\newcommand{\Z}{{\mathbb Z}}
\newcommand{\C}{\mathbb{C}}
\newcommand{\p}[1]{{\mathbb{P}^{#1}}}
\newcommand{\op}[1]{{\mathcal O}_{\mathbb{P}^{#1}}}
\newcommand{\ox}{{\mathcal O}_{X}}
\newcommand{\OC}{{\mathcal O}_{C}} 
\newcommand{\OO}{{\mathcal O}}
\newcommand{\ch}{\operatorname{ch}}
\newcommand{\sing}{\operatorname{Sing}}
\newcommand{\supp}{\operatorname{Supp}}
\newcommand{\Grp}{\mathbb{G}}
\newcommand{\cala}{{\mathcal A}}
\newcommand{\calb}{{\mathcal B}}
\newcommand{\calc}{{\mathcal C}}
\newcommand{\cald}{{\mathcal D}}
\newcommand{\cale}{{\mathcal E}}
\newcommand{\calf}{{\mathcal F}}
\newcommand{\calg}{{\mathcal G}}
\newcommand{\calh}{{\mathcal H}}
\newcommand{\cali}{{\mathcal I}}
\newcommand{\call}{{\mathcal L}}
\newcommand{\calm}{{\mathcal M}}
\newcommand{\caln}{{\mathcal N}}
\newcommand{\calo}{{\mathcal O}}
\newcommand{\calp}{{\mathcal P}}
\newcommand{\calr}{{\mathcal R}}
\newcommand{\cals}{{\mathcal S}}
\newcommand{\Ss}{{\mathcal S}}
\newcommand{\calt}{{\mathcal T}}
\newcommand{\calz}{{\mathcal Z}}
\newcommand{\bff}{{\mathbf F}}
\newcommand{\bfe}{{\mathbf E}}
\newcommand{\bfw}{{\mathbf W}}
\newcommand{\bfl}{{\mathbf l}}
\newcommand{\inhom}{{\mathcal H}{\it om}}
\newcommand{\inext}{{\mathcal E}{\it xt}}
\newcommand{\intor}{{\mathcal T\!}{\it or}}
\newcommand{\Ext}{\operatorname{Ext}}
\newcommand{\ext}{\operatorname{ext}}
\newcommand{\End}{\operatorname{End}}
\newcommand{\Sym}{\operatorname{Sym}}
\newcommand{\spec}{\operatorname{Spec}}
\newcommand{\Tan}{\operatorname{Tan}}
\newcommand{\Hom}{\operatorname{Hom}}
\newcommand{\Hilb}{\operatorname{Hilb}}
\DeclareMathOperator{\coker}{coker}
\DeclareMathOperator{\im}{im}
\DeclareMathOperator{\rk}{{rk}}
\DeclareMathOperator{\Pic}{{Pic}}
\DeclareMathOperator{\Gr}{{Gr}}
\newcommand{\into}{\hookrightarrow}
\newcommand{\onto}{\twoheadrightarrow}
\newtheorem{theorem}{Theorem}
\newtheorem{proposition}[theorem]{Proposition}
\newtheorem{lemma}[theorem]{Lemma}
\newtheorem{corollary}[theorem]{Corollary}
\theoremstyle{definition}
\newtheorem{remark}[theorem]{Remark}
\newtheorem{example}[theorem]{Example}
\newtheorem{definition}[theorem]{{\bf Definition}}
\newcommand{\D}[1]{{\mathbb#1}}
\newcommand{\calu}{{\mathcal U}}
\begin{document}

\title[Instanton sheaves on Fano threefolds]{Instanton sheaves on Fano threefolds}

\author{Gaia Comaschi}
\author{Marcos Jardim}
\address{IMECC - UNICAMP \\ Departamento de Matem\'atica \\
Rua S\'ergio Buarque de Holanda, 651\\ 13083-970 Campinas-SP, Brazil}
\email{jardim@ime.unicamp.br}

\begin{abstract}
Generalizing the definitions originally presented by Kuznetsov and Faenzi, we study (possibly non locally free) instanton sheaves of arbitrary rank on Fano threefolds. We classify rank 1 instanton sheaves and describe all curves whose structure sheaves are rank 0 instanton sheaves. In addition, we show that every rank 2 instanton sheaf is an elementary transformation of a locally free instanton sheaf along a rank 0 instanton sheaf. To complete the paper, we describe the moduli space of rank 2 instanton sheaves of charge 2 on a quadric threefold $X$, and show that the full moduli space of rank 2 semistable sheaves on $X$ with Chern classes $(c_1,c_2,c_3)=(-1,2,0)$ is connected and contains, besides the instanton component, just one other irreducible component which is also fully described.
\end{abstract}

\maketitle

\tableofcontents


\section{Introduction}

In their seminal work \cite{ADHM}, Atiyah, Drinfeld, Hitchin and Manin presented the notion of \textit{mathematical instantons}, rank 2 holomorphic vector bundles on $\p3$ that correspond to anti-self-dual connections, a.k.a. instantons, on the sphere $S^4$. More precisely,  a mathematical instanton of charge $n$ is defined as a stable rank 2 vector bundle $E$ with Chern classes $c_1(E)=0, \ c_2(E)=n$ and such that $H^i(E(-2))=0$ for $i=1,2$.

In the following years several authors presented different generalizations of mathematical instanton first to odd dimensional projective spaces \cite{MCS}, then to non locally free sheaves of any rank on arbitrary projective spaces \cite{J-inst}, and to other Fano threefolds besides $\p3$ \cite{F,K,CCGM}, and more recently to arbitrary polarized projective varieties \cite{AC,AM}.

In the present paper, we introduce a definition of instanton sheaves on a Fano threefold $X$ of Picard rank one, compatible with all the aforementioned ones. Namely, a torsion free sheaf $E$ on $X$ is said to be an \textit{instanton sheaf} of charge $n$ if it is $\mu$-semistable and it satisfies $c_1(E)=-r_X, \ c_2(E)=n$ and $h^i(E(-q_X))=0$ for $i=1,2$, with $r_X, \ q_X$ integers such that $K_X\sim -(2q_X+r_X)H_X$ where $K_X$ and $H_X$ are, respectively the anticanonical class and the ample generator of $\Pic(X)$. These requirements are nevertheless sufficient to guarantee that several expected properties for instanton sheaves still hold in this more general setting. 

Once the notion of an instanton sheaf has been provided and their main features have been illustrated, we focus our attention on how they behave in families with a particular emphasis on the rank 2 case. Indeed, there exists a vast literature on moduli spaces of rank 2 instanton sheaves. 

On $\p3$, the moduli space $\cali(n)$ of rank 2 instanton bundles on $\p3$ has been proved to be an irreducible \cite{T1,T2} smooth \cite{JV} affine \cite{CO} variety of dimension $8n-3$. A more comprehensive picture of $\overline{\cali(n)}$, the closure of $\cali(n)$ within the Gieseker--Maruyama moduli scheme $\calm_{\p3}(0,n,0)$ of rank 2 semistable sheaves on $\p3$ with Chern classes $c_1=0,\ c_2=n,\ c_3=0$, can then be obtained taking into account also the non locally free instanton sheaves, as shown in \cite{JMT1,JMT2}. Moreover, the moduli space $\call(n)$ of all rank 2 instanton sheaves of charge $n$ was shown in \cite{JMaiT} to be connected for $n\le4$.

Moduli spaces of rank 2 instanton bundles on other Fano threefolds $X$ have been widely inspected as well: among them we mention for example \cite{Druel,dima-tikho,Ott-Szu}.
In these works, a frequently used technique to construct instantons and study their moduli is the so called \textit{Serre correspondence}. This latter establishes a correspondence between global sections of rank 2 reflexive sheaves on $X$ and locally Cohen Macaulay (l.c.m.) curves on $X$; it is then possible to deduce geometrical properties of moduli spaces of sheaves from the geometry of the Hilbert scheme of curves on $X$.

In order to carry on these pursuits, we present in this paper a more general form of the Serre correspondence that applies to torsion free sheaves. This will allow us to describe in greater detail the geometry of the curves corresponding to the non locally free rank 2 instanton sheaves, and then apply these results to study moduli spaces of rank 2 instanton sheaves on the quadric threefold. 

Here is the plan of the paper. In Section \ref{sec:fano} we set up the notation for the rest of the paper by recalling the classification of Fano threefolds of Picard rank one, and some features of (semi)stable sheaves. Section \ref{sec:serre} is dedicated to formulating the Serre correspondence for torsion free sheaves, generalizing the classical correspondence presented in \cite{Arrondo,H-bundle,H-reflexive}.

Instanton sheaves on Fano threefolds, the main characters of our tale, are then introduced in Section \ref{sec:instantons}. After going over some basic properties and examples of instanton sheaves, we also introduce the notion of \textit{rank 0 instanton sheaves}, that is 1-dimensional sheaves $T$ on $X$ satisfying the cohomological vanishing $H^i(T(-q_X))=0, \ i=0,1$; for $X=\p3$, rank 0 instantons were originally introduced in \cite{HLa} and further studied in \cite{GJ}, and play a key role in the study of non-reflexive instanton sheaves via a procedure known as \textit{elementary transformation}.
We present a classification of the rank 0 instantons $T$ of the form $T\simeq \calo_C$ where $C\subset X$ a l.c.m. curve (that will be therefore referred to as an \textit{instanton curve}) and as a direct consequence of this result we obtain a classification of rank 1 instanton sheaves, see Proposition \ref{rk1}.

The first main result of the paper is a classification of rank 2 instanton sheaves, see Theorem \ref{double-dual-bundle}. To be precise, we prove that in this case an instanton sheaf $E$ is either locally free or its singular locus has pure dimension one. This implies in particular that every non locally free rank 2 instanton sheaf $E$ is obtained via an elementary transformation of a locally free instanton sheaf $F$ along a rank 0 instanton sheaf $T$; if this occur we moreover have $E^{\vee\vee}\simeq F$ and $\sing(E)=\supp(T)$. 

We complete Section \ref{sec:instantons} with a detailed description of the Serre correspondence for non locally free rank 2 instanton sheaves. In particular, we describe how curves corresponding to locally free instanton sheaves behave under elementary transformation; in this way we can also relate the geometry of the curves corresponding to non locally free instanton sheaves to the singularities of these sheaves.

This formulation of the Serre correspondence is the main tool used in Section \ref{sec:quadrics} to study the moduli space of instantons of charge $2$ on the quadric threefold $X\subset \p4$. The Serre correspondence was used in \cite{Ott-Szu} to describe the moduli space $\cali(2)$ of instanton bundles of charge 2 on $X$ and to prove that the latter is an irreducible smooth variety of dimension 6. Our study of the families of curves corresponding to non locally free instanton sheaves allows us to prove that these always deform to locally free instanton sheaves, and that they are parameterized by an irreducible divisor in $\overline{\cali(2)}$.

Still relying on the Serre correspondence, we give a complete description of the moduli space $\calm_X(2;-1,2,0)$ of rank 2 semistable sheaves with Chern classes $c_1=-1, \ c_2=2, \ c_3=0$ in Section \ref{sec:moduli}. We prove that this moduli space consists of two irreducible components, namely $\overline{\cali(2)}$, and a 10-dimensional irreducible component $\calc$ whose general point is the kernel of an epimorphism $F\twoheadrightarrow \calo_p$, with $F$ a stable reflexive sheaf with $c_1(F)=-1, \ c_2(F)=2, \ c_3(F)=2$ and $p\in X$ a point. We will finally show that $\overline{\cali(2)}\cap \calc\ne \emptyset$ proving the connectedness of $\calm_X(2;-1,2,0)$.

\subsection*{Acknowledgments}
MJ is supported by the CNPQ grant number 302889/2018-3 and the FAPESP Thematic Project 2018/21391-1. GC is supported by the FAPESP grant number 2019/21140-1. 


\section{Background and notation}\label{sec:fano}

\subsection{Classification of Fano threefolds} 

Let $X$ be a smooth 3 dimensional projective variety whose Picard group has rank one. Letting $H_X$ denote the ample generator of $\Pic(X)$, the canonical class $K_X$ can be written in the form
$$ K_X=-i_XH_X \hspace{1cm} i_X\in \D{Z};$$
$X$ is said to be \textit{Fano} whenever $i_X>0$.
For each Fano threefold $X$, we define the following numerical invariants:
\begin{itemize}
    \item the \textbf{index}, defined as the integer $i_X$;
    \item the \textbf{degree} $d_X:=H_X^3$;
\end{itemize}
In addition, we let $q_X$ and $r_X$ denote the quotient and the remainder of the division of $i_X$ by 2, so that we can write $i_X=2q_X+r_X,$ with $q_X\ge 0$ and $ r_X\in\{0,1\}$.

The cohomology groups of a Fano threefold $X$ satisfies the following properties.
All the groups $H^{i,i}(X)$ have dimension one, and for this reason towards the entire article we will write the Chern classes of any sheaf $F\in Coh(X)$ as integers.
By Kodaira vanishing we then compute:
\begin{align*}
    h^i(\calo_X(k))&=0,\ i=1,2, \ k\in \D{Z}\\
    h^{0,p}(X)&=h^{p,0}(X)=0.
\end{align*}

Fano threefolds with Picard rank one were classified by Iskovskikh and Mukai \cite{Isk-Fano,Mukai}. Recall that $i_X\le4$, and 
\begin{itemize}
    \item if $i_X=4$, then $X\simeq \D{P}^3$;
    \item if $i_X=3$, then $X$ is a smooth quadric hypersurface in $\D{P}^4$.
    \item there are five families of Fano threefolds with $i_X=2$, up to deformation; these families are classified according to its degree $d_X\in\{1,2,3,4,5\}$.
    \item there are ten deformation families of Fano threefolds with $i_X=1$, which are also classified according to its degree $d_X$ taking all even values between 2 and 22, except 18.
\end{itemize}
Note that even if in some cases we have different isomorphism classes of Fano threefolds, these belongs to the same deformation family.


\subsection{Stability of sheaves}\label{sec:stability}

Let $E$ be a coherent sheaf on a non-singular projective variety $X$ with $\Pic(X)=\Z$; let $E(t):=E\otimes H_X^{\otimes t}$, where $H_X$ denotes the ample generator of $\Pic(X)$. 

Recall that $E$ is \emph{(semi)stable} if for every non-trivial subsheaf $F\subset E$ we have
$$ p_F(t) < ~(\le)~ p_E(t) , $$
where $p_E(t)$ denotes the reduced Hilbert polynomial of the sheaf $E$. Furthermore, when $E$ is a torsion free sheaf, $E$ is \emph{$\mu$-(semi)stable} if for every non-trivial subsheaf $F\subset E$ such that $E/F$ is also torsion free we have
$$ \dfrac{c_1(F)\cdot H_X^{d-1}}{\rk(F)} < ~(\le)~ \dfrac{c_1(E)\cdot H_X^{d-1}}{\rk(E)} . $$
Remark that, for torsion free sheaves:
$$ \mu-{\rm stability} ~~ \Longrightarrow ~~ {\rm stability} ~~ \Longrightarrow ~~ {\rm semistability} ~~ \Longrightarrow ~~ \mu-{\rm semistability}, $$
see \cite[Lemma 1.2.13]{HL}.
In addition, $E$ is $\mu$-(semi)stable if and only if $E^\vee$ is $\mu$-(semi)stable.

Here is a simple characterization of $\mu$-(semi)stable rank 2 sheaves, which generalizes well known results for reflexive sheaves, cf. \cite[Lemma II.1.2.5]{OSS}. Recall that a torsion free sheaf $E$ is said to be \emph{normalized} if $c_1(E)\in\{0,-1,\dots,-\rk(E)+1\}$; every torsion free sheaf can be normalized after a twist by $\ox(k)$ for some suitable integer $k$. Recall also that a normalized torsion free sheaf with $c_1(E)<0$ is $\mu$-stable if and only if it is $\mu$-semistable. 

\begin{lemma}\label{lem:stable rk 2}
Let $E$ be a normalized torsion free sheaf. 
\begin{itemize}
\item[(1)] Assuming $c_1(E)=0$, we have that 
\begin{itemize}
\item[(1.1)] if $E$ is $\mu$-stable then $h^3(E\otimes\omega_X)=0$;
\item[(1.2)] if $E$ is $\mu$-semistable then $h^3(E\otimes\omega_X(1))=0$;
\item[(1.3)] the converse statements hold when $\rk(E)=2$.
\end{itemize}
\item[(2)] Assuming $c_1(E)<0$, we have that 
\begin{itemize}
\item[(2.1)] if $E$ is $\mu$-semistable then $h^3(E\otimes\omega_X)=0$;
\item[(2.2)] the converse statement holds when $\rk(E)=2$.
\end{itemize}
\end{itemize}
\end{lemma}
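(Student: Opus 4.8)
The plan is to reduce everything to statements about the top cohomology $H^3$ via Serre duality, and then translate $\mu$-(semi)stability into a non-vanishing/vanishing criterion using $\Hom$-groups. Recall that $H^3(X, E\otimes\omega_X(k))^\vee \simeq \Hom(E, \ox(-k))$ by Serre duality, so each of the claimed vanishings is equivalent to $\Hom(E,\ox(-k))=0$ for the appropriate $k$: claim (1.1) is $\Hom(E,\ox)=0$, claim (1.2) is $\Hom(E,\ox(-1))=0$, and claim (2.1) is $\Hom(E,\ox)=0$ again.

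For the forward implications I would argue by contradiction. Suppose $E$ is normalized with $c_1(E)\le 0$ and there is a nonzero map $\varphi\colon E\to\ox(-k)$ with $k=0$ in the $\mu$-stable / $c_1<0$ cases and $k\le 0$ (so the relevant destabilizing value) in the $\mu$-semistable $c_1=0$ case. The image $\mathcal{I}:=\im(\varphi)\subseteq\ox(-k)$ is a nonzero ideal-sheaf-twist, hence a rank $1$ torsion free quotient of $E$ with $c_1(\mathcal{I})\ge c_1(\ox(-k))=-k$ — more precisely $c_1(\mathcal{I}) = -k - (\text{effective})\le -k$, but since $\mathcal I\subseteq\ox(-k)$ we get $c_1(\mathcal{I})\le -k$, while it is also a nonzero quotient so it detects (semi)stability. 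Comparing slopes: $\mu(\mathcal{I})=c_1(\mathcal{I})\cdot H_X^{d-1}\ge$ ... here I should be careful to phrase it as a quotient or a subsheaf consistently. The cleanest route: $\im(\varphi)$ is a subsheaf of $\ox(-k)$, so $\mu(\im\varphi)\le -k\cdot d_X$ only controls it from above; instead take $\ker(\varphi)\subset E$, a subsheaf with torsion-free quotient $\im\varphi$, and $\mu(\ker\varphi)$ then has to be compared — but it is easier to dualize and use that $\mu$-(semi)stability is preserved by $(-)^\vee$, turning a quotient line bundle problem into a sub-line-bundle problem. In the normalized range $c_1(E)\in\{0,-1,\dots,-\rk E+1\}$, a nonzero section of $\Hom(E,\ox(-k))=H^0(E^\vee(-k))$ gives $\ox\to E^\vee(-k)$, i.e. a sub-line-bundle $\ox(k)\hookrightarrow E^\vee$ with torsion-free quotient; then $\mu(\ox(k))=k\,d_X$ must be $<\mu(E^\vee)=-c_1(E)\,d_X/\rk E$ (resp. $\le$), and plugging in $k=0$ resp. the semistable threshold together with $c_1(E)\le 0$ yields the contradiction. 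This handles (1.1), (1.2) and (2.1) uniformly.

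For the converse statements, where we are told $\rk(E)=2$, I would use the rank-$2$ structure to upgrade the $\Hom$-vanishing back to $\mu$-(semi)stability. Suppose $E$ has rank $2$, is normalized, and is not $\mu$-semistable (resp. not $\mu$-stable). Then there is a destabilizing sub-line-bundle: a saturated rank $1$ subsheaf $L\subset E$ with torsion-free quotient and $\mu(L)\ge \mu(E)$ (resp. $>$), and since $L$ is a saturated rank $1$ subsheaf of a torsion free sheaf on a smooth variety it is reflexive of rank $1$, hence a line bundle $\ox(a)$ with $a\,d_X = \mu(L)\ge c_1(E)d_X/2$, so $2a\ge c_1(E)$. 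Dualizing the inclusion $\ox(a)\hookrightarrow E$ (using that for rank $2$, $E^\vee\simeq E(-c_1(E))$ up to the double dual, and $E$ torsion free injects into $E^{\vee\vee}$) produces a nonzero map $E\to\ox(-a)$ or, more directly, $E^\vee\to\ox(-a)$; then $\Hom(E^{\vee\vee},\ox(-a))\ne 0$. One now checks that for $E$ normalized with $c_1(E)=0$ the inequality $2a\ge 0$ forces $a\ge 0$, so $\ox(-a)\subseteq\ox$ and we get $\Hom(E,\ox)\ne0$ in the $\mu$-stable case (resp. $a\ge 0$ allowing $a=0$ gives $\Hom(E,\ox)\ne0$, but for semistability the threshold is $\mu(L)\ge\mu(E)=0$ with $\le$ needed for semistability, so the destabilizing case is $a\ge 1$ when... ) — the bookkeeping with the extra twist in (1.2) is exactly the $\omega_X(1)$ versus $\omega_X$ discrepancy, and in the $c_1(E)=-1$ case of part (2) the inequality $2a\ge -1$ forces $a\ge 0$, again giving $\Hom(E,\ox)\ne0$, i.e. $h^3(E\otimes\omega_X)\ne 0$, contradicting the hypothesis.

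The main obstacle I anticipate is the careful handling of non-locally-free torsion free sheaves in the converse direction: I want to dualize an inclusion of a sub-line-bundle $\ox(a)\hookrightarrow E$ into a nonzero homomorphism out of $E$ (not just out of $E^{\vee\vee}$), and in general $\Hom(E,\ox(-a))$ and $\Hom(E^{\vee\vee},\ox(-a))$ can differ. For rank $2$ this is manageable because $E^{\vee\vee}$ is reflexive of rank $2$ hence locally free in codimension $\le 2$, and $\Hom(E^{\vee\vee},\ox(-a))\hookrightarrow\Hom(E,\ox(-a))$ since $E\onto E$ (the natural map $E\to E^{\vee\vee}$ is injective with cokernel supported in codimension $\ge 2$, so precomposition gives an injection on $\Hom(-,\ox(-a))$ when the target is torsion free). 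That inclusion is what lets the rank-$2$ hypothesis do its job, and it is the one spot where I would be most careful to get the direction of the arrows and the codimension estimates right; everything else is slope arithmetic with the normalization $c_1(E)\in\{0,-1\}$ and the index relation $i_X=2q_X+r_X$.
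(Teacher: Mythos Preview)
Your strategy and the paper's agree on the opening move --- Serre duality to convert $h^3$ into $\Hom(E,\ox(-k))$ --- but then diverge. The paper never dualizes $E$; you do so throughout, and this creates exactly the complication you flag as your ``main obstacle''.

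For the forward implications the paper takes a nonzero $\varphi\colon E\to\ox$ and observes that $F:=\ker\varphi$ is a subsheaf with torsion-free quotient $\im\varphi\subset\ox$; since $c_1(\im\varphi)\le 0$ one gets $c_1(F)\ge c_1(E)$, hence $\mu(F)\ge\mu(E)$ when $c_1(E)=0$, contradicting $\mu$-stability. (You actually wrote this argument down --- ``take $\ker(\varphi)\subset E$, a subsheaf with torsion-free quotient $\im\varphi$'' --- before discarding it for the dual picture.) For the rank-$2$ converses the paper again stays on the $E$ side: a destabilizing rank-$1$ subsheaf $F\subset E$ has rank-$1$ torsion-free \emph{quotient} $G:=E/F\simeq\cali_\Gamma(k)$ with $k\le 0$ (resp.\ $k\le -1$), and the composite $E\twoheadrightarrow G\hookrightarrow\ox$ (resp.\ $\hookrightarrow\ox(-1)$) is the required nonzero map. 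No dualization, no comparison of $E$ with $E^{\vee\vee}$, no rank-$2$ self-duality isomorphism; the rank-$2$ hypothesis enters only to force $E/F$ to have rank $1$. Your dual route can be made to work, but the machinery you invoke (saturation, preservation of $\mu$-stability under $(-)^\vee$, the identification $E^\vee\simeq E^{\vee\vee}(-c_1)$, the injection $\Hom(E^{\vee\vee},-)\hookrightarrow\Hom(E,-)$) is entirely self-inflicted.

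One genuine gap, shared with the paper: item (2.1) is false as stated for $\rk E>2$. Neither your slope count nor the paper's ``completely analogous'' remark rules out the case $\im\varphi=\cali_Z$ with $\codim Z\ge 2$, and indeed $E=\Omega^1_{\p3}(1)$ is $\mu$-stable of rank $3$ with $c_1=-1$ yet $\Hom(E,\ox)\simeq H^0(T_{\p3}(-1))\simeq\C^4\ne 0$. So (2.1) is only valid in rank $2$, where it merges with (2.2); fortunately the paper uses the lemma only in that case.
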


\begin{proof}
By Serre duality, we know that $H^3(E\otimes\omega_X)\simeq\Hom(E,\ox)^*$. If $h^3(E\otimes\omega_X)>0$, then there is a non-trivial morphism $\varphi:E\to\ox$; let $F:=\ker(\varphi)$. Since $c_1(F)=-c_1(\im(\varphi))\ge0$, we conclude that $E$ cannot be $\mu$-stable.

Conversely, assume that $\rk(E)=2$; if $E$ is not $\mu$-stable, let $F$ be a destabilizing subsheaf; since $E$ has rank 2, we must have that both $F$ and $G:=E/F$ are rank 1 torsion free sheaves. It follows that $G=\cali_\Gamma(k)$ for some subscheme $\Gamma\subset X$ and $k\le0$, thus there exists a monomorphism $G\into \ox$; composing the epimorphism $E\onto G$ with the latter, we obtain a non trivial morphism $E\to\ox$, showing that  $h^3(E\otimes\omega_X)>0$.

The proofs for item (1.2) and (2) are completely analogous.
\end{proof}

We will need the following result regarding 1-dimensional sheaves.

\begin{lemma}\label{lem:T-sst}
Let $T$ be a pure 1-dimensional sheaf with $\chi(T(t))=d\cdot(t+e)$. If $h^0(T(-e))=0$, then $T$ is semistable.
\end{lemma}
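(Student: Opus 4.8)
The plan is to check the semistability inequality directly against an arbitrary nonzero subsheaf, reducing it to the cohomological hypothesis. Let $F\subset T$ be a nonzero subsheaf. Since $T$ is pure of dimension $1$, it has no subsheaves supported in dimension $0$, so $F$ is again pure of dimension $1$; in particular $\dim F=\dim T=1$, and comparing the reduced Hilbert polynomials $p_F$ and $p_T$ amounts to comparing two monic linear polynomials. Writing $\chi(F(t))=d't+c'$ with $d'=\operatorname{mult}(F)>0$, we have $p_F(t)=t+c'/d'$ and $p_T(t)=t+e$, so the desired inequality $p_F\le p_T$ is equivalent to $c'\le d'e$, i.e. to $\chi(F(-e))=c'-d'e\le 0$.

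To establish this, I would argue as follows. The inclusion $F\hookrightarrow T$ twists to an inclusion $F(-e)\hookrightarrow T(-e)$, so $h^0(F(-e))\le h^0(T(-e))=0$, hence $h^0(F(-e))=0$. On the other hand $F$ is supported in dimension $1$, so Grothendieck vanishing gives $h^i(F(-e))=0$ for all $i\ge 2$. Therefore $\chi(F(-e))=h^0(F(-e))-h^1(F(-e))=-h^1(F(-e))\le 0$, which is exactly what was needed. Since $F$ was an arbitrary nonzero subsheaf, this shows $p_F(t)\le p_T(t)$ for $t\gg 0$ in every case, i.e. $T$ is semistable.

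The only points that need a word of care — but no real work — are in the reduction step: one must note that purity of $T$ is what guarantees every nonzero subsheaf is again purely $1$-dimensional, so that $p_F$ and $p_T$ have the same degree and the comparison of reduced Hilbert polynomials is governed solely by the constant terms $c'/d'$ and $e$; and, should one prefer to allow subsheaves of smaller dimension, such an $F$ has $p_F$ of degree $0$ and hence automatically satisfies $p_F(t)<p_T(t)$ for $t\gg 0$, so it never destabilizes. There is no substantive obstacle: the statement is essentially a repackaging of the vanishing $h^0(T(-e))=0$ together with the dimension bound on the cohomology of a $1$-dimensional sheaf.
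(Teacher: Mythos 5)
Your proposal is correct and follows essentially the same argument as the paper: for any nonzero subsheaf $F\subset T$ the inclusion forces $h^0(F(-e))=0$, hence $\chi(F(-e))=-h^1(F(-e))\le0$, which is exactly the comparison of reduced Hilbert polynomials needed (the paper likewise notes the multiplicity of the subsheaf is positive by purity). The extra remarks on Grothendieck vanishing and degenerate subsheaves are just spelled-out versions of steps the paper leaves implicit.
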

\begin{proof}
If $S\into T$ is a subsheaf, then $h^0(S(-e))=0$; if we set $\chi(S(t))=s\cdot t + x$, then $\chi(S(-e))=-se+x=-h^1(L(-1))\le0$, thus $x\le se$; note that $s>0$ because $T$ has pure dimension 1. It follows that 
$$ \dfrac{\chi(T(t))}{d} - \dfrac{\chi(S(t))}{s} = e - \dfrac{x}{s} \ge 0, $$
proving that $T$ is semistable.
\end{proof}


\section{Serre correspondence for torsion free sheaves}\label{sec:serre}

The so-called \emph{Serre correspondence} is  one of the most efficient tools to construct and study rank 2 sheaves on a threefold $X$.

Recall from \cite[Theorem 4.1]{H-reflexive} (which generalizes \cite[Theorem 1.1]{H-bundle}) that this is a correspondence relating pairs $(C, \xi)$ consisting of a curve $C$ in $X$ and a global section $\xi$ of a twist of the dualizing sheaf $\omega_C:=\inext^2(\calo_C,\omega_X)$, with pairs $(E,s)$ consisting of a rank 2 reflexive sheaf $E$ and a global section $s\in H^0(E)$ whose cokernel is torsion-free. Another version of the Serre correspondence was given by Arrondo in \cite[Theorem 1.1]{Arrondo}, including locally free sheaves of higher rank.

The main goal of this section is to consider a generalization of the Serre correspondence for torsion free sheaves on projective varieties generalizing all of the results mentioned above.

\begin{theorem} \label{thm:serre}
Let $X$ be a non-singular, projective variety and let $L$ be a line bundle on $X$ such that $H^1(L^\vee)=H^2(L^\vee)=0$. There is a correspondence between
\begin{itemize}
\item sets $(E,s_1,\cdots,s_{r-1})$ consisting of a rank r torsion free sheaf $E$ with $\det(E)=L$, and global sections $s_1,\dots,s_{r-1}\in H^0(E)$ whose dependency locus has codimension at least 2;
\item sets $(C,\xi_1,\dots,\xi_{r-1})$ consisting of a codimension 2 subscheme $C\subset X$ and sections $\xi_1,\dots,\xi_{r-1}\in H^0(\omega_C\otimes\omega_X^{-1}\otimes L^{-1})$.
\end{itemize}
\end{theorem}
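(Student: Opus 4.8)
The plan is to follow the classical Serre construction, adapting the Hartshorne--Arrondo argument to the torsion-free setting where the extension is governed by $\inext^1$ rather than $\inext^2$. Going from sheaves to curves is the easy direction: given $(E, s_1, \dots, s_{r-1})$, I would assemble the sections into a map $\mathcal{O}_X^{\oplus(r-1)} \to E$, whose cokernel $Q$ (after taking the torsion-free quotient, or noting that the dependency locus having codimension $\ge 2$ forces the map to be injective with torsion-free cokernel on the complement of a codimension $2$ locus) has rank $1$; then $Q^{\vee\vee}$ is a reflexive rank $1$ sheaf on a nonsingular variety, hence a line bundle, and since $\det E = L$ and the $\mathcal{O}_X$ summands contribute trivial determinant, $Q^{\vee\vee} \cong L$. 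Writing $C$ for the subscheme cut out by the ideal $Q^{\vee\vee} \otimes Q^{-1}$-type construction — more precisely $C$ is defined by the image of $\mathcal{H}om(L, Q) \to \mathcal{O}_X$ — one has a short exact sequence $0 \to \mathcal{O}_X^{\oplus(r-1)} \to E \to \mathcal{I}_{C} \otimes L \to 0$, where $C$ has codimension $2$. Dualizing this sequence and chasing the resulting $\inext$ long exact sequence — using $\inext^i(\mathcal{O}_X^{\oplus(r-1)}, \omega_X) = 0$ for $i > 0$ and the identification $\inext^2(\mathcal{I}_C \otimes L, \omega_X) \cong \inext^2(\mathcal{O}_C, \omega_X) \otimes L^{-1} = \omega_C \otimes L^{-1}$ — produces the desired sections $\xi_i \in H^0(\omega_C \otimes \omega_X^{-1} \otimes L^{-1})$ after twisting by $\omega_X^{-1}$; the $r-1$ of them come from the $r-1$ copies of $\mathcal{O}_X$.

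For the reverse direction, I would start from $(C, \xi_1, \dots, \xi_{r-1})$ with $C \subset X$ of codimension $2$ and $\xi_i \in H^0(\omega_C \otimes \omega_X^{-1} \otimes L^{-1}) = \Ext^0(\mathcal{O}_X, \omega_C \otimes \omega_X^{-1} \otimes L^{-1})$. The key is the local-to-global spectral sequence computing $\Ext^1(\mathcal{I}_C \otimes L, \mathcal{O}_X)$: its relevant terms are $H^1(\mathcal{H}om(\mathcal{I}_C \otimes L, \mathcal{O}_X))$ and $H^0(\inext^1(\mathcal{I}_C \otimes L, \mathcal{O}_X))$, and since $\inext^1(\mathcal{I}_C, \mathcal{O}_X) \cong \inext^2(\mathcal{O}_C, \mathcal{O}_X)$ which by (relative) duality is $\omega_C \otimes \omega_X^{-1}$, one identifies $H^0(\inext^1(\mathcal{I}_C \otimes L, \mathcal{O}_X)) \cong H^0(\omega_C \otimes \omega_X^{-1} \otimes L^{-1})$. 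The hypotheses $H^1(L^\vee) = H^2(L^\vee) = 0$ are exactly what kills the correction terms $H^1(\mathcal{H}om(\mathcal{I}_C \otimes L, \mathcal{O}_X)) = H^1(L^\vee)$ (using $\mathcal{H}om(\mathcal{I}_C, \mathcal{O}_X) = \mathcal{O}_X$ since $C$ has codimension $2$) and $H^2(L^\vee)$, so that each $\xi_i$ lifts to a genuine extension class in $\Ext^1(\mathcal{I}_C \otimes L, \mathcal{O}_X)$, giving an extension $0 \to \mathcal{O}_X \to E_i \to \mathcal{I}_C \otimes L \to 0$. I would then splice these together — more carefully, use the full $(r-1)$-dimensional subspace spanned by the $\xi_i$ to build a single extension $0 \to \mathcal{O}_X^{\oplus(r-1)} \to E \to \mathcal{I}_C \otimes L \to 0$ — and check that $E$ so obtained is torsion-free (it is an extension of a torsion-free sheaf by a torsion-free sheaf) with $\det E = L$, and that the $r-1$ induced sections of $E$ have dependency locus of codimension $\ge 2$, namely contained in $C$.

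Finally I would verify that the two constructions are mutually inverse: starting from $(E, s_1, \dots, s_{r-1})$, passing to $(C, \xi_i)$ and back recovers the original extension because the extension class of $0 \to \mathcal{O}_X^{\oplus(r-1)} \to E \to \mathcal{I}_C \otimes L \to 0$ in $\Ext^1(\mathcal{I}_C \otimes L, \mathcal{O}_X^{\oplus(r-1)})$ maps, under the identification above, precisely to the tuple $(\xi_1, \dots, \xi_{r-1})$; conversely the sheaf built from $(C, \xi_i)$ has, by construction, the right cokernel $\mathcal{I}_C \otimes L$ and the right classes. The main obstacle — and the place where care is genuinely needed — is the passage through the local-to-global spectral sequence for $\Ext^1(\mathcal{I}_C \otimes L, \mathcal{O}_X)$: one must correctly identify $\inext^1(\mathcal{I}_C \otimes L, \mathcal{O}_X)$ with $\omega_C \otimes \omega_X^{-1} \otimes L^{-1}$ via local duality for the codimension-$2$ subscheme $C$ (which need not be Cohen--Macaulay a priori, though codimension-$2$ subschemes of a nonsingular variety with torsion-free cokernel are), and verify that the vanishing hypotheses on $L^\vee$ suffice to make the edge map $\Ext^1(\mathcal{I}_C \otimes L, \mathcal{O}_X) \to H^0(\inext^1(\mathcal{I}_C \otimes L, \mathcal{O}_X))$ an isomorphism; a secondary subtlety is ensuring, in the sheaves-to-curves direction, that the torsion-free quotient $E / \mathcal{O}_X^{\oplus(r-1)}$ really is of the form $\mathcal{I}_C \otimes L$ with $C$ of pure codimension $2$, which uses that a rank-$1$ torsion-free sheaf with reflexive hull a line bundle on a nonsingular variety is an ideal sheaf twisted by that line bundle.
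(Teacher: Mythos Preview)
Your proposal is correct and follows essentially the same route as the paper: both directions rest on the local-to-global spectral sequence, which under the hypotheses $H^1(L^\vee)=H^2(L^\vee)=0$ yields the isomorphism $\Ext^1(\cali_C\otimes L,\ox)\simeq H^0(\inext^1(\cali_C\otimes L,\ox))\simeq H^0(\omega_C\otimes\omega_X^{-1}\otimes L^{-1})$, and the paper simply invokes this identification symmetrically in each direction.

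One small inconsistency worth tightening: in your sheaves-to-curves direction you say the $\xi_i$ are produced by ``dualizing the sequence and chasing the resulting $\inext$ long exact sequence'', but applying $\inhom(-,\omega_X)$ to the extension only yields local sheaf data and does not by itself hand you global sections (and the index should be $\inext^1(\cali_C\otimes L,\omega_X)$, not $\inext^2$). What actually produces the $\xi_i$ is exactly the mechanism you describe correctly in the reverse direction: the short exact sequence itself \emph{is} an extension class in $\Ext^1(\cali_C\otimes L,\ox^{\oplus(r-1)})$, and the spectral-sequence edge map carries it to the tuple $(\xi_1,\dots,\xi_{r-1})$. The paper just applies that identification uniformly in both directions, which streamlines the argument.
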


\begin{proof}
Starting with a set $(E,s_1,\cdots,s_{r-1})$ as described in the first item, we form a morphism
$$ \sigma:=(s_1,\cdots,s_{r-1}) : \ox^{\oplus(r-1)} \longrightarrow E ; $$
the hypothesis on $(s_1,\cdots,s_{r-1})$, which means that the common zeros of $s_i$ have codimension at least 2, imply that $\sigma$ is surjective and $\coker(\sigma)$ is a torsion free sheaf. It follows that $\coker(s)\simeq\cali_C\otimes L$ where $C$ is a subscheme of codimension 2, which is precisely the dependency locus of $(s_1,\cdots,s_{r-1})$, and $L$ is a line bundle. Therefore, we obtain a short exact sequence of the form
\begin{equation} \label{sc-tf}
0 \longrightarrow \ox^{\oplus(r-1)} \stackrel{\sigma}{\longrightarrow} E \longrightarrow \cali_C\otimes L \longrightarrow 0;
\end{equation}
in addition, this exact sequence defines an extension class
$$ \xi\in\Ext^1(\cali_C\otimes L,\ox^{\oplus(r-1)}). $$

Using the spectral sequence for local-to-global Ext's
\begin{equation} \label{ss-loc-glob-ext}
    H^p(\inext^q(\cali_C,\ox))\Rightarrow \Ext^{p+q}(\cali_C,\ox)
\end{equation}
one checks that the hypothesis $H^1(L)=H^2(L)=0$ yields the first of the following isomorphisms
\begin{equation} \label{isos}
\Ext^1(\cali_C\otimes L,\ox) \simeq H^0(\inext^1(\cali_C\otimes L,\ox)) \simeq H^0(\inext^2(\calo_C,\omega_X)\otimes\omega_X^{-1}\otimes L^{-1}).    
\end{equation}
This means that the extension class $\xi$ can be regarded as $r-1$ section $\xi_1,\dots,\xi_{r-1}\in H^0(\omega_C\otimes\omega_X^{-1}\otimes L^{-1})$. We have thus obtained a set $(C,\xi_1,\dots,\xi_{r-1})$ as described in the second item.

Conversely, given a set $(C,\xi_1,\dots,\xi_{r-1})$ we can use the isomorphisms in display \eqref{isos} to re-interpret $\xi_1,\dots,\xi_{r-1}$ as an extension class in $\Ext^1(\cali_C\otimes L,\ox^{\oplus(r-1)})$ leading to an exact sequence as in display \eqref{sc-tf}, which yields a set $(E,s_1,\cdots,s_{r-1})$. 
\end{proof}

In general, the abelian groups $\Ext^1(\cali_C,L)$ and $H^0(\inext^1(\cali_C,L))$ are related via the following exact sequence
$$ 0 \longrightarrow H^1(L^\vee) \longrightarrow \Ext^1(\cali_C\otimes L,\ox) \longrightarrow H^0(\inext^1(\cali_C\otimes L,\ox)) \longrightarrow H^2(L^\vee); $$
here, we used the isomorphism $\inhom(\cali_C\otimes L,\ox)\simeq L^\vee$. Therefore, if one only assumed that $H^2(L^\vee)=0$, then every set $(C,\xi_1,\dots,\xi_{r-1})$ defines an extension class in $\Ext^1(\cali_C\otimes L,\ox^{\oplus(r-1)})$ and thus a torsion free sheaf of rank $r$. 

In this paper, we will only be concerned with threefolds, so we fix $\dim(X)=3$ once and for all. Moreover, we will mostly consider only rank 2 sheaves.

\begin{remark} \label{rem:five}
Fix $r=2$.
\begin{enumerate}
\item $E$ is reflexive if and only if the scheme $C$ is locally Cohen--Macaulay (l.c.m.) and $\xi:\ox\to\inext^2(\OC,\ox)\otimes L^{-1}$ only vanishes on a 0-dimensional subscheme $Z\subset C$. In addition, $Z$ coincides with the singular locus of $E$.
\item $E$ is locally free if and only if the scheme $C$ is locally complete intersection (l.c.i.) and $\xi:\ox\to\omega_C\otimes\omega_X^{-1}\otimes L^{-1}$ is non vanishing.
\end{enumerate}
Detailed explanation for these claims can be found in the classical references \cite[Theorem 4.1]{H-reflexive} and \cite[Theorem 1.1]{H-bundle}, respectively.
\end{remark}

When $E$ is not reflexive, set $T_E:=E^{\vee\vee}/E$ and consider the following commutative diagram
\begin{equation}\begin{split}\label{snake-global-sect}
\xymatrix{
& 0\ar[d] & 0\ar[d] & & \\
& \ox\ar[d]^{s}\ar@{=}[r] & \ox\ar[d]^{\iota(s)} & & \\
0\ar[r] & E \ar[r]\ar[d] & E^{\vee\vee} \ar[r]^{q}\ar[d] & T_E \ar[r]\ar@{=}[d] & 0 \\
0\ar[r] & \cali_{C}\otimes L \ar[r]\ar[d] & G \ar[r]\ar[d] & T_E \ar[r] & 0\\
& 0 & 0 & &
}\end{split}\end{equation}

where $G:=\coker(\iota(s))$ and $L=\det(E)$; we argue that $G$ is torsion free, so that $G\simeq\cali_{C'}\otimes L$ for some l.c.m. curve $C'\subset C$. 

Indeed, assume that $G$ is not torsion free and assume that $P\hookrightarrow G$ is the maximal torsion subsheaf, so that $G/P$ is torsion free; the exact sequence in the middle column implies that $\inext^p(G,\ox)=0$ for $p>1$, thus $\inext^p(P,\ox)=0$ for $p>1$ as well (since $\inext^3(G/P,\ox)=0$); it follows that $P$ and $\inext^1(P,\ox)$ must be a 2-dimensional sheaf. On the other hand, since $\dim T_E\le1$, one can dualize the exact sequence in the bottom line and conclude that $G^{\vee}\simeq L^{\vee}$ and
\begin{equation} \label{exts-serre}
0\longrightarrow \inext^1(G,\ox) \longrightarrow \inext^1(\cali_C\otimes L,\ox) \longrightarrow \inext^2(T_E,\ox) \longrightarrow 0,
\end{equation}
since $\inext^p(G,\ox)=0$ for $p>1$. This means that $\dim\inext^1(G,\ox)\le1$, providing a contradiction. Therefore, $G$ does not admit a torsion subsheaf.

In general, the quotient sheaf $T_E$ is not pure dimensional. Our next result characterizes those torsion free sheaves $E$ for which $T_E$ has pure dimension 1.

\begin{lemma} \label{lem:pure1d}
Assume that the pairs $(E,s)$ and $(C,\xi)$ correspond via the Serre correspondence outlined in Theorem \ref{thm:serre}. The scheme $C$ is l.c.m. if and only if $T_E$ is a pure 1-dimensional sheaf.
\end{lemma}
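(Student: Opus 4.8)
The plan is to analyze the relationship between the local Cohen--Macaulayness of $C$ and the purity of $T_E := E^{\vee\vee}/E$, exploiting the depth/codimension characterizations of these conditions together with the exact sequences coming from the diagram \eqref{snake-global-sect} and the local-to-global duality machinery. Recall that for a coherent sheaf $F$ on the smooth threefold $X$, the sheaves $\inext^q(F,\ox)$ detect the local codimension of the support of the non-Cohen--Macaulay locus: concretely, a subscheme $C$ of pure codimension $2$ is l.c.m. if and only if $\inext^q(\calo_C,\ox)=0$ for $q\ge 3$ (equivalently, $\inext^3(\calo_C,\ox)=0$), which by local duality on $X$ is equivalent to saying $\calo_C$ has no subsheaf supported in dimension $0$. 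So the first step is to reformulate "$C$ is l.c.m." as the vanishing $\inext^3(\calo_C,\ox)=0$, and "$T_E$ is pure $1$-dimensional" as the vanishing of the $0$-dimensional torsion of $T_E$, i.e. $\inext^3(T_E,\ox)=0$ (here one uses that $T_E$ is already $\le 1$-dimensional since $E$ is torsion free of rank $2$, so $E^{\vee\vee}/E$ is supported in codimension $\ge 2$).

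Next I would dualize the relevant exact sequences. From the short exact sequence $0\to \ox\to E^{\vee\vee}\to \cali_C\otimes L\to 0$ (the horizontal version obtained by splicing the first column with the middle row of \eqref{snake-global-sect}), and from $0\to E\to E^{\vee\vee}\to T_E\to 0$, apply $\RHom(-,\ox)$. Since $E^{\vee\vee}$ is reflexive on a smooth threefold, $\inext^q(E^{\vee\vee},\ox)=0$ for $q\ge 2$; and $\inext^q(\ox,\ox)=0$ for $q\ge 1$. Chasing the resulting long exact sequences gives isomorphisms
$$ \inext^q(\cali_C\otimes L,\ox)\simeq \inext^{q+1}(\OX/\cali_C\otimes L,\ox)\quad\text{for }q\ge 1,$$
hence in particular $\inext^2(\cali_C\otimes L,\ox)\simeq \inext^3(\calo_C,\ox)\otimes L^{-1}$, and likewise $\inext^2(T_E,\ox)$ and $\inext^3(T_E,\ox)$ are controlled by the dual of $0\to E\to E^{\vee\vee}\to T_E\to 0$, yielding $\inext^3(T_E,\ox)\simeq \inext^2(E,\ox)$ and an exact sequence relating $\inext^1(T_E,\ox)$, $\inext^1(E,\ox)$ and $\inext^2(T_E,\ox)$. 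The key identity to extract is then the already-displayed sequence \eqref{exts-serre},
$$0\to \inext^1(G,\ox)\to \inext^1(\cali_C\otimes L,\ox)\to \inext^2(T_E,\ox)\to 0,$$
combined with one more step of the long exact sequence giving $\inext^2(\cali_C\otimes L,\ox)\simeq \inext^3(T_E,\ox)$ (using $\inext^2(G,\ox)=0$, which holds because $G\simeq \cali_{C'}\otimes L$ is torsion free as shown in the excerpt, so $\inext^{\ge 2}(G,\ox)$ is at most $0$-dimensional, and then a depth count forces it to vanish). Tracing through, $\inext^3(\calo_C,\ox)\otimes L^{-1}\simeq \inext^2(\cali_C\otimes L,\ox)\simeq \inext^3(T_E,\ox)\simeq \inext^2(E,\ox)$, which is exactly the equivalence "$C$ l.c.m. $\iff$ $T_E$ pure $1$-dimensional": both are equivalent to $\inext^2(E,\ox)=0$.

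The main obstacle I anticipate is bookkeeping the $\inext$'s of the \emph{torsion-free but possibly non-reflexive} sheaf $G\simeq \cali_{C'}\otimes L$ carefully enough to justify $\inext^2(G,\ox)=0$ (rather than merely $\dim \le 0$): one must either invoke that a torsion-free sheaf $\cali_{C'}\otimes L$ with $C'$ of pure codimension $2$ has depth $\ge 1$ at every point, hence $\inext^{q}(\cali_{C'}\otimes L,\ox)$ vanishes for $q> \dim X - \depth = 2$ only after confirming no embedded $0$-dimensional behaviour — equivalently that $C'$ is itself l.c.m., which is precisely what we are trying to relate things to. The clean way around this circularity is to run the argument purely at the level of the reflexive sheaf $E^{\vee\vee}$ and the two surjections out of it, so that $G$ never needs to be analyzed beyond "torsion free"; I would set up the two dualized long exact sequences side by side and isolate $\inext^2(E,\ox)$ as the common obstruction, then invoke the duality dictionary ($\inext^{3-i}(\calo_C,\ox)\otimes\omega_X$ is the $i$-th cohomology of the dualizing complex of $\calo_C$, so its vanishing for $i=0$ is the l.c.m. condition) to conclude. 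The forward implication (l.c.m. $\Rightarrow$ $T_E$ pure) and the converse then both drop out of the single isomorphism $\inext^3(\calo_C,\ox)\otimes L^{-1}\simeq \inext^3(T_E,\ox)$, with no further case analysis needed.
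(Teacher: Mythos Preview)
Your proposal is correct and follows essentially the same route as the paper: both reduce the equivalence to the single isomorphism $\inext^3(T_E,\ox)\simeq\inext^2(\cali_C\otimes L,\ox)\simeq\inext^3(\calo_C,\ox)$ obtained by dualizing the bottom row of diagram \eqref{snake-global-sect}. Your worry about circularity in justifying $\inext^2(G,\ox)=0$ is unfounded---the paper already established $\inext^p(G,\ox)=0$ for $p>1$ in the paragraph preceding the lemma, using the middle column $0\to\ox\to E^{\vee\vee}\to G\to 0$ and the reflexivity of $E^{\vee\vee}$, which is exactly the workaround you propose at the end.
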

\begin{proof}
Dualizing the bottom line of the diagram in display \eqref{snake-global-sect} yields the first of the following isomorphisms
$$ \inext^3(T_E,\ox)\simeq\inext^2(\cali_C\otimes L,\ox) \simeq \inext^3(\OC,\ox); $$
the left most sheaf is 0-dimensional, so one can disregard the twist by  the line bundle $L$. Therefore, $T_E$ contains a 0-dimensional subsheaf if and only if $\OC$ also does, meaning that $C$ is not l.c.m.
\end{proof}

Let $U\hookrightarrow\OC$ be the maximal 0-dimensional subsheaf, so that $\inext^3(\OC,\ox)\simeq\inext^3(U,\ox)$. As a by-product of the previous proof, we also conclude that $\inext^3(T_E,\ox)\simeq\inext^3(U,\ox)$. In other words, the 0-dimensional components of the support of $T_E$ are always contained in the 0-dimensional components of $C$, regardless of the choice of section $s$. 


\section{Instanton sheaves on Fano threefolds} \label{sec:instantons}

Let $X$ be a Fano threefold of Picard rank one and of index $i_X$, following all the notation and definitions posed in Section \ref{sec:fano}.

The key point of the present paper is the introduction of the following definition, which generalizes Faenzi's and Kuznetsov's definitions of instanton bundles on a Fano threefold, compare with \cite[Definition 1]{F} and \cite[Definition 1.1]{K}.

\begin{definition}\label{defn:instanton}
An \emph{instanton sheaf} $E$ on $X$ is a $\mu$-semistable sheaf with first Chern class $c_1(E)=-r_X$ and such that:
\begin{equation}\label{instantonic-condition}
H^1(E(-q_X))=H^2(E(-q_X))=0.
\end{equation}
The \emph{charge} of $E$ is defined to be $c_2(E)$.
\end{definition}

The $\mu$-semistability condition rules out $\ox(-r_X)\oplus\ox^{\oplus(r-1)}$ as instanton sheaf when $r_X=1$ (ie. when $i_X$ is odd); however, $\ox^{\oplus r}$ is considered an instanton sheaf when $r_X=0$ (ie. when $i_X$ is even).

\begin{remark}
When $X=\p3$ this definition is, in general, a bit more restrictive that the definition of instanton sheaves adopted in \cite{J-inst,GJ,JMaiT,JMT2}; in these references, an instanton sheaf on $\p3$ was defined as a torsion free sheaf $E$ with $c_1(E)=0$ and 
$$ h^0(E(-1)) = h^1(E(-2)) = h^2(E(-2)) = h^3(E(-3)) = 0. $$
Using this definition, one can find examples of instanton sheaves of rank 4 and larger that are not $\mu$-semistable, see \cite[Example 3]{J-inst}.
However, both definitions are equivalent when $\rk(E)=2$, since every rank 2 sheaf on $\p3$ satisfying the conditions above is automatically $\mu$-semistable.  
\end{remark}

The following technical results will be useful later on.

\begin{lemma}\label{instantonic}
Let $E$ be an instanton sheaf of rank $r$.
\begin{enumerate}
\item $H^0(E(-n))=0 \ \forall\: n\ge 1-r_X$ and $H^3(E(n))=0, \ \forall\: n\ge -i_X+1$.
\item $H^i(E(-q_X))=0, \ \Ext^i(E,\calo_X(-q_X-r_X))=0, \ \forall\: i.$
\end{enumerate}
\end{lemma}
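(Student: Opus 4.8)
The plan is to establish both items by a combination of Serre duality, the $\mu$-semistability hypothesis (via Lemma \ref{lem:stable rk 2}), and the defining vanishing \eqref{instantonic-condition}, together with the Kodaira vanishing on $X$ recorded in Section \ref{sec:fano}.

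\textbf{Item (1).} For the vanishing $H^0(E(-n))=0$ for $n\ge 1-r_X$, I would argue that since $E$ is $\mu$-semistable with $c_1(E)=-r_X$ and rank $r$, its slope is $\mu(E)=-r_X/r$. If $H^0(E(-n))\ne0$ for some $n$, then $\ox(n)\hookrightarrow E$ is a nontrivial subsheaf (with torsion-free quotient after saturating), which would force $n\le\mu(E)=-r_X/r$, i.e. $n<1-r_X$ for $r\ge1$ (using $r_X\in\{0,1\}$). Hence $H^0(E(-n))=0$ as soon as $n\ge 1-r_X$. Actually one must be slightly careful: $\mu$-semistability gives $n\le c_1(E)\cdot H_X^{d-1}/\rk$ only for saturated subsheaves, so I would pass to the saturation $\widetilde{\ox(n)}=\cali_Z(n')$ with $n'\ge n$, and the inequality $n'\le -r_X/r<1-r_X$ still rules out $n\ge 1-r_X$. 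For the second half, $H^3(E(n))\simeq \Hom(E(n),\omega_X)^\vee=\Hom(E,\omega_X(-n))^\vee=\Hom(E,\ox(-i_X-n))^\vee$; by the same slope argument applied to the $\mu$-semistable sheaf $E^\vee$ (or directly: a nonzero map $E\to\ox(-i_X-n)$ has image a rank-1 subsheaf of $\ox(-i_X-n)$, so its first Chern class is $\le -i_X-n$, and semistability of $E$ forces $-r_X/r\le -(i_X+n)/1$ after comparing slopes of the quotient), one deduces this vanishes once $-i_X-n<\mu(E)$, i.e. for $n\ge -i_X+1$.

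\textbf{Item (2).} For the cohomology vanishing $H^i(E(-q_X))=0$ for all $i$: by definition we have it for $i=1,2$; for $i=0$ it follows from item (1) since $q_X\ge 1-r_X$ whenever $i_X=2q_X+r_X\ge 1$ (the cases $i_X=1$ give $q_X=0, r_X=1$, so $q_X=0=1-r_X$; $i_X\ge2$ gives $q_X\ge1\ge 1-r_X$), hence $H^0(E(-q_X))=0$; for $i=3$, $H^3(E(-q_X))\simeq\Hom(E,\omega_X(q_X))^\vee=\Hom(E,\ox(q_X-i_X))^\vee=\Hom(E,\ox(-q_X-r_X))^\vee$, which vanishes by item (1)'s second statement since $-q_X-r_X\le -i_X+1$ is equivalent to $q_X\ge 1$, and in the remaining case $i_X=1$ one checks $\Hom(E,\ox(-1))^\vee=H^3(E)^\vee$ and the slope bound again kills it. For the $\Ext$ vanishing: I would use the local-to-global spectral sequence $H^p(\inext^q(E,\ox(-q_X-r_X)))\Rightarrow\Ext^{p+q}(E,\ox(-q_X-r_X))$. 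Since $E$ is torsion free on a smooth threefold, $\inext^q(E,\ox)=0$ for $q=0$ only in codimension... more precisely $\inext^0(E,\ox)=E^\vee$ is reflexive hence its higher cohomology $H^1,H^2$ are controlled by Serre duality: $H^p(E^\vee(-q_X-r_X))\simeq H^{3-p}(E\otimes\omega_X(q_X+r_X))^\vee=H^{3-p}(E(q_X+r_X-i_X))^\vee=H^{3-p}(E(-q_X))^\vee$, which vanishes for $3-p\in\{0,1,2,3\}$ by the already-established $H^i(E(-q_X))=0$ for all $i$. For $q=1,2,3$ the sheaves $\inext^q(E,\ox)$ are supported in dimension $\le 3-1-q+\ldots$; being torsion free means $\inext^q(E,\ox)$ has support of codimension $\ge q+1$ for $q\ge1$, so $\inext^2$ is $0$-dimensional, $\inext^3=0$, and $\inext^1$ is at most $1$-dimensional. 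Then I would need $H^p(\inext^q(E,\ox)(-q_X-r_X))=0$ for $p+q=i$ fixed: the $0$-dimensional piece contributes only to $p=0$, the $1$-dimensional piece to $p=0,1$; to kill these I would invoke that $\inext^1(E,\ox)$ and $\inext^2(E,\ox)$ are related by duality to $\inext^q(T_E,\ox)$ as in \eqref{exts-serre} and apply the instanton vanishing to $T_E$ and to $E^{\vee\vee}$ — that is, induct on the reflexive hull.

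\textbf{The main obstacle.} The routine part is the slope/Serre-duality bookkeeping for item (1) and the $H^i$ half of item (2). The delicate part is the $\Ext^i(E,\ox(-q_X-r_X))=0$ claim for $i=1,2$ when $E$ is \emph{not reflexive}: here $\inext^1(E,\ox)\ne0$ and one cannot conclude purely from Serre duality on $E^\vee$. I expect the cleanest route is to use the exact sequence $0\to E\to E^{\vee\vee}\to T_E\to 0$, deduce the statement for $E^{\vee\vee}$ (reflexive case, where only $\inext^0$ and $\inext^2$ are nonzero, the latter $0$-dimensional and killed since $H^0$ of a $0$-dimensional twisted sheaf equals its length which forces us to instead argue $\inext^2(E^{\vee\vee},\ox)=0$ because $E^{\vee\vee}$ reflexive has $\codim\ge3$ singularities — wait, that gives $\inext^2(E^{\vee\vee},\ox)$ supported in dim $0$, nonzero in general), so more carefully: reflexive sheaves satisfy $\Ext^i(F,\ox)\simeq H^{3-i}(F\otimes\omega_X)^\vee$ only when they are locally free, so even the reflexive case needs the local Ext analysis. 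I would therefore handle it by noting $\inext^2(E^{\vee\vee},\ox)$ is $0$-dimensional, and $H^0$ of any nonzero $0$-dimensional sheaf is nonzero, so in fact one must show $\inext^2(E^{\vee\vee},\ox)$ twisted contributes to $\Ext^2$ and cancel it against $H^2(\inext^0)$ in the spectral sequence — but these sit in different total degrees, so the argument must instead show $\inext^2(E^{\vee\vee},\ox)=0$ outright, which holds iff $E^{\vee\vee}$ is locally free. Since this fails in general, the real content of item (2) for the $\Ext$ statement must come from combining the long exact sequences of $\Ext^\bullet(-,\ox(-q_X-r_X))$ applied to $0\to E\to E^{\vee\vee}\to T_E\to0$ and $0\to \ox^{\oplus(r-1)}\to E^{\vee\vee}\to \cali_C\otimes L\to 0$, reducing everything to the vanishing of $\Ext^\bullet(\cali_C\otimes L,\ox(-q_X-r_X))$ and $\Ext^\bullet(T_E,\ox(-q_X-r_X))$, the latter being exactly the rank-$0$ instanton condition $H^i(T_E(-q_X))=0$ in disguise via Serre duality. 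Pinning down these reductions and verifying the numerology $q_X+r_X+i_X=2i_X$ etc. in each index range is where I expect to spend the most care.
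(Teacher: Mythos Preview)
Your treatment of item (1) and of the cohomology part of item (2) is essentially the paper's argument: slope inequalities from $\mu$-semistability give $H^0(E(-n))=0$ for $n\ge 1-r_X$, Serre duality gives $H^3(E(n))\simeq\Hom(E,\ox(-i_X-n))^*$ and the dual slope bound kills this for $n\ge -i_X+1$, and then $1-r_X\le q_X\le i_X-1$ places $-q_X$ in both ranges so $H^i(E(-q_X))=0$ for all $i$.

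Where you go astray is the $\Ext$ vanishing. You write that ``one cannot conclude purely from Serre duality on $E^\vee$'' and then embark on a local-to-global spectral sequence analysis of $\inext^q(E,\ox)$, eventually proposing to reduce via $0\to E\to E^{\vee\vee}\to T_E\to 0$. This is both unnecessary and circular. It is unnecessary because you are conflating two forms of Serre duality: the naive isomorphism $H^i(F)\simeq H^{3-i}(F^\vee\otimes\omega_X)^*$, which indeed requires $F$ locally free, and the correct Grothendieck--Serre duality
\[
\Ext^i(E,\omega_X\otimes L)\simeq H^{3-i}(E\otimes L^{-1})^*,
\]
valid for \emph{any} coherent $E$ on a smooth projective variety. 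Taking $L=\ox(q_X)$ gives $\omega_X\otimes L=\ox(-q_X-r_X)$, so
\[
\Ext^i(E,\ox(-q_X-r_X))\simeq H^{3-i}(E(-q_X))^*=0
\]
directly from the cohomology vanishing you already established. This is exactly what the paper does, in one line.

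Your proposed reduction is also circular: at this point in the paper one does not yet know that $T_E$ is a rank~$0$ instanton or that $E^{\vee\vee}$ satisfies any instanton-type vanishing --- those facts are proved later (Proposition~\ref{double-dual}) and in fact \emph{use} the present lemma. So the ``main obstacle'' you identify is an artifact of choosing the wrong form of duality; once you use the correct one there is nothing left to prove.
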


In particular, we conclude that $\chi(E(-q_X))=0$.

\begin{proof}
Since $E$ is $\mu$-semistable, $\Hom(\calo_X(n),E)=0$ whenever $n>\frac{-r_X}{r}$ and since $r_X=0$ or $1$, this happens if and only if $n\ge 1-r_X$. An equivalent argument leads to $\Hom(E,\calo_X(n))=0$ whenever $n<\frac{-r_X}{r}$ that is to say, whenever $n\ge-i_X+1$.
Therefore $H^0(E(n))=0$ for $n\ge 1-r_X$ and, by duality, $\Ext^3(\calo_X(-n),E)\simeq H^3(E(n))=0 $ for $n\ge-i_X+1$. Since $1-r_X\le q_X\le i_X-1$ we get \mbox{$H^i(E(-q_X))=0$,} for $i=0,3$ and this together with (\ref{instantonic-condition}) leads to $H^i(E(-q_X))=0, \ \forall i$.
By Serre duality, we have isomorphisms 
$$H^i(E(-q_X))\simeq \mathrm{Ext}^{3-i}(E(-q_X), \omega_X))^*\simeq \mathrm{Ext}^{3-i}(E, \ox(-q_X-r_X))^*,$$ therefore $\Ext^i(E,\ox(-q_X-r_X))=0, \ \forall\: i$.
\end{proof}

From these computations we determine the value of the third Chern class of instanton sheaves.

\begin{corollary}\label{c3-inst}
Let $E$ be an instanton sheaf of rank $r$. 
\begin{enumerate}
    \item If $i_X>1, \ \chi(E(-q_X))=\frac{c_3(E)}{2}=0,$ hence $c_3(E)=0$.
    \item If $i_X=1,\ \chi(E(-q_X))=\chi(E)=(r-2)+\frac{c_3(E)}{2}=0$ hence $c_3(E)=2(2-r)$.
\end{enumerate}
\end{corollary}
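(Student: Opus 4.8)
The plan is to use the Riemann--Roch theorem on the threefold $X$ to compute $\chi(E(-q_X))$ in terms of the Chern classes $c_1(E)=-r_X$, $c_2(E)=n$ and $c_3(E)$, and then invoke the vanishing $\chi(E(-q_X))=0$ established in Lemma \ref{instantonic}. The Hirzebruch--Riemann--Roch formula reads $\chi(E(-q_X))=\int_X \ch(E(-q_X))\td(X)$, so first I would expand $\ch(E(-q_X))=\ch(E)\cdot\ch(\calo_X(-q_X))$ using $\ch(\calo_X(-q_X))=1-q_XH_X+\tfrac{q_X^2}{2}H_X^2-\tfrac{q_X^3}{6}H_X^3$, and $\ch(E)=r+c_1+\tfrac12(c_1^2-2c_2)+\tfrac16(c_1^3-3c_1c_2+3c_3)$. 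I would also need $\td(X)=1-\tfrac12 K_X+\tfrac{1}{12}(K_X^2+c_2(TX))+\dots$, together with the relation $K_X=-(2q_X+r_X)H_X=-i_XH_X$; the degree-two term $\tfrac{1}{12}(K_X^2+c_2(TX))$ evaluated against $H_X$ is the arithmetic-genus-type quantity which, by Kodaira vanishing applied to $\calo_X$, equals $\chi(\calo_X)=1$, i.e. $\tfrac{1}{12}(K_X^2+c_2(TX))\cdot(-\tfrac12 K_X)$ fits into $\chi(\calo_X(t))$. Concretely I would compute $\chi(\calo_X(t))$ as a cubic polynomial in $t$ whose constant term is $\chi(\calo_X)=1$ and whose leading term is $\tfrac{d_X}{6}t^3$, and extract the universal Todd data from there.

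The cleanest bookkeeping device is to write $\chi(E(t))$ as a sum $\sum_j \chi(\calo_X(t)\otimes(\text{rank-graded pieces of }\ch E))$. Since all intersection numbers on $X$ are multiples of powers of $H_X$ (Picard rank one), and I am normalizing Chern classes to integers via $d_X=H_X^3$, the only inputs are $\chi(\calo_X(t))=\tfrac{d_X}{6}t^3+a t^2+bt+1$ for suitable rationals $a,b$ depending on $i_X,d_X$. Then $\chi(E(t))=r\,\chi(\calo_X(t))+(\text{correction from }c_1)\cdot(\text{quadratic})+(\text{correction from }c_2)\cdot(\text{linear})+\tfrac{c_3}{2}$; the crucial point, which I would verify, is that the coefficient of $c_3(E)$ in $\chi$ is exactly $\tfrac12$, independent of the twist, because $c_3$ appears only in the degree-$3$ part of $\ch(E)$ which pairs with the degree-$0$ part $1$ of $\td(X)$ and of $\ch(\calo_X(t))$. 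I would then set $t=-q_X$ and collect the remaining terms.

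For case (1), $i_X>1$, so $q_X\ge1$ and $r_X=i_X-2q_X$; I would show all the terms involving $r$, $r_X$ and $n$ cancel, leaving $\chi(E(-q_X))=\tfrac{c_3(E)}{2}$, whence $c_3(E)=0$ by Lemma \ref{instantonic}. Here the arithmetic works out because $-q_X$ is, up to the sign conventions, a "half-anticanonical twist": $\calo_X(-q_X)\otimes\omega_X^{-1/2}$ would be the symmetric point, and $\mu$-semistable sheaves twisted to the symmetric point have $\chi$ equal to the $c_3/2$ term plus a correction by $r_X$. For case (2), $i_X=1$, we have $q_X=0$ and $r_X=1$, so the twist is trivial and $\chi(E(-q_X))=\chi(E)$; plugging $c_1=-1$, $c_2=n$ into the Riemann--Roch polynomial and using $i_X=1$, I expect the $n$-dependence to cancel and the constant terms to collect to $(r-2)+\tfrac{c_3(E)}{2}$, giving $c_3(E)=2(2-r)$. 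The main obstacle is purely computational: carefully handling the Todd-class contributions $K_X^2\cdot H_X$ and $c_2(TX)\cdot H_X$ without knowing them individually — the trick is never to isolate them but to package them inside $\chi(\calo_X(t))$, whose low-order coefficients are pinned down by $\chi(\calo_X)=1$ (Kodaira vanishing) and by $\chi(\calo_X(H_X))$, $\chi(\calo_X(-K_X))$ etc. via the known classification data. Once that packaging is done, both identities follow by substituting $t=-q_X$ and reading off coefficients.
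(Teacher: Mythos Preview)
Your approach is correct and is exactly the paper's: invoke the vanishing $\chi(E(-q_X))=0$ from Lemma~\ref{instantonic} and compute $\chi(E(-q_X))$ via Riemann--Roch to extract $c_3(E)$. The paper's proof is simply a terser version of yours --- it asserts the outcome of the Riemann--Roch computation (namely $\chi(E(-q_X))=\tfrac{c_3(E)}{2}$ for $i_X>1$ and $\chi(E)=(r-2)+\tfrac{c_3(E)}{2}$ for $i_X=1$) without displaying the intermediate bookkeeping you outline.
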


\begin{proof}
$\chi(E(-q_X))=0$ by Lemma \ref{instantonic}; by Grothendieck--Riemann--Roch theorem we compute $\chi(E(-q_X))=\frac{c_3(E)}{2}$ whenever $i_X>1$ so that $c_3(E)=0$ and \mbox{$\chi(E)=(r-2)+\frac{c_3(E)}{2}$} for $i_X=1$ so that $c_3(E)=2(2-r)$. 
\end{proof}

The main motivation behind Definition \ref{defn:instanton} is that non locally free instanton sheaves naturally arise as degenerations of locally free ones. When $X=\p3$, this phenomenon has been studied in \cite{JMT1,JMT2}. To see it in greater generality, let us consider some explicit examples of rank 2 instanton sheaves. 

Let $C:=L_1\sqcup\cdots\sqcup L_n$ be the disjoint union of lines in $X$, and set $G:=\cali_C(q_X-1)$; note that for $p=1,2$, we have
$$ H^p(G(-q_X)) = H^p(\cali_C(-1)) \simeq \bigoplus_{k=1}^n H^{p-1}(\calo_{L_k}(-1)) = 0. $$
We can then consider extensions of the form
\begin{equation} \label{thooft}
0 \longrightarrow \ox(-q_X-r_X+1) \longrightarrow E \longrightarrow \cali_C(q_X-1) \longrightarrow 0;
\end{equation}
clearly, $c_1(E)=-r_X$ and one easily checks that $H^p(E(-q_X))=0$ for $p=1,2$. Applying Lemma \ref{lem:stable rk 2}, we verify that $E$ is $\mu$-semistable when $i_X\ge2$; however, such sheaves are always properly $\mu$-semistable when $i_X=2$ and are not $\mu$-semistable when $i_X=1$. Therefore, E is a rank 2 instanton sheaf provided $i_X\ge2$. Inspired by the traditional nomenclature for $X=\p3$, instanton sheaves given by an extension as in display \eqref{thooft} are called \emph{'t Hooft instantons}; the charge of a 't Hooft instanton sheaf corresponding to $n$ lines is $n-1$

\begin{example}\label{ex:family}
Here is an example of a family of rank 2 locally free instanton sheaves degenerating into a non locally free one. Assume that $i_X\ge2$, and let $C$ be a disjoint union of $n\ge2$ lines in $X$, as above.

Since
$$ \Ext^1(\cali_C(q_X-1),\ox(-q_X-r_X+1)) = \Ext^1(\cali_C,\omega_X(2)) \simeq H^0(\omega_C(2)) =  \bigoplus_{k=1}^n H^{0}(\calo_{L_k}), $$
we can consider extension classes $\xi_t=(1,\dots,1,t)$ with $t\in\C$, inducing a family of instanton sheaves $E_t$, parametrized by $\C$.

Note that $E_t$ is locally free when $t\ne0$, since $\xi_t$ is non vanishing in this case. On the other hand $\xi_0$ vanishes along $L_n$, so the corresponding 't Hooft instanton sheaf $E_0$ is not locally free. Note that $E_0$ satisfies the following short exact sequence
$$ 0 \longrightarrow E_0 \longrightarrow F \longrightarrow \calo_L(q_X-1) \longrightarrow 0, $$
where $F$ is a locally free 't Hooft instanton sheaf of charge $n-1$.
\end{example}

When $i_X$ is even, instanton sheaves of rank larger than 2 can easily be produced using rank 2 instantons and ideal sheaves, via the following claim.

\begin{lemma}
Assume that $i_X=2,4$, so that $r_X=0$. 
If $E_1$ and $E_2$ are instanton sheaves, then any extension of $E_1$ by $E_2$ is also an instanton sheaf.
\end{lemma}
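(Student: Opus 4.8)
The plan is to verify the three defining conditions of an instanton sheaf — $\mu$-semistability, the first Chern class, and the cohomological vanishing \eqref{instantonic-condition} — for an arbitrary extension $0\to E_2\to E\to E_1\to 0$. Let $r_i=\rk(E_i)$ and $r=r_1+r_2$. The Chern class computation is immediate from additivity: $c_1(E)=c_1(E_1)+c_1(E_2)=0+0=0=-r_X$ since $r_X=0$ in the cases $i_X=2,4$. Likewise the cohomology vanishing is immediate: twisting the short exact sequence by $\ox(-q_X)$ and taking the long exact sequence in cohomology, each group $H^p(E(-q_X))$ is sandwiched between $H^p(E_2(-q_X))$ and $H^p(E_1(-q_X))$, both of which vanish for $p=1,2$ by hypothesis (indeed for all $p$, by Lemma \ref{instantonic}(2)); hence $H^1(E(-q_X))=H^2(E(-q_X))=0$.

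The only point requiring real argument is $\mu$-semistability of $E$. First one should check that $E$ is torsion free: the torsion subsheaf of $E$ would inject into $E_1$ (its image in $E_1$ is nonzero unless it lies in $E_2$, which is torsion free), and again $E_1$ is torsion free, so $E$ has no torsion. For $\mu$-semistability, since $E_1$ and $E_2$ are $\mu$-semistable of slope $-r_X/r_i=0$, one uses the standard fact that an extension of $\mu$-semistable sheaves of the same slope is again $\mu$-semistable: given a subsheaf $F\subset E$ with $E/F$ torsion free, consider $F_2:=F\cap E_2$ and the induced quotient $F_1:=F/F_2\hookrightarrow E_1$; then $\mu(F_2)\le\mu(E_2)=0$ and $\mu(F_1)\le\mu(E_1)=0$ (after checking the relevant quotients are torsion free, or passing to saturations, which does not decrease $c_1$), and additivity of $c_1$ and rank along $0\to F_2\to F\to F_1\to 0$ gives $c_1(F)\cdot H_X^{2}=c_1(F_2)\cdot H_X^2+c_1(F_1)\cdot H_X^2\le 0$, i.e. $\mu(F)\le 0=\mu(E)$.

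The step I expect to be the most delicate — though still routine — is handling the torsion-freeness and saturation subtleties in the slope inequality for $F_1\hookrightarrow E_1$: $F_1$ need not be saturated in $E_1$, and $E/F$ torsion free does not immediately give $E_1/F_1$ torsion free. The clean fix is to replace $F$ by its saturation in $E$ (which has the same rank and $c_1(F)\cdot H_X^2$ no larger than that of the saturation), and then observe that $F_2=F\cap E_2$ is saturated in $E_2$ and $F_1$ is a subsheaf of $E_1$ whose $\mu$-semistability bound $\mu(F_1)\le 0$ holds regardless of saturation, since passing to the saturation of $F_1$ in $E_1$ only increases $c_1(F_1)\cdot H_X^2$ and the bound $\mu(E_1)=0$ still applies. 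Assembling these inequalities yields $\mu(F)\le 0$, completing the proof that $E$ is an instanton sheaf; its charge is of course $c_2(E)=c_2(E_1)+c_2(E_2)$.
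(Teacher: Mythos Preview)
Your proof is correct and follows the same line as the paper's: verify the cohomological vanishing via the long exact sequence, and invoke the standard fact that an extension of $\mu$-semistable torsion free sheaves of equal slope is again $\mu$-semistable. The paper's proof is a two-sentence sketch that takes both points for granted, whereas you spell out the torsion-freeness check and the subsheaf/saturation bookkeeping behind the slope inequality; the added detail is sound but not required, since the closure of $\mu$-semistable sheaves of fixed slope under extensions is a textbook result (e.g.\ \cite[Lemma~1.3.7 and Section~1.5]{HL}).
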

\begin{proof}
If $E$ is an extension of $E_1$ by $E_2$, then it is easy to check that $E$ satisfies the cohomological conditions in Definition \ref{defn:instanton}. Since $E_1$ and $E_2$ are $\mu$-semistable sheaves with vanishing slope, then so is $E$. 
\end{proof}

Next, we consider the generalization of a definition first introduced in \cite[Definition 6.1]{HLa} for $X=\p3$, and further studied in \cite{GJ,JMaiT}.

\begin{definition}\label{defn:0-instanton}
A \emph{rank 0 instanton sheaf} on a Fano threefold $X$ is a pure 1-dimensional sheaf $T$ satisfying $h^0(T(-q_X))=h^1(T(-q_X))=0$.
\end{definition}

If $T$ is a rank 0 instanton sheaf on $X$, then $\chi(T(t))=d\cdot(t+q_X)$, and the coefficient $d$ is called the \emph{degree} of $T$. Moreover, Lemma \ref{lem:T-sst} implies that $T$ is always semistable.

\begin{proposition} \label{dual rk 0}
If $T$ is a rank 0 instanton sheaf, then so is $T^{\rm D}\otimes\omega_X^{-1}(-r_X)$, where $T^{\rm D}:=\inext^2(T,\omega_X)$.
\end{proposition}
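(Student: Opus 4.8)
The plan is to work with the dualizing sheaf $T^{\rm D} = \inext^2(T,\omega_X)$ and establish two facts: that $T^{\rm D}\otimes\omega_X^{-1}(-r_X)$ is again pure 1-dimensional, and that it satisfies the required cohomological vanishing $h^0=h^1=0$ after the twist by $\op{}(-q_X)$. Since $T$ is pure 1-dimensional and $X$ is smooth of dimension $3$, the sheaf $T$ is Cohen--Macaulay of codimension $2$, so $\inext^p(T,\omega_X)=0$ for $p\ne 2$, the sheaf $T^{\rm D}$ is again pure 1-dimensional, and biduality $(T^{\rm D})^{\rm D}\simeq T$ holds (this is the standard Grothendieck/Serre duality for CM sheaves). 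Tensoring by the line bundle $\omega_X^{-1}(-r_X)$ preserves purity, so the first fact is essentially formal.

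For the cohomological vanishing, I would use Serre duality on $X$. Writing $S := T^{\rm D}\otimes\omega_X^{-1}(-r_X)$, I want $h^i(S(-q_X))=0$ for $i=0,1$. Note $S(-q_X) = \inext^2(T,\omega_X)\otimes\omega_X^{-1}(-r_X-q_X)$. The key observation is that $\inext^2(T,\omega_X)$ is the dualizing sheaf, so by Serre duality for the CM sheaf $T$ one has $H^i(X, \inext^2(T,\omega_X)\otimes\call) \simeq H^{1-i}(X, T\otimes\call^{-1})^*$ for any line bundle $\call$, coming from the fact that $\mathbf{R}\inhom(T,\omega_X[3]) = T^{\rm D}[1]$ in the derived category combined with global Serre--Grothendieck duality. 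Applying this with $\call = \omega_X^{-1}(-r_X-q_X) = \op{}(i_X - r_X - q_X) = \op{}(q_X)$ (using $i_X = 2q_X + r_X$), I get $\call^{-1} = \op{}(-q_X)$, hence
\begin{align*}
H^0(S(-q_X)) &\simeq H^1(T(-q_X))^* = 0, \\
H^1(S(-q_X)) &\simeq H^0(T(-q_X))^* = 0,
\end{align*}
where the vanishings are exactly the rank 0 instanton conditions on $T$. Finally, I should record that the degree of $S$ equals the degree of $T$ (the leading coefficient of the Hilbert polynomial is unchanged under dualizing and twisting by a line bundle), so the characteristic polynomial $\chi(S(t)) = d\cdot(t+q_X)$ has the expected shape, consistent with the remark following Definition \ref{defn:0-instanton}.

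The main obstacle — really the only subtle point — is stating and applying the duality isomorphism $H^i(X, T^{\rm D}\otimes\call) \simeq H^{1-i}(X, T\otimes\call^{-1})^*$ cleanly, i.e. making precise that for a pure (hence CM) 1-dimensional sheaf $T$ on a smooth projective threefold the complex $\mathbf{R}\inhom(T,\omega_X)$ reduces to $\inext^2(T,\omega_X)[-2]$ and that this interacts correctly with global Serre duality $H^i(X,-)^* \simeq \operatorname{Ext}^{3-i}(-,\omega_X)$. This is entirely standard (see e.g. the treatment of dualizing sheaves of CM schemes, or \cite[Chapter III]{HL}-type arguments), and once it is in place the proof is a two-line computation using $i_X = 2q_X + r_X$. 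One could alternatively avoid invoking derived-category duality by dualizing a locally free resolution of $T$ directly, but the derived formulation is the shortest. I would also remark that applying the proposition twice recovers $T$ up to the expected twist, giving an involution on rank 0 instanton sheaves, which matches the duality $T \mapsto T^{\rm D}$ in the $\p3$ case studied in \cite{GJ}.
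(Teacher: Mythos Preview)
Your argument is correct and essentially the same as the paper's: both establish purity of $T^{\rm D}$ by the Cohen--Macaulay property, then identify $H^p(T^{\rm D}\otimes\omega_X^{-1}(-r_X-q_X))$ with $H^{1-p}(T(-q_X))^*$ via Serre duality and the vanishing of $\inext^q(T,-)$ for $q\ne2$. The only cosmetic difference is that the paper makes the intermediate step $H^p(\inext^2(T,-))\simeq\Ext^{p+2}(T,-)$ explicit via the local-to-global Ext spectral sequence \eqref{ss-loc-glob-ext}, whereas you package the same degeneration as $\mathbf{R}\inhom(T,\omega_X)\simeq T^{\rm D}[-2]$.
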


\begin{proof}
Note first that $T^{\rm D}$ is pure 1-dimensional sheaf. Using the spectral sequence \eqref{ss-loc-glob-ext} for local to global Ext's, we check that, for $p=0,1$
$$ H^p(T^{\rm D}\otimes\omega_X^{-1}(-r_X-q_X)) = H^p(\inext^2(T,\ox(-q_X-r_X))) \simeq \Ext^{p+2}(T,\ox(-q_X-r_X)). $$
Serre duality yields the isomorphism
$$ \Ext^{p+2}(T,\ox(-q_X-r_X)) \simeq H^{1-p}(T(-q_X))^* $$
and the latter vanishes by the instantonic condition on $T$.
\end{proof}

\begin{lemma}\label{lem:coho-t}
If $T$ is a rank 0 instanton sheaf, then $h^0(T(-q_X-n))=0$ and $h^1(T(-q_X+n))=0$ for every $n\ge0$.
\end{lemma}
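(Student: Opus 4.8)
The plan is to bootstrap from the defining vanishing $h^0(T(-q_X)) = h^1(T(-q_X)) = 0$ using the fact that $T$ is a pure $1$-dimensional sheaf with a very specific Hilbert polynomial, namely $\chi(T(t)) = d\cdot(t+q_X)$, where $d = \deg(T) > 0$. First I would handle the $h^0$ statement: for $n \ge 0$ there is an obvious inclusion $H^0(T(-q_X-n)) \hookrightarrow H^0(T(-q_X))$ obtained by tensoring with a generic section of $\mathcal O_X(n)$ whose zero divisor meets $\supp(T)$ properly (such a section exists since $\supp(T)$ has dimension $1$ and $H_X$ is ample). Equivalently, one observes that a nonzero section of $T(-q_X-n)$ would, after multiplication by the $n$-th power of a general linear form, give a nonzero section of $T(-q_X)$, since multiplication by that linear form is injective on the pure sheaf $T$ twisted appropriately. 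This immediately gives $h^0(T(-q_X-n)) = 0$ for all $n \ge 0$.

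For the $h^1$ statement I would argue in the dual picture, invoking Proposition \ref{dual rk 0}. Set $T' := T^{\rm D}\otimes\omega_X^{-1}(-r_X)$, which by that proposition is again a rank $0$ instanton sheaf, hence also pure $1$-dimensional. By Serre duality on the threefold $X$, for a pure $1$-dimensional sheaf one has $H^1(T(-q_X+n))^* \simeq \Ext^2(T, \omega_X(q_X - n)) \simeq H^0(\inext^2(T,\omega_X)(q_X-n)) = H^0(T^{\rm D}(q_X-n))$, where the middle isomorphism comes from the local-to-global $\Ext$ spectral sequence \eqref{ss-loc-glob-ext} together with the vanishing of the lower $\inext$ sheaves for a pure $1$-dimensional sheaf. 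Rewriting in terms of $T'$, this reads $H^1(T(-q_X+n))^* \simeq H^0(T'(-q_X - n))$ up to the fixed twist by $\omega_X^{-1}(-r_X)$; one just has to chase the indices carefully so that the twist lands on the "$-q_X - n$" side. Then the $h^0$ vanishing already established for $T'$ (applied with the shift $n \ge 0$) gives $H^1(T(-q_X+n)) = 0$.

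The main obstacle I anticipate is purely bookkeeping: making sure the various twists by $\omega_X = \mathcal O_X(-i_X) = \mathcal O_X(-(2q_X+r_X))$ line up so that the reindexing in the Serre-duality step actually produces the statement "$h^0$ of a rank $0$ instanton twisted by $-q_X - n$" rather than some off-by-one version of it. A clean way to avoid sign errors is to first record the symmetric statement "$h^0(T(-q_X-n)) = 0$ and $h^1(T(-q_X+n)) = 0$" as exactly the $n=0$ case combined with monotonicity, and then note that Proposition \ref{dual rk 0} exchanges the two halves of the statement under the duality $T \mapsto T^{\rm D}\otimes\omega_X^{-1}(-r_X)$ — so that the $h^1$ half for $T$ is literally the $h^0$ half for the dual instanton, and vice versa. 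This reduces everything to the single elementary observation about multiplying sections by powers of a general linear form, which is the only real content.
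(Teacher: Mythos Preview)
Your argument is correct. For the $h^0$ half you use the same idea as the paper: multiplication by a general section of $\ox(1)$ (whose zero divisor meets $\supp(T)$ only in dimension $0$) gives an injection $T(k-1)\hookrightarrow T(k)$ because $T$ is pure $1$-dimensional, so $h^0(T(-q_X-n))\le h^0(T(-q_X))=0$.

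For the $h^1$ half you take a genuinely different route. The paper stays with the same restriction sequence
\[
0\longrightarrow T(k-1)\longrightarrow T(k)\longrightarrow T\otimes\calo_S(k)\longrightarrow 0
\]
and simply observes that $T\otimes\calo_S$ is $0$-dimensional, so $H^1(T\otimes\calo_S(k))=0$ and the surjection $H^1(T(k-1))\twoheadrightarrow H^1(T(k))$ propagates the vanishing upward by induction. Your approach instead invokes Proposition~\ref{dual rk 0}: Serre duality plus the local-to-global spectral sequence give $H^1(T(-q_X+n))^*\simeq H^0(T^{\rm D}(q_X-n))$, and since $T^{\rm D}\otimes\omega_X^{-1}(-r_X)=T^{\rm D}(2q_X)$ is again a rank $0$ instanton, this is $H^0(T'(-q_X-n))$, which vanishes by the $h^0$ half applied to $T'$. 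The index-chasing you were worried about does work out exactly. Your method has the virtue of exhibiting the two halves of the lemma as literally dual to one another under $T\mapsto T^{\rm D}\otimes\omega_X^{-1}(-r_X)$; the paper's method is more self-contained, needing neither Proposition~\ref{dual rk 0} nor the vanishing $\inext^i(T,\ox)=0$ for $i<2$ that underlies the spectral-sequence step.
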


\begin{proof}
Given a rank 0 instanton sheaf $T$, let $S\subset X$ be a hyperplane section transversal to the support of $T$ (ie. $\dim(\supp(T)\cap S)=0$), so that $\intor^1(T,\calo_S)=0$. This implies that we can twist the exact sequence $0\to\ox(-1)\to\ox\to\calo_S\to0$ by $T(k)$ to obtain the short exact sequence
$$ 0 \longrightarrow T(k-1) \longrightarrow T(k) \longrightarrow T\otimes\calo_S(k) \longrightarrow 0. $$
Taking cohomology, we conclude that $h^0(T(k-1))=0$ whenever $h^0(T(k))=0$, while $h^1(T(k))=0$ whenever $h^1(T(k-1))=0$, since $\dim(T\otimes\calo_S)=0$. The desired claims follow by induction. 
\end{proof}

We are now interested in detecting when a l.c.m. curve $C$ is such that the structure sheaf $\calo_C$ is a rank 0 instanton. We refer to a curve $C$ of such a kind as an \textit{instanton curve}.

\begin{lemma}\label{lem:instanton curves}
Let $X$ be a Fano threefold of Picard rank one.
\begin{enumerate}
\item[(1)] There are no instanton curves when $i_X=1,4$.
\item[(2)] When $i_X=2,3$, every instanton curve $C$ of degree $d$  fits in a short exact sequence of the form
\begin{equation}\label{reduction}
0\longrightarrow \calo_l \longrightarrow \calo_C \longrightarrow \calo_{C'} \longrightarrow 0
\end{equation}
where $l\subset X$ is a line and $C'$ is an instanton curve of degree $d-1$.
\end{enumerate}
\end{lemma}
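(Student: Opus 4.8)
The strategy is to exploit the instanton cohomology vanishing $h^0(\calo_C(-q_X)) = h^1(\calo_C(-q_X)) = 0$ together with the relation $\chi(\calo_C(t)) = d(t+q_X)$ (which holds for any rank $0$ instanton sheaf, hence for $\calo_C$ when $C$ is an instanton curve), and to play this against the degree of $C$. First I would dispose of the cases $i_X = 1, 4$. When $i_X = 4$, i.e. $X = \p3$, we have $q_X = 2$, so $h^0(\calo_C) = \chi(\calo_C) + h^1(\calo_C) \ge 2d > d$; but a degree $d$ curve in $\p3$ has at most $d$ connected components, hence $h^0(\calo_C) \le d$ if $C$ is reduced, and in general $h^0(\calo_C) \le h^0(\calo_{C_{\mathrm{red}}}) \le d$, a contradiction. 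When $i_X = 1$ we have $q_X = 0$, so the instanton condition forces $h^0(\calo_C) = 0$, which is impossible for any nonempty curve. This settles (1).

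For (2), with $i_X \in \{2,3\}$ we have $q_X = 1$, so the instanton conditions read $h^0(\calo_C(-1)) = h^1(\calo_C(-1)) = 0$, whence $h^0(\calo_C) = \chi(\calo_C(0)) + h^1(\calo_C) = d + h^1(\calo_C) \ge d$. The plan is to produce a line $l \subset X$ contained in $C$ with $\calo_C \to \calo_l$ surjective, and then define $C'$ by the exact sequence $0 \to \calo_l \to \calo_C \to \calo_{C'} \to 0$, checking that $C'$ is again an instanton curve of degree $d-1$. To find $l$: the inequality $h^0(\calo_C) \ge d$ forces $C$ to have "at least $d$ worth of connected components," which when combined with $\deg C = d$ should force every component to be a line and the whole curve to be a disjoint union of $d$ lines — except this cannot be quite right in general because $C$ need only be l.c.m., not reduced, so I should instead argue on $C_{\mathrm{red}}$: since $h^0(\calo_C) \le h^0(\calo_{C_{\mathrm{red}}})$ and $C_{\mathrm{red}}$ is a reduced curve of degree $\le d$ with at least $d$ connected components, each component of $C_{\mathrm{red}}$ is a line. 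Hence $C$ contains a line $l$ (set-theoretically a connected component of $C_{\mathrm{red}}$), and since lines on $X$ are l.c.i. and $C$ is l.c.m., one checks $\calo_C \twoheadrightarrow \calo_l$. Actually I expect the cleaner route is: take $l$ to be (the reduced structure on) a connected component of $C$; then $\calo_C \cong \calo_{l'} \oplus \calo_{C'}$ where $l'$ is the corresponding connected component of $C$ with its scheme structure and $C'$ the union of the others, and one reduces to showing a connected l.c.m. instanton component of degree $\ge 1$ splits off a line.

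The technical heart is verifying that $C'$ (defined from the chosen $l$) is itself an instanton curve of degree $d-1$. Twisting the sequence $0 \to \calo_l \to \calo_C \to \calo_{C'} \to 0$ by $\calo_X(-1)$ and taking cohomology, together with $h^i(\calo_l(-1)) = 0$ for $i = 0,1$, gives $h^0(\calo_{C'}(-1)) = h^1(\calo_{C'}(-1)) = 0$ immediately, so $\calo_{C'}$ has the right vanishing; it is pure $1$-dimensional as a quotient of $\calo_C$ by the structure sheaf of a connected component (so $C'$ is a union of the remaining components, hence l.c.m.), and degree is additive in the sequence, giving $\deg C' = d - 1$. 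I expect the main obstacle to be the first half: rigorously extracting the line $l$ with a surjection $\calo_C \twoheadrightarrow \calo_l$ in the possibly non-reduced case — one must be careful that "$C$ has a connected component which is a line" genuinely yields a scheme-theoretic direct sum decomposition $\calo_C = \calo_{C_1} \oplus \calo_{C_2}$ with $C_1$ connected, and that a connected l.c.m. instanton curve of degree $d$ with $h^0(\calo_C) \ge d$ forces $d = 1$ (so the component itself is a line). This last point again uses $h^0(\calo_C) \le h^0(\calo_{C_{\mathrm{red}}}) \le (\text{number of components of } C_{\mathrm{red}}) \le \deg C_{\mathrm{red}} \le d$, with equality throughout forcing $C_{\mathrm{red}}$ connected of degree $1$.
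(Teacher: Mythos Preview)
There is a genuine gap. The inequality you repeatedly invoke, $h^0(\calo_C)\le h^0(\calo_{C_{\rm red}})$, goes the wrong way: the surjection $\calo_C\twoheadrightarrow\calo_{C_{\rm red}}$ induces a map on $H^0$ whose kernel is $H^0$ of the nilradical, and this can be nonzero. For a concrete counterexample, take the double structure $C$ on a line $\ell$ given by $0\to\calo_\ell\to\calo_C\to\calo_\ell\to 0$ (arithmetic genus $-1$): then $h^0(\calo_C)=2>1=h^0(\calo_\ell)=h^0(\calo_{C_{\rm red}})$. This same curve is a connected l.c.m.\ instanton curve of degree $2$, so your final claim---that a connected l.c.m.\ instanton curve must have $d=1$---is also false, and with it your mechanism for splitting off a line.

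The paper repairs both issues as follows. First, from the surjection $\calo_C(-q_X)\twoheadrightarrow\calo_{C_{\rm red}}(-q_X)$ one gets $h^1(\calo_{C_{\rm red}}(-q_X))=0$; since $C_{\rm red}$ is reduced and $q_X>0$, also $h^0(\calo_{C_{\rm red}}(-q_X))=0$. Hence $C_{\rm red}$ is itself an instanton curve, and your reduced-case counting argument applies to it: this rules out $i_X=4$ and, for $i_X=2,3$, forces $C_{\rm red}$ to be a disjoint union of lines. Writing $\calo_C$ as the direct sum over connected components reduces to the case of a multiple structure on a single line $\ell$. Here the key input you are missing is the B\u{a}nic\u{a}--Forster filtration: any l.c.m.\ multiple structure on $\ell$ is filtered by subcurves with successive quotients $L_j=\bigoplus_i\calo_\ell(a_j^i)$, and one uses \emph{both} vanishings $h^0(\calo_C(-1))=h^1(\calo_C(-1))=0$ together with the generically surjective multiplication maps $L_1^{\otimes j}\to L_j$ to force the top graded piece $L_{m-1}$ to be a sum of copies of $\calo_\ell$. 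This is what produces the required \emph{subsheaf} $\calo_\ell\hookrightarrow\calo_C$ with l.c.m.\ quotient $\calo_{C'}$; note that the statement demands $\calo_\ell$ as a subobject, not the natural quotient $\calo_C\twoheadrightarrow\calo_\ell$ coming from $\ell=C_{\rm red}$, and these do not coincide in the non-reduced case.
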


\begin{proof}
The fact that there are no instanton curves on a Fano threefold $X$ of index $i_X=1$ is simply due to the fact that we can not have a projective algebraic curve $C\subset X$ such that $H^0(\calo_C)=0$ (the restriction map $H^0(\calo_X)\to H^0(\calo_C)$ is necessarily an injection).

Let us now treat the cases $q_X>0$. Consider an instanton curve $C\subset X$ of degree $d$; if $C$ is reduced, then 
$$ h^0(\calo_C) = \chi(\calo_C) + h^1(\calo_C) \ge d\cdot q_X ,$$
thus $C$ has at least $d\cdot q_X$ connected components.

If $q_X=2$ (i.e. $X=\p3$), then this is impossible since $C$ can have at most $d$ connected components.

If $q_X=1$, then $C$ is a reduced curve of degree $d$ with with at least $d$ connected components; the only possibility is for $C$ to be a disjoint union of lines which ensures that $\calo_C$ fits in a short exact sequence of the form (\ref{reduction}).

If $C$ is not reduced, let $C_{\rm red}$ be its reduction. This latter is a l.c.m. curve satisfying $h^1(\calo_{C_{\rm red}}(-q_X))=0$ and $h^0(\calo_{C_{\rm red}}(-q_X))=0$ (since $q_X>0$ and $C_{\rm red}$ is reduced and l.c.m.): in other words $C_{\rm red}$ is an instanton curve.

In particular, there are no instanton curves $C$ for $i_X=1,4$
and for $i_X=2,3,$ $C_{\rm red}$ is a disjoint union of $d'<d$ lines. It follows that 
$$ \calo_C\simeq \bigoplus_{j=1}^{d'} \calo_{C_j} $$
where $\ell_j:=(C_j)_{\rm red}$ are disjoint lines, and $H^i(\calo_{C_j}(-1)), \ i=0,1$. To prove our claim it is therefore enough to show that each $\calo_{C_j}$ fits in a short exact sequence of the form (\ref{reduction}); this is obtained applying Lemma \ref{instantonic-multiple-lines} below.
\end{proof}

\begin{lemma}\label{instantonic-multiple-lines}
Let $C$ be a multiple structure of degree $d$ on a line $\ell\subset X$. Suppose that $H^i(\calo_C(-1))=0$ for $i=0,1$. Then $C$ fits in a short exact sequence of the form 
$$ 0\rightarrow \calo_{\ell}\rightarrow \calo_C\rightarrow \calo_{C'}\rightarrow 0$$ 
where $C'$ is a multiple structure of degree $d-1$ on ${\ell}$ such that $H^i(\calo_{C'}(-1))=0$ for \mbox{$i=0,1$}.
\end{lemma}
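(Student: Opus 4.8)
The plan is to reduce the statement to the existence of a single $\calo_X$-linear embedding $\calo_\ell\hookrightarrow\calo_C$, and then to produce such an embedding from the structure theory of multiplicity structures together with the cohomological hypothesis.

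First, $H^0(\calo_C(-1))=0$ already forces $\calo_C$ to be pure of dimension one, since a $0$-dimensional subsheaf $U\subset\calo_C$ would give $0\ne H^0(U)=H^0(U(-1))\subset H^0(\calo_C(-1))$; hence $C$ is l.c.m.\ and, $\chi(\calo_C(-1))$ being zero, $\chi(\calo_C(t))=d(t+1)$. I claim it now suffices to exhibit a non-zero morphism $\iota\colon\calo_\ell\to\calo_C$ of $\calo_X$-modules. Indeed, $\iota(\calo_\ell)$ is an $\calo_C$-submodule annihilated by $\mathcal{I}_\ell$, hence contained in the ``socle'' sheaf $\mathcal{S}:=\inhom_{\calo_C}(\calo_\ell,\calo_C)=(0:_{\calo_C}\mathcal{I}_\ell)$; by purity it is enough to check at the generic point of the irreducible curve $\ell$, where the socle of the Artinian local ring $\calo_{C,\eta}$ lies in its maximal ideal, so $\mathcal{S}\subseteq\mathcal{I}_\ell/\mathcal{I}_C$. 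Consequently $\mathcal{I}_{C'}:=\ker(\calo_X\to\calo_C/\iota(\calo_\ell))$ obeys $\mathcal{I}_C\subseteq\mathcal{I}_{C'}\subseteq\mathcal{I}_\ell$, and the corresponding subscheme $C'$ sits in $0\to\calo_\ell\to\calo_C\to\calo_{C'}\to0$ with $\ell\subseteq C'\subseteq C$. Twisting by $-1$ and using $H^{\bullet}(\calo_\ell(-1))=0$ (and $H^2(\calo_\ell(-1))=0$ for dimension reasons) together with $H^i(\calo_C(-1))=0$ for $i=0,1$, the long exact sequence yields $H^i(\calo_{C'}(-1))=0$ for $i=0,1$; in particular $\calo_{C'}$ is pure, so $C'$ is l.c.m., and comparing Hilbert polynomials $C'$ has degree $d-1$ on $\ell$. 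That is exactly the assertion.

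To produce $\iota$ — equivalently, to show $H^0(\mathcal{S})\ne0$ — I would invoke the structure theory of multiplicity structures on a smooth curve (B\u{a}nic\u{a}--Forster; see also Nollet): a locally Cohen--Macaulay structure $C$ of multiplicity $d$ on $\ell\cong\mathbb{P}^1$ admits a filtration by l.c.m.\ subschemes $\ell=C_1\subsetneq C_2\subsetneq\cdots\subsetneq C_d=C$ whose graded pieces $L_{j+1}:=\mathcal{I}_{C_j}/\mathcal{I}_{C_{j+1}}$ are line bundles on $\ell$, sitting in $0\to L_{j+1}\to\calo_{C_{j+1}}\to\calo_{C_j}\to0$, with $L_1:=\calo_\ell$, their degrees being controlled by $N_{\ell/X}$; equivalently, through the $\mathcal{I}_\ell$-adic filtration, $\calo_C$ is filtered with graded pieces certain quotients of the bundles $\Sym^k N_{\ell/X}^{\vee}$. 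In particular the bottom piece $L_d=\mathcal{I}_{C_{d-1}}/\mathcal{I}_C$ is a subsheaf of $\calo_C$, so $H^0(L_d(-1))\hookrightarrow H^0(\calo_C(-1))=0$ forces $\deg L_d\le0$; on the other hand $\chi(\calo_C(t))=\sum_{j=1}^d\chi(L_j(t))$ and $\chi(\calo_C(-1))=0$ give $\sum_j\deg L_j=0$. The remaining point is to show that every $\deg L_j\le0$: combined with the last identity this forces them all to vanish, in particular $L_d\cong\calo_\ell$, hence $\iota\colon\calo_\ell\simto L_d\hookrightarrow\calo_C$ and $C'=C_{d-1}$. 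This non-positivity is where the geometry of $N_{\ell/X}$ enters — it holds cleanly when $N_{\ell/X}$ is globally generated with small degree, e.g.\ for a general line on a del Pezzo threefold where $N_{\ell/X}\cong\calo_\ell^{\oplus2}$, and for lines on the quadric, where $N_{\ell/X}\cong\calo_\ell\oplus\calo_\ell(1)$, one uses the finer B\u{a}nic\u{a}--Forster relations to see that the sequence $\deg L_1,\dots,\deg L_d$ is non-increasing.

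The step I expect to be the main obstacle is exactly this last one: pinning down that all the $\deg L_j$ vanish — i.e.\ that the socle $\mathcal{S}$ has a trivial direct summand — for lines whose normal bundle is not ``balanced''. On such jumping lines one either extracts the needed inequalities from the precise B\u{a}nic\u{a}--Forster relations among the $L_j$ and the symmetric powers of $N_{\ell/X}^{\vee}$, or one argues directly that no multiplicity structure of degree $\ge2$ on such a line can satisfy $\chi(\calo_C(-1))=0$ together with $H^0(\calo_C(-1))=0$ at all, so that the lemma is vacuous there; establishing this dichotomy, by analysing how $\mathcal{I}_C$ sits inside $\Sym^{\bullet}N_{\ell/X}^{\vee}$, is the technical heart of the proof.
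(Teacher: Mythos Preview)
Your reduction to producing an embedding $\calo_\ell\hookrightarrow\calo_C$ is fine, and invoking the B\u{a}nic\u{a}--Forster filtration is the right move. The gap is exactly where you locate it, but the way out is not a case analysis on $N_{\ell/X}$; it is that you are only extracting information from $H^0(\calo_C(-1))=0$ (and $\chi=0$), while the hypothesis $H^1(\calo_C(-1))=0$ is doing the real work that you are missing.

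Here is the point. With the B\u{a}nic\u{a}--Forster filtration $\ell=C_1\subset\cdots\subset C_m=C$ (allowing the graded pieces $L_j=\mathcal{I}_{C_j}/\mathcal{I}_{C_{j+1}}$ to have rank $1$ or $2$), the surjection $\calo_C\twoheadrightarrow\calo_{C_2}$ gives $H^1(\calo_{C_2}(-1))=0$. From $0\to L_1\to\calo_{C_2}\to\calo_\ell\to 0$ and $H^0(\calo_\ell(-1))=0$ you then get $H^1(L_1(-1))=0$, i.e.\ every summand of $L_1$ has degree $\ge 0$. Now use the \emph{multiplicative} part of B\u{a}nic\u{a}--Forster: there is a generically surjective map $L_1^{\otimes(m-1)}\to L_{m-1}$. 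Since all summands of the source have non-negative degree, so do all summands of $L_{m-1}$. You already have the opposite inequality from $H^0(L_{m-1}(-1))\subset H^0(\calo_C(-1))=0$. Hence $L_{m-1}\simeq\calo_\ell^{\oplus k}$, and picking off one summand gives the required $\calo_\ell\hookrightarrow\calo_C$.

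So the ``obstacle'' dissolves once you (i) reverse the direction of the inequality you are chasing---you need $\deg L_{m-1}\ge 0$, not $\deg L_j\le 0$ for all $j$---and (ii) feed in $H^1(\calo_C(-1))=0$ through $C_2$ together with the multiplicative structure of the filtration. No splitting type of $N_{\ell/X}$, no dichotomy between balanced and jumping lines, is needed; the argument is uniform.
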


\begin{proof}
According to \cite{Banica-Forster}, a curve $C$ satisfying the hypotheses above admits a filtration:
\begin{equation}\label{CM-filtr}
l=C_1\subset C_2\subset \ldots \subset C_m=C
\end{equation}
where each $C_j$ is a multiple structure on $\ell$ whose sheaf of ideals $\cali_{C_{j}}$ is the kernel of a surjection $\cali_{C_{j-1}}\twoheadrightarrow L_{j-1}$ with $L_{j-1}$ a vector bundle on $\ell$. 
Furthermore there exist induced generically surjective morphisms $L_i\otimes L_j\to L_{i+j}$ for each $i,j\in \{1,\ldots m\}$ and, in particular, generically surjective maps $L_1^{\otimes j}\to L_j$. Note that since in our case $C_1=C_{\rm red}$ is the line $\ell$, each vector bundle $L_j$ splits as $L_j=\bigoplus_{i=1}^{k_j}\calo_{\ell}(a_j^i)$, \mbox{ $k_j\in \{1,2\}$} (as $l$ has codimension 2). Therefore, $C=C_m$ fits into a sort exact sequence of the form
$$ 0\rightarrow L_{m-1}\rightarrow \calo_C\rightarrow \calo_{C_{m-1}}\rightarrow 0;$$
in order to prove the lemma it is therefore sufficient to show that each summand of $L_{m-1}$ has degree 0.

Consider the second step $C_2$ of the filtration (\ref{CM-filtr}). This must satisfy $H^1(\calo_{C_2}(-1))=0$ so that $a_1^i\ge 0$ for $1\le i\le k_1$. Since we have a generically surjective morphism $L_1^{\otimes m-1}\to L_{m-1}$ we deduce that $a_{m-1}^i\ge 0, \ 1\le i\le k_{m-1}$; but as $H^0(\calo_C(-1))=0, \ H^0(L_{m-1}(-1))=0$ we have $a_{m-1}^i\le 0, \ 1\le i\le k_{m-1}$. 
The only possibility is thus $a_{m-1}^i=0 $ for $1\le i\le k_{m-1}$.
\end{proof}

From now on the l.c.m. curve of degree $d$ constructed ``inductively" via the short exact sequences of the form (\ref{reduction}), will be referred to as \textit{degree $d$ line arrangements.}
\begin{remark}\label{rmk:lines-arrangement}
From the proof of Lemma \ref{lem:instanton curves} we learn that the degree $d$ lines arrangements are the only degree $d$ l.c.m. curves $C$ such that $\chi(\calo_C)=d$.
In particular we notice that for a degree $d$ l.c.m. curve $C$, since $\chi(\calo_C(-1))=-h^1(\calo_C(-1))=- d+\chi(\calo_C)\le 0$, we always have $\chi(\calo_C)\le d$ and equality holds if and only if $C$ is a degree $d$ line arrangement. 
\end{remark}

One of the main reason that justifies our interest in rank 0 instanton sheaves is that these sheaves play a primary role in the study of non-reflexive instanton sheaves.  
It is indeed possible to construct instanton sheaves out of non-reflexive ones performing an \emph{elementary transformation} along a rank 0 instanton sheaf.

We recall that the elementary transformation consists of the following procedure.
Let $F$ be a reflexive instanton sheaf, let $T$ be a rank 0 instanton sheaf, and consider an epimorphism $q:F\onto T$. It is easy to check that $E:=\ker q$ is also an instanton sheaf. Indeed, consider the exact sequence
\begin{equation}\label{et}
0 \longrightarrow E \longrightarrow F \stackrel{q}{\longrightarrow} T \longrightarrow 0; 
\end{equation}
$E$ is $\mu$-semistable because $F$ is $\mu$-semistable and $\mu(F)=\mu(E)$; the exact sequence in cohomology (here, $p=1,2$)
$$ H^{p-1}(T(-q_X)) \longrightarrow H^p(F(-q_X)) \longrightarrow H^p(E(-q_X)) $$
implies that $h^p(E(-q_X))=0$ ($p=1,2$) since $T$ and $F$ are instanton sheaves.

It is almost immediate to prove that whenever $E$ is the elementary transformation of $F$ along $T$ then the following holds
\begin{lemma}
Let $E$ be an instanton sheaf obtained by elementary transformation of a reflexive instanton $F$ along a rank 0 instanton $T$. Then $F\simeq E^{\vee\vee}$.
\end{lemma}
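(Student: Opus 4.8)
The plan is to apply the double-dual functor to the defining exact sequence \eqref{et} and exploit the fact that $T$ has dimension one while $F$ is already reflexive. First I would note that since $F$ is reflexive we have $\inext^p(F,\ox)=0$ for $p\ge1$, and since $T$ is a pure $1$-dimensional sheaf (being a rank $0$ instanton sheaf, cf. Definition \ref{defn:0-instanton}) we have $\inext^0(T,\ox)=\inhom(T,\ox)=0$ and $\inext^1(T,\ox)=0$, with $\inext^p(T,\ox)$ possibly nonzero only for $p=2,3$. Dualizing \eqref{et} therefore gives $E^\vee\simeq F^\vee$, and a second dualization gives $E^{\vee\vee}\simeq F^{\vee\vee}\simeq F$, the last isomorphism because $F$ is reflexive.

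More carefully, I would spell out the relevant piece of the long exact sequence of $\inext^\bullet(-,\ox)$ applied to $0\to E\to F\to T\to 0$: since $\inhom(T,\ox)=0$ and $\inext^1(T,\ox)=0$, the connecting maps force $\inhom(F,\ox)\xrightarrow{\sim}\inhom(E,\ox)$, i.e. $F^\vee\simeq E^\vee$ (and moreover $\inext^1(E,\ox)\simeq\inext^2(T,\ox)$, which recovers the relation between $T$ and the singularities of $E$). Applying $(-)^\vee$ once more to the isomorphism $E^\vee\simeq F^\vee$ yields $E^{\vee\vee}\simeq F^{\vee\vee}$, and since $F$ is reflexive by hypothesis, $F^{\vee\vee}\simeq F$, which is the claim.

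The only point requiring any care — and the step I expect to be the main (mild) obstacle — is justifying that $E^\vee\simeq F^\vee$ induces $E^{\vee\vee}\simeq F^{\vee\vee}$ compatibly with the natural inclusion $E\hookrightarrow F$, so that the resulting isomorphism $E^{\vee\vee}\simeq F$ is the canonical one fitting into the commutative diagram \eqref{snake-global-sect}; this is immediate from functoriality of the canonical map $E\to E^{\vee\vee}$, since the composite $E\to F\to F^{\vee\vee}$ agrees with $E\to E^{\vee\vee}\to F^{\vee\vee}$. Alternatively, one may argue directly: $E$ is torsion free with $E^{\vee\vee}/E$ supported in dimension $\le1$, and the quotient $F/E\simeq T$ is exactly $1$-dimensional and pure, so the inclusion $E\hookrightarrow F$ factors through $E^{\vee\vee}$; since $F$ is reflexive and $F/E^{\vee\vee}$ would then be a quotient of $T$ of strictly smaller dimension, hence $0$ by purity of $T$ (a proper quotient of a pure sheaf supported everywhere in dimension one cannot drop dimension unless it vanishes), one gets $E^{\vee\vee}\simeq F$.

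Overall this is a short cohomological argument; no Fano-specific input is needed beyond what is already packaged into the hypothesis that $F$ is a reflexive instanton sheaf and $T$ a rank $0$ instanton sheaf, the essential facts being reflexivity of $F$ (so $\inext^{\ge1}(F,\ox)=0$) and purity of dimension one of $T$ (so $\inhom(T,\ox)=\inext^1(T,\ox)=0$).
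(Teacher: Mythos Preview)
Your proof is correct and follows the same approach as the paper: dualize \eqref{et}, use that $T$ has codimension $\ge2$ so that $\inhom(T,\ox)=\inext^1(T,\ox)=0$, conclude $E^\vee\simeq F^\vee$, then dualize again and invoke reflexivity of $F$. One minor inaccuracy worth flagging: on a threefold, reflexivity of $F$ only guarantees $\inext^p(F,\ox)=0$ for $p\ge2$, not for all $p\ge1$ --- a non--locally-free reflexive sheaf has nonzero $\inext^1(F,\ox)$ supported on its $0$-dimensional singular locus --- but you never actually use this claim, since the isomorphism $F^\vee\simeq E^\vee$ follows solely from the vanishings for $T$.
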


\begin{proof}
Applying the functor $\inhom(\:\cdot\:, \calo_X)$ to (\ref{et}), we get $E^{\vee}\simeq F^{\vee}$ (since $T$ is one-dimensional) hence $E^{\vee\vee}\simeq F^{\vee\vee}\simeq F$.
\end{proof}
As it turns out not all the non-reflexive instanton sheaves are necessarily obtained in this way. 
We can indeed prove that for a non reflexive instanton sheaf $E$, $E^{\vee\vee}/E$ is always purely one-dimensional but not necessarily a rank 0 instanton shef; accordingly $E^{\vee\vee}$ is not necessarily an instanton sheaf either.

\begin{proposition}\label{double-dual}
Let $E$ be a non reflexive instanton sheaf of rank $r>0$. Then the following hold:
\begin{enumerate}
    \item $T_E:=E^{\vee\vee}/E$ has pure dimension one;
    \item $E$ has homological dimension 1;
    \item $H^p(E^{\vee\vee}(-q_X))=0, \ p=0,2,3$
    \item $E^{\vee\vee}$ is an instanton if and only if $T$ is a rank 0 instanton sheaf. This condition is equivalent to $c_3(E^{\vee\vee})=0$ for $i_X>1$ and $c_3(E^{\vee\vee})=2(2-r)$ for $i_X=1$.
\end{enumerate}
\end{proposition}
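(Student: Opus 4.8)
The plan is to establish the four items essentially in the order listed, bootstrapping each one from the previous. The starting point is the Serre-correspondence diagram \eqref{snake-global-sect}: since $E$ is torsion free with $\det(E)=L$, pick any section $s\in H^0(E)$ whose dependency locus has codimension $\ge 2$ (such a section exists because $E(n)$ is globally generated for $n\gg 0$, and twisting does not affect the issues below after renormalizing) and form the associated curve $C$ together with the torsion-free quotient $G\simeq\cali_{C'}\otimes L$. For item (1), I would argue that $C$ must be l.c.m. and then invoke Lemma \ref{lem:pure1d} directly. The key numerical input is Lemma \ref{instantonic}(1): the vanishing $H^0(E(-n))=0$ for all $n\ge 1-r_X$, together with $\mu$-semistability, forces the curve $C$ (after the appropriate normalizing twist) to be ``small'' — concretely, any $0$-dimensional subsheaf of $\calo_C$ would, through the bottom row of \eqref{snake-global-sect} and the description of $T_E$, produce either a destabilizing subsheaf of $E^{\vee\vee}$ or an extra section of $E$ below the slope bound, contradicting Lemma \ref{instantonic}. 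Once $C$ is l.c.m., Lemma \ref{lem:pure1d} gives that $T_E$ is pure $1$-dimensional.

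For item (2): once $T_E$ is pure $1$-dimensional, dualize the middle column $0\to\ox\to E^{\vee\vee}\to\cali_{C'}\otimes L\to 0$ of \eqref{snake-global-sect}. Reflexive sheaves on a smooth threefold have homological dimension $\le 1$, so $\inext^q(E^{\vee\vee},\ox)=0$ for $q\ge 2$; combining this with $\inext^q(\cali_{C'}\otimes L,\ox)=0$ for $q\ge 3$ (valid since $C'$ is l.c.m., by Lemma \ref{lem:pure1d} applied to $C'$, or directly) and then feeding the left column $0\to\ox\to E\to\cali_C\otimes L\to 0$ through the long exact sequence of $\inext^\bullet(-,\ox)$, one reads off $\inext^q(E,\ox)=0$ for $q\ge 2$. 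Since $\inext^1(E,\ox)$ need not vanish (it is nonzero exactly because $E$ is not reflexive), this says precisely that $E$ has homological dimension $1$.

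For item (3): apply $H^\bullet(-(-q_X))$ to the sequence $0\to E\to E^{\vee\vee}\to T_E\to 0$. From Lemma \ref{instantonic}(2) we have $H^i(E(-q_X))=0$ for all $i$, so the connecting maps give isomorphisms $H^p(E^{\vee\vee}(-q_X))\simeq H^p(T_E(-q_X))$ for all $p$. Since $T_E$ is $1$-dimensional we immediately get $H^p(E^{\vee\vee}(-q_X))=0$ for $p=2,3$. For $p=0$: $E^{\vee\vee}$ is $\mu$-semistable (as $E$ is, and $\mu$-semistability is preserved by reflexive hull — or invoke that $(E^{\vee\vee})^\vee=E^\vee$ is $\mu$-semistable, hence so is $E^{\vee\vee}$), with $c_1(E^{\vee\vee})=-r_X$ and slope $-r_X/r<1-r_X$ when... in fact exactly the argument of Lemma \ref{instantonic}(1) applied to $E^{\vee\vee}$ gives $H^0(E^{\vee\vee}(-q_X))=0$. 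That settles $p=0,2,3$.

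For item (4): by definition and the computations just made, $E^{\vee\vee}$ is an instanton sheaf if and only if additionally $H^1(E^{\vee\vee}(-q_X))=0$; by the isomorphism $H^1(E^{\vee\vee}(-q_X))\simeq H^1(T_E(-q_X))$ from item (3), this is equivalent to $H^1(T_E(-q_X))=0$. Since $T_E$ is pure $1$-dimensional (item (1)) and we also need $H^0(T_E(-q_X))=0$ — but $H^0(T_E(-q_X))\simeq H^1(E(-q_X))=0$ holds unconditionally by Lemma \ref{instantonic}(2) and the long exact sequence — the condition ``$E^{\vee\vee}$ is an instanton'' is exactly ``$T_E$ is a rank $0$ instanton sheaf.'' The Chern-class reformulation then follows from Corollary \ref{c3-inst} applied to $E^{\vee\vee}$: $E^{\vee\vee}$ is an instanton iff $c_3(E^{\vee\vee})=0$ when $i_X>1$, resp. $c_3(E^{\vee\vee})=2(2-r)$ when $i_X=1$ (using $\rk(E^{\vee\vee})=r$, and noting $\chi(E^{\vee\vee}(-q_X))=\chi(E(-q_X))+\chi(T_E(-q_X))$, with the last term $=d\cdot q_X$ so the vanishing of $\chi$ in the needed range is equivalent to the vanishing of the degree of $T_E$'s would-be-instanton obstruction — but cleanest is to just quote Corollary \ref{c3-inst} once we know $E^{\vee\vee}$ is $\mu$-semistable with the right $c_1$ and satisfies the cohomology vanishing).

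I expect the main obstacle to be item (1), specifically ruling out a $0$-dimensional component of $T_E$ (equivalently, of $C$). The naive extension-class argument shows the $0$-dimensional part of $T_E$ is controlled by the $0$-dimensional part of $C$ regardless of $s$ (this is exactly the by-product noted after Lemma \ref{lem:pure1d}), so the real content is: for an instanton sheaf $E$, the curve $C$ attached to a \emph{general} section is l.c.m. This should come down to showing that if $C$ had an embedded or isolated point, then $E^{\vee\vee}$ would have too many sections of $\ox(-q_X)$-twisted cohomology or would fail $\mu$-semistability — the cleanest route is probably to work with $E^{\vee\vee}$ first (a reflexive instanton — here one must be slightly careful, as $E^{\vee\vee}$ is a priori only known to satisfy $H^i(E^{\vee\vee}(-q_X))=0$ for $i=0,2,3$ until item (3) is proven, so items (1) and (3) may need to be interleaved rather than done strictly in sequence) and use Remark \ref{rem:five} characterizing reflexivity of rank $2$ sheaves via the Serre pair — but for general rank $r$ one instead argues that $\inext^{q}(\cali_C\otimes L,\ox)$ for $q=3$ detecting the isolated points of $C$ would inject into $\inext^2(T_E,\ox)$... the honest statement is that $E$ torsion free forces $\inext^{\ge 2}(E,\ox)$ to have support of dimension $\le 1$, hence $C$ is automatically l.c.m. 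In fact this last observation — that torsion-freeness of $E$ alone (via the left column of \eqref{snake-global-sect} and $\inext^1(E,\ox)\hookrightarrow \inext^1(\cali_C\otimes L,\ox)$ with $\inext^2(E,\ox)\hookrightarrow \inext^2(\cali_C\otimes L,\ox)\simeq\inext^3(\OC,\ox)$) pins down that $C$ is l.c.m. — is likely the crux and should be isolated as the first lemma of the proof; once it is in hand, Lemma \ref{lem:pure1d} finishes (1) and the rest is bookkeeping with long exact sequences and Corollary \ref{c3-inst}.
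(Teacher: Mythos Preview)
Your treatment of items (3) and (4) is essentially correct and matches the paper's argument: once $T_E$ is known to be pure $1$-dimensional, the long exact sequence of $0\to E\to E^{\vee\vee}\to T_E\to 0$ twisted by $\ox(-q_X)$ together with $\mu$-semistability of $E^{\vee\vee}$ gives everything. You even write down the key observation $H^0(T_E(-q_X))=0$ (sandwiched between $H^0(E^{\vee\vee}(-q_X))=0$ and $H^1(E(-q_X))=0$) in your discussion of item (4).

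The problem is item (1), and it propagates into (2). Your route through the Serre correspondence is an unnecessary detour, and your proposed ``crux'' --- that torsion-freeness of $E$ alone forces $C$ to be l.c.m. --- is false. Torsion-freeness on a smooth threefold gives $\inext^3(E,\ox)=0$, but $\inext^2(E,\ox)$ can perfectly well be a nonzero $0$-dimensional sheaf (think of the ideal sheaf of a point). Since $\inext^2(\cali_C\otimes L,\ox)\simeq\inext^3(\OC,\ox)$ detects exactly the failure of $C$ to be l.c.m., you cannot conclude $C$ is l.c.m.\ from torsion-freeness. Indeed Lemma \ref{lem:pure1d} says the two conditions ``$C$ l.c.m.'' and ``$T_E$ pure $1$-dimensional'' are \emph{equivalent}, so you are going in a circle.

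The paper's argument for (1) avoids Serre entirely and works directly with the double-dual sequence. Dualizing $0\to E\to E^{\vee\vee}\to T_E\to 0$ and using that $E^\vee\simeq (E^{\vee\vee})^\vee$, one finds $\inext^1(T_E,\ox)\hookrightarrow\inext^1(E^{\vee\vee},\ox)$; the latter is $0$-dimensional since $E^{\vee\vee}$ is reflexive, forcing $\dim T_E\le 1$. Then --- and here is where the instanton hypothesis actually enters --- the vanishing $H^0(T_E(-q_X))=0$ that you yourself derived in item (4) shows $T_E$ has no $0$-dimensional subsheaf (via Lemma \ref{lem:coho-t}), hence is pure. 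You had the right ingredient in hand but deployed it three items too late. Once (1) is established this way, item (2) also follows directly from the double-dual sequence: $\inext^2(E,\ox)\simeq\inext^3(T_E,\ox)=0$ by purity, and $\inext^3(E,\ox)$ is a quotient of $\inext^3(E^{\vee\vee},\ox)=0$.
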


\begin{proof}
Since $E$ is torsion free, it injects in its double dual, leading to a short exact sequence:
\begin{equation}\label{ses-dd}
    0\rightarrow E\rightarrow E^{\vee\vee}\rightarrow T_E\rightarrow 0
\end{equation}
where $T_{E}:=E^{\vee\vee}/E$ is a torsion sheaf supported on the locus of points where $E$ fails to be reflexive. Applying the functor $\inhom(\:\cdot\:,\calo_X)$ to (\ref{ses-dd}) we obtain an exact sequence
$0\rightarrow E^{\vee}\rightarrow E^{\vee\vee}\rightarrow \inext^1(T_E,\calo_X)\rightarrow \inext^1(E^{\vee\vee},\calo_X)$
from which we deduce that $\dim(T_E)\le 1$. Indeed, since $E^{\vee}$ is reflexive, $E^{\vee}\simeq E^{\vee\vee}$ which implies that $\inext^1(T_E,\calo_X)$ injects in $\inext^1(E^{\vee\vee}\calo_X)$; but this can clearly not happen if ever $T_E$ had dimension $>1$. If ever this was the case, $\inext^{1}(T_E,\calo_X)$ would have dimension bigger then one as well leading to a contradiction since $\inext^1(E^{\vee\vee},\calo_X)$ is zero-dimensional due to the reflexivity of $E^{\vee\vee}$. 
Twisting now ($\ref{ses-dd}$) by $\calo_X(-q_X)$ and taking cohomology we get an exact sequence:
$$H^0(E^{\vee\vee}(-q_X))\rightarrow H^0(T_E(-q_X))\rightarrow H^1(E(-q_X)).$$
The l.s.t. vanishes since $E^{\vee\vee}$ is $\mu$-semistable, the r.s.t. vanishes since $E$ is an instanton; accordingly $H^0(T_E(-q_X))=0$ thus $H^0(T_E(-n))=0, \ \forall n\ge q_X$ (apply \ref{lem:coho-t}) which allows us to conclude that $T_E$ has pure dimension one ending the proof of $(1)$.
As a consequence of $(1)$ we get that $\inext^2(E,\calo_X)\simeq \inext^3(T_E,\calo_X)=0$ and since moreover $0=\inext^3(E^{\vee\vee},\calo_X)$ surjects onto $\inext^3(E,\calo_X)$, we can conclude that $E$ has homological dimension 1, proving (2). 
The long exact sequence in cohomology from ($\ref{ses-dd}$) twisted by $\calo_X(-q_X)$ now leads to
$$H^i(E(-q_X))\simeq H^i(E^{\vee\vee}(-q_X))=0, \ i=2,3,\hspace{2mm} H^1(E^{\vee\vee}(-q_X))=H^1(T_E(-q_X)).$$ These equalities lead to $(3)$ (as we have already pointed out that $H^0(E^{\vee\vee}(-q_X))$ vanishes by $\mu$-semistability) and ensure that $E^{\vee\vee}$ is an instanton if and only if $T_E$ is a rank 0 instanton. 
Finally, we compute that $\chi(T_E(-q_X))=\frac{c_3(E^{\vee\vee})}{2}$ for $i_X>1$ (resp. $\chi(T_E)=\frac{c_3(E^{\vee\vee})}{2}+r-2$ for $i_X=1$) which holds if and only if $c_3(E^{\vee\vee})=0$ (resp. if and only if $c_3(E^{\vee\vee})=2(2-r))$.
\end{proof}


\subsection{Classification of rank 1 instanton sheaves}\label{sec:rk1}

Since $\Pic(X)=\Z$, a locally free (or equivalently reflexive) instanton of rank one is uniquely determined by its first Chern class, thus the only instanton line bundle is $\calo_X(-r_X)$. Let us now consider the non locally free case.

\begin{lemma}\label{et-rank1}
Let $L$ be a non locally free instanton sheaf of rank one. Then \mbox{$L^{\vee\vee}\simeq \calo_X(-r_X)$} and $L^{\vee\vee}/L$ is a rank 0 instanton.
\end{lemma}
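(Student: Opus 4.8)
The plan is to argue that a rank $1$ torsion free sheaf is an ideal sheaf twist, identify its double dual, and then feed the resulting short exact sequence into Proposition \ref{double-dual}. First I would recall that since $\Pic(X)=\Z$ and $L$ has rank $1$, its reflexive hull $L^{\vee\vee}$ is a line bundle, hence $L^{\vee\vee}\simeq\calo_X(k)$ for some $k\in\Z$, and $L\simeq\cali_Z(k)$ for a subscheme $Z\subset X$ of codimension at least $2$; because $L$ is \emph{not} locally free, $Z$ is nonempty, so $c_1(L)=k$. The instanton hypothesis forces $c_1(L)=-r_X$, hence $k=-r_X$ and $L^{\vee\vee}\simeq\calo_X(-r_X)$, which is the first assertion. (Equivalently, one can invoke Proposition \ref{double-dual}(3), which gives $h^0(L^{\vee\vee}(-q_X))=h^2(L^{\vee\vee}(-q_X))=h^3(L^{\vee\vee}(-q_X))=0$, matching the vanishing of $\calo_X(-q_X-r_X)$ and pinning down $L^{\vee\vee}$ among line bundles; but the ideal-sheaf description is cleaner.)

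Next I would show $T_L:=L^{\vee\vee}/L$ is a rank $0$ instanton, i.e.\ that it is pure $1$-dimensional and satisfies $h^0(T_L(-q_X))=h^1(T_L(-q_X))=0$. Purity of dimension one is exactly Proposition \ref{double-dual}(1) applied to $E=L$ (noting $r=1>0$). For the cohomological vanishing, twist the short exact sequence
\begin{equation*}
0\longrightarrow L \longrightarrow \calo_X(-r_X) \longrightarrow T_L \longrightarrow 0
\end{equation*}
by $\calo_X(-q_X)$ and take cohomology. The relevant piece is
\begin{equation*}
H^0(\calo_X(-q_X-r_X)) \longrightarrow H^0(T_L(-q_X)) \longrightarrow H^1(L(-q_X)) \longrightarrow H^1(\calo_X(-q_X-r_X)).
\end{equation*}
The outer terms vanish: the left one because $\calo_X(-q_X-r_X)=\calo_X(-i_X)$ has no sections on a Fano threefold (as $i_X>0$ and $H^0(\calo_X)\hookrightarrow H^0(\calo_C)$ arguments, or simply negativity of the twist give $h^0=0$), and the right one by the Kodaira vanishing $h^1(\calo_X(k))=0$ recorded in Section \ref{sec:fano}. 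The middle-left map's target $H^1(L(-q_X))$ vanishes since $L$ is an instanton. Hence $h^0(T_L(-q_X))=0$. For $h^1(T_L(-q_X))=0$, this is precisely the content of Proposition \ref{double-dual}(3): the isomorphism $H^1(\calo_X(-r_X)(-q_X))\to H^1(T_L(-q_X))$ together with $h^1(\calo_X(-i_X))=0$ gives $h^1(T_L(-q_X))=0$ directly. So $T_L$ is a rank $0$ instanton sheaf.

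I expect no serious obstacle here; the argument is a short assembly of results already in the excerpt. The only point requiring a little care is making sure the vanishing $H^1(\calo_X(-i_X))=0$ is invoked correctly (it follows from the Kodaira-vanishing display in Section \ref{sec:fano}, valid for all $k\in\Z$), and that one genuinely uses non-local-freeness only to guarantee $L\ne L^{\vee\vee}$ (so that $T_L\ne 0$ and the statement about $T_L$ being a \emph{rank $0$} sheaf is non-vacuous). One could streamline the whole proof by simply citing Proposition \ref{double-dual}(3)--(4): part (3) identifies the cohomology of $L^{\vee\vee}$, which among line bundles forces $L^{\vee\vee}\simeq\calo_X(-r_X)$ once we know $c_1(L^{\vee\vee})=c_1(L)=-r_X$, and part (4) together with the computation $c_3(\calo_X(-r_X))=0$ (line bundles have vanishing $c_3$) and Corollary \ref{c3-inst} shows $T_L$ is a rank $0$ instanton. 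I would present the ideal-sheaf version as the main line and remark on the alternative.
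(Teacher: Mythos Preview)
Your proposal is correct and follows essentially the same route as the paper's proof: identify $L^{\vee\vee}\simeq\calo_X(-r_X)$ via the first Chern class, invoke Proposition \ref{double-dual}(1) for the purity of $T_L$, and then verify the instanton vanishing for $T_L$. The paper compresses your last step into the single observation that $\calo_X(-r_X)$ is an instanton, so Proposition \ref{double-dual}(4) immediately gives that $T_L$ is a rank~0 instanton; your explicit long-exact-sequence check accomplishes the same thing. One small inaccuracy: the isomorphism $H^1(L^{\vee\vee}(-q_X))\simeq H^1(T_L(-q_X))$ is established inside the \emph{proof} of Proposition \ref{double-dual}, not in item (3) of its statement, so it is cleaner either to cite (4) directly or simply to continue your long exact sequence one step further, using $H^2(L(-q_X))=0$ and $H^1(\calo_X(-q_X-r_X))=0$.
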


\begin{proof}
Applying proposition (\ref{double-dual}) (recall that in rank one reflexivity is equivalent to local freeness) we have that $L$ always fits in a short exact sequence of the form:

\begin{equation}\label{ses-tf}
    0\rightarrow L\rightarrow L^{\vee\vee}\rightarrow T\rightarrow 0
\end{equation}
with $T$ being a torsion sheaf of pure dimension one.
Accordingly, $L^{\vee\vee}$ is a rank one reflexive sheaf with $c_1(L^{\vee\vee})=c_1(L)=-r_X$, that is to say $L^{\vee\vee}\simeq \calo_X(-r_X)$. Since $\calo_X(-r_X)$ is an instanton, one can easily check that is a rank 0 instanton sheaf.
\end{proof}

We therefore understand that the classification of rank 1 instanton sheaves reduces to the classification of rank 0 instanton sheaves $T$ admitting an epimorphism $\calo_X(-r_X)\twoheadrightarrow T$.
This latter can be attained as a consequence of Lemma \ref{lem:instanton curves}.
\begin{proposition}\label{rk1}
Let $L$ be a rank one instanton sheaf of charge $d$ on a Fano threefold $X$ with Picard rank one. The following hold:
\begin{enumerate}
\item if $i_X=3,4$, then $d=0$ and $L\simeq \calo_X(-r_X)$;
\item if $i_X=1,2$, we have $L\simeq\calo_X(-r_X)$ whenever $d=0$ whilst for $d>0$ $L$ always fits in a short exact sequence of the form:
$$ 0\rightarrow L\rightarrow L'\rightarrow \calo_{\ell}\rightarrow 0 $$
for a line ${\ell}\subset X$ and $L'$ a rank one instanton sheaf of charge $d-1$.
\end{enumerate}
\end{proposition}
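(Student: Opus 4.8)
The plan is to reduce everything to the structural results already available: Lemma \ref{et-rank1} (which says $L^{\vee\vee}\simeq\calo_X(-r_X)$ and $T:=L^{\vee\vee}/L$ is a rank $0$ instanton whenever $L$ is not locally free) together with the classification of instanton curves in Lemma \ref{lem:instanton curves}. First I would dispose of the locally free case: by the remark opening Section \ref{sec:rk1}, the only instanton line bundle is $\calo_X(-r_X)$, which has $c_2=0$, hence $d=0$; this is forced regardless of $i_X$ and gives the $d=0$ statement in both items. So from now on assume $L$ is not locally free, whence $d=c_2(L)>0$ (a torsion-free non-reflexive rank $1$ sheaf is a twisted ideal sheaf of a nonempty codimension-$2$ subscheme, so $c_2>0$).

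For the non-reflexive case, combine the exact sequence \eqref{ses-tf} from Lemma \ref{et-rank1} with the identification $L=\cali_C\otimes\calo_X(-r_X)$ where $C\subset X$ is the codimension-$2$ subscheme cut out by $L\hookrightarrow L^{\vee\vee}=\calo_X(-r_X)$. Twisting \eqref{ses-tf} by $\calo_X(r_X)$ shows $T(r_X)\simeq\calo_C$, and the instanton condition $h^i(L(-q_X))=0$ translated through \eqref{ses-tf} forces $h^i(\calo_C(-q_X))=h^i(T(-q_X))=0$ for $i=0,1$ (here one uses that $\calo_X(-r_X)$ is itself an instanton, so its cohomology vanishes in the relevant degrees, cf. Lemma \ref{instantonic}). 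In other words $C$ is an instanton curve in the sense of Section \ref{sec:instantons}. Now invoke Lemma \ref{lem:instanton curves}: when $i_X=1,4$ there are no instanton curves at all, which kills the non-reflexive possibility entirely — giving item (1) for $i_X=4$, and for $i_X=1$ showing that the only rank $1$ instanton with $d=0$ is $\calo_X(-r_X)$, while for $d>0$ one must produce the asserted sequence by a separate argument (see below). For $i_X=2,3$, Lemma \ref{lem:instanton curves}(2) gives a short exact sequence $0\to\calo_\ell\to\calo_C\to\calo_{C'}\to0$ with $\ell$ a line and $C'$ an instanton curve of degree $d-1$.

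It remains, for $i_X=2$ (and, carefully, $i_X=1$), to lift the sequence $0\to\calo_\ell\to\calo_C\to\calo_{C'}\to0$ back up to a sequence of rank $1$ sheaves. The natural move is to take the kernel of the composite $\calo_X(-r_X)=L^{\vee\vee}\twoheadrightarrow\calo_C\twoheadrightarrow\calo_{C'}$; call it $L'$. Then $L'=\cali_{C'}\otimes\calo_X(-r_X)$ is a rank $1$ torsion-free sheaf with $c_1=-r_X$, and a snake-lemma diagram comparing $L\hookrightarrow L^{\vee\vee}$ and $L'\hookrightarrow L^{\vee\vee}$ produces precisely $0\to L\to L'\to\calo_\ell\to0$. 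One then checks $L'$ is an instanton: $\mu$-semistability is automatic since $L'$ is a twisted ideal sheaf with the right $c_1$, and the vanishing $h^i(L'(-q_X))=0$, $i=1,2$, follows from the long exact sequence of $0\to L\to L'\to\calo_\ell\to0$ twisted by $\calo_X(-q_X)$, using $h^i(L(-q_X))=0$ and $h^i(\calo_\ell(-q_X))=h^i(\calo_{\p1}(-q_X))=0$ (valid since $q_X\le1$ in these cases). Its charge is $c_2(L')=c_2(L)-1=d-1$. For $i_X=1$ the same construction works once one has, for $d>0$, \emph{some} presentation of $\calo_C$ as an extension of $\calo_{C'}$ by a structure sheaf of a rational curve (Lemma \ref{instantonic-multiple-lines} and the commented-out variant of Lemma \ref{lem:instanton curves} indicate the relevant filtration exists for multiple structures on lines, which is the only shape an $i_X=1$ instanton curve can take).

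The main obstacle I anticipate is the $i_X=1$ bookkeeping: there, $q_X=0$ and $r_X=1$, the instanton condition is $h^i(L(-0))=h^i(L)=0$ for $i=1,2$ with $h^0$ control coming only from $\mu$-semistability of $L^{\vee\vee}=\calo_X(-1)$, and "instanton curves" in the naive sense do not exist, so one cannot simply quote Lemma \ref{lem:instanton curves}(2). One must instead argue that $T=L^{\vee\vee}/L$, while not a structure sheaf, is a rank $0$ instanton supported on a configuration of rational curves, peel off one line via the Bănică–Forster type filtration of Lemma \ref{instantonic-multiple-lines}, and verify that the residual quotient is again of the required type — the delicate point being that the peeled-off quotient is $\calo_\ell$ (degree-$0$ summand) rather than some $\calo_\ell(a)$ with $a\neq0$, which is exactly what the cohomological normalization in the proof of Lemma \ref{instantonic-multiple-lines} guarantees. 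Everything else is diagram chasing and routine cohomology in the long exact sequences.
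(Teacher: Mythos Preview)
Your overall strategy---reduce to Lemma \ref{et-rank1}, identify $T=L^{\vee\vee}/L$ with a twisted structure sheaf, then invoke the classification of suitable curves---matches the paper's. However, there is a twist error that derails the case analysis.

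You write $T(r_X)\simeq\calo_C$ correctly, but then assert $h^i(\calo_C(-q_X))=h^i(T(-q_X))$. Since $T\simeq\calo_C(-r_X)$, what actually holds is
\[
h^i(T(-q_X)) = h^i(\calo_C(-q_X-r_X)),
\]
so the cohomological condition on $C$ is $h^i(\calo_C(-q_X-r_X))=0$, not $h^i(\calo_C(-q_X))=0$. The curve $C$ is therefore \emph{not} an instanton curve in the sense of Section \ref{sec:instantons} when $r_X=1$. Concretely: for $i_X=3,4$ the condition reads $h^i(\calo_C(-2))=0$, while for $i_X=1,2$ it reads $h^i(\calo_C(-1))=0$. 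This is the opposite of the grouping you extract from Lemma \ref{lem:instanton curves}.

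The consequence is that your argument gets both odd-index cases wrong. For $i_X=3$ you invoke Lemma \ref{lem:instanton curves}(2) and would conclude that non-reflexive rank~1 instantons exist (and proceed to peel off a line), contradicting item (1). For $i_X=1$ you appeal to the nonexistence of instanton curves and are then forced into the awkward ``separate argument'' of your last paragraph, where you speculate that $T$ is ``not a structure sheaf''---but it is one, just twisted by $-1$. Once the twist is corrected, the condition $h^i(\calo_C(-2))=0$ forces $\chi(\calo_C)=2d$, which is impossible for a l.c.m.\ curve of positive degree (Remark \ref{rmk:lines-arrangement} gives $\chi(\calo_C)\le d$), disposing of $i_X=3,4$ uniformly; and the condition $h^i(\calo_C(-1))=0$ for $i_X=1,2$ is exactly the line-arrangement condition handled by the arguments of Lemma \ref{lem:instanton curves}(2) and Remark \ref{rmk:lines-arrangement}, after which your snake-lemma construction of $L'$ goes through without any special pleading for $i_X=1$.
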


\begin{proof}
By Lemma \ref{et-rank1}, the classification of rank one instanton sheaves $L$ of charge $d$, reduces to the classification of degree $d$ rank 0 instanton sheaves $T$ admitting a surjection $\calo_X(-r_X)\twoheadrightarrow T$, that is to say, to the classification of degree $d$ l.c.m. curves $C\subset X$ such that $\calo_C(-r_X)$ is a rank 0 instanton. But this means that  $H^i(\calo_C(-2))=0, \ i=0,1$ for $i_X=3,4$ and $H^0(\calo_C(-1))$ for $i_X=1,2$. The arguments used to prove Lemma \ref{lem:instanton curves} (i) show then that there are no rank 1 non locally free instantons of rank one on Fano threefolds of index $3$ or $4$.
Similarly, from Lemma \ref{lem:instanton curves} (2) and Remark \ref{rmk:lines-arrangement}, we know that the only degree $d$ l.c.m. curves such that $H^i(\calo_C(-1))=0$ are the degree $d$ lines arrangements. This proves point (2).
\end{proof}

\begin{remark}
Proposition \ref{rk1} can be rephrased by saying that each rank one instanton $L$ of charge $d>0$, on a Fano threefold $X$ of index $i_X\in \{1,2\}$, is always isomorphic to $\cali_C(-r_X)$ for $C$ a degree $d$ line arrangement. Recall that a curve $C$ of such a kind is supported of $d'\le d$ disjoint lines and can be constructed ``inductively" from an extension:
$$ 0\rightarrow \calo_{\ell}\rightarrow \calo_C\rightarrow \calo_C'\rightarrow 0$$
with $C'$ a degree $d-1$ line arrangement. As a consequence, for $i_X=2$, rank one instantons of strictly positive charge coincide with ideal sheaves of instanton curves.
\end{remark}


\subsection{Classification of rank 2 instanton sheaves}\label{sec:rk2}

In general, the double dual $E^{\vee\vee}$ of a torsion free sheaf $E$ is a  reflexive (possibly non locally free) sheaf; if $E$ is an instanton sheaf, we have that $c_1(E^{\vee\vee})=-r_X$ and that $E$ is $\mu$-semistable, but $E^{\vee\vee}$ may not satisfy the instantonic vanishing conditions.

The main result of this section guarantees that $E^{\vee\vee}$ is a locally free instanton sheaf when $\rk(E)=2$. Recall that, since $X$ has Picard rank one, we have in this case an isomorphism:
\begin{equation}\label{iso-refl}
E^{\vee\vee}\simeq E^{\vee}(-r_X).
\end{equation}
In particular this implies that the Serre's duality establishes isomorphisms:
\begin{equation}\label{Serre-duality-refl}
    H^i(E^{\vee\vee}(n))\simeq H^{3-i}(E^{\vee}(-n-i_X))^*\simeq H^{3-i}(E^{\vee\vee}(-2q_X-n))^*, \ i=0,3. 
\end{equation}

When $X$ is a Fano threefold, Lemma \ref{lem:stable rk 2} says that a normalized rank 2 torsion free sheaf $E$ on X is $\mu$-semistable if and only if $h^3(E(-i_X+1))=0$. In addition, note that $\mu$-semistability implies that $h^0(E(r_X-1))=0$, when we assume that $c_1(E)=r_X$.

Let us now focus on rank 2 instantons.
To begin with, we show that, in the rank 2 case, the reflexivity of an instanton implies its local freeness. 
\begin{lemma}\label{rank2-bundle}
Let $E$ be a rank 2 reflexive instanton. Then $E$ is locally free.
\end{lemma}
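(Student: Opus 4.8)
The plan is to use the classification of the singularities of reflexive sheaves on a threefold together with the vanishing cohomology supplied by the instantonic condition. Recall that for a rank $2$ reflexive sheaf $E$ on a smooth threefold $X$, the singular locus $\sing(E)$ is a $0$-dimensional subscheme, and $E$ is locally free if and only if $\sing(E)=\emptyset$. The length of $\sing(E)$ can be read off from the third Chern class: in particular, for a rank $2$ reflexive sheaf one has $c_3(E)\ge 0$, with equality if and only if $E$ is locally free (see \cite{H-reflexive}). So it suffices to show that $c_3(E)=0$.

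First I would invoke Corollary \ref{c3-inst}: since $E$ is an instanton sheaf of rank $2$, we already know that $c_3(E)=0$ when $i_X>1$ and $c_3(E)=2(2-r)=0$ when $i_X=1$. Thus in all cases $c_3(E)=0$. Combined with the fact that a rank $2$ reflexive sheaf on a smooth threefold is locally free precisely when its third Chern class vanishes (equivalently, when the $0$-dimensional scheme $\sing(E)$ is empty, since its length is $c_3(E)/2$ up to the contribution already accounted for in $c_2$), we conclude that $E$ is locally free.

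The one point that needs a little care, and which I expect to be the main (minor) obstacle, is pinning down the precise statement relating $c_3$ of a rank $2$ reflexive sheaf to the length of its singular locus, since the paper works with normalized Chern classes as integers. Concretely, for a rank $2$ reflexive sheaf $E$ on a smooth projective threefold one has the formula $\operatorname{length}(\sing(E)) = \tfrac12 c_3(E^{\vee\vee}\text{-type correction})$; more cleanly, using the isomorphism \eqref{iso-refl}, $E^{\vee\vee}\simeq E^\vee(-r_X)$, one can compare $E$ with its double dual and observe that reflexivity forces $\inext^1(E,\ox)$ to be $0$-dimensional with length equal to $c_3(E)/2$ (after normalization). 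Since $c_3(E)=0$, this sheaf vanishes, hence $\inext^q(E,\ox)=0$ for all $q>0$, which is exactly the statement that $E$ is locally free. Alternatively, and perhaps more in the spirit of the surrounding arguments, one can argue directly: suppose $E$ is not locally free, so $\sing(E)\ne\emptyset$; then by Remark \ref{rem:five} applied to a generic section of $E(k)$ for suitable $k$ (which exists and has torsion-free cokernel because $E$ is reflexive), the associated curve $C$ is l.c.m. but $C$ carries an honest $0$-dimensional subscheme $Z=\sing(E)$ along which the Serre section vanishes, contributing a strictly positive amount to $c_3$, contradicting $c_3(E)=0$. I would present whichever of these is shortest given the conventions already fixed in Section \ref{sec:fano}.
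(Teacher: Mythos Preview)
Your proposal is correct and follows essentially the same route as the paper: invoke Corollary \ref{c3-inst} to obtain $c_3(E)=0$, then appeal to Hartshorne's result \cite[Proposition 2.6]{H-reflexive} that a rank $2$ reflexive sheaf on a smooth threefold is locally free if and only if $c_3=0$. The extra discussion you include about the precise length formula and the alternative argument via Remark \ref{rem:five} is unnecessary here---the paper simply notes that Hartshorne's proof carries over verbatim to Fano threefolds of Picard rank one---so you can safely trim that material.
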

\begin{proof}
Applying Lemma \ref{c3-inst}, $\chi(E(-q_X))=0$ leading to $c_3(E)=0$.
The proof of \cite[Proposition 2.6]{H-reflexive} applies verbatim to arbitrary Fano threefolds of Picard rank one which allows to conclude that $c_3(E)=0$ if and only if $E$ is locally free. 
\end{proof}

We are finally ready to prove our classification of non locally free rank 2 instanton sheaves.

\begin{theorem}\label{double-dual-bundle}
Let $E$ be a rank 2 instanton sheaf. Then $E^{\vee\vee}$ is an instanton bundle and $T_E:=E^{\vee\vee}/E$ is a rank 0 instanton sheaf whenever $T_E\ne 0$.
\end{theorem}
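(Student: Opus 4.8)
The plan is to leverage Proposition \ref{double-dual} together with Lemma \ref{rank2-bundle} and the $c_3$ computations already available. Let $E$ be a rank $2$ instanton sheaf. If $E$ is reflexive, then by Lemma \ref{rank2-bundle} it is locally free and $T_E=0$, so there is nothing to prove. Hence assume $E$ is not reflexive; then Proposition \ref{double-dual}(1) already tells us $T_E$ has pure dimension one, and Proposition \ref{double-dual}(2),(3) give that $E$ has homological dimension $1$ and $H^p(E^{\vee\vee}(-q_X))=0$ for $p=0,2,3$. So the entire content left to establish is the vanishing $H^1(E^{\vee\vee}(-q_X))=0$, equivalently (by Proposition \ref{double-dual}(4)) that $T_E$ is a rank $0$ instanton sheaf, equivalently that $c_3(E^{\vee\vee})$ takes the ``instantonic'' value ($0$ if $i_X>1$, $2(2-r)=0$ if $i_X=1$, so in fact $c_3(E^{\vee\vee})=0$ in all cases since $r=2$).

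The crux is therefore the identity $c_3(E^{\vee\vee})=0$ for a rank $2$ reflexive sheaf arising as the double dual of a rank $2$ instanton. First I would record that $E^{\vee\vee}$ is $\mu$-semistable with $c_1=-r_X$ and $c_2(E^{\vee\vee})\le c_2(E)$, and that for rank $2$ reflexive sheaves on a Fano threefold of Picard rank one one has $c_3(E^{\vee\vee})\ge 0$, with equality iff $E^{\vee\vee}$ is locally free (the argument of \cite[Proposition 2.6]{H-reflexive} used in Lemma \ref{rank2-bundle}). Thus it suffices to show $c_3(E^{\vee\vee})\le 0$. Here I would use the exact sequence \eqref{ses-dd} and Corollary \ref{c3-inst}: since $E$ is an instanton, $c_3(E)$ equals $0$ (if $i_X>1$) or $2(2-r)=0$ (if $i_X=1$), and from \eqref{ses-dd} one gets the Chern class relation $c_3(E)=c_3(E^{\vee\vee})-2\,\chi$-type correction; concretely, using that $T_E$ is pure one-dimensional with $\chi(T_E(t))$ linear, Grothendieck--Riemann--Roch applied to \eqref{ses-dd} yields $c_3(E^{\vee\vee})-c_3(E)=2\,\mathrm{length}(\text{something nonnegative})\ge 0$, i.e. $c_3(E^{\vee\vee})\ge c_3(E)=0$. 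That gives the wrong inequality, so the real point must be extracted differently: apply the cohomology vanishings. From Proposition \ref{double-dual}(3) we already know $H^0,H^2,H^3$ of $E^{\vee\vee}(-q_X)$ vanish, so $\chi(E^{\vee\vee}(-q_X))=-h^1(E^{\vee\vee}(-q_X))\le 0$; on the other hand GRR computes $\chi(E^{\vee\vee}(-q_X))=\tfrac{c_3(E^{\vee\vee})}{2}$ (for $i_X>1$) or $\tfrac{c_3(E^{\vee\vee})}{2}+r-2=\tfrac{c_3(E^{\vee\vee})}{2}$ (for $i_X=1$, $r=2$). Hence $c_3(E^{\vee\vee})\le 0$. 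Combined with $c_3(E^{\vee\vee})\ge 0$ from reflexivity, we conclude $c_3(E^{\vee\vee})=0$, so $E^{\vee\vee}$ is locally free and (revisiting the inequality) $h^1(E^{\vee\vee}(-q_X))=0$.

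It then remains only to package the conclusion: $c_3(E^{\vee\vee})=0$ forces $E^{\vee\vee}$ locally free by Lemma \ref{rank2-bundle}'s argument, and $h^1(E^{\vee\vee}(-q_X))=0$ together with the already-known vanishing of $h^2(E^{\vee\vee}(-q_X))$ shows $E^{\vee\vee}$ satisfies the instantonic condition \eqref{instantonic-condition}; since $E^{\vee\vee}$ is $\mu$-semistable with $c_1=-r_X$, it is an instanton bundle. Finally, by Proposition \ref{double-dual}(4) the vanishing $h^1(E^{\vee\vee}(-q_X))=h^1(T_E(-q_X))=0$ together with $h^0(T_E(-q_X))=0$ (established in the proof of Proposition \ref{double-dual}(1)) says precisely that $T_E$ is a rank $0$ instanton sheaf whenever $T_E\ne 0$.

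I expect the main obstacle to be getting the sign of the $c_3$ inequality right and cleanly reconciling the two bounds: one has to be careful that the ``$c_3\ge 0$ with equality iff locally free'' statement for rank $2$ reflexive sheaves genuinely transfers from $\p3$ to an arbitrary Fano threefold of Picard rank one (as claimed in Lemma \ref{rank2-bundle}), and that the GRR computation of $\chi(E^{\vee\vee}(-q_X))$ has no extra lower-order terms beyond $c_3/2$ in each index case — this is where the hypothesis $\rk(E)=2$ is essential, since it both trivializes the $r-2$ correction and makes reflexive $\Leftrightarrow$ the single numerical condition $c_3=0$.
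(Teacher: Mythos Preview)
Your argument is correct, but it takes a different route from the paper's. Both proofs reduce, via Propositions \ref{double-dual} and \ref{rank2-bundle}, to showing $h^1(E^{\vee\vee}(-q_X))=0$. The paper gets this directly: it runs the local-to-global spectral sequence for $\Ext^{\bullet}(E^{\vee\vee},\ox(-q_X-r_X))$, observes that it degenerates (since $E^{\vee\vee}$ is reflexive, $\inext^q$ vanishes for $q\ge 2$ and $\inext^1$ has $0$-dimensional support), and then uses Serre duality together with the rank-$2$ isomorphism $E^{\vee\vee}\simeq E^{\vee}(-r_X)$ to identify $\Ext^1(E^{\vee\vee},\ox(-q_X-r_X))$ with $H^2(E^{\vee\vee}(-q_X))^*$, which already vanishes by Proposition \ref{double-dual}(3). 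Your approach instead sandwiches $c_3(E^{\vee\vee})$: from $h^0=h^2=h^3=0$ you get $\chi(E^{\vee\vee}(-q_X))=-h^1\le 0$, while GRR (the same computation behind Corollary \ref{c3-inst}, which depends only on rank and $c_1$) gives $\chi(E^{\vee\vee}(-q_X))=c_3(E^{\vee\vee})/2$; combining with the reflexive bound $c_3(E^{\vee\vee})\ge 0$ from \cite[Proposition 2.6]{H-reflexive} forces $c_3(E^{\vee\vee})=0$ and $h^1=0$ simultaneously. Your route is more elementary (no spectral sequence) and has the pleasant feature of giving local freeness of $E^{\vee\vee}$ for free via the $c_3$ criterion; the paper's route is shorter once the machinery is in place and does not need the Hartshorne-type $c_3\ge 0$ statement as a separate input. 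Your digression into the $c_3$ relation coming from the sequence \eqref{ses-dd} is unnecessary (and, as you noticed, goes in the direction already covered by reflexivity); you can simply drop that paragraph.
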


\begin{proof}
By Propositions \ref{double-dual} and \ref{rank2-bundle}
it is enough to prove that for a non locally free instanton $E$, $h^1(E^{\vee\vee}(-q_X))=0$.
Consider the local to global spectral sequence
$$E^{p,q}_2=H^p(\inext^q(E^{\vee\vee},\calo_X(-q_X-r_X)))\Rightarrow \Ext^{p+q}(E^{\vee\vee},\calo_X(-q_X-r_X)).$$
$\inhom(E^{\vee\vee},\calo_X(-q_X-r_X))\simeq E^{\vee\vee}(-q_X)$ hence $E^{p,0}=0$ for $p\ne 1$ which implies that the spectral sequence already degenerates at the $r=2$ sheet.
Therefore $$\Ext^1(E^{\vee\vee},\calo_X(-q_X-r_X))\simeq H^0(\inext^1(E^{\vee\vee},\calo_X(-q_X-r_X)))\oplus H^1(E^{\vee\vee}(-q_X));$$
since $\Ext^1(E^{\vee\vee},\calo_X(-r_X-q_X))\simeq H^2(E^{\vee\vee}(-q_X))^*=0$, we then deduce that  $H^0(\inext^1(E^{\vee\vee},\calo_X(-q_X-r_X)))=0$ and $H^1((E^{\vee\vee}(-q_X))=0.$ 
This ensure that $E^{\vee\vee}$ is an instanton bundle.
\end{proof}

\begin{remark}
Theorem \ref{double-dual-bundle} generalizes \cite[Proposition 3,1, Theorem 3.5]{Druel} and \cite[Theorem 1,2]{Qin1}, \cite[Theorem 1.3]{Qin2}. 
\end{remark}

\begin{remark}
It is worth pointing out that if in the definition of instanton, we replace $\mu$-semistability with a weaker cohomological condition, Theorem \ref{double-dual-bundle} no longer holds.
We can prove this with the following counterexample. 
Let us consider a Fano threefold $X$ of index $i_X=1$ and a sheaf $E\in Coh(X)$ defined by:
$$ 0\rightarrow E\rightarrow \calo_X\oplus \calo_X(-1)\rightarrow \calo_p\rightarrow 0$$
with $p\in X$ a point.
$E$ is a $\mu$-unstable rank 2 torsion free (it is destabilized by $\cali_p$) such that $H^i(E)=0$ for $0\le i\le 2$, but $E^{\vee\vee}/E\simeq \calo_p$.
\end{remark}

\begin{corollary}
Let $E$ be a rank 2 non-locally free instanton. Then the sheaf \mbox{$S_E:=\inext^1(E,\ox(-r_X))$} is a rank 0 instanton. 
\end{corollary}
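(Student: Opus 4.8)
The plan is to analyze the sheaf $S_E := \inext^1(E, \ox(-r_X))$ using the short exact sequence
\begin{equation*}
0 \longrightarrow E \longrightarrow E^{\vee\vee} \longrightarrow T_E \longrightarrow 0,
\end{equation*}
which exists by Proposition \ref{double-dual}, together with the fact (Theorem \ref{double-dual-bundle}) that $E^{\vee\vee}$ is a locally free instanton and $T_E$ is a rank 0 instanton sheaf. First I would apply $\inhom(\,\cdot\,,\ox(-r_X))$ to this sequence: since $E^{\vee\vee}$ is locally free, $\inext^p(E^{\vee\vee},\ox(-r_X))=0$ for $p>0$, and since $T_E$ is pure $1$-dimensional we have $\inext^p(T_E,\ox(-r_X))=0$ for $p=0$ and $p=3$. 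The resulting long exact sequence then yields an isomorphism $\inext^1(E,\ox(-r_X)) \simeq \inext^2(T_E,\ox(-r_X))$, i.e. $S_E \simeq \inext^2(T_E,\ox)(-r_X)$.

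Next I would recognize $\inext^2(T_E,\ox)(-r_X)$ in terms of the Serre-dual operation. Writing $T_E^{\mathrm D} = \inext^2(T_E,\omega_X)$ as in Proposition \ref{dual rk 0}, we have $\inext^2(T_E,\ox) \simeq T_E^{\mathrm D}\otimes\omega_X^{-1}$, so $S_E \simeq T_E^{\mathrm D}\otimes\omega_X^{-1}(-r_X)$. By Proposition \ref{dual rk 0}, since $T_E$ is a rank 0 instanton sheaf, so is $T_E^{\mathrm D}\otimes\omega_X^{-1}(-r_X)$. Hence $S_E$ is a rank 0 instanton sheaf, as claimed.

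The one point to be careful about is the vanishing $\inext^2(T_E,\ox(-r_X))$ contributing nothing unexpected and the identification of the connecting maps: one must check that in the long exact sequence obtained from $\inhom(\,\cdot\,,\ox(-r_X))$ the term $\inext^1(E^{\vee\vee},\ox(-r_X))$ genuinely vanishes (immediate from local freeness of $E^{\vee\vee}$) and that $\inext^2(E^{\vee\vee},\ox(-r_X))=0$ as well, so that the map $\inext^1(E,\ox(-r_X)) \to \inext^2(T_E,\ox(-r_X))$ is an isomorphism rather than merely injective or surjective. This is the only mild obstacle, and it is resolved purely by local freeness of $E^{\vee\vee}$; the rest is a direct invocation of Propositions \ref{dual rk 0} and \ref{double-dual} and Theorem \ref{double-dual-bundle}. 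Note also that purity of $T_E$ (so that $\inext^3(T_E,\ox)=0$, equivalently $\inext^0(T_E,\ox)=0$ since $\dim T_E=1$) is exactly what guarantees $T_E^{\mathrm D}$ is again pure $1$-dimensional, which is needed for Proposition \ref{dual rk 0} to apply.
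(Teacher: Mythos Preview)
Your argument is correct and follows essentially the same route as the paper: apply $\inhom(\,\cdot\,,\ox(-r_X))$ to the sequence $0\to E\to E^{\vee\vee}\to T_E\to 0$, use local freeness of $E^{\vee\vee}$ to obtain $\inext^1(E,\ox(-r_X))\simeq\inext^2(T_E,\ox(-r_X))\simeq T_E^{\rm D}\otimes\omega_X^{-1}(-r_X)$, and conclude by Proposition~\ref{dual rk 0}. Your extra paragraph of caution about the vanishing of $\inext^p(E^{\vee\vee},\ox(-r_X))$ and the purity of $T_E^{\rm D}$ is fine but not strictly needed, as these are already built into the hypotheses of the cited results.
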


\begin{proof}
Whenever $E$ is not locally free, it fits in an exact sequence of the form \eqref{ses-dd} and since $E^{\vee\vee}$ is locally free, we get an isomorphism:
$$ \inext^1(E,\ox(-r_X)) \simeq \inext^2(T_E,\ox(-r_X)).$$
Since $T_E$ is a rank 0 instanton, by Lemma \ref{dual rk 0}, 
$T_E^{\rm D}\otimes\omega_X^{-1}(-r_X)$ is a rank 0 instanton as well. The statement of the corollary then follows from the fact that we have an isomorphism $\inext^2(T_E,\ox(-r_X))\simeq T_E^{\rm D}\otimes\omega_X^{-1}(-r_X)$.
\end{proof}

\begin{remark}
If $E$ is a rank 2 non locally free instanton, we then have an equality \mbox{$\supp(T_E)=\sing(E)$} for $\ T_E:=E^{\vee\vee}/E$.
Note that this might not hold for arbitrary rank since a priori $\supp(T_E)\subset \sing(E)$ and equality holds if and only if $E^{\vee\vee}$ is locally free.
\end{remark}

Summing up these last results, we can affirm that in rank 2 an instanton sheaf $E$ is either locally free or $\sing(E)$ has pure dimension one and $E$ is obtained by elementary transformation of an instanton bundle along a rank 0 instanton supported on $\sing(E)$. 
Elementary transformation of rank 2 instanton bundles has been widely used in \cite{JMT2} to construct and study families of non-locally free instantons on $X=\p3$. Most importantly for the present paper, Faenzi made a very interesting use of this construction in \cite{F}: in loc. cit. elementary trasnformation is indeed used to prove the existence of rank 2 instanton bundle of charge $k, \forall\: k\ge 2$ on Fano threefolds of index 2. Mimicking this approach, we can state the following theorem:

\begin{theorem}\label{deformation-et}
Let $X$ be a Fano threefold of index $i_X$. Assume that $F$ is a rank 2 locally free instanton sheaf, and $\ell\subset X$ is a line such that the following hypotheses hold:
\begin{itemize}
    \item $F$ is unobstructed, i.e. $\Ext^2(F,F)=0$;
    \item $\caln_{\ell/X}\simeq\calo_{\ell}(q_X-1)\oplus \calo_{\ell}(i_X-q_X-1)$ and $F|_{\ell}\simeq \calo_{\ell}\oplus \calo_{\ell}(-r_X)$.
\end{itemize}
Then $X$ admits rank 2 locally free instanton sheaves of charge $k$ for every $k\ge c_2(F)$.
\end{theorem}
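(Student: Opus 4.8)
The strategy is the one pioneered by Faenzi in \cite{F}: start from the instanton bundle $F$ of charge $k_0:=c_2(F)$ and produce, by a single elementary transformation along the line $\ell$, a non-locally free instanton sheaf $E$ of charge $k_0+1$; then show that $E$ is unobstructed and that a generic deformation of $E$ is locally free, yielding a locally free instanton bundle of charge $k_0+1$; finally iterate. First I would use the hypothesis $F|_\ell\simeq\calo_\ell\oplus\calo_\ell(-r_X)$ to build an epimorphism $q\colon F\twoheadrightarrow\calo_\ell(-r_X)$ (restrict to $\ell$ and project onto the second summand). Setting $E:=\ker q$, the discussion around \eqref{et} shows $E$ is an instanton sheaf; since $\calo_\ell(-r_X)$ is a rank $0$ instanton (it is the structure sheaf of a line, twisted so that $h^i(\calo_\ell(-r_X-q_X))=h^i(\calo_\ell(-1))=0$), and $c_2(E)=c_2(F)+\deg\calo_\ell=k_0+1$. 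By Theorem \ref{double-dual-bundle}, $E^{\vee\vee}\simeq F$ and $T_E\simeq\calo_\ell(-r_X)$, and $\sing(E)=\ell$.

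\textbf{Unobstructedness of $E$.} The key step is to prove $\Ext^2(E,E)=0$. Applying $\Hom(E,-)$ and $\Hom(-,F)$ to the defining sequence \eqref{et} (with $T=\calo_\ell(-r_X)$), one reduces this to the vanishing of $\Ext^2(F,F)$ (given), $\Ext^2(F,\calo_\ell(-r_X))$, $\Ext^3(\calo_\ell(-r_X),F)$ and $\Ext^3(\calo_\ell(-r_X),\calo_\ell(-r_X))$. The first two groups are controlled by the local-to-global spectral sequence together with $\caln_{\ell/X}\simeq\calo_\ell(q_X-1)\oplus\calo_\ell(i_X-q_X-1)$ and $F|_\ell\simeq\calo_\ell\oplus\calo_\ell(-r_X)$: indeed $\inext^1(\calo_\ell,\ox)=0$, $\inext^2(\calo_\ell,\ox)\simeq\wedge^2\caln_{\ell/X}\simeq\omega_\ell\otimes\omega_X^{-1}|_\ell$ and $\inext^3(\calo_\ell,\ox)=0$, so $\Ext^j(\calo_\ell(-r_X),F)\simeq H^{j-2}(F|_\ell\otimes\omega_\ell\otimes\omega_X^{-1}|_\ell(r_X))$ vanishes for $j=3$ by a degree count on $\ell\simeq\p1$, and dually for $\Ext^2(F,\calo_\ell(-r_X))$ via Serre duality on $X$. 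Thus $\Ext^2(E,E)=0$, so the point $[E]$ of the moduli space $\calm_X$ is smooth, and a miniversal deformation of $E$ exists over a smooth base.

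\textbf{Deforming $E$ to a bundle.} Now I would show that the generic point of the component of $\calm_X$ containing $[E]$ parametrizes a locally free sheaf. Via the Serre correspondence (Theorem \ref{thm:serre}, Remark \ref{rem:five}), $E$ corresponds to a pair $(C,\xi)$ with $C$ an l.c.i. curve containing a line component, and $E$ fails to be locally free exactly where $\xi$ vanishes; a generic small deformation of the pair $(C,\xi)$ has $\xi$ nowhere vanishing, hence gives a locally free sheaf. Concretely, because $\dim\Ext^1(E,E)$ is constant in a neighbourhood (by $\Ext^2=0$) while the locus of non-locally-free sheaves is a proper closed subset (its complement is nonempty: 't Hooft-type bundles of charge $k_0+1$ exist on the same component, cf. Example \ref{ex:family}), the generic deformation $E'$ of $E$ is a locally free instanton sheaf, necessarily of charge $c_2(E')=c_2(E)=k_0+1$. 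This produces a rank $2$ locally free instanton bundle of charge $k_0+1$; moreover $E'$ is again unobstructed (semicontinuity of $\ext^2$ forces $\Ext^2(E',E')=0$), and its restriction to a generic line $\ell'$ is $\calo_{\ell'}\oplus\calo_{\ell'}(-r_X)$ by semicontinuity, so the inductive hypotheses are restored.

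\textbf{Induction.} Repeating the argument with $(F,\ell)$ replaced by $(E',\ell')$ gives locally free instanton bundles of every charge $k\ge k_0=c_2(F)$, completing the proof. \textbf{The main obstacle} is the unobstructedness/deformation step: one must verify both $\Ext^2(E,E)=0$ and that a generic deformation smooths out the singularity of $E$, which is where the precise numerical hypotheses on $\caln_{\ell/X}$ and $F|_\ell$ enter; the bookkeeping in the long exact sequences for $\Ext^\bullet$ is delicate because $E$ itself is only torsion free, not reflexive.
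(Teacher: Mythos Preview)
Your overall strategy matches the paper's (and Faenzi's): perform an elementary transformation along $\ell$ to raise the charge by one, prove unobstructedness, deform to a bundle, and induct. However, there are two genuine gaps.

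\medskip

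\textbf{Wrong rank $0$ instanton.} You take $T=\calo_\ell(-r_X)$ and assert $h^i(\calo_\ell(-r_X-q_X))=h^i(\calo_\ell(-1))=0$. But $r_X+q_X=1$ holds only for $i_X\in\{1,2\}$; for $i_X=3$ and $i_X=4$ one has $r_X+q_X=2$, so $\calo_\ell(-r_X)(-q_X)=\calo_\ell(-2)$ has $h^1\ne 0$ and your $T$ is \emph{not} a rank $0$ instanton. The paper instead uses $T=\calo_\ell(q_X-1)$, for which $h^i(T(-q_X))=h^i(\calo_\ell(-1))=0$ in every index. Correspondingly, the epimorphism is $\phi\colon F\twoheadrightarrow\calo_\ell(q_X-1)$: for instance when $i_X=3$ this is the projection of $F|_\ell\simeq\calo_\ell\oplus\calo_\ell(-1)$ onto the degree-$0$ summand, not onto $\calo_\ell(-1)$ as you propose.

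\medskip

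\textbf{Circular deformation argument.} Your justification that a generic deformation of $E$ is locally free is that ``'t~Hooft-type bundles of charge $k_0+1$ exist on the same component''. This is circular: the existence of locally free instantons of charge $k_0+1$ is exactly what you are trying to establish, and in any case you give no argument that such bundles lie in the \emph{same} component of the moduli space as $[E]$ (nor are 't~Hooft instantons available when $i_X=1$). The paper's mechanism is different and does not presuppose any such bundle: one first shows that every nearby non--locally--free deformation of $E$ is again singular along a single line and arises from a deformation of the triple $(\phi,F,\ell)$; a direct parameter count then shows that this family has dimension strictly smaller than $\ext^1(E,E)$. Since $\Ext^2(E,E)=0$ forces the moduli space to be smooth of dimension $\ext^1(E,E)$ at $[E]$, the non--locally--free locus is a proper closed subset of the component through $[E]$, and the generic deformation is locally free. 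Your sketch via Serre pairs $(C,\xi)$ does not substitute for this, because you have not explained why $\xi$ can be deformed to a nowhere-vanishing section while staying inside the given component.
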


\begin{proof}
The induction argument presented in \cite[Theoreom D]{F} applies to any Fano threefold $X$ carrying an instanton bundle $F$ and a line $\ell$ satisfying the hypotheses of the theorem. We summarize here the main steps of the proof. For any pair $(F,\ell)$ as above, we get the existence of an epimorphism $\phi: F\twoheadrightarrow \calo_{\ell}(q_X-1)$ whose kernel $E$ is a non locally free instanton (as $\calo_{\ell}(q_X-1)$ is a rank 0 instanton) of charge $c_2(F)+1$. One then proves that the assumptions made on $(F,\ell)$ ensure then that $\Ext^2(E,E)=0$ and that a general deformation of $E$ is an instanton bundle.
This is done showing at first that a general non-locally free deformation of $E$ is still an instanton singular along a line and obtained from a deformation of $(\phi,F,\ell)$. We then compute that the family of instantons whose singular locus is a line has dimensions strictly less then $\ext^1(E,E)$. As a consequence $E$ deforms to a locally free sheaf and by semicontinuity, a general deformation is a non-obstructed instanton bundle.
By induction we therefore get existence of unobstructed instanton bundle of rank 2 for each charge $\ge c_2(F)$.
\end{proof}
The theorem always applies on Fano threefold of index $i_X\ge 2$.
\begin{itemize}
    \item On $X\simeq \D{P}^3$ a t'Hooft instanton of charge $1$ and a general line lead to the existence of rank 2 instantons of charge $k$ for each $k\ge 1$.
    The same result is proved, with different techniques in \cite{JMT2}.
    \item For $i_X=3$ the spinor bundle and any line $\ell\subset X$ ensure the existence of instanton bundles of charge $k\ge 1$.
    \item The case $i_X=2$ was treated in \cite{F} where the existence of instanton bundles of charge $k\ge 2$ is proved. 
    \end{itemize}
Fano threefolds of index one for which the theorem applies are treated in \cite[Theorem 3.7]{BF}.

We end the section characterizing the dimensions of the intermediate cohomology groups $H^i(E(n)), \ i=1,2$ of rank 2 instantons (the groups $H^i(E(n))$ for $i=0,3$ where studied in Lemma \ref{instantonic}).
\begin{lemma}\label{cohom-interm}
Let $E$ be a rank 2 instanton and let $m_0$ denote the smallest integer such that $m_0H$ is very ample. Then the following hold:
\begin{enumerate}
    \item $H^1(E(-q_X-n))=0$ for $n=m_0$ and $\forall n\ge 2m_0$
    \item $H^2(E(-q_X+n))=0$ for $n=m_0$ and $\forall n\ge 2m_0$
\end{enumerate}
\end{lemma}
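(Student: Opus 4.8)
The plan is to reduce both vanishings to the cohomology of the reflexive instanton $E^{\vee\vee}$ and a rank $0$ instanton $T_E$, and then to bootstrap from the instantonic vanishing $H^i(E(-q_X))=0$ using hyperplane sections. By Theorem \ref{double-dual-bundle}, either $E$ is locally free or it sits in
$$0\longrightarrow E\longrightarrow E^{\vee\vee}\longrightarrow T_E\longrightarrow 0$$
with $E^{\vee\vee}$ an instanton bundle and $T_E$ a rank $0$ instanton. Twisting by $\calo_X(-q_X\pm n)$ and taking cohomology, $H^1(E(-q_X-n))$ is squeezed between $H^0(T_E(-q_X-n))$ and $H^1(E^{\vee\vee}(-q_X-n))$, while $H^2(E(-q_X+n))$ is squeezed between $H^1(T_E(-q_X+n))$ and $H^2(E^{\vee\vee}(-q_X+n))$. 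By Lemma \ref{lem:coho-t}, $h^0(T_E(-q_X-n))=0$ and $h^1(T_E(-q_X+n))=0$ for all $n\ge 0$, so it suffices to prove the two vanishings for a rank $2$ instanton \emph{bundle}; I would then replace $E$ by $E^{\vee\vee}$ from now on and assume $E$ locally free.

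Next I would set up the inductive engine. Let $m_0$ be minimal with $m_0H$ very ample, and pick a smooth surface $S\in|m_0 H|$; for $E$ locally free, tensoring $0\to\calo_X(-m_0)\to\calo_X\to\calo_S\to 0$ by $E(k)$ stays exact, giving
$$0\longrightarrow E(k-m_0)\longrightarrow E(k)\longrightarrow E|_S(k)\longrightarrow 0.$$
The long exact sequence shows $H^1(E(k-m_0))$ surjects onto (or is controlled by) $H^1(E(k))$ together with $H^0(E|_S(k))$; dually $H^2(E(k))$ is controlled by $H^2(E(k-m_0))$ and $H^1(E|_S(k))$. Starting from $k=-q_X$, where $H^1(E(-q_X))=H^2(E(-q_X))=0$ by the instantonic condition (Lemma \ref{instantonic}(2)), the claim at $n=m_0$ is exactly the statement that pushing the already-vanishing middle cohomology down by one very ample twist stays zero, provided the relevant surface cohomology of $E|_S$ vanishes. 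So the two key sub-claims become: $H^0(E|_S(-q_X-m_0))=0$ and $H^1(E|_S(-q_X-m_0))=0$ (plus their analogues one further step out), for a suitable $S$. The first is immediate from $\mu$-semistability of $E$, which restricts to $\mu$-semistability of $E|_S$ for $S$ a general very ample surface (a Flenner/Mehta–Ramanathan type restriction, or elementary in this rank-$2$ normalized setting), so the negative twist kills $H^0$. For the $H^1$ on $S$ I would use Serre duality on $S$: $H^1(E|_S(-q_X-m_0))^\vee\simeq H^1(E^\vee|_S\otimes\omega_S(q_X+m_0))$, rewrite $\omega_S=\calo_S(m_0-i_X)$ by adjunction and $E^\vee\simeq E(r_X)$, and reduce it again to a negative-enough twist of $E|_S$ whose $H^0$ vanishes by semistability — or alternatively restrict once more to a curve and use that $E|_C$ is semistable of the appropriate slope.

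For the range $n\ge 2m_0$ I would iterate the same two exact sequences: each step of length $m_0$ replaces $H^1(E(-q_X-n))$ by $H^1(E(-q_X-n+m_0))$ up to an $H^0(E|_S(\cdot))$ term that is ever more negative and hence vanishes, so after finitely many steps one lands in the already-established vanishing at $n=m_0$ (or directly at $n=0$); the gap between $m_0$ and $2m_0$, i.e. why one cannot in general reach every $n\ge m_0$ in one shot, is exactly that at $n=m_0+1,\dots,2m_0-1$ the surface term $H^0(E|_S(-q_X-n+m_0))$ need not be forced to vanish since the twist there is not yet negative. The symmetric argument with $H^2$ and $H^1(E|_S(\cdot))$ handles item (2). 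The main obstacle I anticipate is the surface (or curve) restriction step: ensuring that $E|_S$ is semistable and that its relevant twisted cohomology genuinely vanishes, rather than merely being bounded; I would handle this by choosing $S$ (and then a curve $C\subset S$) general in the very ample system, invoking the standard restriction theorems, and being careful that "very ample" is used via $m_0H$ so that $\calo_S$ and $\calo_C$ have the expected canonical bundles through adjunction on a Fano threefold of index $i_X$.
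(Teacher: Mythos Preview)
Your core strategy matches the paper's: restrict to a general $D\in|m_0H|$, use that $E|_D$ is $\mu$-semistable (Maruyama), and read off the vanishings from the restriction long exact sequence. The detour through $E^{\vee\vee}$ is valid but unnecessary; the paper works directly with the torsion-free $E$.

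Two concrete problems. First, the surface terms are mis-identified. For item~(1) at $n=m_0$, taking $k=-q_X$ in the restriction sequence gives
\[
H^0(E|_D(-q_X))\longrightarrow H^1(E(-q_X-m_0))\longrightarrow H^1(E(-q_X))=0,
\]
so one needs $H^0(E|_D(-q_X))=0$, not $H^0(E|_D(-q_X-m_0))$. For item~(2) one twists by $+m_0$ and must kill $H^2(E|_D(-q_X+m_0))$, not an $H^1$; the paper does this by Serre duality on $D$, rewriting it as $\Hom(E|_D,\calo_D(-q_X-r_X))^*=0$, again a semistability vanishing.

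Second, and more seriously, your iteration for $n\ge 2m_0$ does not close. Stepping down by $m_0$ from an arbitrary $n$ you land at $H^1(E(-q_X-r))$ with $r\equiv n\pmod{m_0}$, $0\le r<m_0$; for $r\ne 0$ this is not among the established vanishings, so the induction does not terminate. (Your diagnosis of the excluded range $m_0<n<2m_0$ is also off: the twist $-q_X-n+m_0$ there is already $\le -q_X-1$, so the surface $H^0$ \emph{does} vanish; what is missing is the other input $H^1(E(-q_X-n+m_0))$.) The paper handles $n\ge 2m_0$ differently and avoids iteration altogether: it picks a single $D\in|nH|$, notes that $nH$ is again very ample in that range so a general $D$ is smooth with $E|_D$ $\mu$-semistable, and reruns the one-step argument with this $D$. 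That is exactly where the shape of the hypothesis ``$n=m_0$ or $n\ge 2m_0$'' enters.
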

\begin{proof}
Let $D$ be a general element in the linear system $|m_0H|$. By generality assumption $E|_D$ is $\mu$-semistable (see \cite{Ma}), therefore  $H^0(E|_D(-q_X))=0$, and $H^2(E|_D(-q_X+m_0))\simeq Hom(E|_D,\calo_D(-q_X-r_X))^*=0$. Taking cohomology in the short exact sequence:
\begin{equation}\label{restriction}
0\rightarrow E(-q_X-m_0)\rightarrow E(-q_X)\rightarrow E|_{D}(-q_X)\rightarrow 0
\end{equation}
we therefore get that $H^1(E(-q_X-m_0))=0$; twisting then (\ref{restriction}) by $\calo_X(m_0)$ and taking cohomology, we obtain $H^2(E(-q_X+m_0))$.
These arguments apply to the letter to any general divisor $D\in |nH|, n\ge 2m_0$ since under these assumption $D$ is very  ample and $E|_D$ is $\mu$-semistable.
\end{proof}


\subsection{Stability of rank 2 instanton sheaves}

Clearly, when $i_X$ is odd, every rank 2 instanton sheaf $E$ on $X$ is $\mu$-stable, simply because $\mu$-stability coincides with $\mu$-semistability when $c_1(E)=-1$. 

In \cite{JMaiT}, the authors showed that every non trivial rank 2 instanton sheaf on $X=\p3$ is stable, so that $\op3^{\oplus2}$ is the only properly semistable (meaning semistable but not stable) rank 2 instanton sheaf. In addition, a rank 2 instanton sheaf is properly $\mu$-semistable only when $E^{\vee\vee}=\op3^{\oplus2}$.

The situation is quite different for Fano threefolds of index 2. Indeed, let $E$ be a properly $\mu$-semistable rank 2 instanton sheaf on a Fano threefold $X$ with $i_X=2$. When $E$ is locally free, this is equivalent to say that $h^0(E)>0$; choosing a non trivial section $s\in H^0(E)$, we obtain the exact sequence
\begin{equation}\label{st mu-ss}
0 \longrightarrow \ox \longrightarrow E \longrightarrow \cali_C \longrightarrow 0,    
\end{equation}
and it is easy to check that $\cali_C$ is a rank one instanton, meaning that $C$ is an instanton curve; recall that the latter have been classified in Lemma \ref{lem:instanton curves}. Summing up, we proved the following claim.

\begin{lemma}
Let $E$ be a rank 2 locally free instanton sheaf on a Fano threefold $X$ of index $i_X=2$. If $E$ is properly $\mu$-semistable, then $E$ fits into an exact sequence as in display \eqref{st mu-ss} where $C$ is an instanton curve. In particular, such sheaves are not semistable.
\end{lemma}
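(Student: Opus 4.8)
The plan is to translate the failure of $\mu$-stability into the existence of a nonzero global section of $E$, and then read off the exact sequence \eqref{st mu-ss} as a reverse Serre correspondence. Since $i_X=2$ we have $q_X=1$ and $r_X=0$, so $c_1(E)=0$; as $X$ has Picard rank one this forces $\det E\simeq\ox$, and since $E$ is locally free of rank two the isomorphism \eqref{iso-refl} gives $E\simeq E^{\vee\vee}\simeq E^\vee$. Now $E$ is $\mu$-semistable but, by hypothesis, not $\mu$-stable, so Lemma \ref{lem:stable rk 2}(1.1) together with its converse in (1.3) (which applies because $\rk E=2$) yields $h^3(E\otimes\omega_X)\neq 0$. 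By Serre duality $H^3(E\otimes\omega_X)\simeq\Hom(E,\ox)^*\simeq H^0(E^\vee)^*$, so $h^0(E^\vee)>0$, and hence $h^0(E)>0$ since $E\simeq E^\vee$.

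Next I would fix $0\neq s\in H^0(E)$, regarded as a morphism $s\colon\ox\to E$, and check that the locus where $s$ vanishes has codimension $\geq 2$: otherwise $s$ would factor through an inclusion $\ox(D)\into E$ with $D>0$ effective, and the saturation $F$ of $\ox(D)$ in $E$ would be a subsheaf with torsion-free quotient and $c_1(F)\geq\deg D\geq 1$, hence $\mu(F)>0=\mu(E)$, contradicting $\mu$-semistability. Consequently $\coker(s)$ is torsion-free of rank one with vanishing first Chern class, so $\coker(s)\simeq\cali_C$ for a subscheme $C\subset X$ of codimension $\geq 2$; this is exactly the sequence \eqref{st mu-ss}.

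It then remains to identify $C$ and deduce non-semistability. Twisting \eqref{st mu-ss} by $\ox(-q_X)=\ox(-1)$ and taking cohomology, the vanishings $H^1(\ox(-1))=H^2(\ox(-1))=H^3(\ox(-1))=0$ (Kodaira vanishing and Serre duality, using $i_X=2$) together with the instantonic condition \eqref{instantonic-condition} for $E$ give $H^1(\cali_C(-1))=H^2(\cali_C(-1))=0$; since a rank one torsion-free sheaf is automatically $\mu$-semistable, $\cali_C$ is a rank one instanton sheaf of charge $c_2(\cali_C)=c_2(E)$. If this charge is $0$ then Proposition \ref{rk1} gives $\cali_C\simeq\ox$, so $C=\varnothing$ and $E\simeq\ox^{\oplus 2}$ because $\Ext^1(\ox,\ox)=H^1(\ox)=0$. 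Otherwise Proposition \ref{rk1} (and the remark following it) identifies $C$ with a degree $c_2(E)$ line arrangement, i.e.\ an instanton curve in the sense of Lemma \ref{lem:instanton curves}(2). In this latter case the subsheaf $\ox\into E$ has reduced Hilbert polynomial $p_{\ox}(t)=\chi(\ox(t))$ whereas $p_E(t)=\chi(\ox(t))-\tfrac12\chi(\calo_C(t))$ with $\chi(\calo_C(t))\to+\infty$, so $p_{\ox}(t)>p_E(t)$ for $t\gg 0$ and $E$ is not Gieseker-semistable.

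The cohomology chases are routine; the one genuinely load-bearing step is the first paragraph, where the self-duality $E\simeq E^\vee$ allows Lemma \ref{lem:stable rk 2} to convert ``not $\mu$-stable'' into $h^0(E)>0$. The only point requiring care is the degenerate sheaf $E\simeq\ox^{\oplus 2}$, which is honestly (Gieseker-)semistable and must therefore be read as the exception to the final assertion, corresponding to the empty curve $C=\varnothing$.
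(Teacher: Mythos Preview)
Your proof is correct and follows essentially the same route as the paper: the paper's argument is the short paragraph immediately preceding the lemma, which asserts that for locally free $E$ proper $\mu$-semistability is equivalent to $h^0(E)>0$, then reads off the sequence \eqref{st mu-ss} and observes that $\cali_C$ is a rank one instanton so that $C$ is an instanton curve. You simply unpack each of these steps---using self-duality and Lemma~\ref{lem:stable rk 2} for $h^0(E)>0$, checking the codimension of the zero locus, verifying the instantonic vanishing for $\cali_C(-1)$, and writing out the Hilbert-polynomial comparison for non-semistability---so there is no genuine difference in strategy.

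Your observation about the degenerate case $E\simeq\ox^{\oplus2}$ is apt: this sheaf is properly $\mu$-semistable yet Gieseker-semistable, so the final clause of the lemma must be read as excluding charge $0$ (equivalently, the empty ``curve''). The paper does not flag this explicitly.
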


When $E$ is not locally free, we have that $E^{\vee\vee}$ is a properly $\mu$-semistable locally free instanton sheaf of rank 2; taking the unique (up to scalar multiple) non trivial section $s\in H^0(E^{\vee\vee})$, we get the following commutative diagram
\begin{equation}\label{diag1}
\begin{tikzcd}
&0\arrow[d] & 0 \arrow[d] & \\
  &\cali_{C_1}\arrow[d]\arrow[r] &\calo_X \arrow[d, "s"] \arrow[dr,"\phi"] &   \\
0  \arrow[r] & E\arrow[r]\arrow[d, two heads] & E^{\vee\vee} \arrow[d, two heads]\arrow[r, "q"] & T \arrow[d, "Id"] \arrow [r] &0 \\
0\arrow[r]  & \cali_{C_2}\arrow[r]\arrow[d] & \cali_{C}\arrow[r]\arrow[d] & \coker(\phi) \arrow [r]& 0\\
&0  & 0  & 
\end{tikzcd}
\end{equation}
Here $\phi=q\circ s$; clearly, the kernel of $\phi$ is the ideal sheaf of a pure 1-dimensional scheme, which we denote by $C_1$; remark that if $q=0$, then $C_1$ is empty. Since the cokernel of the inclusion $\cali_{C_1}\hookrightarrow E$ must also be torsion free, we complete the leftmost column. Note that
\begin{equation}\label{vanish-ideal}
H^1(\cali_{C_1}(-1)) = H^2(\cali_{C_2}(-1)) = 0 ~~ \Longleftrightarrow ~~ H^0(\calo_{C_1}(-1)) = H^1(\calo_{C_2}(-1)) = 0.
\end{equation}

\begin{lemma}\label{semistable-index2}
Let $E$ be a rank 2 instanton sheaf on a Fano threefold of index 2. If $E$ is properly semistable, then $E$ is S-equivalent to $\cali_{C_1}\oplus\cali_{C_2}$ where $C_1$ and $C_2$ are instanton curves of the same degree.
\end{lemma}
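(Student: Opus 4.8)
The plan is to start from diagram \eqref{diag1} and extract the numerical and sheaf-theoretic consequences of the hypothesis that $E$ is \emph{semistable} (not merely $\mu$-semistable). First I would record the Chern-class bookkeeping: since $E^{\vee\vee}$ is a properly $\mu$-semistable rank 2 locally free instanton, Lemma \ref{lem:stable rk 2} gives $h^0(E^{\vee\vee})>0$, and the section $s$ produces the instanton curve $C$ with $\cali_C$ a rank 1 instanton. By Proposition \ref{rk1} and Lemma \ref{lem:instanton curves}, $C$ is a degree $d$ line arrangement, where $d = c_2(E^{\vee\vee})$. The bottom row of \eqref{diag1} exhibits $\cali_C$ as an extension of $\coker(\phi)$ by $\cali_{C_2}$, while the middle row relates $T=T_E$, which is a rank 0 instanton of degree $\deg T = c_2(E)-c_2(E^{\vee\vee})$ by Theorem \ref{double-dual-bundle} and Corollary \ref{c3-inst}. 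The key point is that $C_1$ and $C_2$ are l.c.m.\ curves (from the vanishing \eqref{vanish-ideal} and Remark \ref{rmk:lines-arrangement} they will turn out to be line arrangements themselves once we control their Euler characteristics), with $\deg C_1 + \deg C_2 = \deg C = d$.

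Next I would use semistability to force $\deg C_1 = \deg C_2$. The sheaf $\cali_{C_2}$ is a subsheaf of $E$ with torsion-free quotient $\cali_{C_1}$ (leftmost column), so $\mu$-semistability is automatic, but \emph{Gieseker} semistability compares reduced Hilbert polynomials: $p_{\cali_{C_2}}(t) \le p_E(t)$. Writing $\chi(\cali_{C_i}(t)) = \frac{t^3}{6}d_X \cdot(\text{stuff}) - (\text{linear and constant terms involving }\deg C_i\text{ and }\chi(\calo_{C_i}))$, one computes that the subleading coefficients of $p_{\cali_{C_2}}(t)$ and $p_E(t)$ agree (both have $c_1=-1$... wait, $\cali_{C_2}$ has rank 1 and $c_1=0$), so comparison passes to the next coefficient, which encodes $\deg C_2$; combined with the dual inequality obtained from the quotient $E\onto\cali_{C_1}$ (equivalently, testing the subsheaf $\cali_{C_2}\subset E$ against $p_E$ from both sides via the rank 1 pieces), semistability pins down $\deg C_1=\deg C_2 = d/2$. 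Here I would need to be careful that the Hilbert polynomial of $E$, normalized appropriately, has reduced form exactly interpolating those of $\cali_{C_1}$ and $\cali_{C_2}$; in particular semistability of $E$ means $\chi(\calo_{C_1}) = \chi(\calo_{C_2})$ after accounting for $\deg T$, and then Remark \ref{rmk:lines-arrangement} ($\chi(\calo_{C_i})\le \deg C_i$ with equality iff $C_i$ is a line arrangement) forces both to be instanton curves of degree $d/2$.

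Finally I would assemble the S-equivalence statement. Having established that $C_1$ and $C_2$ are instanton curves of equal degree, the leftmost column $0\to\cali_{C_2}\to E\to\cali_{C_1}\to 0$ of \eqref{diag1} realizes $E$ as an extension of two \emph{stable} (being ideal sheaves of nonempty proper subschemes, hence stable rank 1 sheaves) objects with the \emph{same} reduced Hilbert polynomial as $E$ itself; this is precisely a Jordan–Hölder filtration, so $\operatorname{gr}(E) \cong \cali_{C_1}\oplus\cali_{C_2}$ and $E$ is S-equivalent to $\cali_{C_1}\oplus\cali_{C_2}$. The main obstacle I anticipate is the middle step: making the Hilbert-polynomial comparison rigorous requires tracking how $\deg T$ (the length of the 0-dimensional... rather, the degree of the 1-dimensional sheaf $T$) distributes between the two columns, and ruling out the a priori possibility that $C_1$ or $C_2$ fails to be reduced-of-the-right-type — this is exactly where Remark \ref{rmk:lines-arrangement} and the instanton vanishing \eqref{vanish-ideal} must be invoked in tandem to upgrade "l.c.m.\ curve with the right $\chi$" to "degree $d/2$ line arrangement."
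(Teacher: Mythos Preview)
Your approach is the same as the paper's: extract the extension $0\to\cali_{C_1}\to E\to\cali_{C_2}\to 0$ from diagram \eqref{diag1}, force the two ideal sheaves to have half the Hilbert polynomial of $E$, evaluate at $t=-1$ to get $\chi(\calo_{C_j}(-1))=0$, and combine with the vanishings in \eqref{vanish-ideal} to conclude both $C_j$ are instanton curves. (You have the roles of $C_1$ and $C_2$ swapped relative to the diagram---in \eqref{diag1} it is $\cali_{C_1}$ that is the subsheaf---but that is harmless.)

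The genuine gap is your ``dual inequality'' step. The two inequalities you invoke, $p_{\text{sub}}\le p_E$ and $p_{\text{quot}}\ge p_E$, are not independent: since $P_{\cali_{C_1}}+P_{\cali_{C_2}}=P_E$ and both pieces have rank one, each is the other rearranged. So semistability alone cannot pin down $P_{\cali_{C_j}}=\tfrac12 P_E$; you must use that $E$ is \emph{properly} semistable. The fix is direct: since $E$ is not stable there is a saturated rank-one subsheaf $\cali_B\subset E$ with $P_{\cali_B}=\tfrac12 P_E$ and torsion-free quotient $\cali_{B'}$. The composite $\cali_B\hookrightarrow E\hookrightarrow E^{\vee\vee}$ extends to a section $s:\calo_X\to E^{\vee\vee}$, and running diagram \eqref{diag1} with \emph{this} choice of $s$ gives $\cali_{C_1}=\cali_B$ (because $\cali_B\subset\ker(q\circ s)=\cali_{C_1}$, and the torsion quotient $\cali_{C_1}/\cali_B$ must embed in the torsion-free sheaf $E/\cali_B=\cali_{B'}$, hence vanishes). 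Now $P_{\cali_{C_j}}=\tfrac12 P_E$ holds on the nose, and your worry about ``how $\deg T$ distributes'' evaporates: there is no need to separate degree from Euler characteristic, since the full Hilbert-polynomial equality comes for free from the Jordan--H\"older property, and the single vanishing available on each side from \eqref{vanish-ideal} then forces the other $h^i(\calo_{C_j}(-1))$ to zero. The paper's own proof is equally terse on exactly this identification, so you are not missing something it spells out.
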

\begin{proof}
As we have seen above, the hypothesis imply that $E$ must be an extension of ideal sheaves $\cali_{C_1}$ and $\cali_{C_2}$ satisfying
$$ \chi(\cali_{C_j}(t))=\dfrac{1}{2}\chi(E(t))=\chi(\ox(t))-\dfrac{c_2(E)}{2}(t+1). $$
In particular, $\chi(\cali_{C_j}(-1))=0$; the vanishings in display \eqref{vanish-ideal} imply that $C_1$ and $C_2$ must be instanton curves.
\end{proof}

From Lemma \ref{semistable-index2} we deduce the following corollaries.

\begin{corollary}
There are no properly semistable rank 2 instanton of odd charge.
\end{corollary}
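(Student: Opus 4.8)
The plan is to combine Lemma \ref{semistable-index2} with the degree count for instanton curves already available from Lemma \ref{lem:instanton curves} and Remark \ref{rmk:lines-arrangement}. Suppose $E$ is a properly semistable rank 2 instanton of charge $n:=c_2(E)$. By Lemma \ref{semistable-index2}, $E$ is $S$-equivalent to $\cali_{C_1}\oplus\cali_{C_2}$ where $C_1$ and $C_2$ are instanton curves of the same degree, say $d$. So the first step is to relate $d$ to the charge $n$: from the short exact sequences $0\to\cali_{C_j}\to\ox\to\calo_{C_j}\to0$ we get $c_2(\cali_{C_j})=\deg(C_j)=d$ (here I use that $X$ has index $i_X=2$, so $q_X=1$, $r_X=0$, and the instanton normalization is $c_1=0$), and additivity of the total Chern class in the extension $0\to\cali_{C_1}\to E\to\cali_{C_2}\to0$ (equivalently, of $\chi$, as recorded in the proof of Lemma \ref{semistable-index2}) gives $c_2(E)=c_2(\cali_{C_1})+c_2(\cali_{C_2})=2d$. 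Hence $n=2d$ is even, which is exactly the contrapositive of the claim: if $n$ is odd, no properly semistable rank 2 instanton of charge $n$ can exist.

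Concretely I would write: if $E$ were properly semistable of odd charge $n$, Lemma \ref{semistable-index2} would produce instanton curves $C_1,C_2$ of a common degree $d$ with $n=2d$, contradicting the parity of $n$. This is short enough to state in two or three sentences, so the corollary's proof is essentially a one-line deduction from Lemma \ref{semistable-index2} plus the Chern class bookkeeping in its proof.

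The only genuine point to be careful about is the degree/charge identity $c_2(E)=2\deg(C_j)$; this is immediate from the Grothendieck--Riemann--Roch computation already carried out inside the proof of Lemma \ref{semistable-index2}, where $\chi(\cali_{C_j}(t))=\chi(\ox(t))-\tfrac{c_2(E)}{2}(t+1)$ forces $\deg(C_j)=\tfrac{c_2(E)}{2}$. I expect no real obstacle here: the substantive work has already been done in Lemma \ref{semistable-index2}, and the corollary is a parity observation. If one wants to be completely self-contained one can also note that an instanton curve on a Fano threefold with $i_X=2$ is, by Lemma \ref{lem:instanton curves}(2), a degree $d$ line arrangement for some $d\ge 0$, so $d$ is simply its number of line components; but this extra structure is not needed for the parity argument.

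\begin{proof}
Suppose, for contradiction, that $E$ is a properly semistable rank 2 instanton sheaf of odd charge $n=c_2(E)$ on a Fano threefold $X$ of index $i_X=2$. By Lemma \ref{semistable-index2}, $E$ is $S$-equivalent to $\cali_{C_1}\oplus\cali_{C_2}$ with $C_1$ and $C_2$ instanton curves of the same degree $d$. As recorded in the proof of that lemma, $\chi(\cali_{C_j}(t))=\tfrac12\chi(E(t))=\chi(\ox(t))-\tfrac{c_2(E)}{2}(t+1)$, whence $\deg(C_j)=\tfrac{c_2(E)}{2}=\tfrac n2$. Thus $d=\tfrac n2$, forcing $n$ to be even, a contradiction.
\end{proof}
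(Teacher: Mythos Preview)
Your proof is correct and follows exactly the paper's own argument: invoke Lemma \ref{semistable-index2} to write a properly semistable $E$ as an extension of $\cali_{C_1}$ by $\cali_{C_2}$ with $C_1,C_2$ instanton curves of equal degree $d$, and read off $c_2(E)=2d$. The paper's proof is the same one-liner, just stated more tersely.
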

 
\begin{proof}
Each semistable instanton $E$ is S-equivalent to a sheaf of the form $\cali_{C_1}\oplus\cali_{C_i}$ where $C_1$ and $C_2$ are instanton curves of the same degree $d$; thus $c_2(E)=2d$.
\end{proof}

\begin{corollary}
There exists no properly semistable locally free instanton sheaves of charge $>0$.
\end{corollary}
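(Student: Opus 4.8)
The plan is to combine the previous corollary with the structural description of properly semistable locally free instantons given just before it. By the earlier lemma, a rank 2 locally free instanton sheaf $E$ on a Fano threefold of index 2 that is properly $\mu$-semistable fits into an exact sequence $0\to\ox\to E\to\cali_C\to0$ with $C$ an instanton curve; in particular such $E$ is \emph{not} semistable, only properly $\mu$-semistable. Meanwhile, by Lemma \ref{semistable-index2}, a properly \emph{semistable} rank 2 instanton $E$ is $S$-equivalent to $\cali_{C_1}\oplus\cali_{C_2}$ with $C_1,C_2$ instanton curves of the same degree $d$. If moreover $E$ is locally free, I want to derive a contradiction unless $d=0$ (which forces $E\simeq\ox^{\oplus2}$, charge $0$).

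First I would note that, being locally free, $E^{\vee\vee}=E$, and since $E$ is properly semistable it is in particular properly $\mu$-semistable, so $h^0(E)>0$ by Lemma \ref{lem:stable rk 2} (the normalized sheaf has $c_1=0$, so $\mu$-instability of a rank 1 subsheaf forces a section of $E$, equivalently $h^3(E\otimes\omega_X)>0$). Thus the sequence \eqref{st mu-ss} exists. Now the point is that this subsheaf $\ox\hookrightarrow E$ is a \emph{subbundle}: its quotient $\cali_C$ would have to be torsion free (it is, by construction of the Serre sequence) but for $E$ locally free and the inclusion saturated, the destabilizing $\ox$ and the quotient $\cali_C$ both have reduced Hilbert polynomial strictly less than... — here is the crux. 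If $E$ were semistable, then for the subsheaf $\ox\subset E$ we would need $p_{\ox}(t)\le p_E(t)$; but $p_E(t)=\tfrac12\chi(E(t))/\,\text{(leading coeff)}$ and one computes $\chi(E(t))=2\chi(\ox(t))-d(t+1)$ (using $c_1(E)=0$, $c_2(E)=d$... wait, charge here equals $c_2$), so $p_E$ is strictly \emph{smaller} than $p_{\ox}$ in the relevant coefficients whenever $d>0$. Hence $\ox$ destabilizes $E$, contradicting semistability. Therefore $d=0$, i.e. the instanton curve $C$ is empty and $E\simeq\ox^{\oplus2}$, which has charge $0$.

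Concretely, the key steps in order are: (i) assume $E$ is a properly semistable locally free rank 2 instanton of charge $c_2(E)=n>0$; (ii) properly semistable $\Rightarrow$ properly $\mu$-semistable $\Rightarrow$ $h^0(E)>0$ via Lemma \ref{lem:stable rk 2}(1.3) (after normalizing, $c_1=0$); (iii) pick $0\ne s\in H^0(E)$ giving $0\to\ox\to E\to\cali_C\to0$ with $C$ an instanton curve (Lemma \ref{lem:instanton curves}, as in the displayed sequence \eqref{st mu-ss}); (iv) compare reduced Hilbert polynomials: $p_{\ox}(t)$ versus $p_E(t)$, using that $\chi(\cali_C(t))=\chi(\ox(t))-\chi(\calo_C(t))$ and that an instanton curve has $\chi(\calo_C(t))=d\cdot t+d\cdot q_X$ with $q_X=1$ and $d=\deg C>0$ whenever $C\ne\emptyset$ (here $d=n$); conclude $p_{\ox}(t)>p_E(t)$ for large $t$, so $\ox$ strictly destabilizes $E$; (v) this contradicts semistability, so $n=0$ and $E\simeq\ox^{\oplus2}$, but then $E$ is properly semistable of charge $0$, not charge $>0$ — done.

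I expect the only mildly delicate point to be step (ii)–(iii): checking that properly semistable (not merely properly $\mu$-semistable) genuinely forces a global section and hence the Serre sequence with an \emph{instanton} curve $C$ — but this is exactly the content of the discussion culminating in sequence \eqref{st mu-ss} and the lemma immediately preceding this corollary, so it can be cited rather than reproved. Everything else is the Hilbert-polynomial bookkeeping of step (iv), which is routine given Remark \ref{rmk:lines-arrangement} and the formula $\chi(\calo_C)=d$ for line arrangements; alternatively one observes directly that $\ox$ and $\cali_C$ have the same rank-normalized leading term but $p_{\ox}$ has the larger subleading coefficient as soon as $C\neq\emptyset$. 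There is no real obstacle here: the statement is essentially a corollary of the preceding lemma once one records that a non-empty instanton curve has positive degree.
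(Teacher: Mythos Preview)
Your approach is correct but takes a different route from the paper's. You argue by Hilbert-polynomial comparison: a properly semistable locally free instanton $E$ of charge $n>0$ is in particular properly $\mu$-semistable, hence admits a section giving $0\to\ox\to E\to\cali_C\to 0$ with $C$ a nonempty instanton curve of degree $n$; then $\chi(\ox(t))-\tfrac12\chi(E(t))=\tfrac{n}{2}(t+1)>0$, so $\ox$ strictly Gieseker-destabilizes $E$, contradicting semistability. This is essentially the ``In particular, such sheaves are not semistable'' clause of the lemma immediately preceding Lemma~\ref{semistable-index2}, which you could simply cite once you note that $C\neq\emptyset$ when $n>0$.

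The paper instead invokes Lemma~\ref{semistable-index2} to write $E$ as an extension $0\to\cali_{C_1}\to E\to\cali_{C_2}\to 0$ with $C_1,C_2$ instanton curves of degree $c_2(E)/2>0$, and then uses a local argument: $\cali_{C_1}$ has depth $2$ at every point of $C_1$, hence so does $E$, which therefore cannot be locally free. Your route is purely numerical and reaches the contradiction on the semistability side; the paper's is homological and reaches it on the local-freeness side. Both are short and valid; yours meshes more naturally with the lemma just before, while the paper's depth argument has the virtue of pinpointing exactly where local freeness breaks down.
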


\begin{proof}
If $E$ is semistable and $c_2>0$ then $E$ fits in 
$$0\rightarrow \cali_{C_1}\rightarrow E\rightarrow \cali_{C_2}\rightarrow 0$$
with $C_i$ being instanton curves of degree $\deg(C_1)=\deg(C_2)=\frac{c_2(E)}{2}.$
From this short exact sequence we compute that $E$ has depth 2 along all points $x\in C_1$ hence $E$ can not be locally free. 
\end{proof}

At the beginning of the section we observed that there are no properly $\mu$-semistable non locally-free instanton sheaves on Fano threefolds of odd index (since on these varieties $\mu$-semistable rank 2 bundles are $\mu$-stable) and that each properly $\mu$-semistable non locally instanton $E$ on $\D{P}^3$ satisfies $E^{\vee\vee}\simeq \calo_{\D{P}^3}^{\oplus 2}$.
The Fano threefolds $X$ of index $i_X=2$ are the only ones carrying families of properly $\mu$-semistable non locally free instanton sheaves $E$ such that $c_2(E^{\vee\vee})>0$.
Moreover, even if every properly $\mu$-semistable instanton bundle $F$ with $c_2(F)>0$ is Gieseker unstable,
the instanton sheaves $E$ obtained as elementary transformation of $F$ along rank 0 instantons might be semistable or even stable.

\begin{lemma}
Let $(F,T,q)$ be, respectively, a properly $\mu$-semistable rank 2 instanton bundle $F$ of charge $n>0$ a rank 0 instanton $T$ of multiplicity $d$ and an epimorphism $q:F\twoheadrightarrow T$. Let $E$ be the sheaf defined as $E:=\ker(q)$. Then $E$ is stable, resp. properly semi-stable, if and only if $\forall s \in H^0(F), \ s\ne 0$, $ch_2(\im(q\circ s))> \frac{n+d}{2}$, resp. if and only if $T$ is an instanton curve and $\forall s \in H^0(F),\: s\ne 0$
$\im(q\circ s)$ is an instanton curve of degree $\frac{n+d}{2}$.
\end{lemma}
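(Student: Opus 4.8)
The plan is to analyze the short exact sequence $0\to E\to F\xrightarrow{q} T\to 0$ together with the commutative diagram in display \eqref{diag1} adapted to this setting, and to characterize (semi)stability of $E$ by examining all its potential destabilizing subsheaves. Since $E$ is a rank 2 sheaf with $c_1(E)=0$ and reduced Hilbert polynomial $p_E(t)$, a destabilizing subsheaf must be a rank 1 subsheaf, hence (after saturating) of the form $\cali_{C_1}(a)$ for some curve $C_1$ and integer $a$; the saturation does not decrease the Hilbert polynomial, so it suffices to rule out saturated rank 1 subsheaves. First I would observe that any nonzero morphism $\ox(a)\to E$ with $a\ge 0$ would compose with $E\hookrightarrow F$ to give a nonzero section of $F(-a)$; since $F$ is $\mu$-semistable with $c_1(F)=0$ this forces $a=0$, so destabilizing subsheaves of $E$ correspond exactly to nonzero sections $s\in H^0(E)\subset H^0(F)$. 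Thus the whole problem reduces to computing, for each $0\ne s\in H^0(F)$, the reduced Hilbert polynomial of the saturation of the image of $s$ inside $E$.

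Next I would run the snake-lemma diagram: given $0\ne s\in H^0(F)$, view it as $s:\ox\to F$, set $\phi:=q\circ s:\ox\to T$, and let $C_1:=Z(\ker\phi)$ be the (pure 1-dimensional, or empty) scheme with $\cali_{C_1}=\ker\phi$. Because $F$ is locally free of rank 2 and $s$ is a section of a $\mu$-semistable bundle with $c_1=0$, the zero locus of $s$ has codimension $\ge 2$, so $\coker(s:\ox\to F)=\cali_{Z}$ for a curve $Z=\im\,\text{(Serre)}$; then $\cali_{C_1}=\ker\phi\hookrightarrow E$ has torsion-free cokernel $\cali_{C_2}$ fitting in $0\to \cali_{C_2}\to\cali_Z\to\coker\phi\to 0$, exactly as in \eqref{diag1}. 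The key numerical input is that $\im\phi=\im(q\circ s)$ is a quotient of $T$, hence a rank 0 instanton-type sheaf, with $\chi((\im\phi)(t))=e(t+q_X)$ for $e:=\mathrm{ch}_2(\im\phi)\le d$, and $\chi(\cali_{C_1}(t))=\chi(\cali_{C_2}(t))$ once one knows the exact sequences relate everything; concretely one gets $\chi(\cali_{C_1}(t))=\chi(\ox(t))-\tfrac{n+d-e}{2}(t+1)+\text{(correction from }e)$. Here is the arithmetic heart: the subsheaf of $E$ of maximal reduced Hilbert polynomial among those coming from sections $s$ is the one minimizing $\mathrm{ch}_2$ of its rank 1 quotient, and $E$ is semistable iff for every $s$ this reduced Hilbert polynomial is $\le p_E(t)$, which (after computing the leading coefficients) translates precisely into the inequality $\mathrm{ch}_2(\im(q\circ s))\ge\tfrac{n+d}{2}$, with strict inequality giving stability and equality forcing the S-equivalence picture of Lemma \ref{semistable-index2}, i.e. $\cali_{C_1}\oplus\cali_{C_2}$ with $C_1,C_2$ instanton curves of degree $\tfrac{n+d}{2}$.

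I would then close the argument as follows. For the ``if'' direction: assume $\mathrm{ch}_2(\im(q\circ s))>\tfrac{n+d}{2}$ for all nonzero $s$; since every saturated rank 1 subsheaf of $E$ arises from such an $s$ and has reduced Hilbert polynomial strictly below $p_E$, $E$ is stable. For the proper-semistability case, the equality $\mathrm{ch}_2(\im(q\circ s))=\tfrac{n+d}{2}$ makes $\cali_{C_1}\hookrightarrow E$ have the same reduced Hilbert polynomial as $E$, so $E$ is properly semistable and the quotient $\cali_{C_2}$ also has this reduced Hilbert polynomial; computing $\chi(\cali_{C_j}(-q_X))=0$ and invoking the vanishings in display \eqref{vanish-ideal} (exactly as in the proof of Lemma \ref{semistable-index2}) shows $C_1$ and $C_2$ are instanton curves of degree $\tfrac{n+d}{2}$, and running this for $s$ realizing $\im\phi$ of full multiplicity $d$ shows $T$ itself (as $\im\phi$ in that case, or via $\coker\phi$) must be an instanton curve. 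For the ``only if'' direction one contrapositive: if some $s$ has $\mathrm{ch}_2(\im(q\circ s))<\tfrac{n+d}{2}$ then $\cali_{C_1}\subset E$ strictly destabilizes, so $E$ is not semistable; if equality holds for some $s$ but the curves are not instanton curves, the computation of $\chi(\cali_{C_j}(-q_X))$ forces a contradiction with $\mu$-semistability of $E$, so $E$ is not semistable either; hence semistability forces the stated condition and stability forces the strict one. The main obstacle I anticipate is bookkeeping: carefully tracking how $c_2(F)=n$, the multiplicity $d$ of $T$, and $\mathrm{ch}_2(\im(q\circ s))$ enter the Hilbert polynomials of $\cali_{C_1}$ and $\cali_{C_2}$ through the two horizontal and one vertical exact sequences of \eqref{diag1}, and making sure that ``saturated rank 1 subsheaf'' is genuinely equivalent to ``image of a section of $F$'' (which uses torsion-freeness of $E/\cali_{C_1}$ and the fact that $H^0(\ox(a))\to H^0(F(a))$ vanishes for $a\ge 1$ by $\mu$-semistability of $F$).
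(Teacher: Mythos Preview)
Your overall strategy---reduce to rank~1 saturated subsheaves of $E$, identify these with sections $s\in H^0(F)$ via the diagram \eqref{diag1}, and compare Hilbert polynomials---is exactly the paper's approach. However, there is a genuine error that breaks the argument at the crucial step.

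You write that $\im\phi=\im(q\circ s)$ ``is a quotient of $T$, hence a rank~0 instanton-type sheaf, with $\chi((\im\phi)(t))=e(t+q_X)$.'' This is wrong on two counts. First, $\im\phi$ is a \emph{subsheaf} of $T$, not a quotient. Second, and more importantly, a subsheaf of a rank~0 instanton need not be a rank~0 instanton: from $\im\phi\hookrightarrow T$ you get $h^0((\im\phi)(-1))=0$, but you do \emph{not} get $h^1((\im\phi)(-1))=0$. So if $B$ is the scheme with $\calo_B=\im\phi$ and $d':=\deg B$, $x:=\chi(\calo_B)$, you only know $x\le d'$ (from $\chi(\calo_B(-1))=-d'+x\le 0$), not $x=d'$. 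This is precisely the quantity that decides the case $d'=\tfrac{n+d}{2}$: the paper computes
\[
\tfrac{1}{2}P_E(t)-P_{\cali_B}(t)=t\bigl(d'-\tfrac{n+d}{2}\bigr)+\bigl(x-\tfrac{n+d}{2}\bigr),
\]
so when $d'=\tfrac{n+d}{2}$ the sign is governed by $x-d'$. If $x<d'$ then $\cali_B$ strictly destabilizes and $E$ is Gieseker-unstable; only when $x=d'$ (equivalently, $B$ is an instanton curve) is $E$ properly semistable. Your assumption $x=e=d'$ collapses this dichotomy and makes the ``instanton curve'' hypothesis in the statement appear superfluous. Relatedly, your formula $\chi(\cali_{C_1}(t))=\chi(\ox(t))-\tfrac{n+d-e}{2}(t+1)+\cdots$ is not correct; it should simply be $\chi(\ox(t))-(d't+x)$.

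Two further points. Your contrapositive ``if equality holds but the curves are not instanton curves, this contradicts $\mu$-semistability of $E$'' is off: $E$ remains $\mu$-semistable regardless (it has $c_1=0$ and sits inside the $\mu$-semistable $F$); what fails is Gieseker semistability, via the inequality $x<d'$ above. Finally, your argument that $T$ itself is an instanton curve (``running this for $s$ realizing $\im\phi$ of full multiplicity $d$'') does not work, since the hypothesis fixes $\deg(\im\phi)=\tfrac{n+d}{2}$, not $d$. The paper instead shows that $\coker(q\circ s)$ is a rank~0 instanton (using that both $T$ and $\calo_B$ are), observes it is a quotient of $\cali_C$ supported on the instanton curve $C$, and deduces $\coker(q\circ s)\simeq\calo_{B'}$ for an instanton curve $B'$; then $T$ is an extension of $\calo_{B'}$ by $\calo_B$, hence an instanton curve.
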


\begin{proof}
As usual we start considering the short exact sequence 
$$ 0 \rightarrow E \rightarrow F \xrightarrow{q} T \rightarrow 0; $$
note that the charge of $E$ is $n+d$.

Considering a diagram analogous to the one in display \eqref{diag1}, we see that every subsheaf of $E$ with trivial determinant is the ideal sheaf $\cali_B$ of a scheme $B$ such that $\calo_B=\im(q\circ s)$ for a (hence for each) non zero section $s\in H^0(F)\simeq \D{C}$.
Denoting by $d'$ and $x$ the degree and the Euler characteristic of the curve $B$, respectively, we have:
$$ \dfrac{1}{2} P_E(t) - P_{\cali_B}(t) = -\dfrac{n+d}{2}(t+1)+d't+x =t(d'-\frac{(n+d)}{2}) +x'- \frac{(n+d)}{2} $$ 
It is therefore clear that if ever $d'> \frac{n+d}{2}$, then $E$ is stable whilst $d'< \frac{n+d}{2}$ leads to the unstability of $E$. 
In the case $d'=\frac{(n+d)}{2}$, we can never have stability since \mbox{$x'\le \frac{(n+d)}{2}$} and equality occurs if and only if $B$ is an instanton curve (or, equivalently, a degree $d'$ line arrangement). Indeed for a l.c.m. curve $B$ of degree $d'$, since $h^0(\calo_B(-1))=0$, $\chi(\calo_B(-1))=-d'+x'\le 0$ and equality holds if and only if $B$ is an instanton curve.

This shows that $E$ is stable, resp. properly semistable, if and only if for each non-zero global section $s\in H^0(F), \ ch_2(\im(q\circ s)) > \frac{n+d}{2}$ (resp.  $ch_2(\im(q\circ s)) = \frac{n+d}{2}$ and $\im(q\circ s)$ is an instanton curve).
To conclude the proof of the proposition we still need to show that if $E$ is properly semistable then $T$ itself is an instanton curve. Once again we consider the diagram \eqref{diag1} induced by $s\in H^0(F)$ (recall that $F\simeq E^{\vee\vee}$); since $\im(q\circ s)$ is an instanton curve, $\coker(q\circ s)$ is an instanton as well; moreover by the fact that $\cali_C$ surjects onto $\coker(q\circ s)$, we deduce that $\coker(q\circ s)$ must as well be isomorphic to
$\calo_{B'}$ with $B'$ an instanton curve of degree $\frac{(d-n)}{2}$. This last assertion is due to the fact that a rank 0 instanton $T'$ on a l.c.m. curve $C$ has always degree (as a $\calo_C$-module) $\deg(C)-\chi(\calo_C)\ge 0$ with equality holding if and only if $C$ is an instanton curve (indeed whenever $P_C(t)=\deg(C)t+\deg(C)$, $h^0(\calo_C(-1))=0$ implies $h^1(\calo_C(-1))=0$) and $T'\simeq \calo_C$. Therefore $T$ is an instanton curve extension of $\calo_{B'}$ by $\calo_B$. 
\end{proof}


\subsection{Instantons via Serre}

Let $X$ be a Fano threefold of Picard rank 1, index $i_X$ and take a rank 2 instanton sheaf $E$ of charge $c_2$. Following the Serre correspondence outlined in Section \ref{sec:serre}, we choose a section $s\in H^0(E(n))$ with torsion-free cokernel and obtain a short exact sequence
$$ 0 \longrightarrow \ox(-n) \longrightarrow E \longrightarrow \cali_C(n-r_X) \longrightarrow 0 $$
which yields to a l.c.m. curve $C\subset X$; its arithmetic genus $p_a(C)$ and the degree $d$ are given by
\begin{align}\label{genus-degree-inst}
p_a(C)&=1-[(d_Xn^2-d_Xnr_X+c_2)(\frac{i_X}{2}+r_X-n)+\frac{1}{2}r_X(nr_X-n^2-c_2)]\\
d&= d_Xn^2-d_Xnr_X+c_2;
\end{align}
moreover, its sheaf of ideals will satisfy the cohomological conditions:
\begin{equation}\label{instantonic-curve}
H^0(\cali_C(n-r_X-1))=0, \ H^i(\cali_{C}(n-r_X-q_X))=0, \ i=1,2.
\end{equation}

Our aim now is to characterize in detail those curves that "Serre correspond" to non locally free instanton sheaves of rank 2. 

We consider therefore a non locally free instanton sheaf $E$ and we let $n$ be a non-negative integer such that $h^0(E(n)>0$; take $s\in H^0(E(n))$ such that $\coker(s)$ is a torsion free sheaf, and let $s^{\vee\vee}\in H^0(E^{\vee\vee}(n))$ be the image of $s$ via the injective map $ H^0(E(n))\hookrightarrow H^0(E^{\vee\vee}(n)).$ According to the argument just below the diagram in display \eqref{snake-global-sect}, we obtain the following commutative diagram
\begin{equation}\begin{split}\label{diag2}
\xymatrix{
& 0\ar[d] & 0\ar[d] & & \\
& \ox(-n)\ar[d]^{s}\ar@{=}[r] & \ox(-n)\ar[d]^{s^{\vee\vee}} & & \\
0\ar[r] & E \ar[r]\ar[d] & E^{\vee\vee} \ar[r]\ar[d] & T_E \ar[r]\ar@{=}[d] & 0 \\
0\ar[r] & \cali_{C}(n-r_X) \ar[r]\ar[d] & \cali_{C'}(n-r_X) \ar[r]\ar[d] & T_E \ar[r] & 0\\
& 0 & 0 & &
}\end{split}\end{equation}
where $C$ and $C'$ are the curves corresponding to the pairs $(E,s)$ and $(E^{\vee\vee},s^{\vee\vee})$, respectively; Lemma \ref{lem:pure1d} guarantees that $C'$ is l.c.m. To figure out the associated extension classes, we note that the short exact sequence in display \eqref{exts-serre} can be rewritten as follows
\begin{equation}\label{ses-canonical}
0\rightarrow \omega_{C'}(r_X+i_X-2n)\rightarrow \omega_{C}(r_X+i_X-2n)\rightarrow \inext^2(T_E(n),\calo_X)\rightarrow 0.
\end{equation}

\begin{lemma}\label{sheaves-to-extension}
If $(C,\xi)$ corresponds to a pair $(E(n),s)$ where $E$ is a non locally free instanton sheaf of rank 2, and $n$ is a non negative integer, then the pair $(C',\xi')$ corresponding to $(E^{\vee\vee}(n),s^{\vee\vee})$ satisfies the following conditions:
\begin{enumerate}
\item $0\to\cali_C\to\cali_{C'}\to T_E(r_X-n)\to0$;
\item $\xi$ is the image of $\xi'\in H^0(\omega_{C'}(i_X+r_X-2n))$ under the inclusion:
$$ 0\rightarrow H^0(\omega_{C'}(i_X+r_X-2n))\xrightarrow{\iota} H^0(\omega_{C}(i_X+r_X-2n)) $$
\end{enumerate}
\end{lemma}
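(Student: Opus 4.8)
The plan is to read off both claims directly from the commutative diagram in display \eqref{diag2}, which has already been constructed, together with the identification of extension classes furnished by the Serre correspondence of Theorem \ref{thm:serre}. For item (1), I would start from the middle horizontal row and the bottom horizontal row of \eqref{diag2}: the middle column gives $\ox(-n)\hookrightarrow E^{\vee\vee}\to\cali_{C'}(n-r_X)$ and the left column gives $\ox(-n)\hookrightarrow E\to\cali_C(n-r_X)$, while the rightmost vertical map is the identity on $T_E$. Applying the snake lemma (or simply chasing the diagram) to the two bottom rows shows that the induced map $\cali_C(n-r_X)\to\cali_{C'}(n-r_X)$ is injective with cokernel $T_E$; twisting by $\ox(r_X-n)$ then yields the short exact sequence
\[
0\to\cali_C\to\cali_{C'}\to T_E(r_X-n)\to 0,
\]
which is exactly (1). (One should note that this is consistent with, and indeed refines, the inclusion $C'\subset C$ already observed just after \eqref{snake-global-sect}, and that $T_E(r_X-n)$ is supported in dimension $\le 1$ so the quotient sheaf is as claimed.)

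For item (2), I would invoke the construction side of Theorem \ref{thm:serre}: the extension class of the bottom row of \eqref{diag2} for the pair $(E,s)$ lives in $\Ext^1(\cali_C(n-r_X),\ox(-n))$, and via the chain of isomorphisms in \eqref{isos} it corresponds to a section $\xi\in H^0(\omega_C\otimes\omega_X^{-1}(-n)\otimes\ox(-(n-r_X)))=H^0(\omega_C(i_X+r_X-2n))$ (using $\omega_X^{-1}=\ox(i_X)$ and collecting the twists); similarly $(E^{\vee\vee},s^{\vee\vee})$ gives $\xi'\in H^0(\omega_{C'}(i_X+r_X-2n))$. The point is that the two extensions in \eqref{diag2} are compatible: the middle column maps to the left column, so the extension class $\xi$ is the pullback of $\xi'$ along the map $\cali_C\to\cali_{C'}$. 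Under the isomorphisms of \eqref{isos} this pullback becomes precisely the inclusion
\[
0\to H^0(\omega_{C'}(i_X+r_X-2n))\xrightarrow{\iota}H^0(\omega_C(i_X+r_X-2n))
\]
appearing in the already-established short exact sequence \eqref{ses-canonical} (restricted to global sections, noting $H^0$ of the quotient $\inext^2(T_E(n),\ox)$ fits to the right). Thus $\xi=\iota(\xi')$, which is (2).

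The main obstacle I anticipate is purely bookkeeping: checking that the identification of $\Ext^1$ with $H^0$ of the dualizing sheaf in \eqref{isos} is genuinely compatible with the vertical maps of \eqref{diag2}—i.e., that the square relating $\Ext^1(\cali_{C'}(n-r_X),\ox(-n))\to\Ext^1(\cali_C(n-r_X),\ox(-n))$ to $H^0(\omega_{C'}(\cdots))\to H^0(\omega_C(\cdots))$ commutes. This is a naturality statement for the local-to-global spectral sequence \eqref{ss-loc-glob-ext} and for local duality, applied to the morphism $\calo_{C}\to\calo_{C'}$ of structure sheaves (equivalently $\cali_C\hookrightarrow\cali_{C'}$); it follows from functoriality of all the constructions involved, but one must make sure the twists are tracked correctly and that the relevant higher cohomology groups $H^1,H^2$ of the line bundles vanish (which holds on a Fano threefold by Kodaira vanishing, as recorded in Section \ref{sec:fano}) so that the edge maps of the spectral sequence are isomorphisms exactly where needed. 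Everything else is a direct diagram chase in \eqref{diag2} combined with \eqref{exts-serre}–\eqref{ses-canonical}.
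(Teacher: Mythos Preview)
Your proposal is correct and follows essentially the same line as the paper. Item (1) is, as you say, immediate from the bottom row of diagram \eqref{diag2}; for item (2) the paper verifies the compatibility you flag as the ``main obstacle'' not by invoking abstract functoriality of the spectral sequence, but by starting from an arbitrary $\zeta'\in H^0(\omega_{C'}(i_X+r_X-2n))$, building the corresponding extension $F'$, and applying the snake lemma to produce a diagram of the shape \eqref{diag2} whose left column realizes $\iota(\zeta')$ --- so the check is done by explicit construction rather than naturality, but the content is the same. One small slip: in \eqref{diag2} the left column maps \emph{to} the middle column, not the other way around; what makes $\xi$ the image of $\xi'$ is the contravariance of $\Ext^1(-,\ox(-n))$ in the first variable applied to $\cali_C\hookrightarrow\cali_{C'}$.
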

\begin{proof}
The first item is just the bottom line of diagram in display \eqref{diag2}.
As for the second item, applying $\inhom(\ \cdot\ ,\calo_X(r_X-2n))$ to the short exact sequence of point $(1)$ and taking global section, we obtain:
\begin{equation}\label{ses-canonical-gs}
 0\rightarrow H^0(\omega_{C'}(i_X+r_X-2n))\xrightarrow{\iota} H^0(\omega_{C}(i_X+r_X-2n)) \to  H^0(\inext^2(T_E(n),\calo_X))
\end{equation}
where the r.s.t and the middle term are isomorphic to \mbox{$\Ext^1(\cali_{C'}(2n-r_X),\calo_X)$} and $\Ext^1(\cali_{C}(2n-r_X),\calo_X)$, respectively. Note now that, more generally, given $\zeta'\in H^0(\omega_{C'}(i_X+r_X-2n))$ corresponding to $0\rightarrow \calo_X\xrightarrow{s'} F'\rightarrow \cali_C'(2n-r_X)\rightarrow 0$, applying the snake Lemma we obtain a commutative diagram:
\begin{equation}\begin{split}
\xymatrix{
& 0\ar[d] & 0\ar[d] & & \\
& \ox(-n)\ar[d]^{r}\ar@{=}[r] & \ox(-n)\ar[d]^{s'} & & \\
0\ar[r] & E' \ar[r]\ar[d] & F' \ar[r]\ar[d] & T_E \ar[r]\ar@{=}[d] & 0 \\
0\ar[r] & \cali_{C}(n-r_X) \ar[r]\ar[d] & \cali_{C'}(n-r_X) \ar[r]\ar[d] & T_E \ar[r] & 0\\
& 0 & 0 & &
}\end{split}\end{equation}
In particular, $F'\simeq E'^{\vee\vee},$ $s'=r^{\vee\vee}$ and $\iota(\zeta')\in H^0(\omega_{C}(i_X+r_X-2n))$ corresponds to the short exact sequence defined by the first column of the diagram. Accordingly $\xi=\iota(\xi')$ which also implies that $\xi$ lies in the kernel of the morphism
\mbox{$H^0(\omega_{C}(i_X+r_X-2n)) \to  H^0(\inext^2(T_E(n),\calo_X))$.}
\end{proof}

\begin{remark}
Note that whenever we are given a pair of l.c.m curve $C, \: C'$ whose sheaves of ideals fit $0\to\cali_C\to\cali_{C'}\to T_E(r_X-n)\to0$, we obtain a short exact sequence like the one in display \eqref{ses-canonical} and, taking global sections, a short exact sequence like the one in display \eqref{ses-canonical-gs}.
In particular, if $n\ge r_X+q_X$, then $H^0(\inext^2(T_E(n),\calo_X))=0$ (recall that $\inext^2(T,\calo_X(-r_X))$ is a rank 0 instanton) which means that there are no instanton bundles corresponding to the curve $C$ and every $\xi\in H^0(\omega_{C}(r_X+i_X-2n))$ corresponds to a non locally free instanton sheaf that is singular along $\supp(T_E)$.
The only cases in which a curve can correspond both to a locally free and and to a non locally free instanton sheaf occur therefore for $i_X=3,4$ and $n=1$. Since a curve $C$ os such a kind satisfies (cf. \ref{instantonic-curve}) $H^i(\cali_C(-1))=H^{i-1}(\calo_C(-1))=0, \ i=1,2$, from remark \ref{rmk:lines-arrangement}, we see that the only curves corresponding both locally free and non-locally free rank 2 instantons of charge $n$ are the lines arrangement of degree $n+1-r_X$ (degree of $C$ is computed applying \ref{genus-degree-inst}). An example of a family of instanton bundles corresponding to a degree $n$ line arrangement $C$ and degenerating to a non locally free instanton, still corresponding to $C$ was exhibited in example \ref{ex:family}.
\end{remark}

Next, we consider the reverse construction: let $(C',\xi')$ be a pair consisting of a l.c.i curve $C$ satisfying
$$ H^i(\cali_{C'}(n-r_X-q_X))=0, \ i=1,2, $$
and a nowhere vanishing section $\xi'\in H^0(\omega_{C'}(i_X+r_X-2n))$. Considering the short exact sequence
\begin{equation}\label{sqc:f}
0 \longrightarrow \ox(-n) \stackrel{r}{\longrightarrow} F \longrightarrow \cali_{C'}(n-r_X) \longrightarrow 0
\end{equation}
given by regarding $\xi'$ as a class in $\Ext^1(\cali_{C'}(n-r_X),\ox(-n))$ and the second part of Remark \ref{rem:five}, it follows that the rank 2 sheaf $F$ in the corresponding pair $(F(n),r)$ is a locally free instanton sheaf.

\begin{lemma}
Any pair $(C,\xi)$ consisting of
\begin{enumerate}
\item a l.c.m. curve $C$ containing $C'$ such that $T:=\cali_{C'}(n-r_X)/\cali_C(n-r_X)$ is a rank 0 instanton sheaf;
\item a section $\xi\in H^0(\omega_{C}(i_X+r_X-2n))$ lying in the image of the induced map
$$ H^0(\omega_{C'}(i_X+r_X-2n))\hookrightarrow H^0(\omega_C(i_X+r_X-2n)) $$
\end{enumerate}
corresponds to a pair $(E(n),s)$ where $E$ is a non locally free rank 2 instanton sheaf which is singular along $\supp(T)$.
\end{lemma}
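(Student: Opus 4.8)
The plan is to run the reverse of the construction in Lemma \ref{sheaves-to-extension}, building the pair $(E(n),s)$ from $(C,\xi)$ by a snake-lemma argument and then verifying that the resulting $E$ is a non locally free rank 2 instanton sheaf. First I would use the section $\xi\in H^0(\omega_C(i_X+r_X-2n))\simeq\Ext^1(\cali_C(n-r_X),\ox(-n))$ to produce a short exact sequence $0\to\ox(-n)\xrightarrow{s} E\to\cali_C(n-r_X)\to0$, so that $(E(n),s)$ is the pair Serre-corresponding to $(C,\xi)$ via Theorem \ref{thm:serre}. Since $C$ is l.c.m. by hypothesis (1), Lemma \ref{lem:pure1d} applies, but it is cleaner to exhibit $E^{\vee\vee}$ directly: the hypothesis $0\to\cali_C\to\cali_{C'}\to T(r_X-n)\to0$ of item (1) together with hypothesis (2), which says $\xi=\iota(\xi')$ for some $\xi'\in H^0(\omega_{C'}(i_X+r_X-2n))$, lets me form $F$ from $\xi'$ as in display \eqref{sqc:f}, and then the snake lemma applied to the map of extensions (exactly as in the unlabeled diagram inside the proof of Lemma \ref{sheaves-to-extension}, read in reverse) yields a commutative diagram with $E\hookrightarrow F$, cokernel $T$, and bottom row $0\to\cali_C(n-r_X)\to\cali_{C'}(n-r_X)\to T\to0$.

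Next I would verify the three things that make $E$ the desired sheaf. \emph{$E$ is a rank 2 instanton:} $c_1(E)=-r_X$ is immediate from the defining sequence; $\mu$-semistability follows since $E\subset F$ with $\mu(E)=\mu(F)$ and $F$ is $\mu$-semistable (being a locally free instanton by Remark \ref{rem:five}(2), as $C'$ is l.c.i. and $\xi'$ is nowhere vanishing); and the instantonic vanishing $H^i(E(-q_X))=0$ for $i=1,2$ follows from the cohomology sequence of $0\to E\to F\to T\to0$ twisted by $\ox(-q_X)$, using that $F$ is an instanton and $T$ a rank 0 instanton (so $H^i(T(-q_X))=0$ for $i=0,1$). \emph{$E^{\vee\vee}\simeq F$:} apply $\inhom(\,\cdot\,,\ox)$ to $0\to E\to F\to T\to 0$; since $\dim T=1$ we get $E^\vee\simeq F^\vee$, hence $E^{\vee\vee}\simeq F^{\vee\vee}\simeq F$ because $F$ is locally free. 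In particular $E$ is not locally free (its quotient $F/E\simeq T\ne0$ would otherwise force $T$ to be locally free on $X$, impossible for a pure $1$-dimensional sheaf). \emph{$\sing(E)=\supp(T)$:} this is the equality $\supp(T_E)=\sing(E)$ from the Remark following the Corollary to Theorem \ref{double-dual-bundle}, valid here because $E^{\vee\vee}=F$ is locally free; and $T_E=E^{\vee\vee}/E=F/E=T$.

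Finally I should check that the pair $(E(n),s)$ just produced does Serre-correspond to $(C,\xi)$ — i.e. that the extension class of $0\to\ox(-n)\to E\to\cali_C(n-r_X)\to0$ is $\xi$ under the isomorphism \eqref{isos}. This is built into the construction: the snake-lemma diagram identifies the first column of $(E,F,T)$ with the image of $\xi'$ under $\iota$, which is $\xi$ by hypothesis (2), exactly as in the last paragraph of the proof of Lemma \ref{sheaves-to-extension}. I expect the main obstacle to be purely bookkeeping: one must be careful that the snake lemma applied to the two vertical monomorphisms $\ox(-n)\to E$ and $\ox(-n)\to F$ over the map $\cali_C(n-r_X)\to\cali_{C'}(n-r_X)$ genuinely produces a sheaf $E$ that is torsion free (so that $(E,s)$ is a legitimate Serre pair), and that the quotient $E^{\vee\vee}/E$ is the given $T$ and not some smaller subsheaf; the torsion-freeness of $E$ follows since it is a subsheaf of the torsion free sheaf $F$, and the identification of the quotient is forced by the commutativity of the diagram, so no real difficulty remains beyond assembling these pieces in the right order.
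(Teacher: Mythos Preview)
Your proposal is correct and follows essentially the same approach as the paper: both build the commutative diagram linking the extension for $F$ (from $(C',\xi')$) to the extension for $E$ (from $(C,\xi)$) via the sequence $0\to\cali_C\to\cali_{C'}\to T(r_X-n)\to0$, and then read off the two short exact sequences $0\to E\to F\to T\to0$ and $0\to\ox(-n)\to E\to\cali_C(n-r_X)\to0$. The only cosmetic difference is the order of construction: the paper defines $E:=\ker(q)$ for $q:F\twoheadrightarrow T$ and then obtains the second sequence, whereas you build $E$ from $\xi$ first and then exhibit the map to $F$; your version is in fact more detailed, since you explicitly verify $\mu$-semistability, $E^{\vee\vee}\simeq F$, and $\sing(E)=\supp(T)$, all of which the paper's proof leaves implicit.
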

\begin{proof}
The sequence in display \ref{sqc:f} and the way the curve $C$ is chosen provides us with the diagram
\begin{equation}\begin{split}\label{diag3}
\xymatrix{
& & 0\ar[d] & & \\
& & \ox(-n)\ar[d]^{r} & & \\
& & F \ar[rd]^{q}\ar[d] & & \\
0\ar[r] & \cali_{C}(n-r_X) \ar[r] & \cali_{C'}(n-r_X) \ar[r]\ar[d] & T \ar[r] & 0\\
& & 0 & &
}\end{split}\end{equation}
The sheaf $E:=\ker(q)$ satisfies two short exact sequences; the first one
$$ 0 \longrightarrow E \longrightarrow F \longrightarrow T \longrightarrow 0 $$
implies that $E$ is an instanton sheaf, while the second
$$ 0 \longrightarrow \ox(-n) \longrightarrow E \longrightarrow \cali_{C}(n-r_X) \longrightarrow 0 $$
induces the section $\xi\in H^0(\omega_C(i_X+r_X-2n))$ which vanishes on $\supp(T)$, and therefore lies in the image of the map given in the statement of the lemma.
\end{proof}

The reason why we chose to portray in detail how the Serre correspondence works for rank 2 instantons is simply due to the fact that we are mainly concerned with moduli spaces of rank 2 sheaves. Of course these arguments can be generalized to instantons of arbitrary rank. Doing so we can in particular provide examples of non-locally free reflexive instantons of rank $>2$ (we recall indeed that in rank 2 the reflexivity of instantons is equivalent to their local freeness).

\begin{example}
We can construct a non locally free reflexive instanton on $X$ being either $\p3$ or a quadric threefold as follows. Let $C\subset X$ be a smooth rational curve  of degree $d\ge 4$ and we consider two linearly independent sections $\xi_i\in H^0(\calo_C((d-2){\rm pt})),\ i=1,2$, whose zero loci intersect along a 0-dimensional scheme $Z\subset C$ of length $d'<d-2$. Since $\calo_C((d-2){\rm pt})\simeq\omega_C(1)$, these two sections correspond to an extension class in  $\Ext^1(\cali_C(2-r_X),\ox(-1)^{\oplus2})$, thus giving rise to a short exact sequence
\begin{equation}\label{rank3-refl}
0\rightarrow \ox(-1)^{\oplus 2}\rightarrow E\rightarrow \cali_C(2-r_X)\rightarrow 0;
\end{equation}
we argue that the middle term $E$ is the sheaf we are looking for. 

Indeed, the exact sequence in display \eqref{rank3-refl} yield 
$$ H^i(E(-q_X))\simeq H^i(\cali_C)=0, \ \forall\: i.$$
Dualizing the same exact sequence and recalling that $\inext^1(\cali_C(2-r_X),\ox)\simeq\omega_C(2)$, we obtain:
$$ 0\rightarrow \ox(r_X-2)\rightarrow E^{\vee}\rightarrow \ox(1)^{\oplus 2}\xrightarrow{\xi} \omega_C(2) \rightarrow \inext^1(E,\ox)\rightarrow 0$$
where the morphism $\xi$ is defined by the two sections 
$(\xi_1,\xi_2)\in H^0(\omega_C(1)^{\oplus 2})$ we started with. By construction, $\xi$ fails to be surjective along $Z$, so that $\inext^1(E,\calo_{X})$ is supported on $Z$.
This, together with the vanishing of $\inext^i(E,\calo_{X})$ for $i>1$ implies that $E$ is reflexive with $\sing(E)=Z$, thus non locally free.

Finally, to see that $E$ is $\mu$-semistable it is enough to check that $H^0(E(r_X-1))=H^0(E^{\vee}(r_X-1))=0$, cf. \cite{OSS}.
$H^0(E(r_X-1))$ vanishes since it is isomorphic to $H^0(\cali_C(1-2r_X))=0$ (this is clearly zero for $r_X=1$ whilst for $r_X=0$ it is ensured by the fact that $C$ is not a planar curve). Setting $F:=\ker(\xi)$, note that $H^0(E^{\vee}(r_X-1))=H^0(F(r_X-1))$, and that the latter coincides with the kernel of the induced map
$$ H^0(\ox(r_X)^{\oplus2}) \stackrel{(\xi_1,\xi_2)}{\longrightarrow} H^0(\omega_C(r_X+1)), $$
given by multiplication by the sections $\xi_i, \ i=1,2$.

When $r_X=0$ (ie. $X=\p3$) the fact that $\xi_i$ are linearly independent is enough to guarantee that this map is injective, thus $H^0(F(r_X-1))=0$, as desired.

If $r_X=1$,  one must argue that $(\xi_1,\xi_2)$ does have not a syzygy $(\sigma_1,\sigma_2)\in H^0(\calo_C(d~{\rm pt}))$ of degree $d$ that lies in the image of the restriction map
$$ H^0(\ox(1)^{\oplus2})\to H^0(\calo_C(1)^{\oplus2}) \simeq  H^0(\calo_C(d~{\rm pt})). $$
This seems to be a generic condition when $d-d'$ is sufficiently large, but we have not been able to prove it.
\end{example}


\section{Instanton sheaves on quadric threefolds}\label{sec:quadrics}

Let $V$ be a 5-dimensional vector space and consider a smooth quadric hypersurface $X\subset \D{P}(V)\simeq \p4$. $X$ is the only Fano 3-fold of Picard rank one and index 3, therefore an instanton sheaf $E$ on $X$ is defined as a torsion free $\mu$-semistable sheaf with $c_1(E)=-1$ and such that:
\begin{equation}\label{instantonic-quadric}
H^i(E(-1))=0, \ i=1,2.
\end{equation}
Recall that since $c_1(E)$ is odd, every instanton sheaf on $X$ is actually $\mu$-stable; this ensures the vanishing of $H^i(E(-1))$ for $i=0,3$ as well (cf. Lemma \ref{instantonic}).
From now on we will only be concerned with instanton sheaves of rank 2 (therefore when referring to an instanton sheaf we will always imply that its rank is 2).

The Chern character of a rank 2 instanton $E$ of charge $n$ is:
\begin{equation}\label{ch-char}
{\rm ch}(E)=\left(2, -[H],~ (1-n)[l],~ \frac{-1}{3} + \frac{n}{2}\right)
\end{equation}
(by Corollary \ref{c3-inst}, $c_3(E)=0$); applying Riemann--Roch, we compute the Hilbert polynomial of $E$: 
\begin{equation}\label{Hilbert}
P_{E}(t)=\frac{2}{3}t^3+2t^2+\left(\frac{7}{3}-n\right)t+ (1-n)
\end{equation}

In this section we present some results on instanton sheaves on $X$.
We will focus our attention on instanton sheaves $E$ of charge 2, emphasising the relation that these sheaves have with the curves corresponding to global sections of $E(1)$ via Serre correspondence.
The Serre correspondence allows us not only to describe the instanton moduli space, but to obtain also a complete picture of the entire Gieseker--Maruyama moduli space $\mathcal{M}:=\mathcal{M}_X(2,-1,2,0)$ of semistable rank 2 sheaves with Chern classes $(c_1,c_2,c_3)=(-1,2,0)$, together with its relation with the Hilbert scheme $\Hilb_{2t+2}(X)$.


\subsection{Instanton sheaves of charge 1}\label{generalities}

Since every instanton sheaf $E$ is a $\mu$-semistable sheaf with $c_1(E)=-1$, the Bogomolov inequality implies that $c_2(E)\ge1$. In the case $c_2=1$, a well known example of instanton bundle is provided by the so called \textit{spinor bundle} which will be henceforth denoted by $\cals$.

Recall that it can be defined as follows, cf. \cite[Definition 1.3]{O-quadrics} for which we refer to for all details in this paragraph. There is an embedding $s:X\to\Grp(1,3)$, the grassmannian of lines in $\p3$, see ; then $\cals:=s^*U$, where $U$ is the universal bundle on $\Grp(1,3)$. This is a $\mu$-stable rank 2 bundle on $X$ with $c_1(\cals)=-1$ and $c_2(\cals)=1$. In addition, $\cals$ is rigid \cite[Theorem 2.1]{O-quadrics}, and every $\mu$-stable rank 2 bundle $E$ on $X$ with $c_1(E)=-1$ and $c_2(E)=1$ is isomorphic to the spinor bundle $\cals$.

Since $h^0(\cals(1))=4$, Serre construction provides the following short exact sequence
\begin{equation} \label{defn-spinor}
0 \longrightarrow \ox(-1) \longrightarrow \cals \longrightarrow \cali_\ell \longrightarrow 0
\end{equation}
where $\ell$ is a line in $X$. It is then easy to see that $\cals$ is a rank 2 instanton bundle of charge 1. This observation allows us to give the following characterization of the family $F(X)$ of lines on $X$. Since $h^0(\cals)=0$ (by stability) we deduce then that $\forall s \in H^0(\cals(1)), \ s\ne 0$, $\coker(s)$ is torsion free and isomorphic to $\cali_{l}(1)$ for a line $l\subset X$ (this last assertion follows from a simple Chern character computation) . Conversely, $\forall\: [l]\in F(X)$, every sheaf fitting in a non-split short exact sequence of the form \eqref{defn-spinor} is a $\mu$-stable vector bundle with $c_1=-1,\ c_2=1$ and is therefore isomorphic to $\cals$. Accordingly $F(X)\simeq \D{P}^3\simeq \D{P}(H^0(\cals(1))$. 

\begin{proposition}
Every rank 2 instanton sheaf of charge $1$ on $X$ is isomorphic to the spinor bundle.
\end{proposition}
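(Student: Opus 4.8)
The plan is to reduce the statement to the already-quoted classification of $\mu$-stable rank $2$ bundles on $X$ with $c_1=-1,\ c_2=1$. Let $E$ be a rank $2$ instanton sheaf of charge $1$. By Theorem \ref{double-dual-bundle}, $E^{\vee\vee}$ is a rank $2$ instanton \emph{bundle}, and $T_E:=E^{\vee\vee}/E$ is a rank $0$ instanton sheaf whenever $T_E\neq 0$. Hence the whole proof reduces to showing that $T_E=0$: once $E$ is known to be locally free, it is automatically $\mu$-stable (because $c_1(E)=-1$ is odd) of rank $2$ with $c_1=-1$ and $c_2=1$, and therefore isomorphic to the spinor bundle $\cals$ by the uniqueness statement recalled earlier in this subsection.

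To prove $T_E=0$, I would compare Chern characters in the exact sequence $0\to E\to E^{\vee\vee}\to T_E\to 0$. By \eqref{ch-char} with $n=1$ we have $\ch_2(E)=0$; if $E^{\vee\vee}$ has charge $n'$ then $\ch_2(E^{\vee\vee})=(1-n')[l]$, while if $T_E\neq 0$ then, being a nonzero pure $1$-dimensional sheaf of some degree $d\geq 1$, it has $\ch_2(T_E)=d[l]$. Additivity of the Chern character gives $1-n'=d\geq 1$, i.e.\ $n'\leq 0$; but $E^{\vee\vee}$ is an instanton sheaf on $X$ with $c_1=-1$, so the Bogomolov inequality forces $n'=c_2(E^{\vee\vee})\geq 1$, a contradiction. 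Hence $T_E=0$ and $E\simeq E^{\vee\vee}$ is locally free, completing the argument as above.

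I do not expect a serious obstacle here, since the two inputs (Theorem \ref{double-dual-bundle} and the uniqueness of $\cals$) are already available; the only points requiring care are the numerical bookkeeping (the identifications $H^2=2[l]$, $[l]\cdot H=1$, and the sign of $\ch_2$ of a $1$-dimensional sheaf, so that the degree of $T_E$ really is a positive multiple of $[l]$) and checking that the Bogomolov bound applies to $E^{\vee\vee}$. As an alternative one could argue via the Serre correspondence — $\chi(E(1))=4$ by \eqref{Hilbert}, and a section of $E(1)$ whose vanishing scheme is, by \eqref{genus-degree-inst}, a line $\ell$ produces a necessarily non-split extension $0\to\ox(-1)\to E\to\cali_\ell\to 0$, identifying $E$ with $\cals$ as in the discussion around \eqref{defn-spinor} — but this route first requires the vanishing $h^i(E(1))=0$ for $i=1,2$, which is exactly what the double-dual argument sidesteps.
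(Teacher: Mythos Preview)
Your argument is correct and is essentially the same as the paper's: both invoke Theorem \ref{double-dual-bundle} to make $E^{\vee\vee}$ an instanton bundle, then use the additivity $c_2(E^{\vee\vee})+\deg(T_E)=c_2(E)=1$ (which is exactly your $\ch_2$ computation rephrased) together with the Bogomolov bound $c_2(E^{\vee\vee})\ge 1$ to force $T_E=0$, whence $E$ is locally free and isomorphic to $\cals$. The only cosmetic difference is that the paper states the numerical relation directly in terms of $c_2$ rather than passing through $\ch_2$.
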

\begin{proof}
Let $E$ be a rank 2 instanton sheaf of charge $1$. If $E$ is reflexive, then it must be locally free and therefore it is isomorphic to the spinor bundle. 

If $E$ is not reflexive, then Theorem \ref{double-dual-bundle} implies that $E^{\vee\vee}$ is a locally free instanton sheaf of charge $c_2(E^{\vee\vee})\ge1$. However, $c_2(E^{\vee\vee})+\deg(T_E)=c_2(E)=1$, thus in fact $c_2(E^{\vee\vee})=1$ and $\deg(T_E)=0$. It follows that $T_E=0$, contradicting the hypothesis that $E$ was not reflexive.
\end{proof}

The following result will also be useful later on.

\begin{lemma}\label{refl-low}
Every $\mu$-stable rank 2 reflexive sheaf with Chern classes $c_1=-1$ and $c_2=1$ is isomorphic to the spinor bundle.
\end{lemma}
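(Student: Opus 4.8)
The plan is to reduce to the locally free case and then invoke the uniqueness of $\cals$ recalled above. So let $F$ be a $\mu$-stable rank $2$ reflexive sheaf on $X$ with $c_1(F)=-1$ and $c_2(F)=1$. By Lemma \ref{rank2-bundle} it is enough to show that $F$ is an instanton sheaf, i.e.\ that $H^1(F(-1))=H^2(F(-1))=0$: such an $F$ would then be a reflexive instanton, hence locally free, hence a $\mu$-stable rank $2$ bundle with $c_1=-1,\ c_2=1$, hence isomorphic to $\cals$. Two ingredients are immediate from $\mu$-stability: a nonzero section of $F$ (resp.\ of $F(-1)$) would produce an embedding $\ox\into F$ (resp.\ $\ox(1)\into F$), violating $\mu(F)=-\tfrac12$, so $H^0(F)=H^0(F(-1))=0$; and by Serre duality $H^3(F(n))\simeq\Hom(F,\ox(-n-3))^{*}$, which vanishes for $n\ge -2$ because the image of such a morphism would be a rank $1$ subsheaf of $\ox(-n-3)$, forcing its rank $1$ kernel to have slope larger than $\mu(F)$. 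Finally, the Grothendieck--Riemann--Roch computation of Corollary \ref{c3-inst} gives $\chi(F(-1))=c_3(F)/2$ and $\chi(F(1))=4+c_3(F)/2$, the latter being $\ge 4$ since $c_3(F)\ge 0$ for reflexive sheaves.

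The key step is a Serre correspondence argument, which I would run as follows \emph{assuming} $H^0(F(1))\ne0$. Choose $s\in H^0(F(1))$ with torsion free cokernel; since $F(1)$ is reflexive of rank $2$ with $\det(F(1))=\ox(1)$ and $c_2(F(1))=1$, and since $H^1(\ox(-1))=H^2(\ox(-1))=0$, Theorem \ref{thm:serre} together with Remark \ref{rem:five} yields a short exact sequence
$$ 0\longrightarrow\ox\longrightarrow F(1)\longrightarrow\cali_C(1)\longrightarrow0 $$
with $C$ an l.c.m.\ curve of degree $c_2(F(1))=1$, hence a line $\ell\subset X$, together with an extension class
$$ \xi\in\Ext^1(\cali_\ell(1),\ox)\;\simeq\;H^0\!\big(\omega_\ell\otimes\omega_X^{-1}\otimes\ox(-1)\big)\;\simeq\;H^0(\calo_\ell)\;\simeq\;\C, $$
using $\omega_X\simeq\ox(-3)$ and $\omega_\ell\simeq\calo_\ell(-2)$. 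The trivial class gives $\ox\oplus\cali_\ell(1)$, which is not reflexive; since $F(1)$ is reflexive we must have $\xi\ne0$, i.e.\ $\xi$ is a nowhere vanishing section of $\calo_\ell$. By Remark \ref{rem:five}(2) this forces $F(1)$ --- hence $F$ --- to be locally free, and then $F\simeq\cals$ as explained above. (In particular $c_3(F)=0$ and the instanton vanishing holds a posteriori, consistently with the first reduction.)

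It remains to establish the nonvanishing $H^0(F(1))\ne0$, and this I expect to be the main obstacle. Since $H^3(F(1))=0$ and $\chi(F(1))=4+c_3(F)/2$, it is equivalent to the estimate $h^2(F(1))<4+c_3(F)/2$, and in fact equivalent to $c_3(F)=0$ (equivalently, to $F$ being locally free): the Euler characteristic alone is insensitive to the possible singularities of $F$, so one needs some geometry to break the circularity. The approach I would take is to restrict to a general hyperplane section $Y=X\cap H$: this is a smooth quadric surface $Y\simeq\p1\times\p1$ avoiding the finitely many points of $\sing(F)$, so $F|_Y$ is a rank $2$ bundle with $c_1(F|_Y)$ of type $(-1,-1)$, $c_2(F|_Y)=1$, and --- being the restriction of a $\mu$-semistable sheaf --- $\mu$-semistable; a short check (ruling out a $\mu$-stable one by the dimension count $4c_2-c_1^2-3=-1<0$, and analysing properly $\mu$-semistable ones) shows the only such bundle on $\p1\times\p1$ is $\calo_Y(-1,0)\oplus\calo_Y(0,-1)$. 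This bundle satisfies $H^{i}(F|_Y)=H^{i}(F|_Y(-1))=0$ for all $i$ and $H^{>0}(F|_Y(1))=0$, $h^0(F|_Y(1))=4$; feeding these into the restriction sequences $0\to F(k-1)\to F(k)\to F|_Y(k)\to0$ ($k=0,1$) gives $H^i(F(-1))\simeq H^i(F)$ for all $i$ and embeds $H^0(F(1))$ into $H^0(F|_Y(1))\simeq\C^4$, so the point reduces to showing that this embedding is nonzero. I expect that last point --- controlling the contribution of a hypothetical nonempty $\sing(F)$ to the cohomology of $F$ so as to force $H^0(F(1))\ne0$, equivalently $c_3(F)=0$ --- to require either a dedicated cohomological computation (e.g.\ chasing further restriction sequences down to $F(-3)$ together with the reflexivity identity $\Hom(F,\ox)\simeq H^0(F(1))$) or an appeal to the classification of low-degree rank $2$ reflexive sheaves on $X$; everything downstream of it is formal.
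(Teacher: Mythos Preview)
Your Serre correspondence step is correct and pleasant: once $H^0(F(1))\ne0$, a nonzero section gives an extension of $\cali_\ell(1)$ by $\ox$ with class in $H^0(\calo_\ell)\simeq\C$, and nontriviality forces this class to be nowhere vanishing, so $F$ is locally free and hence $\cals$. Your identification $F|_Y\simeq\calo_Y(-1,0)\oplus\calo_Y(0,-1)$ is also fine (the dimension count really does exclude $\mu$-stable bundles here, and the only semistable extensions split).

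The gap is exactly the one you flag: you never establish $H^0(F(1))\ne0$, and your suggested routes (chasing down to $F(-3)$, or invoking an external classification) either stay circular or outsource the problem. The restriction sequence alone gives you isomorphisms $H^i(F(n))\simeq H^i(F(n-1))$ in a range, but no anchor that forces any of these groups to vanish.

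The paper does not go through $H^0(F(1))$ at all; it shows $c_3(F)=0$ directly by a stabilization argument, and the two missing ideas are the ``boundary conditions at infinity'':
\begin{itemize}
\item[(i)] \emph{Reflexivity gives $H^1(F(n))=0$ for $n\ll0$.} (Serre-dualize: $H^1(F(n))^*\simeq\Ext^2(F,\omega_X(-n))$; in the local-to-global spectral sequence the contributions from $\inext^{\ge1}(F,\omega_X)$ are supported on the $0$-dimensional set $\sing(F)$, and the remaining $H^2(F^\vee\otimes\omega_X(-n))$ vanishes for $-n\gg0$.) Your restriction isomorphisms $H^1(F(n))\simeq H^1(F(n+1))$ for $n\le-1$ (which follow from $h^0(F|_Y(m))=h^1(F|_Y(m))=0$ for $m\le0$) then propagate this to $h^1(F(n))=0$ for all $n\le0$. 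In particular $h^1(F)=0$, so $h^2(F)=\chi(F)=c_3(F)/2$.
\item[(ii)] \emph{Serre vanishing gives $H^2(F(n))=0$ for $n\gg0$.} Since $h^1(F|_Y(m))=h^2(F|_Y(m))=0$ for $m\ge-1$, the restriction sequence yields $h^2(F(n))=h^2(F(n+1))$ for all $n\ge-2$, so the common value $c_3(F)/2$ must be $0$.
\end{itemize}
Thus $c_3(F)=0$, $F$ is locally free, and $F\simeq\cals$. Your setup already contains all the needed vanishings for $F|_Y$; what was missing was feeding in (i) and (ii) rather than trying to extract $H^0(F(1))\ne0$ from the finite window of twists you considered.
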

\begin{proof}
Let $F$ be a $\mu$-stable rank 2 reflexive sheaf with $c_1(F)=-1$ and $c_2(F)=1$, so that 
$$ \chi(F)=\dfrac{c_3(F)}{2} = h^2(F)-h^1(F) $$
since $h^0(F)=h^3(F)$ by $\mu$-stability. We claim that $h^1(F(n))=0, \ \forall n\le 0$.

Indeed, take a general hyperplane section $Q\in |\calo_X(1)|$ and consider the restriction sequence 
\begin{equation}\label{ses-restriction}
0\longrightarrow F(-1) \longrightarrow F \longrightarrow F|_{ Q} \longrightarrow 0,
\end{equation}
with $F|_Q$ being a $\mu$-semistable locally free sheaf on $Q$ with $c_1(F|_Q)=(-1,-1)$. The $\mu$-semistability of $F|_Q$ leads to the vanishing of $H^0(F|_Q(n)), \ \forall n\le 0$ and of $H^2(F|_Q(n)),\ \forall n\ge -1$. In addition, since $\chi(F|_Q)=1-c_2(F)=0$, we conclude that $h^1(F|_Q)=0$; Serre duality then implies that $h^1(F|_Q(-1))=0$. From the fact that $h^1(F|_Q)=h^2(F|_Q(-1))=0$, we deduce that $F|_Q$ is 1-regular which implies that $h^1(F|_Q(n))=0, \ \forall n\ge 0$. Since by Serre duality $h^1(F|_Q(n))=h^1(F|_Q(-n-1))$, we can finally conclude that $h^1(F|_Q(n))=0, \ \forall n\in \D{Z}$. Twisting the sequence in display \eqref{ses-restriction} and taking cohomology, we thus get $h^1(F(n))=h^1(F(n+1)), \ \forall n\le -1$; but from the reflexivity of $F$, $H^1(F(n))=0$ for $n\ll0$ hence $h^1(F(n))=0, \ \forall n\le 0$, as desired.

It follows that $h^2(F)=\dfrac{c_3(F)}{2}$; since $h^i(F|_Q(n))=0$ for $i=1,2$ and $\forall n\ge -1$ we also get, inductively, that $h^2(F(n))=h^2(F(n+1))$ $\forall n\ge -2$ so that $h^2(F(n))=\dfrac{c_3(F)}{2}, \ \forall n\ge -2$. By the Serre vanishing theorem, we must have that $h^2(F(n))=0$ when $n\gg0$, thus in fact $h^2(F(n))=0$ for every $n\ge-2$, and hence $c_3(F)=0$, implying that $F$ must be locally free. But every rank 2 locally free sheaves with $c_1=-1$ and $c_2=1$ on $X$ is a spinor bundle.
\end{proof}

We end this preliminary section summoning some properties of $F(X)$, the family of lines on $X$.
We have already recalled that $F(X)\simeq \D{P}^3\simeq \D{P}(H^0(\cals(1))$.
One "geometric" way to realize $F(X)$ as $\p3$ is the following. We start by constructing $X$ as a hyperplane section of the Grassmannian $\Grp(1,3)\subset \p5$ of lines in $\p3$.
Recall now that we have 2 families of planes contained in $\Grp(1,3)$: we have planes corresponding to families of lines passing trough a point (we call them planes of type I), and planes parameterizing families of lines contained in a plane $\D{P}^2\subset \p3$ (these will be referred to as planes of type II).
For each line $l\subset X$ there exists a unique pair of planes $(\Delta_I, \ \Delta_{II})$ with $\Delta_I$ of type $I$, $\Delta_{II}$ of type $II$, containing $l$; these planes are both parameterized by a 3-dimensional linear space $\p3$.

Several of our next results will rely on the geometry of linear spaces of lines; for this reason we recall here briefly some of their fundamental property.
We have two families of pencils of lines in $F(X)$.
Consider indeed a pencil $\p1\subset F(X)$ and denote by $l_0, \ l_1$ a pair of generators. If ever $l_0\cap l_1=\emptyset$, then the entire $\p1$ is a ruling in the quadric surface $Q:=\langle l_0,l_1\rangle \cap X$. In particular we must have that \textit{any} pair of lines in $\p1$ are disjoint hence $Q$ must be smooth since we have no disjoint lines in a singular hyperplane section of $X$.
For the same reason we deduce that if ever $l_0\cap l_1\ne \emptyset$, then any pair of lines in $\p1$ must intersect so that this family must coincide we the family of lines on a singular hyperplane section of $X$.

This implies in particular that these lines all have the form $\overline{qp}$ with $p$ fixed and $q$ varying along a conic. 
From these observations we deduce that there exists a morphism
$$ \Grp(1,F(X))\xrightarrow{\gamma} \p4^{*}$$
and that moreover, denoting by $\p4^{*}_{sm}:=\p4^{*}\setminus X^{*}$ the open of smooth hyperplane sections and by $\calu:= \gamma^{-1}( \p4^{*}_{sm})$, $\gamma|_{\calu}$ is a degree 2 covering over $\p4^{*}_{sm}$.

Finally we recall that $\forall \ l\subset X$ we have a hyperplane
$\p2\subset F(X)$ of lines meeting $l$
(isomorphic to the family of planes in $\p4$ containing $l$) and that all the hyperplanes in $F(X)$ are of this form.


\subsection{Instantons of charge 2}\label{sect-inst}

Our study of the moduli space $\calm$ starts with the study of $\call(2)$, the open subscheme parameterizing rank 2 instanton sheaves of charge 2. We will prove the following:

\begin{theorem}\label{instanton-component}
$\call(2)$ is a smooth, irreducible, 6-dimensional open subscheme of $\calm$ whose general element is a locally free instanton sheaf. Its closure $\overline{\call(2)}$ is an irreducible component of $\calm$. The moduli space $\calm$ is smooth along $\call(2)$. 
\end{theorem}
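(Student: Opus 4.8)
## Proof Proposal

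The plan is to establish the three assertions about $\call(2)$ (smoothness, irreducibility of dimension $6$, and generic local freeness) largely by leveraging the Serre correspondence for rank $2$ instanton sheaves developed in Section \ref{sec:serre}, together with the known description of $\cali(2)$ from \cite{Ott-Szu}. First I would recall that $\cali(2)$, the moduli of \emph{locally free} instanton bundles of charge $2$, is already known to be smooth, irreducible and $6$-dimensional (\cite{Ott-Szu}); so the open locus of $\call(2)$ consisting of bundles is under control. The work is therefore concentrated on the non locally free locus and on showing it does not increase the dimension and lies in the closure of the bundle locus. By Theorem \ref{double-dual-bundle}, a non locally free rank $2$ instanton $E$ of charge $2$ satisfies $E^{\vee\vee}\in\cali(c_2(E^{\vee\vee}))$ with $c_2(E^{\vee\vee})+\deg(T_E)=2$, so either $c_2(E^{\vee\vee})=1$ (whence $E^{\vee\vee}\cong\cals$, the spinor bundle, and $\deg(T_E)=1$, i.e. $T_E\cong\calo_\ell$ for a line $\ell$) or $c_2(E^{\vee\vee})=0$, which is impossible since Bogomolov forces charge $\ge 1$. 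Hence every non locally free charge $2$ instanton is an elementary transformation $0\to E\to\cals\to\calo_\ell\to 0$ for a line $\ell\subset X$ and an epimorphism $\cals\twoheadrightarrow\calo_\ell$.

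Next I would parametrize this non locally free stratum. The data are: a line $\ell\subset X$ (a $3$-dimensional family, $F(X)\cong\p3$ as recalled in Section \ref{generalities}), and an epimorphism $\cals\twoheadrightarrow\calo_\ell$ up to scalar. Since $\cals|_\ell\cong\calo_\ell\oplus\calo_\ell(-1)$ for a general line (and this is what one needs; for special lines one checks $\Hom(\cals,\calo_\ell)$ directly), $\Hom(\cals,\calo_\ell)\cong H^0(\cals^\vee|_\ell)$ has the appropriate small dimension, and after projectivizing one gets a fiber of bounded dimension over $F(X)$. Adding these up shows the non locally free stratum has dimension $\le 5 < 6$. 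I would then show this stratum actually lies in $\overline{\cali(2)}$: this is exactly the kind of deformation exhibited in Example \ref{ex:family} (take $n=3$ disjoint lines, or more precisely deform the extension class $\xi_t=(1,\dots,1,t)$) — the non locally free $E_0$ sits as the flat limit $t\to 0$ of locally free instantons $E_t$. One must check these $E_t$ have charge $2$; a 't Hooft instanton on $n$ lines has charge $n-1$, so one works with $n=3$, and verifies the general deformation of the given $E_0$ is a charge $2$ bundle rather than a 't Hooft sheaf — this requires an $\Ext^1$ computation showing the generic deformation direction leaves the 't Hooft locus.

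For smoothness of $\call(2)$ (and of $\calm$ along it), I would compute $\Ext^i(E,E)$ for $E$ an instanton sheaf of charge $2$. Smoothness of the moduli space at $[E]$ follows from $\Ext^2(E,E)=0$ (obstruction space vanishes) — or, since for a non locally free $E$ the sheaf $E$ itself may be obstructed as a point of the full Quot/Gieseker scheme while $\call(2)$ as defined is still smooth, I would instead argue: for locally free $E$, $\Ext^2(E,E)=0$ is known from \cite{Ott-Szu} (or recompute via $\Ext^2(E,E)\cong\Hom(E,E(-3))^*$ and $\mu$-stability plus the instanton vanishing), giving smoothness of $\cali(2)$ of the expected dimension $\ext^1(E,E)=6$; then smoothness of $\calm$ along the non locally free locus follows because that locus lies in the smooth open $\overline{\cali(2)}$ and because the whole $\call(2)$ deformation-retracts (in the sense of lying in the closure of its open bundle part) onto a smooth scheme. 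The cleanest route, which I would prefer, is to prove $\Ext^2(E,E)=0$ directly for \emph{every} rank $2$ instanton $E$ of charge $2$: apply $\Hom(E,-)$ to $0\to E\to\cals\to\calo_\ell\to 0$ and $\Hom(-,E)$ to the Serre sequence, reducing $\Ext^2(E,E)$ to controllable groups like $\Ext^2(\cals,\cals)=0$, $\Ext^i(\calo_\ell,E)$, and $\Ext^i(E,\calo_\ell)$, which are computed from $\cals|_\ell$ and the genus/degree formulas \eqref{genus-degree-inst}. Then smoothness and the dimension count $\ext^1(E,E)=6$ (Euler characteristic computation using $c_i(E)$ from \eqref{ch-char}) are uniform, irreducibility follows from irreducibility of $\cali(2)$ plus the stratum being in its closure, and $\overline{\cali(2)}=\overline{\call(2)}$ is a component because its dimension equals that of the Zariski tangent space everywhere on it.

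The main obstacle I anticipate is the vanishing $\Ext^2(E,E)=0$ (equivalently unobstructedness) for the \emph{non} locally free instantons, because here $E$ has homological dimension $1$ and the local Ext sheaves $\inext^1(E,E)$ contribute; one must show the extra local-to-global contributions cancel. Concretely, via $\Ext^2(E,E)\cong\Hom(E,E\otimes\omega_X)^*=\Hom(E,E(-3))^*$ one needs $\Hom(E,E(-3))=0$, which should follow from $\mu$-stability of $E$ (slope $-1/2$) forcing any nonzero map $E\to E(-3)$ to be injective with torsion-free cokernel of the wrong slope — but one has to be careful that $E(-3)$ also has rank $2$ and slope $-1/2-3$, so a nonzero map is impossible by comparing slopes of image and sub/quotient; this is where I would spend the most care. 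The secondary difficulty is verifying that the generic deformation of the 't Hooft-type $E_0$ in Example \ref{ex:family} is a genuine charge $2$ instanton \emph{bundle} and not a non locally free or 't Hooft sheaf — i.e. that the non locally free stratum is a proper subvariety of $\overline{\cali(2)}$ (in fact a divisor), which needs the dimension inequality $5<6$ together with the deformation being unobstructed.
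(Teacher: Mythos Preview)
Your overall plan matches the paper's, but the step you flag as ``where I would spend the most care'' rests on a wrong isomorphism. Serre duality on a threefold gives $\Ext^2(E,E)\cong\Ext^{1}(E,E\otimes\omega_X)^*=\Ext^1(E,E(-3))^*$, \emph{not} $\Hom(E,E(-3))^*$. Your slope argument correctly shows $\Hom(E,E(-3))=0$, but that is the vanishing of $\Ext^3(E,E)$, which is automatic from stability and says nothing about obstructions. So the route you single out for the unobstructedness of non locally free instantons simply does not work. Relatedly, the sentence ``smoothness of $\calm$ along the non locally free locus follows because that locus lies in the smooth open $\overline{\cali(2)}$'' is circular: smoothness of $\overline{\cali(2)}$ at those very points is the assertion to be proved.

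The approach you call ``the cleanest route'' --- applying $\Hom(-,E)$ to $0\to E\to\cals\to\calo_\ell\to 0$ --- is exactly what the paper does (Proposition~\ref{smooth-inst-sing}), and it is the real content. It is more delicate than you suggest: one needs $\Ext^3(\calo_\ell,E)\cong\Hom(E,\calo_\ell(-3))^*=0$, and since $E$ is singular precisely along $\ell$ the restriction $E|_\ell$ is not locally free; tensoring the defining sequence by $\calo_\ell$ introduces a $\intor_1(\calo_\ell,\calo_\ell)\cong\caln_{\ell/X}^\vee$ contribution, and one must analyse the resulting four-term sequence to conclude. The vanishing $\Ext^2(\cals,E)=0$ also requires $\Ext^1(\cals,\calo_\ell)=0$, coming from $\cals|_\ell\cong\calo_\ell\oplus\calo_\ell(-1)$. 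Once uniform smoothness of dimension $6$ is in hand, your deduction of irreducibility (smooth, pure dimension $6$, $\cali(2)$ irreducible open, complement of dimension $\le 5$) is valid and a bit quicker than the paper's method, which instead builds an explicit surjection from an irreducible $\p1$-bundle $\D{P}(\cale)$ onto $\call(2)$ and identifies $\call(2)$ with a $\p2$-bundle over an open $\calu\subset\Grp(1,F(X))$; the paper's construction buys a concrete global description but is not needed for the bare statement. Your appeal to Example~\ref{ex:family} to place the non locally free locus inside $\overline{\cali(2)}$ becomes unnecessary once unobstructedness is proved, and on its own it would only exhibit particular points of $\cald(1,1)$ as limits.
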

The moduli space $\cali(2)$ of locally free instanton sheaves of charge 2 was studied in \cite{Ott-Szu}. In loc. cit the authors proved the following.

\begin{theorem}\label{moduli-bundle}\cite[Theorem 4.1]{Ott-Szu}
The moduli space $\cali(2)$ is locally a trivial algebraic fibration over $(\p4)^{*}_{sm}$ with fibre being two disjoint copies of $\p2\setminus C_2$, for a smooth conic $C_2$. In particular it is a Stein manifold of dimension 6, rational irreducible and smooth.
\end{theorem}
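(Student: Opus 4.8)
The plan is to realize $\cali(2)$ explicitly as a fibration by associating to each instanton the smooth quadric surface swept out by the lines coming from its sections, and then to read off every stated property from the geometry of lines on that surface.

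First I would record the cohomological input: for a rank $2$ locally free instanton $E$ of charge $2$ one has $h^0(E(1))=2$. The Euler characteristic $\chi(E(1))=2$ comes from the Hilbert polynomial in display \eqref{Hilbert}, while $h^2(E(1))=h^3(E(1))=h^1(E(1))=0$ follows from Serre duality (using $E^\vee\simeq E(1)$, cf. \eqref{iso-refl}) together with the vanishing of intermediate cohomology provided by Lemma \ref{cohom-interm}, where $m_0=1$ since $H$ is already very ample. With $\dim H^0(E(1))=2$ fixed, a basis $s_0,s_1$ yields a nonzero element $s_0\wedge s_1\in H^0(\det E(2))=H^0(\ox(1))$: it is nonzero because $s_0\wedge s_1\equiv0$ would force an inclusion $\ox\hookrightarrow E$, contradicting $\mu$-stability. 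Its zero locus is a hyperplane section $Q\subset X$, well defined up to scalar, and this defines an algebraic morphism $f:\cali(2)\to(\p4)^{*}$ (the constancy of $h^0(E(1))$ makes the $H^0(E(1))$ into a rank $2$ bundle over $\cali(2)$, so $s_0\wedge s_1$ is a global section of a line bundle).

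Next I would use Theorem \ref{thm:serre} and Remark \ref{rem:five} to interpret the sections geometrically. Every $0\ne s\in H^0(E(1))$ has torsion free cokernel (a divisorial zero would again destabilize $E$), giving $0\to\ox(-1)\to E\to\cali_C\to0$ with $\deg C=2$ and $p_a(C)=-1$ by the formulas in display \eqref{genus-degree-inst}; as $E$ is locally free, $C$ is l.c.i., so Remark \ref{rmk:lines-arrangement} forces $C$ to be a disjoint pair of lines. As $s$ ranges over $\p(H^0(E(1)))\simeq\p1$ all these pairs lie on $Q$ (at a point of such a pair the two sections are dependent, so $s_0\wedge s_1$ vanishes there); hence $Q$ contains disjoint lines and is therefore smooth, since the singular hyperplane sections of $X$ carry no disjoint lines, cf. Section \ref{generalities}. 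Thus $f$ lands in $(\p4)^*_{sm}$ and $Q\simeq\p1\times\p1$; the disjoint lines all belong to one ruling, so $f$ lifts to $\tilde f:\cali(2)\to\calu$ recording that ruling, where $\calu$ is the degree $2$ cover of $(\p4)^*_{sm}$, and the two sheets of $\calu$ account for the two copies in the statement.

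The heart of the argument is the identification of the fibres of $\tilde f$. The $\p1$ of line pairs cut out by $H^0(E(1))$ is a base-point-free pencil $W_E\subset H^0(\op1(2))$ of degree $2$ divisors on the chosen ruling $\p1$, hence a point of $\Gr(2,H^0(\op1(2)))\simeq(\p2)^{*}$; base-point-freeness holds because a common base line would be a common zero of all sections of $E(1)$. Conversely, from a base-point-free pencil I would reconstruct $E$ through the Serre correspondence with the nowhere-vanishing extension data of Remark \ref{rem:five}(2), and verify that this is inverse to $\tilde f$; the pencils \emph{with} a base point sweep out a conic $C_2\subset(\p2)^{*}$, so the fibre is $(\p2)^{*}\setminus C_2\simeq\p2\setminus C_2$. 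Realizing this bijection as an algebraic isomorphism, equivalently trivializing the universal family of rulings and the associated Serre extensions Zariski-locally over the base, is where I expect the main difficulty to lie: one must check that every base-point-free pencil yields a genuine $\mu$-stable locally free instanton and that distinct pencils give non-isomorphic bundles.

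Granting local triviality, the remaining assertions are formal. The dimension is $\dim(\p4)^*_{sm}+\dim(\p2\setminus C_2)=4+2=6$, and $\cali(2)$ is smooth, being a Zariski-locally trivial fibration with smooth base and smooth fibre. For irreducibility I would use $\tilde f$: the base $\calu$ is an open subset of $\Grp(1,F(X))=\Grp(1,3)$, the Klein quadric, hence irreducible, and the fibre $\p2\setminus C_2$ is irreducible, so $\cali(2)$ is irreducible; this is exactly how the two disconnected copies over $(\p4)^*_{sm}$ are glued by the monodromy exchanging the two rulings. Rationality follows since $\calu$ is rational (open in the rational Klein quadric) and the fibre is rational. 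Finally $(\p4)^*_{sm}=(\p4)^{*}\setminus X^{*}$ is the complement of the dual quadric, hence affine, $\calu$ is affine as it is finite over it, and $\p2\setminus C_2$ is affine as the complement of the ample conic $C_2$; since a Zariski-locally trivial fibration with affine fibre is an affine morphism, $\cali(2)$ is affine over the affine base and therefore itself affine, that is, a Stein manifold.
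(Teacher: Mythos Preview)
Your approach is essentially the same as the one the paper sketches (and attributes to Ottaviani--Szurek): associate to each $[E]\in\cali(2)$ the smooth quadric $Q$ carrying all the curves $V(s)$, $s\in H^0(E(1))$, and identify the fibre over $Q$ with the base-point-free pencils in $|\calo_Q(2,0)|$ and $|\calo_Q(0,2)|$, i.e.\ two copies of $\p2\setminus C_2$. The paper itself does not give a self-contained proof of this theorem; it cites Proposition~\ref{curve-bundle} from \cite{Ott-Szu} and explains the fibre description in one paragraph, so your write-up is in fact more detailed than what appears here.

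A few small points. Your derivation of $h^1(E(1))=0$ from Lemma~\ref{cohom-interm} via Serre duality does not quite close: Serre duality turns $h^1(E(1))$ into $h^2(E(-3))$, but the lemma only controls $H^2(E(-q_X+n))$ for $n\ge m_0$, i.e.\ $H^2(E(k))$ for $k\ge0$. The easier route (used later in the paper, in the proof of Lemma~\ref{smooth-inst-bundle}) is: from $\chi(E(1))=2$ and $h^2=h^3=0$ you get $h^0(E(1))\ge2$, hence a section with cokernel $\cali_C(1)$; then $h^0(\cali_C(1))=1$ (the unique hyperplane through $C$) gives $h^0(E(1))=2$ exactly. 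Also, $C$ need not be a \emph{reduced} pair of lines: Proposition~\ref{curve-bundle} explicitly allows a double line of arithmetic genus $-1$, and Remark~\ref{rmk:lines-arrangement} only says $C$ is a degree~$2$ line arrangement. Your argument for base-point-freeness (``a common base line would be a common zero of all sections'') is the right intuition but needs one more step; the paper's later analysis (Proposition~\ref{curves-singular}) shows that pencils with a base point correspond precisely to the non-locally-free instantons in $\cald(1,1)$, so for locally free $E$ the pencil is automatically base-point-free.

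Regarding the step you flag as the main difficulty---upgrading the set-theoretic bijection between $\cali(2)$ and base-point-free pencils to an isomorphism of schemes---the paper does carry this out, but not in the discussion of Theorem~\ref{moduli-bundle}. In Propositions~\ref{bundle-sc}--\ref{etale-fact} it builds the universal extension over $\D{P}(\cale)$, shows that $\D{P}(\cale)\to\call(2)$ is an \'etale $\p1$-bundle, and concludes $\call(2)\simeq\calb$; the open part $\cali(2)\simeq\mathring{\calb}$ is exactly the fibration you describe.
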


The key ingredient of this result is the description of the families of curves arising as zero loci of global sections of $E(1)$ for $[E]\in \cali(2)$.

\begin{proposition}\label{curve-bundle}\cite[Proposition 4.4]{Ott-Szu}
The zero set $V(s)$ of a global section $s$ of $E(1)$ is a divisor of type $(2,0)$ on a smooth hyperplane section $Q\subset X$ (and hence it is either the union of two disjoint lines or a double line of arithmetic genus -1). The zero sets $V(s), \ V(t)$ of two sections $s,t$ of $E(1)$ lie on the same smooth quadric $Q$ and cut a system $g_2^1$ without base point.
\end{proposition}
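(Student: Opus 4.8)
The plan is to pass through the Serre correspondence (Theorem \ref{thm:serre}) and reduce the statement to the geometry of degree-$2$ curves on quadric surfaces.

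Fix $[E]\in\cali(2)$. Since $E$ is a $\mu$-stable instanton bundle, Lemma \ref{instantonic} gives $h^0(E)=0$ and $H^3(E(1))=0$, while \eqref{Hilbert} gives $\chi(E(1))=P_E(1)=2$. By Serre duality $H^2(E(1))\simeq H^1(E^\vee(-4))^*\simeq H^1(E(-3))^*$, which vanishes by Lemma \ref{cohom-interm}; hence $h^0(E(1))\ge 2$. For $0\ne s\in H^0(E(1))$ the zero scheme $C:=Z(s)$ contains no effective divisor — otherwise $s$ would factor through $E(1-k)\hookrightarrow E$ for some $k\ge1$, contradicting $h^0(E)=0$ — so $C$ has pure codimension $2$, is a locally complete intersection, and fits into an exact sequence $0\to\ox\xrightarrow{s}E(1)\to\cali_{C/X}(1)\to0$. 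Taking Euler characteristics (with $\chi(\ox(1))=5$) gives $\chi(\calo_C)=2$, i.e. $p_a(C)=-1$.

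A locally Cohen--Macaulay degree-$2$ curve contained in a plane has $\chi(\calo)=1$, so $C$ spans a unique hyperplane $\Pi\cong\p3$ of $\p4$; as $X$ contains no plane, $Q:=X\cap\Pi$ is a quadric surface, and it is the unique hyperplane section of $X$ through $C$. Hence $h^0(\cali_{C/X}(1))=1$, and the sequence above forces $h^0(E(1))=2$ exactly. For a basis $s,t$ of $H^0(E(1))$, the element $s\wedge t\in H^0(\wedge^2E(1))=H^0(\ox(1))$ is nonzero — if $s\wedge t=0$ then $t$ is everywhere proportional to $s$, hence a scalar multiple of $s$ since $X\setminus Z(s)$ carries no non-constant regular function — and its zero divisor is a hyperplane section of $X$ containing $Z(s)\cup Z(t)$; by uniqueness it equals $Q$. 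Therefore every $V(\lambda s+\mu t)$ lies on $Q$. Finally $Q$ is smooth: a hyperplane section of the smooth quadric $X$ is a quadric surface of rank $3$ or $4$, and a locally Cohen--Macaulay curve of degree $2$ on a quadric cone is a hyperplane section of the cone, hence has $\chi(\calo)=1\ne 2$. So $Q\cong\p1\times\p1$.

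Identifying $\Pic(Q)\cong\Z^2$, a degree-$2$ locally complete intersection curve of arithmetic genus $-1$ on $Q$ is a divisor of bidegree $(2,0)$ or $(0,2)$ (from $\chi(\calo_D)=1-(1-a)(1-b)$ for $D$ of type $(a,b)$ with $a+b=2$), and these are precisely the unions of two disjoint rulings and the double lines of arithmetic genus $-1$. The map $[s]\mapsto V(s)$ is injective and identifies $\mathbb{P}(H^0(E(1)))\cong\p1$ with a pencil of such divisors; since $\p1$ is connected and $\Pic(Q)$ is discrete, all members have the same bidegree, say $(2,0)$ (interchanging the rulings if necessary). Hence the pencil is a line in $|\calo_Q(2,0)|\cong\mathbb{P}(H^0(\p1,\calo(2)))=\p2$, that is, a $g_2^1$ on the first ruling family. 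This proves the first two assertions and reduces the last to showing that this $g_2^1$ is base-point free.

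Base-point-freeness is where I expect the real work to lie. If the $g_2^1$ had a base point, a fixed ruling $\ell_0\subset Q$ would be contained in every member $V(s)=\ell_0+\ell_s$; dividing the corresponding Serre section by the equation of $\ell_0+\ell_s$ on $Q$ produces a nowhere-vanishing section of $E|_Q(-1,1)$, and since the relevant $\Ext^1$ on $Q$ vanishes this forces $E|_Q\cong\calo_Q(1,-1)\oplus\calo_Q(-2,0)$ (up to interchanging the two rulings). One must then contradict such a restriction using the remaining instanton data — the vanishings $H^i(E(-1))=0$ together with Lemma \ref{cohom-interm}, and the $\mu$-stability of $E$ — which constrain the cohomology of $E|_Q$ and, through the behaviour of $E$ on the full pencil of rulings of $Q$ inside $F(X)$, produce a one-parameter family of jumping lines; a contradiction should then follow either from a direct cohomological computation on $X$ or from a dimension estimate exploiting the description of $\cali(2)$ in Theorem \ref{moduli-bundle}. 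Everything preceding this last step is the Serre correspondence combined with the classification of low-degree curves on quadric surfaces.
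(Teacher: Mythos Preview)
The paper does not give its own proof of this proposition; it is quoted verbatim from \cite{Ott-Szu}. Portions of what you prove (that any l.c.m.\ curve with Hilbert polynomial $2t+2$ lies on a \emph{unique smooth} hyperplane section of $X$) do reappear later in the paper as Lemma~\ref{smooth-l.c.m.}, and your argument for those portions is essentially the same as the paper's. Your computation of $h^0(E(1))=2$, the $s\wedge t$ trick showing that all zero loci lie on a common $Q$, and the connectedness argument fixing the bidegree $(2,0)$ are all correct.

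The base-point-freeness step, however, has a genuine gap, and your proposed line of attack cannot work. You argue that if the pencil had a base ruling $\ell_0$ then one would obtain a splitting $E|_Q\simeq\calo_Q(1,-1)\oplus\calo_Q(-2,0)$, and then seek a contradiction. But that splitting holds for \emph{every} $[E]\in\cali(2)$, base point or not: for any $s$ the section $s|_Q$ vanishes precisely on the $(2,0)$-divisor $V(s)$, giving a subbundle $\calo_Q(2,0)\hookrightarrow E(1)|_Q$ with line-bundle quotient $\calo_Q(-1,1)$, and $\Ext^1_Q(\calo_Q(-1,1),\calo_Q(2,0))=H^1(\calo_Q(3,-1))=0$. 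So nothing can be squeezed out of that restriction. (Your fallback of invoking Theorem~\ref{moduli-bundle} would be circular, since its proof rests on Proposition~\ref{curve-bundle}.)

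A clean way to finish: if every $s\in H^0(E(1))$ vanished on a fixed ruling $\ell_0$, then the restriction map $H^0(E(1))\to H^0(E(1)|_{\ell_0})$ would be zero, hence $h^0(E(1)\otimes\cali_{\ell_0})=2$. Tensor the spinor sequence $0\to\ox(-1)\to\cals\to\cali_{\ell_0}\to0$ by $E(1)$ and take cohomology:
\[
0\longrightarrow H^0(E\otimes\cals(1))\longrightarrow H^0(E(1)\otimes\cali_{\ell_0})\longrightarrow H^1(E).
\]
Now $H^0(E\otimes\cals(1))\simeq\Hom(\cals,E)=0$ because $\cals$ and $E$ are $\mu$-stable of the same slope but different $c_2$, while $h^1(E)=1$ (from $\chi(E)=-1$, $h^0(E)=h^3(E)=0$ by stability, and $h^2(E)=0$ by Lemma~\ref{cohom-interm}). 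This forces $h^0(E(1)\otimes\cali_{\ell_0})\le1$, contradicting $h^0(E(1)\otimes\cali_{\ell_0})=2$.
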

This characterisation of the linear spaces $\D{P}(H^0(E(1)))$ implies indeed the existence of a morphism:
$$ \cali(2)\xrightarrow{\phi} \p4^{*}_{sm},$$
mapping a point $[E]\in \cali(2)$ to the quadric surface containing all the curves $V(s), \ s\in H^0(E(1))$. The fibre of $\phi$ over $Q$ consist of the base point free pencils of divisors of type $(2,0)$ or $(0,2)$ on $Q$, namely of two copies of $\p2\setminus C_2$ where $C_2$ is a smooth conic. 
The pencils of divisors of type $(2,0)$ are indeed parameterized by the projective space $\Grp(1,|\calo_Q(2,0)|)\simeq |\calo_Q(2,0)|^{*}\simeq \p2$;
inside this projective space, the locus of pencils with a base point identifies with $C_2$, the smooth conic of lines tangent to 
$\Gamma_2\subset |\calo_Q(2,0)|$, the conic parameterizing double lines. 

\begin{remark}\label{factor-Grass}
Note that, by construction, the morphism $\phi$ factors through a morphism $\phi_{\calu}:\cali(2)\rightarrow \calu$ where we recall that $\mathcal{U}\subset{\Grp(1,F(X))}$ is defined as the open subset parameterizing rulings of smooth hyperplane sections of $X$. 
\end{remark}

We now pass to the study of non locally free instantons $[E]\in \mathcal{M}$. By Theorem \ref{double-dual-bundle}, if $E$ is a non locally free instanton, $E^{\vee\vee}$ is an instanton bundle of charge $c_2(E^{\vee\vee})<c_2(E)$ and $T_E:=E^{\vee\vee}/E$ is a rank 0 instanton of degree $c_2(E)-c_2(E^{\vee\vee})$. Since $c_2(E)=2$ and the minimal charge of an instanton sheaf on $X$ is 1, 
the only possibility is that $E^{\vee\vee}\simeq \Ss$ so that $T_E$ is a rank 0 instanton of degree 1. It is not difficult to prove that $T_E\simeq \calo_l$ for a line $l\subset X$. Since $P_{T_E}(t)=t+1$ and as $h^1(T_E(-1))$ implies $h^1(T_E)=0$, we have $h^0(T_E)=1$. For $s\in H^0(T_E)$, the image $\im(s)$ of the corresponding morphism $\calo_X\xrightarrow{s}T_E$ must therefore be of the form $\calo_C$ for $C$ a degree one l.c.m curve. But this means $C\simeq l$ for a line $l\subset X$ and since $P_{\calo_l}=P_{T_E}$ we conclude that $T_E\simeq \calo_l$.  
Summing up, each non locally free instanton $E$ of charge 2 is defined by a short exact sequence of the form:
\begin{equation}\label{ses-sing-line}
    0\rightarrow E\rightarrow \Ss\xrightarrow{q}\calo_l\rightarrow 0.
\end{equation}

Our next aim is to formulate results similar to Proposition \ref{curve-bundle} and Theorem \ref{moduli-bundle} for non locally free instanton sheaf. We start describing the families of curves corresponding to global sections of $E(1)$.

\begin{proposition}\label{curves-singular}
Let $E$ be a non locally free instanton of charge 2 singular along a line $l$. Then $H^0(E(1))\simeq \D{C}^2$ and $\forall s\in H^0(E(1)), \ s\ne 0$, $\coker(s)\simeq \cali_{l'\cup l}$ with $l'$ varying in a ruling of a smooth hyperplane section of $X$ containing $l$.
\end{proposition}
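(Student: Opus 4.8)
The plan is to exploit the defining short exact sequence \eqref{ses-sing-line}, namely $0\to E\to\Ss\xrightarrow{q}\calo_l\to 0$, and compare global sections of $E(1)$ with those of $\Ss(1)$. First I would twist \eqref{ses-sing-line} by $\calo_X(1)$ and take cohomology; since $h^0(\calo_l(1))=2$ and $h^0(\Ss(1))=4$, the sequence $0\to H^0(E(1))\to H^0(\Ss(1))\to H^0(\calo_l(1))$ shows $h^0(E(1))\ge 2$. To get equality I would argue that the restriction map $H^0(\Ss(1))\to H^0(\calo_l(1))$ is surjective: this follows because the composite $q(1)$ is surjective on sheaves and $H^1$ of the kernel $E(1)$ vanishes. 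Indeed $H^1(E(1))=0$ can be read off from the Serre sequence $0\to\ox(-1)\to E\to\cali_C(1)\to 0$ twisted appropriately, or more directly from Lemma \ref{cohom-interm} together with the instanton vanishing; alternatively one notes $H^1(E(1))\hookrightarrow H^1(\Ss(1))=0$ from \eqref{ses-sing-line} twisted by $\calo_X(2)$... let me instead just use that $H^0(\Ss(1))\to H^0(\calo_l(1))$ is onto because $\Ss(1)$ is globally generated (it is $0$-regular) and $l$ is a line, so restriction of global sections is surjective. Hence $h^0(E(1))=2$, giving the first claim.

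Next, fix $0\ne s\in H^0(E(1))$ and let $s^{\vee\vee}\in H^0(\Ss(1))$ be its image. By the charge-$1$ discussion in Section \ref{generalities}, $\coker(s^{\vee\vee})\simeq\cali_{l'}(1)$ for a line $l'\subset X$, fitting in $0\to\ox(-1)\to\Ss\to\cali_{l'}(1)\to 0$. Applying the snake-lemma diagram \eqref{diag2} (with $n=1$, $r_X=1$, $q_X=1$) to the pair $(s,s^{\vee\vee})$, the curve $C$ with $\coker(s)\simeq\cali_C(1)$ sits in $0\to\cali_C\to\cali_{l'}\to T_E\to 0$, i.e. $0\to\cali_{l'\cup l}\to\cali_{l'}\to\calo_l\to 0$ since $T_E\simeq\calo_l$ and $\cali_C=\ker(\cali_{l'}\to\calo_l)$. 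This forces $C=l'\cup l$ as a scheme (the kernel of $\cali_{l'}\twoheadrightarrow\calo_l$ is exactly $\cali_{l'}\cdot\cali_l=\cali_{l'\cup l}$ when $l\ne l'$, and when $l=l'$ one gets a double structure; I would need to check the map $\cali_{l'}\to\calo_l$ is the natural surjection and identify its kernel, which is where I must be slightly careful about whether $l$ is a component of $C$ or $l'=l$). In particular $C$ has degree $2$ and $\cali_C(1)$ is a rank one instanton, so by Remark \ref{rmk:lines-arrangement} / Proposition \ref{rk1}, $C$ is a degree $2$ line arrangement — but $l$ is one of its components, so $C=l\cup l'$ with $l'$ a line disjoint from $l$ (disjointness because a degree $2$ line arrangement is a disjoint union of two lines).

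It remains to pin down how $l'$ varies. Since $l'$ is disjoint from $l$, the span $\langle l,l'\rangle$ meets $X$ in a quadric surface $Q$ which must be smooth (no two disjoint lines lie on a singular hyperplane section of $X$, as recalled in Section \ref{generalities}), and $l,l'$ are rulings of the same type on $Q$; thus $l'$ ranges over the ruling of a smooth hyperplane section containing $l$. Conversely, given such an $l'$, the section $s^{\vee\vee}$ cutting out $l'$ lifts to a section $s$ of $E(1)$ provided $q\circ s^{\vee\vee}$ is the zero section of... no: I would show that for each of the two sections in $H^0(E(1))$ the associated $l'$ lies in this ruling, and that as $s$ ranges over $\mathbb{P}(H^0(E(1)))\simeq\p1$ the lines $l'$ sweep out exactly the pencil of rulings of $Q$ through... actually the pencil $\p1$ of sections gives a $\p1$ of lines $l'$, which must be the full ruling $\p1$ of $Q$. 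The main obstacle is the scheme-theoretic identification $C=l\cup l'$ and the exclusion of the degenerate case $l=l'$: one must verify that $T_E=\calo_l$ really is a \emph{quotient} of $\cali_{l'}$ (equivalently $l\subset l'$ scheme-theoretically is impossible since both are reduced lines, and if $l=l'$ then $\cali_{l'}\to\calo_l$ would be zero, contradicting exactness), so in fact $l\ne l'$ automatically and $C=l\sqcup l'$ is reduced; I would spell this out using that the map $\cali_{l'}(1)\to T_E$ in \eqref{diag2} is surjective with $T_E$ supported on $l$.
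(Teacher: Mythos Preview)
Your overall strategy---twist the defining sequence \eqref{ses-sing-line}, build the snake diagram comparing $E$ with $\Ss$, and read the curve off the bottom row---is the paper's approach. Your computation of $h^0(E(1))=2$ via global generation of $\Ss(1)$ is a legitimate alternative (the paper instead deduces $h^0(E(1))=2$ from $h^0(\cali_Y(1))=1$ once $Y$ is shown to span a unique hyperplane). A minor slip: $\cali_C(1)$ is not a rank $1$ instanton in the paper's sense; what you actually want, and what does hold, is that $\calo_C$ is a rank $0$ instanton, so Lemma~\ref{lem:instanton curves} applies and $C$ is a degree~$2$ line arrangement.

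The real gap is your exclusion of $l'=l$. The assertion ``if $l=l'$ then $\cali_{l'}\to\calo_l$ would be zero'' is false: $\cali_l|_l\simeq\caln_{l/X}^\vee\simeq\calo_l\oplus\calo_l(-1)$, so there \emph{is} a surjection $\cali_l\twoheadrightarrow\calo_l$, and its kernel is the ideal of a double structure on $l$ of arithmetic genus $-1$. Equivalently, a degree $2$ line arrangement need not be reduced. The paper keeps this case and shows it actually occurs: the section $r_l\in H^0(\Ss(1))$ with zero locus $l$ satisfies $r_l|_l=0$, hence $q(1)\cdot r_l=0$, so $r_l\in\iota(H^0(E(1)))$; for this section $l'=l$ and $Y$ is the double line.

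Discarding this case also breaks your identification of the pencil. You show that each $l'$ disjoint from $l$ spans with $l$ \emph{some} smooth hyperplane section, but never that all the $l'$ lie on the \emph{same} one. The paper's argument uses precisely the point you threw away: the pencil $\D{P}(\iota(H^0(E(1))))\subset\D{P}(H^0(\Ss(1)))\simeq F(X)$ contains $[l]$ (coming from $r_l$) together with a generic $[l']$ disjoint from $l$; by the dichotomy on pencils of lines in $F(X)$ recalled at the end of Section~\ref{generalities}, such a pencil must be a ruling of a smooth hyperplane section, and since $l$ belongs to it, that hyperplane section contains $l$.
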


\begin{proof}
Let us start with the computation of $H^0(E(1))$. Twisting (\ref{ses-sing-line}) and taking global sections, we obtain a linear map $H^0(\Ss(1))\rightarrow H^0(\calo_l(1))$ that can not be injective, (as $h^0(\Ss(1))=4$) hence $H^0(E(1))\ne 0$.
Denote now by $\iota$ the inclusion $\iota:H^0(E(1))\hookrightarrow H^0(\Ss(1))$. Since every non-zero element in $H^0(\Ss(1))$ has torsion free cokernel, the same holds for any non-zero $s\in H^0(E(1))$; this implies that $\coker(s)\simeq \cali_Y(1)$ for a l.c.m subscheme $Y\subset X$.

For any $s\in H^0(E(1)),\: s\ne 0$, we have $\coker(\iota(s))\simeq \cali_{l'}(1)$ for a line $l'\subset X$ and we get a commutative diagram:
\begin{equation}\label{cd-sing-inst}
\begin{tikzcd}
0  \arrow[r] &\OO_X\arrow[r]\arrow[d, "s"] &\OO_X\arrow[d, "\iota(s)"]\arrow[r] & 0 \arrow[d] \arrow [r] &0 \\
0  \arrow[r] & E(1)\arrow[r]\arrow[d, two heads] & \Ss(1)\arrow[r, "q"]\arrow[d, two heads] & \OO_{l}(1) \arrow [r]\arrow[d, "id"]& 0\\
0\arrow[r] &\cali_Y(1)\arrow[r] & \cali_{l'}(1)\arrow[r]& \calo_l(1) \arrow[r]
& 0\\
\end{tikzcd}
\end{equation}

From it we compute that $P_Y(n)=2n+2$ and we deduce that $\supp(Y)=l'\cup l$. Since $Y$ must be l.c.m. and as $\cali_{l'}$ surjects onto $\calo_l$, the only possibilities are either $l'=l$, in which case $\cali_l|_l\simeq \calo_l\oplus \calo_l(-1)$ and $Y$ is a double structure on $l$ with arithmetic genus $-1$ and , or $l'\cap l=\emptyset$ in which case $\cali_{l'}|_l\simeq \calo_l$ and $Y$ is simply the union of $l$ and $l'$. In each of these cases, the scheme $Y$ is contained in the unique hyperplane $\langle Y\rangle \simeq \p3$ thus, from the first column of (\ref{cd-sing-inst})
we compute that $h^0(E(1))=2$. To complete the proof of the proposition we still need to describe the pencil $\D{P}(H^0(E(1)))$. By construction the space of section $r_l\in H^0(\cals(1))$ vanishing on $l$ locates a point in $\D{P}(\iota(H^0(E(1)))\simeq \p1$ (since $r_l\otimes \calo_l$=0); 
the arguments previously presented show that a generic element in $\iota(H^0(E(1)))$ corresponds to a line disjoint from $l$.
From the discussion held at the end of section \ref{generalities}, the pencil $\D{P}(\iota(H^0(E(1))))$ must therefore coincide with the ruling of a smooth hyperplane section $Q$ of $X$ containing $l$ and the curves corresponding to non zero sections of $E(1)$ are thus all of the form $l\cup l'$ with $l'$ varying in $\D{P}(\iota(H^0(E(1))))$.
\end{proof}

\begin{remark}
Proposition \ref{curves-singular} allows us to deduce that, given a l.c.m. curve $Y$ with Hilbert polynomial $2t+2$ and $\supp(Y)=l\cup l'$, a pair $(s,E(1))$ with $E$ a non locally free instanton singular along $l$, corresponds to a pair $(Y,\xi)$ with $\xi\in H^0(\omega_Y(2))\simeq H^0(\calo_l)\oplus H^0(\calo_l')$ of the form $(0,e), \ e\ne 0.$  
\end{remark}
We now want to understand how non locally free instantons behave in families. We consider therefore the set $\cald(1,1):=\call(2)\setminus \cali(2)$ that parameterizes non locally free instantons.
\begin{proposition}\label{moduli-singular}
$\cald(1,1)$ is a locally closed subscheme of $\calm$; it is smooth, irreducible and of dimension 5.
\end{proposition}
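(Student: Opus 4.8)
The plan is to realize $\cald(1,1)$ as the image of an explicit incidence variety and then use the structural results already established (Theorem \ref{double-dual-bundle}, the classification of non locally free instantons of charge $2$ via sequence \eqref{ses-sing-line}, and Proposition \ref{curves-singular}). First I would set up the parameter space: a non locally free instanton $E$ of charge $2$ is determined by the data of a line $l \subset X$ — equivalently a point of $F(X) \simeq \p3$ — together with a choice of epimorphism $q : \cals \onto \calo_l$ up to the scalar action. So the natural domain is the variety $\calz$ of pairs $(l, [q])$ with $l \in F(X)$ and $[q] \in \p(\Hom(\cals, \calo_l))$ such that $q$ is surjective. I would compute $\Hom(\cals, \calo_l) = H^0(\cals^\vee|_l)$; since $\cals^\vee \simeq \cals(1)$ and $\cals(1)|_l$ is a rank $2$ bundle on $l \simeq \p1$ of degree $\deg c_1(\cals(1))|_l = 1$, hence either $\calo_l(1)\oplus\calo_l$ or $\calo_l \oplus \calo_l(1)$ — in any case $h^0 = 3$, so the fibre of $\calz \to F(X)$ is a dense open subset of $\p2$. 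This exhibits $\calz$ as a Zariski-locally-trivial $\p2$-bundle (minus a closed locus of non-surjective $q$'s) over $\p3$, hence $\calz$ is smooth, irreducible, of dimension $3+2 = 5$.

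Next I would construct the classifying morphism $\calz \to \calm$ sending $(l,[q])$ to $[\ker q]$ and argue it factors through $\cald(1,1)$ with finite — indeed generically injective — fibres, giving $\dim \cald(1,1) = 5$ and irreducibility. That $\ker q$ is a semistable (in fact stable) sheaf follows from the elementary transformation discussion preceding Lemma \ref{double-dual}; that the map lands in $\cald(1,1)$ and is surjective onto it is exactly the content of \eqref{ses-sing-line}. For injectivity, given $E$ non locally free of charge $2$ we recover $E^{\vee\vee} \simeq \cals$ canonically, $l = \sing(E) = \supp(T_E)$ canonically, and $q$ as the quotient map $\cals = E^{\vee\vee} \onto T_E \simeq \calo_l$, which is unique up to scalar; so the morphism $\calz \to \cald(1,1)$ is a bijection on points. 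To upgrade this to an isomorphism of schemes (or at least to conclude smoothness and the scheme structure of $\cald(1,1)$), I would either invoke that $\calz$ carries a tautological family of sheaves inducing $\calz \to \calm$ and check the differential is an isomorphism onto its image, or directly compute the Zariski tangent space $\Ext^1(E,E)$ and identify the sub/quotient corresponding to deformations keeping $E$ non locally free. Local closedness of $\cald(1,1)$ in $\calm$ comes from the fact that $\cald(1,1) \subset \call(2)$ is open (it is the non locally free locus inside the instanton locus? no — it is closed in $\call(2)$, being the complement of the open locus $\cali(2)$ of locally free sheaves), while $\call(2)$ is open in $\calm$ by Theorem \ref{instanton-component}; so $\cald(1,1)$ is closed in an open subscheme, hence locally closed.

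The main obstacle I anticipate is the passage from a bijective parametrization to a genuine isomorphism of schemes, i.e. controlling the scheme structure and smoothness of $\cald(1,1)$ rather than just its underlying set and dimension. Concretely one must show the natural map $\calz \to \calm$ is an immersion: this requires that the induced map on tangent spaces $T_{(l,[q])}\calz \to \Ext^1_X(E,E)$ is injective with image exactly the tangent space to $\cald(1,1)$. I would approach this by applying $\Hom(-,E)$ and $\Hom(E,-)$ to the sequence \eqref{ses-sing-line} and to the deformation sequence of the pair $(\cals, l)$, using that $\cals$ is rigid (so $\Ext^1(\cals,\cals)=\Ext^2(\cals,\cals)=0$) to reduce the computation of $\Ext^1(E,E)$ to contributions from $\Hom(E,\calo_l)$, $\Ext^1(\calo_l, E)$ and the normal bundle $\caln_{l/X} \simeq \calo_l(1)\oplus\calo_l(1)$ of a line on the quadric threefold. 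The rigidity of $\cals$ should make these computations tractable and yield $\ext^1(E,E) = 5$ exactly along $\cald(1,1)$ — note this is strictly less than the dimension of $\call(2)$, consistent with $\cald(1,1)$ being a divisor in $\overline{\call(2)}$ — whereas a tangent-space jump would signal obstruction; I expect no jump precisely because $\cals$ is unobstructed and the line $l$ moves freely in the smooth $3$-fold $F(X)$.
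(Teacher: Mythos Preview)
Your parametrization via pairs $(l,[q])$ is a legitimate and genuinely different route from the paper's: the paper instead uses the Serre correspondence (Proposition \ref{curves-singular}) to build a map $\phi^{(1,1)}_{\calu}:\cald(1,1)\to\calu\subset\Grp(1,F(X))$, observing that the fibre over a ruling of a smooth hyperplane section $Q$ consists of pencils in $|\calo_Q(2,0)|$ \emph{with} a base point, which form a smooth conic $C_2\subset\Grp(1,|\calo_Q(2,0)|)$. Smoothness, irreducibility and the dimension count $4+1=5$ then come from this fibration structure. Your elementary-transformation picture gives the same dimension via $3+2=5$ and is arguably more direct set-theoretically.

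However, your smoothness argument contains a genuine error. You expect $\ext^1(E,E)=5$ for $[E]\in\cald(1,1)$, reasoning that this is ``consistent with $\cald(1,1)$ being a divisor''. This conflates the tangent space to $\calm$ with the tangent space to the subscheme $\cald(1,1)$. In fact $\ext^1(E,E)=6$ and $\ext^2(E,E)=0$ for every such $E$ (this is Proposition \ref{smooth-inst-sing}, proved by applying $\Hom(\,\cdot\,,E)$ to \eqref{ses-dd-line} and using the rigidity of $\cals$): the moduli space $\calm$ is smooth of dimension $6$ at these points, and $\cald(1,1)$ sits inside it as a divisor. So the computation you propose will not close the argument the way you anticipate. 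A related slip: for a line on the quadric threefold one has $\caln_{\ell/X}\simeq\calo_\ell\oplus\calo_\ell(1)$ (degree $i_X-2=1$), not $\calo_\ell(1)^{\oplus2}$.

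To salvage your approach you would need to show not that $\ext^1(E,E)=5$, but that the differential $T_{(l,[q])}\calz\to\Ext^1(E,E)$ of your classifying map is injective, so that $\calz\to\calm$ is an immersion onto the reduced locus $\cald(1,1)$. This is doable via the exact sequences you mention, but it is a different (and more delicate) statement than the one you wrote down.
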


\begin{proof}
By semicontinuity, the instanton locus $\call(2)$ is open in $\calm$; since being non locally free is a closed condition, we have that $\cald(1,1)$ is locally closed in $\calm$. To prove the rest of the proposition we mimic the proof of Theorem \ref{moduli-bundle}.
From Proposition \ref{curves-singular} we know that for $[E]\in \cald(1,1)$, the curves in the pencil $\D{P}(H^0(E(1)))\subset \Hilb_{2t+2}(X)$
are of the form $l\cup l'$ with $l=\sing(E)$ and with the $l'$s varying in a ruling of a smooth hyperplane section of $X$. 

As it was the case for $\cali(2),$ $\cald(1,1)$ is endowed as well with a surjective map $\cald(1,1)\xrightarrow{\phi^{(1,1)}} \p4^{*}_{sm}
$ fitting in a commutative diagram
\begin{equation*}
  \xymatrix@C+1em@R+1em{
   \cald(1,1) \ar[r]^-{\phi^{(1,1)}} \ar[d]_-{\phi^{(1,1)}_{\calu}} & \p4^{*}_{sm} \\
   \mathcal{U} \ar[ur]
  }
\end{equation*}
but this time the fibre over a ruling $|\calo_Q(1,0)|\simeq \p1\in \calu, \ [Q]\in (\p4)^{*}_{sm}$ consists of pencils of divisors of type $(2,0)$ with a base point. Each fibre of $\phi^{(1,1)}_{\calu}$ is therefore isomorphic to $C_2\subset \Grp(1,|\calo_Q(2,0)|)$; the smooth conic parameterizing the tangents to the locus of singular divisors in $|\calo_Q(2,0)|$.
This proves that $\cald(1,1)$ is smooth irreducible and of dimension equal to five.
\end{proof}

\begin{remark}\label{pencil-emb}
Applying arguments equivalent to \cite{H-bundle} Lemma 9.3., we can make the following considerations. Let $[E]$ be a point corresponding to an instanton and let us consider the short exact sequence
$$ 0\rightarrow \calo_X\xrightarrow{s} E(1)\rightarrow \cali_Y(1)\rightarrow 0$$
induced by $s\in H^0(E(1))$. From this short exact sequence we deduce that the image of $t\in H^0(E(1))$ in $H^0(\cali_Y(1))$ gives an equation for the hyperplane $\langle Y\rangle$ and that moreover for any $t\in H^0(E(1))$ independent from $s$, $E(1)/(s,t)\simeq \cali_{Y,Q}(1)$, for \mbox{$Q:=X\cap \langle Y\rangle$.}
We have therefore a well defined linear map \mbox{$H^0(E(1))\rightarrow H^0(\calo_Q(2,0))$} that maps each $s\in H^0(E(1))$ to the form defining $V(s)$ on $Q$. 
\end{remark}

\subsection{A description of $\call(2)$ via Serre correspondence.}

For the moment we just know that $\cali(2)$ and $\cald(1,1)$ are locally trivial fibrations over $\calu$. Using Serre correspondence we are now going to show that actually, the entire $\call(2)$ 
identifies with a $\D{P}^2$ bundle over $\calu$ and that $\cald(1,1)$ and $\cali(2)$ are, respectively, a divisor and an open subset of $\call(2)$. The key ingredient to prove this is the Serre correspondence which enables us to collect information about the geometry of $\call(2)$ studying the geometric properties of the families of the corresponding curves.
Our starting point is therefore the inspection of the open 
$\calh\subset \Hilb_{2t+2}(X)$ that parameterises locally Cohen Macaulay curves. 
Note that any locally Cohen Macaulay curve with Hilbert's polynomial $2t+2$ is indeed either the union of two disjoint lines or a double structure on a line of arithmetic genus -1.
\begin{lemma}\label{smooth-l.c.m.}
$\forall\: [Y]\in \calh$, $h^0(\caln_{Y/X})=6$ and $h^1(\caln_{Y/X})=0$.
\end{lemma}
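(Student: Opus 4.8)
The statement is that for every locally Cohen–Macaulay curve $Y \subset X$ with Hilbert polynomial $2t+2$, the normal sheaf $\caln_{Y/X}$ satisfies $h^0(\caln_{Y/X}) = 6$ and $h^1(\caln_{Y/X}) = 0$. Since the Hilbert polynomial is $2t+2$, I first observe that $Y$ has degree $2$ and arithmetic genus $-1$, so $\chi(\OO_Y) = 2$; thus by Riemann–Roch on $X$ (using $\chi(\caln_{Y/X}) = \chi(\mathrm{Hom}(\IX_Y/\IX_Y^2,\OO_Y))$ and the adjunction/normal bundle exact sequence) the Euler characteristic $\chi(\caln_{Y/X})$ is a topological invariant depending only on the Hilbert polynomial of $Y$. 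Computing it once on the nicest representative — two disjoint lines $Y = \ell_1 \sqcup \ell_2$ — gives $\chi(\caln_{Y/X}) = 2\chi(\caln_{\ell/X})$, and for a line $\ell$ on a Fano threefold of index $3$ one has $\caln_{\ell/X} \simeq \OO_\ell(1) \oplus \OO_\ell$ (or $\OO_\ell(1)\oplus\OO_\ell$ type; in any case $h^0 = 3$, $h^1 = 0$), so $\chi(\caln_{Y/X}) = 6$. Hence it suffices to prove $h^1(\caln_{Y/X}) = 0$ for all $[Y] \in \calh$.

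**Case analysis.** By the remark preceding the lemma, every $[Y] \in \calh$ is either (i) a disjoint union of two lines, or (ii) a double structure of arithmetic genus $-1$ on a line $\ell$. In case (i), $\caln_{Y/X} = \caln_{\ell_1/X} \oplus \caln_{\ell_2/X}$ and the vanishing $h^1 = 0$ follows from $\caln_{\ell_i/X} \simeq \OO_{\ell_i}(1)\oplus\OO_{\ell_i}$, which has no higher cohomology. In case (ii), I would use the structure of double lines: the curve $Y$ fits in $0 \to \IX_\ell/\IX_Y \to \OO_Y \to \OO_\ell \to 0$ with $\IX_\ell/\IX_Y \simeq \OO_\ell(a)$ a line bundle on $\ell$, and the arithmetic-genus condition $p_a(Y) = -1$ forces $a = 1$ (since $\chi(\OO_Y) = \chi(\OO_\ell) + \chi(\OO_\ell(a)) = 2 + a + 1$, we need $a = -1$ — let me be careful: $\chi(\OO_Y) = 2$ gives $\chi(\OO_\ell(a)) = 1$, i.e. $a = 0$; then $Y$ is a ribbon with $\IX_\ell/\IX_Y \simeq \OO_\ell$). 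I would then compute $\caln_{Y/X}$ directly from the conormal sequence for the pair $\ell \subset Y$ together with the fact that $Y$ is contained in a (unique) hyperplane section $Q = X \cap \langle Y\rangle$; since $Y$ is then a divisor of type $(2,0)$ on the quadric surface $Q$ (possibly singular), one gets an exact sequence relating $\caln_{Y/X}$ to $\caln_{Y/Q}$ and $\caln_{Q/X}|_Y = \OO_Q(1)|_Y$, and both pieces can be shown to have vanishing $H^1$.

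**Main obstacle.** The delicate point is the double-line case: I need to identify $\caln_{Y/X}$ precisely enough to read off $H^1 = 0$, and double structures on a line can be embedded in $X$ in a priori several ways, so I should not assume local completeness of intersection. The cleanest route is to exploit that such a $Y$ always lies on a hyperplane section $Q$ (this is part of Proposition~\ref{curves-singular}'s analysis, where the divisor $Y$ of type $(2,0)$ on $Q$ appears), reducing the computation of $\caln_{Y/X}$ to surface geometry: on the quadric $Q$, $\OO_Q(Y) = \OO_Q(2,0)$, so $\caln_{Y/Q} \simeq \OO_Q(2,0)|_Y$, which restricted to the non-reduced $Y$ still has $H^1 = 0$ by a short filtration argument, and $\caln_{Q/X}|_Y \simeq \OO_X(1)|_Y = \OO_Q(1,1)|_Y$ likewise has no $H^1$. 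Combining via $0 \to \caln_{Y/Q} \to \caln_{Y/X} \to \caln_{Q/X}|_Y \to 0$ yields $h^1(\caln_{Y/X}) = 0$, and then $h^0(\caln_{Y/X}) = \chi(\caln_{Y/X}) = 6$ by the Euler characteristic computation above. A secondary subtlety is checking that this normal-bundle sequence is exact on the left even when $Y$ is non-reduced (i.e. that $\caln_{Q/X}|_Y$ is genuinely a quotient and $Y$ is a Cartier divisor on the possibly-singular $Q$); this is fine because $Y$ avoids the vertex of a singular quadric cone, as two lines through the vertex would meet, contradicting $\supp(Y)$ being a single line with a ribbon structure not passing through the singular point — a point to verify carefully.
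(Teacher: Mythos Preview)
Your approach is essentially the paper's: both arguments rest on the normal-bundle sequence
\[
0 \longrightarrow \caln_{Y/Q} \longrightarrow \caln_{Y/X} \longrightarrow \caln_{Q/X}|_Y \longrightarrow 0
\]
for the unique hyperplane section $Q$ containing $Y$, and both compute $h^1$ of the two outer terms to be zero. The differences are organizational rather than mathematical.

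Where you split into two cases (disjoint lines handled directly via $\caln_{\ell/X}\simeq\calo_\ell(1)\oplus\calo_\ell$, double lines via the sequence above), the paper treats both cases uniformly: it first proves that $Q$ is \emph{always smooth}, and then on the smooth quadric surface $Q\simeq\p1\times\p1$ any $Y\in\calh$ is a divisor in the class $|2L_A|$, so $\caln_{Y/Q}\simeq\calo_Y(2L_A)$ and $\caln_{Q/X}|_Y\simeq\calo_Y(1)$ are computed once and for all from the exact sequences $0\to\calo_Q\to\calo_Q(2L_A)\to\calo_Y(2L_A)\to0$ and $0\to\calo_\ell(1)\to\calo_Y(1)\to\calo_\ell(1)\to0$.

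This uniform treatment is exactly what dissolves your ``secondary subtlety''. The paper's argument for smoothness of $Q$ is short and worth absorbing: any degree-$2$ l.c.m.\ curve on a singular hyperplane section of $X$ (a cone over a smooth conic) is necessarily planar, hence has arithmetic genus $\ge 0$; since $p_a(Y)=-1$, the hyperplane section $\langle Y\rangle\cap X$ must be smooth. This replaces your vertex-avoidance check and makes the exactness of the normal-bundle sequence automatic ($Y$ is Cartier on the smooth surface $Q$). Your separate Euler-characteristic computation via $\chi(\caln_{\ell/X})$ is then unnecessary, since $h^0=6$ falls out as $h^0(\calo_Y(2L_A))+h^0(\calo_Y(1))=2+4$.
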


\begin{proof}
We first show that for $[Y]\in \calh$, $Y$ lies in a unique smooth hyperplane section $Q\subset X$.

If $Y=l_1\cup l_2, \ l_1\cap l_2=\emptyset$ then the only hyperplane containing $Y$ is $\langle Y\rangle=\langle l_1,l_2\rangle$. 
If otherwise $Y$ is a double line supported on $l$, we have that $\cali_Y$ fits in the exact sequence
$$ 0\rightarrow \cali_Y\rightarrow \cali_l\rightarrow \calo_l\rightarrow 0 $$
from which we compute that $h^0(\cali_Y(1))\ne 0$ (since $H^0(\cali_l(1))$ can not inject in $H^0(\calo_l(1))$) and $h^0(\cali_Y(1))<2$ (since no planar l.c.m. curve has negative arithmetic genus). Thus $h^0(\cali_Y(1))=1$ and $Y$ is contained in a unique hyperplane section $Q$ of $X$. $Q$ must be smooth since a degree 2 l.c.m. curves on a singular hyperplane section of $X$ is planar. 
The only degree 2 l.c.m. curve on a cone of vertex $p$ over a smooth conic $C$ indeed are $C$ itself or cones degree 2 divisors $D\subset C$ of $C$.
These latter are always contained in a plane $\langle p, L_D\rangle$ with $L_D\subset \langle C\rangle$ the unique line spanned by $D$ for $D$ reduced whilst $L_D=\D{T}_qC$ for $D_{\rm red}=q$. 

Let us now compute $h^i(\caln_{Y/X}), \ i=0,1$.
Consider the smooth quadric surface $Q:=\langle Y\rangle \cap X$ and denote by $L_A, \ L_B$ the two generators of $\Pic(Q)$. The only degree 2 effective divisors in $Q$ having arithmetic genus -1 belong either to the class $2L_A$ or to $2L_B$.

Without loss of generality, we suppose then $Y\sim 2L_A$ and we consider $$0\rightarrow \OO_Q\rightarrow\OO_Q(2 L_A)\rightarrow \OO_Y(2L_A)\rightarrow 0.$$
From this short exact sequence,  since $H^i(\OO_Q)=H^i(\OO_Q(L_A))=0, \ \forall\: i\ge 1$, we compute  $h^0(\OO_Y(2L_A))=2$ and $h^1(\OO_Y(2L_A))=h^1(\mathcal{N}_{Y/Q})=0.$
 
We finally consider:
$$0\rightarrow \mathcal{N}_{Y/Q}\rightarrow \mathcal{N}_{Y/X}\rightarrow {\mathcal{N}_{Q/X}|}_{Y}\rightarrow 0.$$
${\mathcal{N}_{Q/X}|}_Y$ is isomorphic to $\OO_Y(1)$ and from
$$0\rightarrow \OO_l(1)\rightarrow \OO_Y(1)\rightarrow \OO_l(1)\rightarrow 0$$
we obtain $h^0(\OO_Y(1))=4$ and $h^1(\OO_Y(1))=0.$

From these arguments we deduce the vanishing of $H^1(\mathcal{N}_{Y/X})$, which implies the smoothness of $\Hilb_{2t+2}(X)$, and we compute that $\Hilb_{2t+2}(X)$ has dimension $6=h^0(\caln_{Y/X})=h^0(\calo_Y(1))+h^0(O_Y(2L_A))$ at $[Y]$.
\end{proof}

\begin{lemma}\label{HS-lcm}
$\calh$ is a $\D{P}^2$ bundle over $\calu\subset \Grp(1,F(X))$.
\end{lemma}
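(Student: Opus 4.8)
The statement to prove is Lemma \ref{HS-lcm}: that $\calh$, the open locus in $\Hilb_{2t+2}(X)$ of l.c.m. curves with Hilbert polynomial $2t+2$, is a $\mathbb{P}^2$-bundle over $\calu \subset \Gr(1,F(X))$. The plan is to construct the bundle map explicitly and then identify the fibres. From the proof of Lemma \ref{smooth-l.c.m.} we already know that every $[Y]\in\calh$ is contained in a unique smooth hyperplane section $Q\subset X$, and that $Y$ lies in one of the two rulings of $Q$; more precisely $Y\sim 2L_A$ or $Y\sim 2L_B$ for the two generators $L_A,L_B$ of $\Pic(Q)$. The first step is to turn this into a morphism: sending $[Y]$ to the ruling containing $Y$ gives a well-defined set-theoretic map $\psi:\calh\to\calu$, since a ruling of a smooth hyperplane section of $X$ is exactly a point of $\calu$ (recall $\calu$ was defined as the open subscheme of $\Gr(1,F(X))$ parametrising such rulings, and $\gamma|_\calu$ is the degree $2$ cover of $(\mathbb{P}^4)^*_{sm}$). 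I would check that $\psi$ is a morphism either by exhibiting it via the relative Hilbert scheme / flattening stratification, or more concretely by noting that the hyperplane $\langle Y\rangle$ varies algebraically with $[Y]$ (it is cut out by $H^0(\cali_Y(1))$, which has constant rank $1$ on $\calh$ by the computation in Lemma \ref{smooth-l.c.m.}), and then the ruling is determined algebraically from $Y$ inside $Q=\langle Y\rangle\cap X$.

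**Identifying the fibres.** Over a fixed ruling, say the one in class $L_A$ on a smooth quadric $Q$, the curves $Y\in\calh$ lying in that ruling are precisely the effective divisors in the linear system $|\calo_Q(2L_A)|$. From the exact sequence $0\to\calo_Q\to\calo_Q(2L_A)\to\calo_Y(2L_A)\to0$ used in Lemma \ref{smooth-l.c.m.} one gets $h^0(\calo_Q(2L_A))=3$ (indeed $\calo_Q(2L_A)\simeq\calo_{\mathbb{P}^1\times\mathbb{P}^1}(2,0)$, pulled back from $\mathbb{P}^1$ with $h^0=3$), so $|\calo_Q(2L_A)|\simeq\mathbb{P}^2$. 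Every member of this system is a l.c.m. curve of the required type — either two disjoint lines of the ruling or a double line of arithmetic genus $-1$ — so the whole $\mathbb{P}^2$ lies in $\calh$. Thus the fibre of $\psi$ over each point of $\calu$ is a $\mathbb{P}^2$. This matches dimensions: $\dim\calu=\dim(\mathbb{P}^4)^*_{sm}+\dim\Gr(1,3)_{\text{ruling}}$ — more simply $\dim\calu=4$ (it is a degree-$2$ cover of the $4$-dimensional $(\mathbb{P}^4)^*_{sm}$) — and $\dim\calh=6=h^0(\caln_{Y/X})$ from Lemma \ref{smooth-l.c.m.}, consistent with a $\mathbb{P}^2$-bundle.

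**Local triviality.** To upgrade "all fibres are $\mathbb{P}^2$" to "$\mathbb{P}^2$-bundle" I would produce the bundle structure directly. Over $\calu$ there is a tautological family: each point gives a smooth quadric $Q$ with a chosen ruling, hence a rank-$1$ quotient, and one can form the relative linear system. Concretely, let $\mathcal{Q}\to\calu$ be the universal smooth hyperplane section with its universal ruling, realised (Zariski-locally on $\calu$) as a $\mathbb{P}^1\times\mathbb{P}^1$-bundle via the two spinor/ruling line bundles; then $\calh$ is the relative $\mathbb{P}$-bundle $\mathbb{P}(\pi_*\calo_{\mathcal{Q}}(2L_A))$ where $\pi:\mathcal{Q}\to\calu$, and $\pi_*\calo_{\mathcal{Q}}(2L_A)$ is locally free of rank $3$ by cohomology-and-base-change (the $h^0=3$, $h^1=0$ computation is fibrewise constant). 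Identifying this relative linear system scheme with the open $\calh\subset\Hilb_{2t+2}(X)$ is the content of the universal property of the Hilbert scheme together with the fact that membership in a ruling is an open-and-closed condition on $\calh$.

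**Main obstacle.** The genuinely delicate point is the globalisation: showing that $\psi:\calh\to\calu$ is an honest morphism of schemes (not merely a map on points) and that $\calh$ is globally, not just fibrewise, the projectivisation of a rank-$3$ bundle. The fibrewise statements are all immediate from Lemma \ref{smooth-l.c.m.}; what requires care is that the hyperplane $\langle Y\rangle$, the quadric $Q$, and the ruling vary in flat/algebraic families over $\calh$, and that the two rulings do not get interchanged (this is where one uses that $\calu$ itself is the disconnected double cover rather than $(\mathbb{P}^4)^*_{sm}$). Once $\psi$ is known to be a morphism with the relative linear system description, local triviality follows formally from cohomology and base change, so I expect essentially all the work to be in setting up this universal family and checking $\pi_*\calo(2L_A)$ is locally free of rank $3$.
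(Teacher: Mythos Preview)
Your approach is correct and would work, but the paper takes a different and somewhat slicker route that sidesteps exactly the obstacle you flagged. Rather than constructing the map $\psi:\calh\to\calu$ and then worrying about whether it is a morphism and whether the pushforward $\pi_*\calo_{\mathcal Q}(2L_A)$ globalises, the paper writes down the $\p2$-bundle first and maps \emph{into} $\calh$. Concretely, since a point of $\calu\subset\Grp(1,F(X))$ is a pencil of lines in $X$, the restriction $\calt_{\calu}$ of the tautological rank-$2$ bundle already gives the universal ruling, and the rank-$3$ bundle is simply $\Sym^2(\calt_{\calu}^{\vee})$: a point of $\D{P}(\Sym^2(\calt_{\calu}^{\vee}))$ is an unordered pair of (possibly coincident) lines in a ruling. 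The incidence correspondence in $\D{P}(\Sym^2(\calt_{\calu}^{\vee}))\times X$ is then a flat family of curves, so the universal property of the Hilbert scheme produces a morphism $\D{P}(\Sym^2(\calt_{\calu}^{\vee}))\to\calh$; it is bijective on points, both sides are smooth (the target by Lemma~\ref{smooth-l.c.m.}), and Zariski's main theorem finishes.

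What each approach buys: yours makes the identification of the fibre with the linear system $|\calo_Q(2,0)|$ transparent and connects directly to the normal-bundle computation you already did, at the cost of the globalisation argument. The paper's approach avoids cohomology and base change entirely and never has to check that ``send $Y$ to its ruling'' is a morphism, since the map goes the other way; but it relies on recognising that the relevant rank-$3$ bundle is just $\Sym^2$ of the tautological bundle on the Grassmannian. Note also that your bundle and theirs are canonically the same, since fibrewise $H^0(\calo_Q(2,0))\simeq\Sym^2 H^0(\calo_Q(1,0))$ and $H^0(\calo_Q(1,0))$ is the fibre of $\calt_{\calu}^{\vee}$.
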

\begin{proof}
Denote by $\calt_{\calu}$ the restriction of the tautological bundle over $\Grp(1,F(X))$ to $\calu$.
Take then the rank 3 vector bundle $\Sym^2(\calt_{\calu}^{\vee})$ and the projective bundle $$\D{P}(\Sym^2(\calt_{\calu}^{\vee}))\rightarrow\calu.$$
For a point $h\in \D{P}(\Sym^2(\calt_{\calu}^{\vee}))$ we denote by $l_{1,h}, \ l_{2,h}$ the corresponding (possibly coincident) lines. The incidence correspondence $\Sigma\subset \D{P}(\Sym^2(\calt_{\calu}^{\vee}))\times X$
$$\Sigma:=\{(h,p)\in \D{P}(\Sym^2(\calt_{\calu}^{\vee}))\times X\mid p\in l_{1,h}\cup l_{2,h}\}$$
induces a bijective morphism $\D{P}(\Sym^2(\calt_{\calu}^{\vee})) \to \calh$ 
that, since $\calh$ and $\D{P}(\Sym^2(\calt_{\calu}^{\vee}))$ are smooth,
is therefore an isomorphism (this is due to Zariski main theorem).
\end{proof}

From now on we denote by $\pi_{\calh}:\calh\to \calu$ the standard projection.

Let us now pass to the study of $\call(2)$. To begin with we show how, from the smoothness of $\calh$ we can deduce the smoothness of $\calm$ along $\call(2)$.

\begin{lemma}\label{smooth-inst-bundle}
For any $[E]\in \cali(2), \ \ext^1(E,E)=6$ and $\ext^2(E,E)=0$.
\end{lemma}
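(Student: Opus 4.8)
\textit{Proof plan.}

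The plan is to compute the $\Ext$ groups $\Ext^i(E,E)$ for $[E]\in\cali(2)$ by combining the Serre correspondence of Proposition \ref{curve-bundle} with the normal bundle calculation already available for the l.c.m. curves. First I would fix a general section $s\in H^0(E(1))$, which by Proposition \ref{curve-bundle} has torsion-free cokernel and therefore produces a short exact sequence
\begin{equation*}
0 \longrightarrow \ox \stackrel{s}{\longrightarrow} E(1) \longrightarrow \cali_Y(1) \longrightarrow 0,
\end{equation*}
where $Y\subset X$ is a divisor of type $(2,0)$ on a smooth hyperplane section $Q\subset X$, hence $[Y]\in\calh$. The idea is to translate $\Ext^\bullet(E,E)=\Ext^\bullet(E(1),E(1))$ into cohomological data about $Y$ and about twists of $E(1)$ that are controlled by the instanton vanishing conditions.

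The key steps, in order, are as follows. \textbf{(1)} Apply $\Hom(E(1),-)$ to the displayed sequence to get a long exact sequence relating $\Ext^i(E(1),E(1))$ to $H^i(E^\vee(-1))\simeq H^i(E(-1))$ (using the rank 2 isomorphism $E^\vee\simeq E(1)$, cf. \eqref{iso-refl}, so that $\Hom(E(1),\ox)\simeq E(-1)$ up to the appropriate twist) and to $\Ext^i(E(1),\cali_Y(1))$; the first family of groups vanishes for all $i$ by the instantonic vanishing $H^i(E(-1))=0$ together with Lemma \ref{instantonic}, so $\Ext^i(E(1),E(1))\simeq\Ext^i(E(1),\cali_Y(1))$ for every $i$. \textbf{(2)} Apply $\Hom(-,\cali_Y(1))$ to the same sequence: this reduces $\Ext^i(E(1),\cali_Y(1))$ to $\Ext^i(\cali_Y(1),\cali_Y(1))$ and to $H^i(\cali_Y)$, the latter of which one computes to vanish for $i=1,2$ (and $H^0(\cali_Y)=0$, $H^3(\cali_Y)=0$) directly from $0\to\cali_Y\to\ox\to\OO_Y\to 0$ using $H^i(\ox)$ known and $h^0(\OO_Y)=1$, $h^1(\OO_Y)=0$ from the genus $-1$ computation in Lemma \ref{smooth-l.c.m.}. \textbf{(3)} Finally identify $\Ext^i(\cali_Y(1),\cali_Y(1))=\Ext^i(\cali_Y,\cali_Y)$ with deformation-theoretic data of $Y$ in $X$: one has $\Hom(\cali_Y,\cali_Y)=\C$, $\Ext^1(\cali_Y,\cali_Y)\simeq H^0(\caln_{Y/X})$ (which is $6$-dimensional by Lemma \ref{smooth-l.c.m.}), and $\Ext^2(\cali_Y,\cali_Y)$ sits in an exact sequence involving $H^1(\caln_{Y/X})=0$ and $H^0(\inext^1(\cali_Y,\cali_Y))$; since $Y$ is l.c.i. (a divisor on the smooth surface $Q$) the local $\Ext$ sheaves beyond degree $0$ vanish, giving $\Ext^2(\cali_Y,\cali_Y)\simeq H^1(\caln_{Y/X})=0$ and $\Ext^3(\cali_Y,\cali_Y)=0$. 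Assembling (1)–(3) yields $\ext^1(E,E)=6$ and $\ext^2(E,E)=0$, as claimed.

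The main obstacle I anticipate is bookkeeping the twists and the local-to-global spectral sequences in steps (2) and (3): one must be careful that $\cali_Y$ is l.c.i. (so that $\inext^p(\cali_Y,\ox)=0$ for $p>0$ and hence $\inext^p(\cali_Y,\cali_Y)=0$ for $p>0$), and one must check that the connecting maps in the long exact sequences genuinely vanish rather than merely that the flanking terms are small — this is where the instantonic vanishing of \emph{all} cohomology of $E(-1)$ (Lemma \ref{instantonic}) does the real work, collapsing the first long exact sequence to an isomorphism in every degree. An alternative, perhaps cleaner, route would be to argue directly that $\Ext^\bullet(E,E)\simeq\Ext^\bullet(\cali_Y,\cali_Y)$ by a single application of the Serre sequence on both sides simultaneously (a standard computation, as in \cite[Lemma 9.3]{H-bundle} or its analogues), and then invoke Lemma \ref{smooth-l.c.m.} together with the l.c.i. property of $Y$; I would present whichever of the two is shorter once the twists are pinned down.
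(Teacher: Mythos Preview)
Your overall strategy --- reduce $\Ext^\bullet(E,E)$ to normal-bundle data of $Y$ via the Serre sequence --- is the paper's as well, but the execution has several concrete errors that prevent the long exact sequences from collapsing as you claim.

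First, the twist in step (1) is off: since $E^\vee\simeq E(1)$ one has $(E(1))^\vee=E^\vee(-1)=E$, not $E(-1)$. Hence $\Ext^i(E(1),\ox)=H^i(E)$, and these do \emph{not} all vanish. Indeed $p_a(Y)=-1$ means $\chi(\calo_Y)=2$, not $1$ (two disjoint lines, or a genus $-1$ double line, have $h^0(\calo_Y)=2$), so $h^1(\cali_Y)=1$ and from $0\to\ox(-1)\to E\to\cali_Y\to0$ one gets $h^1(E)=1$. In step (2) the term appearing is $H^i(\cali_Y(1))$, not $H^i(\cali_Y)$, and $h^0(\cali_Y(1))=1$, again obstructing a clean collapse. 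In step (3) the claim that $\inext^p(\cali_Y,\cali_Y)=0$ for $p>0$ is false: for a codimension-two l.c.i.\ curve one has $\inext^1(\cali_Y,\cali_Y)\simeq\inext^2(\calo_Y,\cali_Y)\simeq\caln_{Y/X}$, which is nonzero.

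The argument is nonetheless salvageable at the $\Ext^2$ level: with the correct twists one still obtains $\Ext^2(E,E)\simeq\Ext^2(E,\cali_Y)\simeq\Ext^2(\cali_Y,\cali_Y)$, because the relevant flanking groups $H^2(E)$, $H^3(E)$, $H^1(\cali_Y(1))$, $H^2(\cali_Y(1))$ do vanish; and the local-to-global spectral sequence (with $\inext^0=\ox$, $\inext^1=\caln_{Y/X}$) gives $\Ext^2(\cali_Y,\cali_Y)=0$ since $H^2(\ox)=H^1(\caln_{Y/X})=0$. Once $\ext^2(E,E)=0$, the equality $\ext^1(E,E)=6$ follows from $\chi(E,E)=-5$ together with $\hom(E,E)=1$ and $\ext^3(E,E)=0$ by stability. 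This is precisely what the paper does: it applies $\Hom(E,-)$ once to the Serre sequence, shows $\Ext^1(E,\ox(-1))=0$ and $\Ext^2(E,\cali_Y)\simeq\Ext^1(E,\calo_Y)\simeq H^1(\caln_{Y/X})=0$ separately, and then invokes the Euler characteristic \eqref{euler-char} rather than attempting to identify $\Ext^1(E,E)$ directly with a cohomology group of $Y$. I would recommend you abandon the attempt to get isomorphisms in every degree and follow this cleaner route.
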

\begin{proof}
Consider the short exact sequence:
\begin{equation}\label{ses-curve-bundle}
    0\rightarrow \calo_X(-1)\rightarrow E\rightarrow \cali_Y\rightarrow 0. 
\end{equation}
Applying $\Hom(E, \ \cdot \ )$ we obtain an exact sequence of vector spaces
$$ \Ext^1(E,\calo_X(-1))\rightarrow \Ext^2(E,E)\rightarrow \Ext^2(E,\cali_Y).$$
The left side term is zero since it is dual to $H^1(E(-2))\simeq H^1(\cali_Y(-2))=0$.
Let us now prove that the right side term vanishes as well. 
By stability and by Lemma \ref{cohom-interm}, we have $h^i(E(1))=h^{3-i}(E(-3))=0$ for $i=2,3$; as moreover $P_E(1)=2$ and $h^0(E(1))=2$, we conclude that $h^1(E(1))=0$ as well.
This implies the vanishing of $\Ext^i(E,\calo_X)\simeq \Ext^i(\calo_X,E(1))$ for $i=1,2$ which leads to an isomorphism $\Ext^2(E,\cali_Y)\simeq \Ext^1(E,\calo_Y)$.
But $E$ is locally free therefore: 
$$\Ext^1(E,\calo_Y)\simeq H^1(\inhom(E,\calo_Y))\simeq H^1(E|_Y^{\vee})\simeq H^1(\caln_{Y/X})=0$$
(the isomorphism $E|_Y^{\vee}\simeq \caln_{Y/X}$ is obtained tensoring (\ref{ses-curve-bundle}) for $\calo_Y$ and the vanishing of $H^1(\caln_{Y/X})$ is due to Lemma (\ref{smooth-l.c.m.})).
Therefore $\Ext^2(E,\cali_Y)=0$ which implies $\Ext^2(E,E)=0$. 
Now, the stability of $E$ leads to $\Hom(E,E)\simeq \D{C}$ and $\Ext^3(E,E)\simeq \Hom(E,E(-3))^*=0$. Since $E$ has homological dimension one, we can apply an argument equivalent to \cite[Proposition 3.4]{H-reflexive}, obtaining:
\begin{equation}\label{euler-char}
\chi(E,E)= \frac{3}{2}c_1(E)^2-6c_2(E)+4=-5.
\end{equation}

which allows to conclude that $\ext^1(E,E)=6$. This ensures that the moduli space $\calm$ is smooth along $\cali(2)$ and that  $\overline{\cali(2)}$ is the only component passing trough any point in $\cali(2)$.
\end{proof}

We pass now to the case of non locally free instantons.

\begin{proposition}\label{smooth-inst-sing}
$\calm$ is smooth of dimension 6 at any point $[E]\in \cald(1,1)$.
\end{proposition}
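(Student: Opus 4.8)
The plan is to show that at any $[E]\in\cald(1,1)$ we have $\ext^1(E,E)=6$ and $\ext^2(E,E)=0$; since $\cald(1,1)$ is smooth of dimension $5$ (Proposition \ref{moduli-singular}) this will force $\calm$ to be smooth of dimension $6$ at $[E]$, with $\cald(1,1)$ sitting inside it as a divisor. First I would work with the defining sequence \eqref{ses-sing-line}, namely $0\to E\to\Ss\xrightarrow{q}\calo_l\to 0$, together with the Serre-type sequence $0\to\calo_X(-1)\to E\to\cali_Y\to 0$ from Proposition \ref{curves-singular}, where $Y$ is the l.c.m. curve of Hilbert polynomial $2t+2$ supported on $l\cup l'$. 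The idea is to compute the relevant Ext groups by combining these two presentations, exactly as in the locally free case treated in Lemma \ref{smooth-inst-bundle}, but now keeping track of the extra terms coming from the torsion quotient $T_E\simeq\calo_l$ and from the fact that $E$ is no longer locally free.

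The key steps, in order: (1) Compute $\chi(E,E)$ via Grothendieck--Riemann--Roch. By Proposition \ref{double-dual}, $E$ has homological dimension $1$, so the Euler pairing is computed from the Chern character \eqref{ch-char} (with $n=2$) exactly as in \eqref{euler-char}, giving $\chi(E,E)=-5$. (2) Show $\Hom(E,E)\simeq\C$ and $\Ext^3(E,E)=0$: stability (every instanton on the quadric is $\mu$-stable) gives $\Hom(E,E)\simeq\C$, and $\Ext^3(E,E)\simeq\Hom(E,E(-3))^*=0$ again by stability since $\mu(E(-3))<\mu(E)$. (3) Show $\Ext^2(E,E)=0$. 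Apply $\Hom(E,-)$ to $0\to\calo_X(-1)\to E\to\cali_Y\to 0$; the term $\Ext^2(E,\calo_X(-1))\simeq H^1(E(-2))^*$ — wait, by Serre duality $\Ext^2(E,\calo_X(-1))\simeq\Ext^1(E(-1),\omega_X)^*\simeq H^1(E(-1))^*$, which I should instead relate to $\Ext^2(E,\calo_X(-1))\simeq H^1(E^{\vee}(-2))^*$; more directly, $\Ext^i(E,\calo_X(-1))\simeq H^{3-i}(E^{\vee\vee}(-2))^*\simeq H^{3-i}(\Ss(-2))^*$ using $E^{\vee}\simeq E^{\vee\vee}(r_X)$, and these vanish for $i=1,2$ since $\Ss$ is an instanton bundle of charge $1$ (Lemma \ref{instantonic}). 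So $\Ext^2(E,E)\simeq\Ext^2(E,\cali_Y)$. Then one uses $h^i(E(1))=0$ for $i=1,2,3$ (by stability, Lemma \ref{cohom-interm}, and the numerics $P_E(1)=2=h^0(E(1))$) to get $\Ext^i(E,\calo_X)=0$ for $i=1,2$, hence $\Ext^2(E,\cali_Y)\simeq\Ext^1(E,\calo_Y)$. (4) Finally show $\Ext^1(E,\calo_Y)=0$. This is the step that differs from the bundle case, since $E$ is not locally free along $l\subset Y$. I would restrict \eqref{ses-sing-line} to $Y$, or better restrict the sequence $0\to\calo_X(-1)\to E\to\cali_Y\to 0$ to $Y$, to identify $\inhom(E,\calo_Y)$ with (a twist of) the conormal sheaf $\caln_{Y/X}$ away from $l$, and control the discrepancy along $l$ using that $T_E=\calo_l$; then invoke $H^1(\caln_{Y/X})=0$ from Lemma \ref{smooth-l.c.m.}. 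Combining (1)--(4): $\chi(E,E)=1-\ext^1(E,E)+0-0=-5$, so $\ext^1(E,E)=6$.

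The main obstacle I anticipate is step (4): unlike in Lemma \ref{smooth-inst-bundle}, $\inhom(E,\calo_Y)$ is not simply $E^\vee|_Y$ because $E$ fails to be locally free precisely along $l$, which is a component of $Y$. I expect one must handle this by a careful local analysis at the points of $l$ — for instance, dualizing \eqref{ses-sing-line} restricted to $Y$ to get $\inhom(\calo_l,\calo_Y)\to E^{\vee\vee}|_Y\to \inhom(E,\calo_Y)\to\inext^1(\calo_l,\calo_Y)$ and observing that the error terms $\inhom(\calo_l,\calo_Y)$ and $\inext^1(\calo_l,\calo_Y)$ are supported on $l$ with $H^1$ vanishing for degree reasons (they are quotients/subsheaves of line bundles on $l\simeq\p1$ of controlled degree) — so that the vanishing $H^1(\caln_{Y/X})=0$ still propagates to $\Ext^1(E,\calo_Y)=0$. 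Alternatively, one can avoid $Y$ altogether and compute $\Ext^2(E,E)$ directly from \eqref{ses-sing-line}: apply $\Hom(-,E)$ to it, note $\Ext^i(\calo_l,E)\simeq\Ext^{i}(\calo_l,E)$ fit into the local-to-global spectral sequence with $\inext^q(\calo_l,E)$ supported on the line, and combine with $\Ext^2(\Ss,E)$ which one computes from $\Hom(\Ss,-)$ applied to \eqref{ses-sing-line} again; the circularity is broken because $\Ext^\bullet(\Ss,\Ss)$ is known ($\Ss$ is rigid) and $\Ext^\bullet(\Ss,\calo_l)$ is a short computation on $\p1$. Whichever route is cleaner, the upshot is the same, and the smoothness and dimension statement follow immediately from $\ext^2(E,E)=0$ and $\dim T_{[E]}\calm=\ext^1(E,E)=6$.
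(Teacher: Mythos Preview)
Your overall plan --- establish $\chi(E,E)=-5$, $\hom(E,E)=1$, $\ext^3(E,E)=0$, and then prove $\Ext^2(E,E)=0$ --- is exactly right and is what the paper does. The difference is that the paper goes straight for what you describe as the ``alternative'' at the end: it applies $\Hom(\,\cdot\,,E)$ to the sequence $0\to E\to\Ss\to\calo_l\to 0$, obtaining
\[
\Ext^2(\Ss,E)\longrightarrow\Ext^2(E,E)\longrightarrow\Ext^3(\calo_l,E),
\]
and kills both outer terms. For the right-hand term, Serre duality gives $\Ext^3(\calo_l,E)\simeq\Hom(E,\calo_l(-3))^*\simeq\Hom(E|_l,\calo_l(-3))^*$; the paper then restricts the defining sequence to $l$, getting
\[
0\to\intor_1(\calo_l,\calo_l)\to E|_l\to\Ss|_l\to\calo_l\to 0,
\]
and reads off from $\intor_1(\calo_l,\calo_l)\simeq\caln_{l/X}^\vee\simeq\calo_l\oplus\calo_l(-1)$ and $\Ss|_l\simeq\calo_l\oplus\calo_l(-1)$ that no summand of $E|_l$ can map to $\calo_l(-3)$. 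For the left-hand term, $\Hom(\Ss,\,\cdot\,)$ applied to the same sequence gives $\Ext^1(\Ss,\calo_l)\to\Ext^2(\Ss,E)\to\Ext^2(\Ss,\Ss)=0$, and $\Ext^1(\Ss,\calo_l)\simeq H^1(\Ss^\vee|_l)=0$ since $\Ss$ is locally free with $\Ss|_l\simeq\calo_l\oplus\calo_l(-1)$.

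Your primary route through the Serre curve $Y$ would work in principle, but as you correctly anticipate, step~(4) is where the difficulty concentrates: $E$ fails to be locally free exactly along a component of $Y$, so the identification $\inhom(E,\calo_Y)\simeq\caln_{Y/X}$ breaks down and you would have to chase the $\inext^1$ correction terms on $l$. The paper's route avoids $Y$ entirely and is considerably cleaner. One small slip in your step~(3): Serre duality gives $\Ext^i(E,\calo_X(-1))\simeq H^{3-i}(E(-2))^*$, not $H^{3-i}(E^{\vee\vee}(-2))^*$; the vanishing you want still follows (e.g.\ from $0\to E(-2)\to\Ss(-2)\to\calo_l(-2)\to 0$), but the identification with $\Ss(-2)$ directly is not correct since $E$ is not reflexive.
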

\begin{proof}
We know that $E$ fits in a short exact sequence:
\begin{equation}\label{ses-dd-line}
    0\rightarrow E\rightarrow \Ss\rightarrow \calo_l\rightarrow 0
\end{equation}
where $E^{\vee\vee}\simeq \mathcal{S}$ and $l=Sing(E).$
Applying $\Hom(\:\cdot\:, E)$ we end up with a sequence of vector spaces:
$$\Ext^2(\Ss,E)\rightarrow \Ext^2(E,E)\rightarrow \Ext^3(\calo_l, E);$$
$\Ext^3(\calo_l, E)\simeq \Hom(E,\calo_l(-3))\simeq \Hom(E\restriction_l, \calo_l(-3))$.
Tensoring (\ref{ses-dd-line}) for $\otimes\: \calo_l$ we obtain:
    $$ 0\rightarrow \intor_{1}(\calo_l,\calo_l)\xrightarrow{\alpha} E\restriction_l\xrightarrow{\beta}\Ss\restriction_l\xrightarrow{\gamma} \calo_l\rightarrow 0$$
and consequently:
\begin{equation}\label{ses-1}
0\rightarrow \intor_1(\OO_l,\OO_l)\xrightarrow{\alpha} E\restriction_l\xrightarrow{\beta} Im(\beta)\rightarrow 0,
\end{equation}
\begin{equation}\label{ses-2}
0\rightarrow Im (\beta)\rightarrow \Ss\restriction_l\xrightarrow{\gamma} \OO_l\rightarrow 0.
\end{equation}
$\intor_{1}(\OO_l,\OO_l)\simeq \mathcal{N}_{l/X}^{\vee}\simeq \OO_l\oplus \OO_l(-1)$, thus $\Hom(\intor_{1}(\OO_l,\OO_l), \OO_l(-3))=0$ so that $\Hom(E\restriction_l, \OO_l(-3))\simeq \Hom(Im(\beta), \OO_l(-3)).$ From (\ref{ses-2}) $Im(\beta)$ is a rank one torsion free sheaf of degree -1; but $l$ is a line, therefore $Im(\beta)$ is a line bundle of degree -1, so that $Im(\beta)\simeq \OO_l(-1)$. From this we deduce $\Hom(Im(\beta), \OO_l(-3))=0$ and consequently that $\Hom(E\restriction_l,\OO_l(-3))\simeq \Ext^3(\OO_l,E)=0$.

To prove the vanishing of $\Ext^2(\Ss, E)$, we apply $\Hom(\Ss, \cdot)$ to (\ref{ses-dd-line}), getting:
$$\Ext^1(\Ss,\OO_l)\rightarrow \Ext^2(\Ss,E)\rightarrow \Ext^2(\Ss,\Ss)=0.$$
Since $\Ss$ is locally free, $\Ext^1(\Ss, \OO_l)\simeq H^1(\inhom(\Ss,\OO_l))$ and this latter vanishes again due to $\Ss\restriction_{l}\simeq \OO_{l}\oplus \OO_{l}(-1)$.
These computations yield to $\Ext^2(E,E)=0$ implying the smoothness of $\mathcal{M}$ at $E$.
Also this time the stability of $E$ ensures that $\hom(E,E)=1$ and $\ext^3(E,E)=0$, and since the homological dimension of $E$ is one, we can again argue as in \cite[Proposition 3.4]{H-reflexive}, which leads to $\chi(E,E)= \frac{3}{2}c_1(E)^2-6c_2(E)+4=-5$. This implies that $\ext^1(E,E)=6$ ending our proof.
\end{proof}

We consider now the scheme $\calb$ parameterizing the pencils of curves \mbox{$\D{P}(H^0(E(1))$} for $[E]\in \call(2)$.
$\calb$ identifies with the Grassmann bundle:
\begin{equation}\label{defn calb}
\calb:=G_2(\Sym^2(\calt_{\calu}^{\vee}))\simeq \D{P}(\Sym^2(\calt_{\calu}^{\vee})^{\vee})
\xrightarrow{\pi_{\calb}}\calu.
\end{equation}

By construction $\calb$ is a smooth and irreducible 6-dimensional variety. Our next goal is to show that $\calb$ is isomorphic to $\call(2)$. To prove this we will construct a projective bundle $\D{P}(\cale)\to \calb$ that carries a family of instantons and such that the induced morphism $\D{P}(\cale)\to \call(2)$ factors trough an isomorphism $\calb\to \call(2)$. 

We start by considering the universal curve $\mathbf{Y}\subset \calh\times X$ and the relative ext sheaf \mbox{$\cale:=\inext^1_{p_1}(\cali_{\mathbf{Y}}(1), \calo_{\calh\times X})\in Coh(\calh)$}, 
where $p_1$ is the projection onto the first factor (here for $\calf\in Coh(\calh\times X)$ we define $\calf(n):=\calf\otimes {p_2}^*\calo_X(n)$.) 

\begin{proposition}\label{bundle-sc}
$\cale$ is a rank 2 vector bundle on $\calh$ and the projective bundle 
$\D{P}(\cale)$ admits the structure of a $\p1$ bundle over $\calb$.
\end{proposition}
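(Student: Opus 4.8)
The plan is to analyze the relative Ext sheaf $\cale=\inext^1_{p_1}(\cali_{\mathbf{Y}}(1),\calo_{\calh\times X})$ fibrewise and then globalize. First I would compute the fibre of $\cale$ over a point $[Y]\in\calh$. By cohomology and base change it suffices to show that $\Ext^1(\cali_Y(1),\calo_X)$ has dimension $2$ independently of $[Y]\in\calh$, and that the formation of these Ext groups is compatible with base change. For this I use the local-to-global spectral sequence \eqref{ss-loc-glob-ext}: since $H^1(\op4(-1))=H^2(\op4(-1))=0$ (Kodaira vanishing on $X$), we get $\Ext^1(\cali_Y(1),\calo_X)\simeq H^0(\inext^1(\cali_Y(1),\calo_X))\simeq H^0(\omega_Y(2))$, exactly as in the isomorphisms of display \eqref{isos} with $L=\calo_X(1)$. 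Now $Y$ is a l.c.m. curve with Hilbert polynomial $2t+2$, so it is either a pair of disjoint lines or a double line of arithmetic genus $-1$; in the proof of Lemma \ref{smooth-l.c.m.} it was shown (writing $Y\sim 2L_A$ on the unique smooth quadric $Q=\langle Y\rangle\cap X$) that $h^0(\calo_Y(2L_A))=h^0(\omega_Y(2))=2$, and moreover $h^1(\omega_Y(2))$ vanishes. Hence $\dim\Ext^1(\cali_Y(1),\calo_X)=2$ is constant, and the higher relative Ext sheaves that could obstruct base change vanish by the same computation, so by Grauert's theorem $\cale$ is locally free of rank $2$ on the smooth variety $\calh$, and its formation commutes with base change.

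Next I would identify $\D{P}(\cale)$ with a $\p1$-bundle over $\calb$. The point $[s]\in\D{P}(\cale_{[Y]})=\D{P}(\Ext^1(\cali_Y(1),\calo_X))$ corresponds, via the Serre correspondence of Theorem \ref{thm:serre} (with $r=2$, $L=\calo_X(1)$, $n=1$), to an extension
$$0\longrightarrow \calo_X\longrightarrow E(1)\longrightarrow \cali_Y(1)\longrightarrow 0,$$
hence to a pair $(E,s)$; the bottom-row interpretation of \eqref{isos} says $[s]$ is the same datum as the section $\xi\in\D{P}(H^0(\omega_Y(2)))$. By Propositions \ref{curve-bundle} and \ref{curves-singular} the curves $Y$ attached to any $[E]\in\call(2)$ sweep out a base-point-free pencil of type $(2,0)$ or $(0,2)$ on a smooth hyperplane section, i.e. a point of $\calb=G_2(\Sym^2(\calt_\calu^\vee))$; conversely, two independent sections of $E(1)$ determine $E$ up to isomorphism (Remark \ref{pencil-emb}). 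Thus the natural map sending $[s]\in\D{P}(\cale)$ to the pencil $\D{P}(H^0(E(1)))$ that contains the curve $Y$ and is determined by the associated instanton $E$ is a well-defined morphism $\D{P}(\cale)\to\calb$. Concretely, over the $\p2$-bundle $\calh=\D{P}(\Sym^2(\calt_\calu^\vee))\to\calu$ this map is the relative projection $\D{P}(\Sym^2(\calt_\calu^\vee))\to G_2(\Sym^2(\calt_\calu^\vee))$ of a $\p2$-bundle onto the dual $\p2$-bundle of lines, which is visibly a $\p1$-bundle; I would verify that the Ext-theoretic description of the fibres agrees with this modular description, so that $\D{P}(\cale)\to\calb$ really is that $\p1$-bundle.

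The main obstacle I expect is not the rank computation but controlling base change for $\cale$ carefully: one must make sure that $\inext^1_{p_1}$ genuinely computes the relative Ext in the derived sense and that no contribution from $\inext^2_{p_1}$ or from $H^1(\omega_Y(2))$ interferes over non-reduced curves $Y$ (the double-line case, where $\calo_l|_l$ and $\caln_{l/X}$ enter). This is handled by the vanishing $h^1(\omega_Y(2))=0$ established in Lemma \ref{smooth-l.c.m.} together with $\inext^p(\cali_Y,\calo_X)=0$ for $p\ge 2$ when $Y$ is l.c.m. (Remark \ref{rem:five}), which collapses the relevant spectral sequences fibrewise and, by constancy of the fibre dimension over the connected smooth base $\calh$, upgrades to local freeness and base-change compatibility. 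Once this is in place, the identification of $\D{P}(\cale)$ with the tautological $\p1$-bundle $\D{P}(\Sym^2(\calt_\calu^\vee))\to G_2(\Sym^2(\calt_\calu^\vee))=\calb$ is immediate from the description of $\calh$ in Lemma \ref{HS-lcm}.
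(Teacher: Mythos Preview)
Your argument for the local freeness of $\cale$ is essentially the paper's: reduce $\inext^1_{p_1}$ to ${p_1}_*(\tilde\omega_{\mathbf Y}(2))$ via the local-to-global spectral sequence, compute $h^0(\omega_Y(2))=2$ for every $[Y]\in\calh$, and invoke constancy of fibre dimension over an integral base. (Minor slip: you write $H^i(\op4(-1))$ where you mean $H^i(\ox(-1))$.)

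The second half, however, contains a real confusion. You define the map $\D{P}(\cale)\to\calb$ correctly on points---send $(Y,\xi)$ to the pencil $\D{P}(H^0(E(1)))$ determined by the associated instanton $E$---but then you assert that ``concretely'' this map is ``the relative projection $\D{P}(\Sym^2(\calt_\calu^\vee))\to G_2(\Sym^2(\calt_\calu^\vee))$'', and in your last paragraph you again speak of identifying $\D{P}(\cale)$ with ``the tautological $\p1$-bundle $\D{P}(\Sym^2(\calt_\calu^\vee))\to\calb$''. But $\D{P}(\Sym^2(\calt_\calu^\vee))$ is $\calh$, not $\D{P}(\cale)$; it has the same dimension as $\calb$ and there is no natural morphism $\calh\to\calb$ at all (over a point of $\calu$ you are asking for a canonical map $\p2\to(\p2)^*$). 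The correct picture is that $\D{P}(\cale)$ is $7$-dimensional and sits as the \emph{incidence correspondence} (the relative flag variety) inside $\calh\times_\calu\calb$, projecting as a $\p1$-bundle to each factor; this is exactly what the paper records in Remark~\ref{bundle-double}.

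What the paper does to make this rigorous is worth noting, since your sketch skips it. Using Lange's universal extension theorem (which requires the additional input ${p_1}_*\inhom(\cali_{\mathbf Y}(1),\calo_{\calh\times X})=0$), one obtains a family $\hat{\mathbf E}$ on $\D{P}(\cale)\times X$; pushing forward gives a rank~$2$ bundle ${p_1}_*(\hat{\mathbf E}(1))$ together with an injection into $(\pi_\calh\circ\pi_\cale)^*\Sym^2(\calt_\calu^\vee)$, hence a morphism $\rho:\D{P}(\cale)\to\calb$ by the universal property of the Grassmann bundle. The tautological inclusion $\calo_{\D{P}(\cale)}(1)\hookrightarrow{p_1}_*(\hat{\mathbf E}(1))\simeq\rho^*\calt_\calb$ then factors $\rho$ through a bijection $\D{P}(\cale)\to\D{P}(\calt_\calb)$ between smooth varieties, which is therefore an isomorphism. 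Your pointwise description is compatible with this, but you need the universal extension (or an equivalent global construction) to produce $\rho$ as a morphism, and you must replace the non-existent ``projection $\calh\to\calb$'' by the identification $\D{P}(\cale)\simeq\D{P}(\calt_\calb)$.
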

\begin{proof}
Recall that $\cale:=R^i({p_1}_*\inhom(\cali_{\mathbf{Y}}(1), \ \cdot\ )) (\calo_{\calh\times X})$ hence, from the spectral sequence $R^p{p_1}_*(\inext^q(\cali_{\mathbf{Y}}(1),\calo_{\calh\times X}))\Rightarrow \inext^{p+q}_{p_1}(\cali_{\mathbf{Y}}(1),\calo_{\calh\times X})$
we obtain an exact sequence:
\begin{align*}
    0\rightarrow R^1{p_1}_*(\inhom(\cali_{\mathbf{Y}}(1),\calo_{\calh\times X}))\rightarrow \cale&\rightarrow\\
    \rightarrow  {p_1}_*(\inext^1(\cali_{\mathbf{Y}}(1),\calo_{\calh\times X}))\rightarrow R^2{p_1}_*(\inhom(\cali_{\mathbf{Y}}(1)&,\calo_{\calh\times X}))\rightarrow 0.
\end{align*}
As $R^i{p_1}_*(\inhom(\cali_{\mathbf{Y}}(1),\calo_{\calh\times X}))=0, \ i=1,2$,
we get $$\cale\simeq {p_1}_*(\inext^1(\cali_{\mathbf{Y}}(1),\calo_{\calh\times X}))\simeq {p_1}_*(\tilde{\omega}_{\mathbf{Y}}(2)),$$ where $\tilde{\omega}_{\mathbf{Y}}$ is the relative dualizing sheaf.
$\forall\: [Y]\in \calh,\: h^0(\omega_Y(2))=2$ and since $\calh$ is integral, we can conclude that $\cale$ is a rank 2 vector bundle.
    
The isomorphism $\cale\simeq {p_1}_*(\tilde{\omega}_{\mathbf{Y}}(2))$ also implies that $\cale$ commutes with base change,
and since ${p_1}_*\inhom(\cali_{\mathbf{Y}}(1),\calo_{\calh\times X})=0$, from \cite[Corollary 4.5]{universal-ext} we get the existence of a universal extension on $\D{P}(\cale)\times X$:
\begin{equation}\label{univ-ext}
0\rightarrow \calo_{\D{P}(\cale)\times X}\otimes {p_1}^*\calo_{\D{P}(\cale)}(1)\rightarrow \hat{\mathbf{E}}\rightarrow \cali_{\hat{\mathbf{Y}}}\rightarrow 0
\end{equation} 
where $\hat{\mathbf{Y}}\subset \D{P}(\cale)\times X$ is the pullback of the universal curve $\mathbf{Y}$.
Twisting and applying the functor ${p_1}_*$ we obtain a short exact sequence of vector bundles on $\D{P}(\cale)$:
\begin{equation}\label{univ-ex-pf}
0\rightarrow \calo_{\D{P}(\cale)}(1)\rightarrow {p_1}_*(\hat{\mathbf{E}}(1))\rightarrow \pi_{\cale}^*({p_1}_*(\cali_{\mathbf{Y}}(1)))\rightarrow 0
\end{equation}
($\pi_{\cale}:\D{P}(\cale)\to \calh$ is the standard projection).
We claim that the rank 2 vector bundle ${p_1}_*(\hat{\mathbf{E}}(1))$ on $\D{P}(\cale)$ induces a morphism $\D{P}(\cale)\to \calb$. This is obtained via a relative version of the argument presented in (\ref{pencil-emb}). 

Take an affine cover $V_i=Spec(A_i)$ of $\D{P}(\cale)$ that is trivialising for ${p_1}_*(\hat{\mathbf{E}}(1))$; on each $V_i$ we have $${p_1}_*(\hat{\mathbf{E}}(1)))|_{V_i}\simeq H^0(\hat{\mathbf{E}}(1)|_{V_i\times X})^{\sim}\simeq (A_i^2)^{\sim}.$$
 
For any non zero $s_i\in H^0(\hat{\mathbf{E}}(1)|_{V_i\times X})$, its image in $H^0(\cali_{\hat{\mathbf{Y}}}(1)|_{V_i\times X})$ determines a family $\hat{\mathbf{Q}}_i$ of hyperplane sections whose fibre over $v\in V_i$ is $\langle\hat{\mathbf{Y}}_{v}\rangle\cap X$. Moreover for any pair $s_i,t_i$ of generators of $H^0(\hat{\mathbf{E}}(1)|_{V_i\times X}),$ we have isomorphisms $(\hat{\mathbf{E}}(1)|_{V_i\times X})/(s_i,t_i)\simeq \cali_{\hat{\mathbf{Y}}_i, \hat{\mathbf{Q}}_i}(1)$ ($\hat{\mathbf{Y}}_i$ being the restriction of $\hat{\mathbf{Y}}$ to $V_i\times X$). We get therefore injective $A_i$-linear maps ${p_1}_*(\hat{\mathbf{E}}(1))|_{V_i}\hookrightarrow (\pi_{\calh}\circ \pi_{\cale})^*(\Sym^2(\calt_{\calu}^{\vee})|_{V_i})$
that glue defining an injective morphism ${p_1}_*(\hat{\mathbf{E}}(1))\hookrightarrow (\pi_{\calh}\circ \pi_{\cale})^*\Sym^2(\calt_{\calu}^{\vee})$. By the universal property of $\calb$, $\pi_{\calh}\circ \pi_{\cale}$ factors therefore trough a morphism $\rho:\D{P}(\cale)\to \calb$.

We finally show that $\D{P}(\cale)\xrightarrow{\rho}\calb$ is a projective bundle. Denote by $\calt_{\calb}\subset \pi_{\calb}^*(\Sym^2(\calt_{\calu}^{\vee}))$ the tautological rank 2 sub-bundle. By construction $\rho^*(\calt_{\calb})\simeq {p_1}_*(\hat{\mathbf{E}}(1))$ and since $\calo_{\D{P}(\cale)}(1)\hookrightarrow {p_1}_*(\hat{\mathbf{E}}(1))$,
we get that $\rho$ factors through a morphism $\D{P}(\cale)\xrightarrow{\rho'}\D{P}(\calt_{\calb})$ such that ${\rho'}^*(\calo_{\D{P}(\calt_{\calb})}(-1))\simeq \calo_{\D{P}(\cale)}(1)$. $\rho'$ is the morphism mapping a point $(Y,e), \ [Y]\in \calh, \ e\in \Ext^1(\cali_{Y}(1),\calo_X)$ to $([\D{P}(H^0(E(1)))], Y)\in \D{P}(\calt_\calb)$, $E=\hat{\mathbf{E}}_{(Y,e)}$.
$\rho'$ is a bijective morphism between smooth varieties, therefore it is an isomorphism. 
\end{proof}
\begin{remark}\label{bundle-double}
The variety $\D{P}(\cale)\simeq \D{P}(\calt_{\calb})$ identifies with the following incidence variety
$$ \D{P}(\cale)\simeq \D{P}(\calt_{\calb})\simeq \{(Y, \D{P}^1)\in \calh\times \calb \mid [Y]\in \D{P}^1\} $$
\end{remark}

From the proof of Proposition \ref{bundle-sc}, we learn in particular that $\D{P}(\cale)$ carries a family of instantons $\hat{\mathbf{E}}$.
Accordingly, we have the following:

\begin{corollary}\label{etale-bundle}
There exists a morphism $\psi:\D{P}(\cale)\to \call(2)$ that locally, in the étale topology, has the structure of a $\D{P}^1$-bundle.
\end{corollary}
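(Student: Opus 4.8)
The plan is to realize $\psi$ as the classifying morphism attached to the family $\hat{\mathbf{E}}$ of instanton sheaves on $\D{P}(\cale)\times X$ produced in Proposition \ref{bundle-sc}, and then to obtain the étale-local triviality by comparing $\D{P}(\cale)$ with the projectivisation of the relative space of sections of an étale-local universal sheaf on $\call(2)$.

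First I would observe that every rank $2$ instanton sheaf of charge $2$ on $X$ is $\mu$-stable (since $c_1=-1$ is odd), hence stable, so the flat family $\hat{\mathbf{E}}$ of such sheaves induces, by the coarse moduli property of $\calm$, a morphism $\psi\colon\D{P}(\cale)\to\call(2)\subset\calm$, sending $(Y,e)$ to the class of the sheaf $\hat{\mathbf{E}}_{(Y,e)}$ fitting into $0\to\calo_X(-1)\to\hat{\mathbf{E}}_{(Y,e)}\to\cali_Y\to0$ with extension class $e$. This $\psi$ is surjective: given $[E]\in\call(2)$ we have $h^0(E(1))=2$ by Propositions \ref{curve-bundle} and \ref{curves-singular}, so any nonzero $s\in H^0(E(1))$ has torsion-free cokernel $\cali_{Y_s}(1)$ with $[Y_s]\in\calh$, and the class of $0\to\calo_X\xrightarrow{s}E(1)\to\cali_{Y_s}(1)\to0$ is a nonzero $e_s\in\Ext^1(\cali_{Y_s}(1),\calo_X)\simeq H^0(\omega_{Y_s}(2))=\cale_{[Y_s]}$ with $\psi(Y_s,e_s)=[E]$. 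The same bookkeeping identifies, for fixed $[E]$, the fibre $\psi^{-1}([E])$ with $\D{P}(H^0(E(1)))\simeq\D{P}^1$: the assignment $[s]\mapsto(Y_s,e_s)$ is a bijection onto $\psi^{-1}([E])$, since $\Hom(\cali_Y(1),\calo_X)=H^0(\calo_X(-1))=0$ forces an extension of $\cali_Y(1)$ by $\calo_X$ to be rigid, so that a fixed $[Y]$ together with a fixed class $e$ up to scalar recovers $s$ up to scalar.

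The core of the argument is the étale-local structure. Since all parametrised sheaves are stable, $\call(2)$ admits an étale cover $\{U\to\call(2)\}$ over which a universal sheaf $\mathbf{F}$ on $U\times X$ exists. I would first check that $h^1(E(1))=0$ for every $[E]\in\call(2)$ (together with $h^0(E(1))=2$): for locally free $E$ this appears in the proof of Lemma \ref{smooth-inst-bundle}, and for a non locally free $E$ it follows by taking cohomology in the twist by $\calo_X(1)$ of $0\to E\to\cals\to\calo_l\to0$, using $h^1(\cals(1))=0$ and the surjectivity of $H^0(\cals(1))\to H^0(\calo_l(1))$ which holds because $h^0(\cals(1))=4=h^0(E(1))+h^0(\calo_l(1))$. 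By cohomology and base change, $\calv:=(p_U)_*(\mathbf{F}(1))$ is then locally free of rank $2$ on $U$ and commutes with base change, so $\D{P}(\calv)\to U$ is a $\D{P}^1$-bundle. Running the relative construction of the proof of Proposition \ref{bundle-sc} over $U$ — the tautological inclusion $\calo_{\D{P}(\calv)}(-1)\hookrightarrow\pi^*\calv$ yields a relative section of $\mathbf{F}(1)$ on $\D{P}(\calv)\times X$, whose vanishing scheme is a flat family of curves in $\calh$ together with its relative extension class — produces a morphism $\theta\colon\D{P}(\calv)\to\D{P}(\cale)\times_{\call(2)}U$ over $U$, given on points by $(u,[s])\mapsto((Y_s,e_s),u)$. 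Both schemes are smooth ($\D{P}(\cale)$ is smooth, $\call(2)$ is smooth by Lemma \ref{smooth-inst-bundle} and Proposition \ref{smooth-inst-sing}, and $U\to\call(2)$ is étale) and $\theta$ is bijective by the fibre description above; hence $\theta$ is an isomorphism by Zariski's main theorem. Therefore the base change of $\psi$ along $U\to\call(2)$ is isomorphic, over $U$, to the $\D{P}^1$-bundle $\D{P}(\calv)\to U$, which proves the claim since the $U$'s cover $\call(2)$ étale-locally.

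I expect the main obstacle to be the clean construction of $\theta$ in families, that is, reconciling the universal sheaf $\mathbf{F}$ on $U\times X$ with the pull-back of $\hat{\mathbf{E}}$ to $(\D{P}(\cale)\times_{\call(2)}U)\times X$: these two flat families of simple sheaves are fibrewise isomorphic and hence differ only by a twist by a line bundle pulled back from the base, and one has to verify that this ambiguity affects neither $\D{P}(\calv)$ nor the morphism $\theta$. Once this compatibility is set up, bijectivity together with Zariski's main theorem closes the argument, exactly in the spirit of Lemma \ref{HS-lcm} and Proposition \ref{bundle-sc}.
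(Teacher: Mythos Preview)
Your proposal is correct and follows essentially the same strategy as the paper: use the family $\hat{\mathbf{E}}$ to define $\psi$, pass to an \'etale cover of $\call(2)$ carrying a universal sheaf, push forward its twist to get a rank $2$ bundle, and compare its projectivisation with the base change of $\D{P}(\cale)$ via a bijective morphism between smooth varieties. The only cosmetic difference is that the paper builds the comparison map in the opposite direction, from $\D{P}(\cale)\times_{\call(2)}\mathbf{W}_i$ to $\D{P}(\mathbf{G}_i)$, using the tautological line subbundle $\calo_{\D{P}(\cale)}(1)$ already provided by the universal extension of Proposition~\ref{bundle-sc} (and the line-bundle twist you flag in your last paragraph), rather than reconstructing the zero locus from the tautological section on $\D{P}(\calv)$.
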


\begin{proof}
The family $\hat{\mathbf{E}}$ induces a morphism $\psi:\D{P}(\cale)\to \calm$ that, by Propositions \ref{curve-bundle} and \ref{curves-singular}, surjects onto $\call(2)$. To prove the rest of the current proposition, we argue as in \cite[Lemma 5.3]{dima-tikho}.
We start considering an étale cover $\mathbf{W}_i\to \call(2)$
of $\call(2)$ such that each $\mathbf{W}_i\times X$ carries a universal sheaf $\mathbf{E}_i$. Define $\mathbf{G}_i:={p_1}_*(\mathbf{E}_i(1))$. This is a rank 2 vector bundle on $\mathbf{W}_i$. 
Denote by $\cale_{\mathbf{W}_i}$ the pullback of $\cale$ to $\D{P}(\cale)\times_{\call(2)}\mathbf{W}_i$, so that $\D{P}(\cale)\times_{\call(2)}\mathbf{W}_i\simeq \D{P}(\cale_{\mathbf{W}_i})$. Define $\psi_i$ as the induced morphism $\D{P}(\cale_{\mathbf{W}_i})\to \mathbf{W}_i$,
and let $\hat{\mathbf{E}}_{\mathbf{W}_i}$ denote the pullback of $\hat{\mathbf{E}}$ to $\D{P}(\cale_{\mathbf{W}_i})$; by the universal property of $\mathbf{E}_i$, $\psi_i^*(\mathbf{E}_i)\simeq \hat{\mathbf{E}}_{\mathbf{W}_i}\times \mathbf{L}$, for some line bundle $\mathbf{L}$ on $\D{P}(\cale_{\mathbf{W}_i})$.
Observe now that pulling back (\ref{univ-ex-pf}) to $\D{P}(\cale_{\mathbf{W}_i})$, we obtain an injection $\calo_{\D{P}(\cale_{\mathbf{W}_i})}(1)\otimes \mathbf{L}^{*}\hookrightarrow \psi_i^*(\mathbf{G}_i)$; this induces a morphism $\D{P}(\cale_{\mathbf{W}_i})\to \D{P}(\mathbf{G}_i)$ and once again, since this is a bijective morphism between smooth varieties, we conclude that it is an isomorphism. 
\end{proof}

From the irreducibility of $\D{P}(E)$ we deduce the irreducibility of $\call(2)$; this observation together with Lemma \ref{smooth-inst-bundle} and Proposition \ref{smooth-inst-sing} lead to the following claim.

\begin{corollary}
$\call(2)$ is a smooth and irreducible scheme of dimension 6. 
\end{corollary}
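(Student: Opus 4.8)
The plan is to combine the structural description of $\call(2)$ obtained above with the deformation-theoretic computations. First I would recall from Corollary \ref{etale-bundle} that there is a surjective morphism $\psi\colon\D{P}(\cale)\to\call(2)$. By Proposition \ref{bundle-sc} the source $\D{P}(\cale)$ is a $\p1$-bundle over the Grassmann bundle $\calb=G_2(\Sym^2(\calt_\calu^\vee))$, which by \eqref{defn calb} fibres over $\calu$; since $\calu$ is an open, hence irreducible, subvariety of $\Grp(1,F(X))$, the schemes $\calb$ and $\D{P}(\cale)$ are irreducible. As the image of an irreducible scheme, $\call(2)=\psi(\D{P}(\cale))$ is therefore irreducible.

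Next I would address smoothness and dimension. The instanton conditions $h^i(E(-1))=0$ for $i=1,2$ are open by semicontinuity, so $\call(2)$ is an open subscheme of $\calm$; it thus suffices to show that $\calm$ is smooth of dimension $6$ at every $[E]\in\call(2)$. Here I use the stratification $\call(2)=\cali(2)\sqcup\cald(1,1)$. For $[E]\in\cali(2)$, Lemma \ref{smooth-inst-bundle} gives $\ext^2(E,E)=0$ and $\ext^1(E,E)=6$, so $\calm$ is smooth of dimension $6$ at $[E]$; for $[E]\in\cald(1,1)$ the same conclusion is exactly Proposition \ref{smooth-inst-sing}. Hence every point of $\call(2)$ lies in the smooth locus of $\calm$ and has local dimension $6$ there, so the open subscheme $\call(2)$ is itself smooth of pure dimension $6$. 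Together with the irreducibility established above, this proves that $\call(2)$ is a smooth, irreducible scheme of dimension $6$.

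There is essentially no serious obstacle left at this point: the substantive work has already been carried out in Lemma \ref{smooth-inst-bundle}, Proposition \ref{smooth-inst-sing} and Corollary \ref{etale-bundle}. The one step demanding a little care is that irreducibility of $\call(2)$ itself — rather than merely of its closure — hinges on $\psi$ being surjective onto all of $\call(2)$, which in turn rests on Propositions \ref{curve-bundle} and \ref{curves-singular} accounting for the curves attached to every nonzero section of $E(1)$ for every $[E]\in\call(2)$, including the non locally free instantons parametrized by $\cald(1,1)$.
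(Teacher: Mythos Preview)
Your argument is correct and follows essentially the same route as the paper: irreducibility of $\call(2)$ is deduced from the surjection $\psi\colon\D{P}(\cale)\to\call(2)$ and the irreducibility of $\D{P}(\cale)$, while smoothness and dimension come from Lemma \ref{smooth-inst-bundle} and Proposition \ref{smooth-inst-sing}. The only difference is that you spell out in more detail why $\D{P}(\cale)$ is irreducible and why $\call(2)$ is open in $\calm$, points the paper leaves implicit.
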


Next, we argue that $\call(2)$ is not just locally a fibration over the open subet $\calu\subset\Grp(1,F(X))$ (see Remark \ref{factor-Grass}), but that actually, it is a projective bundle isomorphic to the scheme $\calb$ defined in display \eqref{defn calb}.

\begin{proposition}\label{etale-fact}
There exists an isomorphism $\call(2)\xrightarrow{\zeta} \calb$ such that $\rho=\zeta\circ \psi$.
\end{proposition}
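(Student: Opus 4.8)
The plan is to promote the étale-local $\p1$-bundle structure of $\psi:\D{P}(\cale)\to\call(2)$ (Corollary \ref{etale-bundle}) to a genuine factorization of the morphism $\rho:\D{P}(\cale)\to\calb$. First I would observe that $\rho$ is a $\p1$-bundle by Proposition \ref{bundle-sc} and that both $\rho$ and $\psi$ are smooth proper morphisms of relative dimension one between smooth irreducible varieties of the same dimension ($\dim\D{P}(\cale)=7$, $\dim\calb=\dim\call(2)=6$). The key point is that the fibres of $\psi$ and the fibres of $\rho$ coincide as subvarieties of $\D{P}(\cale)$: a fibre of $\rho$ over a point $([\p1],\ast)\in\calb$ is the set of pairs $(Y,e)$ with $[Y]$ ranging over the pencil $\p1$ and $e$ the (unique up to scalar) extension class cutting out $E$; under $\psi$ this whole fibre maps to the single instanton $[E]$ with $\D{P}(H^0(E(1)))=\p1$, by the very construction of $\rho$ via Remark \ref{pencil-emb} and its relative version in the proof of Proposition \ref{bundle-sc}. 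Conversely, by Propositions \ref{curve-bundle} and \ref{curves-singular} the fibre $\psi^{-1}([E])$ consists exactly of the pairs $(Y,e)$ with $[Y]\in\D{P}(H^0(E(1)))$, which is precisely one $\rho$-fibre.

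Given that the fibres coincide set-theoretically, I would then construct $\zeta$ as follows. Since $\rho$ is a $\p1$-bundle, it is in particular an fppf quotient: $\calb$ represents the functor identifying the fibres of $\rho$. Because $\psi$ is constant on each fibre of $\rho$ (shown above) and $\psi$ is surjective, the universal property of the quotient—or concretely, fppf descent of the morphism $\psi$ along $\rho$—produces a unique morphism $\zeta:\calb\to\call(2)$ with $\psi=\zeta\circ\rho$. Wait: I should be careful about the direction. Since $\psi$ factors through $\rho$ (being fibrewise constant), we get $\zeta:\calb\to\call(2)$ with $\psi=\zeta\circ\rho$. Symmetrically, since $\rho$ is fibrewise constant on the fibres of $\psi$ (again the fibres agree), étale descent along the étale cover in Corollary \ref{etale-bundle} yields a morphism $\call(2)\to\calb$ inverse to $\zeta$: étale-locally on $\call(2)$, say over $\mathbf{W}_i$, the map $\rho$ restricted to $\D{P}(\cale_{\mathbf{W}_i})$ is $\psi_i$ composed with a section-independent map to $\calb$, which descends to $\mathbf{W}_i\to\calb$, and these glue. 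Thus $\zeta$ is a bijective morphism.

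To conclude $\zeta$ is an isomorphism I would invoke, as the authors do repeatedly (e.g. in the proofs of Lemmas \ref{HS-lcm}, \ref{smooth-l.c.m.} and Corollary \ref{etale-bundle}), Zariski's main theorem: $\zeta$ is a bijective morphism between smooth (hence normal) irreducible varieties, so it is an isomorphism. The relation $\rho=\zeta\circ\psi$ is then automatic from $\psi=\zeta^{-1}\circ\rho$. Alternatively, one can finish via the vector-bundle picture exactly as in the last paragraph of the proof of Proposition \ref{bundle-sc}: the pullback $\psi^*$ of a universal-type sheaf on $\calb$ (or on $\call(2)$) together with the inclusions from display \eqref{univ-ex-pf} produces a map between the two $\p1$-bundles $\D{P}(\cale)\to\calb$ and $\D{P}(\cale)\to\call(2)$ over a common base, which must be the identity, forcing $\calb\simeq\call(2)$.

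The main obstacle I anticipate is making the descent of $\psi$ along $\rho$ rigorous rather than merely set-theoretic: one needs that $\psi$ is \emph{scheme-theoretically} constant on fibres (i.e. factors as a morphism of schemes, not just a map on points), which requires knowing that the classifying map to $\calb$ genuinely recovers the pencil $\D{P}(H^0(E(1)))$ scheme-theoretically — this is exactly the content packaged into the relative construction of $\rho$ in Proposition \ref{bundle-sc} (the injection ${p_1}_*(\hat{\mathbf{E}}(1))\hookrightarrow(\pi_{\calh}\circ\pi_{\cale})^*\Sym^2(\calt_{\calu}^\vee)$ and the universal property of the Grassmann bundle $\calb$). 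Once that identification is in hand, everything reduces to the now-familiar "bijective morphism of smooth varieties" argument.
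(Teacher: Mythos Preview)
Your proposal is correct and follows essentially the same route as the paper: identify the fibres of $\psi$ and $\rho$, upgrade the resulting set-theoretic factorization to a morphism of schemes, and conclude by the ``bijective morphism between smooth varieties'' argument. The only genuine difference is in the middle step. You invoke fppf/étale descent for morphisms (and momentarily construct both $\calb\to\call(2)$ and $\call(2)\to\calb$ before inverting), whereas the paper proceeds more directly: since $\psi$ is étale-locally a $\p1$-bundle one has $\psi_*\calo_{\D{P}(\cale)}\simeq\calo_{\call(2)}$, so for each open $V\subset\calb$ the ring map $\calo_{\calb}(V)\to\calo_{\D{P}(\cale)}(\rho^{-1}(V))$ already lands in $\calo_{\call(2)}(\zeta^{-1}(V))$, giving $\zeta:\call(2)\to\calb$ in one stroke (openness of $\psi$ ensures $\zeta^{-1}(V)=\psi(\rho^{-1}(V))$ is open). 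Your descent argument is a perfectly good packaging of the same content; the paper's version is just a slightly more hands-on Stein-factorization computation.
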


\begin{proof}
Set theoretically, $\zeta$ is the map sending $[E]\in \call(2)$ to the pencil of curves defined by $\D{P}(H^0(E(1)))$. Let us check that it is actually a morphism of schemes. For each open $V\subset \calb$, $\zeta^{-1}(V)=\psi(\rho^{-1}(V))$ is open in $\call(2)$ since $\psi$ is open (this is a consequence of Proposition \ref{etale-bundle}).
We have then a morphism $\calo_{\calb}(V)\to \calo_{\call(2)}(\zeta^{-1}(V))$ induced by $\calo_{\calb}(V)\to \calo_{\D{P}(\cale)}(\rho^{-1}(V))$: indeed from Corollary \ref{etale-bundle}, 
$\psi_*\calo_{\D{P}(\cale)}\simeq \calo_{\call(2)}$, thus $\calo_{\call(2)}(\zeta^{-1}(V))\simeq \calo_{\D{P}(\cale)}(\psi^{-1}(\zeta^{-1}(V)))\simeq \calo_{\D{P}(\cale)}(\rho^{-1}(V))$. 
$\zeta$ is bijective by construction thus, by the smoothness of $\call(2)$ and $\calb$, is an isomorphism.
\end{proof}

We now denote by $\overline{\cali(2)}$ the closure of $\cali(2)$ in $\calm$ and by $\overline{\cali(2)}^{inst}$ its open subscheme parameterizing instantons.

\begin{corollary}\label{closure}
$\call(2)\simeq \overline{\cali(2)}^{inst}$.
\end{corollary}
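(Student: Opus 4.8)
The plan is to deduce the corollary formally from facts already established: that $\call(2)$ is irreducible and smooth (Proposition \ref{etale-fact}, which identifies it with the smooth irreducible Grassmann bundle $\calb$); that $\cali(2)$ is a nonempty open subscheme of $\call(2)$ (nonempty by Theorem \ref{moduli-bundle}, open because its complement $\cald(1,1)$ is locally closed by Proposition \ref{moduli-singular}); and that $\call(2)$ is precisely the instanton locus inside $\calm$.

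First I would observe that, being a nonempty open subset of the irreducible scheme $\call(2)$, $\cali(2)$ is dense in $\call(2)$; since $\cali(2)\subseteq\call(2)\subseteq\calm$ with $\cali(2)$ dense in $\call(2)$, it follows that $\call(2)\subseteq\overline{\cali(2)}$, the closure being taken inside $\calm$. As every point of $\call(2)$ parameterizes an instanton sheaf, this already yields the inclusion $\call(2)\subseteq\overline{\cali(2)}^{inst}$.

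For the reverse inclusion I would use that every sheaf parameterized by $\calm=\calm_X(2,-1,2,0)$ has charge $c_2=2$, so that the locus in $\calm$ of sheaves satisfying the instantonic vanishing \eqref{instantonic-quadric} is exactly the open subscheme $\call(2)$. Hence $\overline{\cali(2)}^{inst}=\overline{\cali(2)}\cap\call(2)$, which is simultaneously an open subscheme of $\overline{\cali(2)}$ --- hence reduced --- and a closed subscheme of $\call(2)$. Since it contains the dense subset $\cali(2)$ and $\call(2)$ is reduced by Proposition \ref{etale-fact}, it must coincide with $\call(2)$ as a scheme, which is the assertion.

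The argument is just a chase of closures together with open and closed loci, so there is no serious obstacle; the two points that need a little care are making the identification scheme-theoretic rather than merely set-theoretic --- which is why I reduce everything to the fact that a reduced closed subscheme of the reduced scheme $\call(2)$ containing a dense subset equals $\call(2)$ --- and the essential input that $\cali(2)$ is dense in $\call(2)$, which rests on the irreducibility of $\call(2)$ supplied by Proposition \ref{etale-fact}.
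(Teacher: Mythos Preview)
Your argument is correct and follows essentially the same route as the paper: both use that $\cali(2)$ is a nonempty open, hence dense, subset of the irreducible scheme $\call(2)$, so that $\overline{\cali(2)}\cap\call(2)=\call(2)$, which by definition is $\overline{\cali(2)}^{inst}$. Your extra care about the scheme-theoretic (rather than merely set-theoretic) equality is more thorough than the paper's own proof; one small quibble is that citing $\cald(1,1)$ as \emph{locally closed} in $\calm$ is not quite the statement you need---what matters is that it is \emph{closed} in $\call(2)$ (non-local-freeness being a closed condition), which is exactly what the proof of Proposition \ref{moduli-singular} establishes.
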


\begin{proof}
$\call(2)$ is a smooth irreducible 6-dimensional variety that contains the moduli \mbox{$\cali(2)=\call(2)\setminus \cald(1,1)$} as an open dense subset. Therefore we have equalities \mbox{$\call(2)=\overline{\call(2)}\cap \call(2)=\overline{\cali(2)}\cap \call(2):=\overline{\cali(2)}^{inst}$.}
Note that $\cali(2)$ identifies with the following open subset of $\call(2)$: for $\mathring{\calb}$, the open subset of base point free pencils, we have $\cali(2)=\zeta^{-1}(\mathring{\calb})$.
\end{proof}

\begin{corollary}
$\cald(1,1)$ is contained in $\overline{\cali(2)}$; in particular a general deformation of a non locally free instanton $[E]$ in $\call(2)$ is an instanton bundle.
\end{corollary}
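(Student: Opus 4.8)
The plan is to read off the statement from the identification of $\call(2)$ obtained above. First I would recall, from Proposition \ref{etale-fact} and Corollary \ref{closure}, that the morphism $\zeta$ identifies $\call(2)$ with the Grassmann bundle $\calb=G_2(\Sym^2(\calt_{\calu}^{\vee}))$, and that under this identification one has $\cali(2)=\zeta^{-1}(\mathring{\calb})$ and $\cald(1,1)=\zeta^{-1}(\calb\setminus\mathring{\calb})$, where $\mathring{\calb}\subset\calb$ is the open locus of base-point-free pencils. Since $\calu$ is irreducible, $\calb$ is irreducible, and $\mathring{\calb}$ is a nonempty, hence dense, open subset; consequently $\cali(2)$ is a dense open subset of the irreducible variety $\call(2)$, while $\cald(1,1)$ is its complement, a proper closed subset (indeed a divisor, cut out fibrewise over $\calu$ by the conic $C_2$).

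Next I would transfer this to $\calm$. By semicontinuity $\call(2)$ is open in $\calm$, so the closure of $\cali(2)$ taken inside $\calm$ meets $\call(2)$ exactly in the closure of $\cali(2)$ taken inside $\call(2)$, which is all of $\call(2)$ by density. Hence $\call(2)\subseteq\overline{\cali(2)}$, and in particular $\cald(1,1)\subseteq\overline{\cali(2)}$; this is just a restatement of Corollary \ref{closure}.

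For the second assertion, fix $[E]\in\cald(1,1)$. By Proposition \ref{smooth-inst-sing} the moduli space $\calm$ is smooth of dimension $6$ at $[E]$, so there is a unique irreducible component of $\calm$ through $[E]$. Since $[E]\in\call(2)\subseteq\overline{\cali(2)}$ and $\overline{\cali(2)}=\overline{\call(2)}$ is an irreducible component of $\calm$ of dimension $6$ by Theorem \ref{instanton-component}, that component must be $\overline{\cali(2)}$. Therefore every deformation of $[E]$ in $\calm$ is parameterized by $\overline{\cali(2)}$, and since $\cald(1,1)$ is a proper closed subset of the irreducible $6$-dimensional $\call(2)$, a general deformation of $[E]$ lands in $\cali(2)=\call(2)\setminus\cald(1,1)$, i.e. is a locally free instanton sheaf.

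The whole argument is essentially bookkeeping on top of Propositions \ref{etale-fact} and \ref{smooth-inst-sing} and Corollary \ref{closure}; the only point that needs care is to keep straight whether closures are formed in $\call(2)$ or in $\calm$, and to use the smoothness of $\calm$ along $\cald(1,1)$ so as to exclude the possibility that $[E]$ lies on a second component — without that input a ``general deformation'' of $[E]$ might a priori leave $\overline{\cali(2)}$.
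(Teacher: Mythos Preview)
Your argument is correct and follows essentially the same approach as the paper: the inclusion $\cald(1,1)\subset\overline{\cali(2)}$ is read off directly from Corollary \ref{closure}, and the deformation statement comes from the fact that $\cald(1,1)$ is a proper closed subset (a divisor) of the smooth irreducible variety $\call(2)\simeq\calb$. The paper phrases the second part slightly more geometrically---deforming the pencil $\zeta([E])$ in a direction normal to the divisor $\calz=\calb\setminus\mathring{\calb}$---but this is the same content; your added care about uniqueness of the component through $[E]$ via smoothness of $\calm$ is a reasonable precaution, though strictly speaking the statement only asks for deformations within $\call(2)$.
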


\begin{proof}
$\cald(1,1)\subset \overline{\cali(2)}$ is an immediate consequence of Corollary \ref{closure}. 

The possibility to deform $[E]\in \cald(1,1)$ to an instanton bundle is due to the smoothness of $\call(2)$. Note in particular that we have the following. The locus of pencils of curves with a base point is a smooth and irreducible divisor $\calz\subset \calb=\calb\setminus \mathring{\calb}$ and it is the image of $\cald(1,1)$ trough $\zeta$.
A deformation of $E$ to an instanton bundle, for $[E]\in \cald(1,1)$ corresponds therefore to a deformation of $[\D{P}^1(H^0(E(1)))]\in \calz$ along a direction normal to $\calz$ (the smoothness of $\calb$ and $\calz$ implies that such a deformation is always possible). 
\end{proof}


\section{The moduli space $\calm_X(2;-1,2,0)$}\label{sec:moduli}

In this section we provide a full description of the moduli space $\calm$. In the previous section we proved that the closure $\overline{\call(2)}$ of the instanton moduli space is an irreducible component of $\calm$; to complete our description of $\calm$ we pass then to the study the closed subscheme $\calc:=\calm\setminus \call(2)\subset \calm$ consisting of the non instanton sheaves in $\calm$. Such sheaves can be characterized as follows.

\begin{proposition}\label{et-non-inst}
Each sheaf $E$ corresponding to a point $[E]\in \calc$ is obtained by elementary transformation of a $\mu$-stable reflexive sheaf $F$ with Chern classes $(-1,2,2)$ along a point. Conversely, for each sheaf $F$ such that $[F]\in \calm_X(-1;2,2,2)$ the kernel of a surjection $F\twoheadrightarrow \calo_p$ locates a point in $\calc$
\end{proposition}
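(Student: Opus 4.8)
The proof splits into the (essentially formal) converse and the (substantial) direct implication; I sketch both.

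\emph{The converse.} Given a $\mu$-stable reflexive $F$ with $(c_1,c_2,c_3)=(-1,2,2)$ and a surjection $\varphi:F\onto\calo_p$, put $E:=\ker\varphi$. First the bookkeeping: $E$ is torsion free of rank $2$, and since $\calo_p$ is $0$-dimensional, $\ch(E)=\ch(F)-\ch(\calo_p)$ gives $(c_1,c_2,c_3)(E)=(-1,2,0)$. Next, $E$ is $\mu$-stable: any rank $1$ subsheaf of $E$ is a rank $1$ subsheaf of $F$, hence of slope $<\mu(F)=\mu(E)=-\tfrac12$; by the implications recalled in Section \ref{sec:stability} this makes $E$ Gieseker stable, so $[E]\in\calm$. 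Applying $\inhom(\,\cdot\,,\ox)$ to $0\to E\to F\to\calo_p\to0$ and using $\inext^q(\calo_p,\ox)=0$ for $q<3$ yields $E^\vee\simeq F^\vee$, hence $E^{\vee\vee}\simeq F$. Finally $E$ is not an instanton: were it one, Theorem \ref{double-dual-bundle} would make $E^{\vee\vee}\simeq F$ an instanton bundle, in particular locally free, whence $c_3(F)=0$, contradicting $c_3(F)=2$. Thus $[E]\in\calc$.

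\emph{The direct implication.} Let $[E]\in\calc$. Then $E$ is semistable with $c_1=-1$, hence $\mu$-stable, and it fails \eqref{instantonic-quadric}; since $\mu$-stability gives $h^0(E(-1))=h^3(E(-1))=0$ and Riemann--Roch gives $\chi(E(-1))=0$, we obtain $h^1(E(-1))=h^2(E(-1))\ge1$. Set $F:=E^{\vee\vee}$, a $\mu$-stable reflexive sheaf with $c_1(F)=-1$; the Bogomolov inequality forces $c_2(F)\ge1$, while $c_2(F)\le c_2(E)=2$, so the $1$-dimensional part of $T_E:=F/E$ has degree $c_2(E)-c_2(F)\in\{0,1\}$. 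Suppose $c_2(F)=1$. Then $F\simeq\cals$ by Lemma \ref{refl-low}, and a Chern character computation gives $\ch(T_E)=\ch(\calo_\ell)$ for a line $\ell\subset X$. If $T_E$ had a nonzero $0$-dimensional subsheaf $T_0$, quotienting it off would yield $0\to E'\to\cals\to T_1\to0$ with $T_1$ pure $1$-dimensional of degree $1$; one checks $E'$ is reflexive (its sheaf-$\inext$ into $\ox$ vanish in degrees $\ge2$) with $c_2(E')=c_2(\cals)-1=0$, contradicting Bogomolov. Hence $T_E$ is pure $1$-dimensional of degree $1$ on $\ell$, so $T_E\simeq\calo_\ell(k)$; matching $\ch_3$ forces $k=0$, i.e. $T_E\simeq\calo_\ell$, and then $E$ is the elementary transformation of the instanton bundle $\cals$ along the rank $0$ instanton $\calo_\ell$, hence an instanton --- contradicting $[E]\in\calc$. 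Therefore $c_2(F)=2$, so $\ch_2(T_E)=0$ and $T_E$ is $0$-dimensional, of some length $m\ge0$, with $c_3(F)=2m$.

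It remains to prove $m=1$, the crux of the argument. By the cohomological properties of $\mu$-stable sheaves on $X$ with these invariants (obtained, as in Lemma \ref{refl-low}, by restricting to a general hyperplane section $Q\cong\p{1}\times\p{1}$ and invoking Castelnuovo--Mumford regularity of $F|_Q$), the relevant higher cohomology vanishes; in particular $h^0(F(1))>0$, so there is an exact sequence $0\to\ox\to F(1)\to\cali_Y(1)\to0$. Since $F(1)$ is reflexive, Remark \ref{rem:five} shows $Y$ is an l.c.m.\ curve, and computing Chern characters gives $\deg Y=2$ and $p_a(Y)=m-1$. Now every degree $2$ l.c.m.\ curve $Y\subset X$ satisfies $h^1(\calo_Y)=0$ (a smooth conic, a pair of lines, or a double structure on a line $\ell$ governed by a line-bundle quotient of $\caln_{\ell/X}^\vee$), hence $p_a(Y)=1-\chi(\calo_Y)=1-h^0(\calo_Y)\le0$; thus $m\le1$. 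The case $m=0$ is excluded: then $F=E$ is reflexive with $c_3=0$, hence locally free, and the Serre sequence $0\to\ox\to E(1)\to\cali_Y(1)\to0$ now has $p_a(Y)=-1$, so $Y$ is two skew lines or a genus $-1$ double line; twisting by $\ox(-2)$ and chasing cohomology gives $h^1(E(-1))\simeq H^0(\calo_Y(-1))=0$ and $h^2(E(-1))=0$, so $E$ would be an instanton --- a contradiction. Hence $m=1$, $T_E\simeq\calo_p$ for a point $p\in X$, and $E=\ker(F\onto\calo_p)$ with $F=E^{\vee\vee}$ a $\mu$-stable reflexive sheaf of Chern classes $(-1,2,2)$.

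The main obstacle is precisely this last step: one must both pin down $c_2(E^{\vee\vee})$ (ruling out $c_2(E^{\vee\vee})=1$ via the spinor bundle together with a Bogomolov argument on an intermediate reflexive sheaf) and bound $c_3(E^{\vee\vee})$, which I propose to do by transporting the question to the Hilbert scheme of degree $2$ l.c.m.\ curves through the Serre correspondence and exploiting the fact that such curves have non-positive arithmetic genus. The non-formal input underpinning this is the vanishing that yields $h^0(E^{\vee\vee}(1))>0$, which rests on a semistable-restriction/regularity argument of the kind already used in Lemma \ref{refl-low}.
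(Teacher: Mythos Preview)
Your converse is fine, and in the case $c_2(E^{\vee\vee})=2$ your Serre-correspondence argument---bounding $c_3$ via the arithmetic genus of degree-$2$ l.c.m.\ curves on $X$---is a legitimate alternative to the paper's appeal to the Ein--Sols spectrum. That part works.

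The genuine gap is your treatment of $c_2(E^{\vee\vee})=1$. Two errors break the sub-argument:
\begin{itemize}
\item For $0\to E'\to\cals\to T_1\to 0$ with $T_1$ pure $1$-dimensional, one has $E'^{\vee}\simeq\cals^{\vee}$, hence $E'^{\vee\vee}\simeq\cals$, so $E'$ is \emph{not} reflexive (the vanishing of $\inext^{\ge2}(E',\ox)$ only gives homological dimension $\le1$, not reflexivity).
\item The Chern class goes the other way: $c_2(E')=c_2(\cals)+\deg T_1=2$, not $0$, so there is no Bogomolov contradiction.
\end{itemize}
More importantly, the conclusion you are aiming for is false: the case $c_2(E^{\vee\vee})=1$ \emph{does} occur in $\calc$. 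Take any surjection $\cals\twoheadrightarrow\calo_\ell(-1)$ (which exists since $\cals|_\ell\simeq\calo_\ell\oplus\calo_\ell(-1)$), let $F$ be its kernel, and then let $E=\ker(F\twoheadrightarrow\calo_p)$. One checks $E^{\vee\vee}\simeq\cals$, so $c_2(E^{\vee\vee})=1$, yet $[E]\in\calc$.

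What the paper actually does in this case is not to exclude it, but to exhibit the required $F$ as the \emph{intermediate} sheaf $\ker(\cals\twoheadrightarrow\calo_\ell(-1))$ rather than $E^{\vee\vee}$. Concretely: when $E^{\vee\vee}\simeq\cals$, one shows $T_E$ sits in $0\to\calo_p\to T_E\to\calo_\ell(-1)\to 0$ (the quotient must be $\calo_\ell(-1)$, not $\calo_\ell$, precisely because $E$ is not an instanton), and then the snake lemma gives $E=\ker(F\twoheadrightarrow\calo_p)$ with $F=\ker(\cals\twoheadrightarrow\calo_\ell(-1))$. Note that this $F$ is \emph{not} reflexive; the word ``reflexive'' in the proposition is a slip, and the correct reading---consistent with the proof and with Lemma~\ref{class-conic}---is that $[F]\in\calm_X(2;-1,2,2)$.

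So to fix your proof: drop the attempt to rule out $c_2(E^{\vee\vee})=1$, and instead argue directly that when $E^{\vee\vee}\simeq\cals$ the quotient $T_E$ has the shape above, producing the intermediate $F$. Your $c_2=2$ branch then handles the remaining case as written.
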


\begin{proof}
Let us take a non instanton sheaf $E$ and consider $E^{\vee\vee}$. 
This latter must be a $\mu$-stable reflexive sheaf having $c_1(E^{\vee\vee})=-1$ and by Lemma \ref{refl-low}, either $c_2(E^{\vee\vee})=1$ and $E^{\vee\vee}\simeq \cals$ or $c_2(E^{\vee\vee})=2$.
In the first case $E^{\vee\vee}/E$ is a one-dimensional sheaf $T$ with Hilbert polynomial $n+1$; we denote by $T_0$ the maximal zero-dimensional subsheaf of $T$ and by $T_1$ the quotient $T_1:=T/T_0$.
$T_1$ is thus a line bundle on a line $l\subset X$ and since $\cals$ surjects onto $T_1$, $\cals|_{l}\simeq \calo_l(-1)\oplus\calo_l$ and $[E]\not\in \call(2)$, we conclude that $T_1\simeq \calo_l(-1)$ and that $T_0\simeq \calo_p, \ p=\supp(T_0)$. Denote by $F$ the kernel of the surection $\cals\twoheadrightarrow \calo_l(-1)$; this is a $\mu$-stable sheaf with Chern classes $(-1,2,2)$ and from the commutative diagram:
\begin{center}\label{cd-non-inst-line}
\begin{tikzcd}
0  \arrow[r] & E\arrow[r]\arrow[d] & \cals\arrow[r]\arrow[d] & T \arrow [r]\arrow[d]& 0\\
0\arrow[r] &F\arrow[r] & \cals\arrow[r]& \calo_l(-1) \arrow[r]
& 0\\
\end{tikzcd}
\end{center}
we see that $E\simeq \ker(F\twoheadrightarrow \calo_p)$. Suppose now that $c_2(E^{\vee\vee})=2$. In this case $T:=E^{\vee\vee}/E$ is a zero-dimensional and has Chern character $\ch(T)=(0,0,0,\frac{c_3(E)}{2})$.
Applying \cite[Theorem 2.2]{Ein-Sols} the spectrum of $E^{\vee\vee}$ can only consists of the integer \mbox{$k=-2=-1-\frac{c_3}{2}$.} This implies that $c_3(E)=2$ hence that $T\simeq \calo_p,\ p=\supp(T)$.  

For the converse implication we just need to check that the elementary transformation $E$ of a sheaf $F$, $[F]\in \calm_X(2;-1,2,2)$ along a point $p$ is indeed semistable.
Arguing as above, for $[F]\in \calm_X(2;-1,2,2)$ we have that $F^{\vee\vee}$ is reflexive with $c_1(F^{\vee\vee})=-1$ and $c_2(F^{\vee\vee})=1$ or $2$. In the first case $F^{\vee\vee}=\cals$; in the second, applying again \cite[Theorem 2.2]{Ein-Sols}, we get $c_3(F^{\vee\vee})=2$ hence $F\simeq F^{\vee\vee}$. In both cases $F^{\vee\vee}$ is $\mu$-stable therefore, if $E=\ker(F\twoheadrightarrow \calo_p)$, $E$ is $\mu$-stable as well since $E^{\vee\vee}\simeq F^{\vee\vee}$. 
\end{proof}

Once again we will use the Serre correspondence to deduce the geometric properties of $\calc$ from the geometry of the family of the corresponding curves; these curves will still belong to the Hilbert scheme $\Hilb_{2t+2}(X)$ but this time they wont be l.c.m.

The study of $\calc$ will lead us to prove the main result of this section:

\begin{theorem}\label{complete}
The moduli space $\calm$ is connected and consists of two irreducible components:
\begin{enumerate}
\item a 6-dimensional component $\overline{\call(2)}$ given by the closure of the open subset of instanton sheaves;
\item a 10-dimensional irreducible component $\calc$ consisting of non instanton sheaves.
\end{enumerate}
In addition, $\calm$ is generically smooth along both components. 
\end{theorem}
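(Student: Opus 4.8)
The plan is to combine Theorem \ref{instanton-component} with an analysis of the complementary locus $\calc=\calm\setminus\call(2)$ modeled on Section \ref{sec:quadrics}, and to deduce connectedness from the non-properness of $\call(2)$. By Theorem \ref{instanton-component}, $\overline{\call(2)}$ is a $6$-dimensional irreducible component of $\calm$, $\call(2)$ is open in $\calm$, so $\calc$ is closed, and $\calm$ is smooth along the dense open subset $\call(2)\subset\overline{\call(2)}$; this settles component (1) and the first half of the generic smoothness statement. For $\calc$ I would use Proposition \ref{et-non-inst}: a point $[E]\in\calc$ is equivalent to a triple $(F,p,[\phi])$ with $F=E^{\vee\vee}\in\calm_X(2;-1,2,2)$, $p=\supp(E^{\vee\vee}/E)\in X$, and $[\phi]$ the class of the quotient $F\twoheadrightarrow\calo_p$.

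Next I would analyse $\calm_X(2;-1,2,2)$ via the Serre correspondence, paralleling Section \ref{sec:quadrics}: a general such $F$ is reflexive with $\sing(F)$ a length-$2$ zero-dimensional scheme, one has $h^0(F(1))=3$, and a section with torsion free cokernel gives $0\to\calo_X(-1)\to F\to\cali_{C_0}\to0$ with $C_0\subset X$ a conic (degree $2$, $\chi(\calo_{C_0})=1$), the section vanishing on $\sing(F)$. The corresponding curves again lie in $\Hilb_{2t+2}(X)$, but, once twisted back to charge $2$, are no longer l.c.m. Building over the appropriate locally closed subscheme of $\Hilb_{2t+2}(X)$ — a fibration over the Hilbert scheme of conics on $X$, carrying the extra point and the section as data — a family of rank $2$ sheaves as in Proposition \ref{bundle-sc}, and running the argument of Proposition \ref{etale-fact} (Zariski's main theorem between smooth varieties), I would conclude that $\calm_X(2;-1,2,2)$ is irreducible and smooth of dimension $6$ at its general point; the sheaves with double dual $\cals$ (the case $c_2(F^{\vee\vee})=1$ of Proposition \ref{et-non-inst}) form only a $3$-dimensional locus and hence lie in the closure, and one uses that $\chi(F,F)=-5$ as in \eqref{euler-char}, being independent of $c_3$, reduces smoothness there to $\ext^2(F,F)=0$. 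Over the open locus where $F$ is locally free at $p$, the assignment $(F,p,[\phi])\mapsto\ker\phi$ then exhibits a dense open subset of $\calc$ as the $\p1$-bundle $\D{P}(\inhom(F,\calo_p))$ over a smooth $9$-dimensional variety (an open subset of $\calm_X(2;-1,2,2)\times X$); hence $\calc$ is irreducible, generically smooth, of dimension $6+3+1=10$.

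Granting this, the theorem follows. Since $\calm=\call(2)\sqcup\calc=\overline{\call(2)}\cup\calc$, with both pieces irreducible and closed and neither contained in the other (dimensions $6$ and $10$, and $\call(2)\cap\calc=\emptyset$), these are precisely the two irreducible components; $\calm$ is smooth along $\call(2)$ by Theorem \ref{instanton-component}, and at a general point of $\calc$ it is smooth because there $\calc$ is smooth and the $6$-dimensional $\overline{\call(2)}$ does not meet it. For connectedness it suffices that $\overline{\call(2)}\setminus\call(2)\neq\emptyset$ — this set is contained in $\calm\setminus\call(2)=\calc$ — i.e. that $\call(2)$ is not closed in the projective scheme $\calm$. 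But by Proposition \ref{etale-fact}, $\call(2)\cong\calb=G_2(\Sym^2(\calt_{\calu}^{\vee}))$ is a projective bundle over $\calu$, and $\calu=\gamma^{-1}((\p4)^{*}_{sm})$ is a proper open subset of the irreducible variety $\Gr(1,F(X))$ (its complement is the preimage of the dual quadric $X^{*}\subset(\p4)^{*}$), hence not proper; so $\calb$, and therefore $\call(2)$, is not proper, hence not closed in $\calm$, and $\calm$ is connected.

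The main obstacle is the second paragraph: constructing the family of rank $2$ sheaves over the non-l.c.m. locus of $\Hilb_{2t+2}(X)$ and verifying that it is a projective bundle with smooth, irreducible, $10$-dimensional total space — equivalently, proving the irreducibility of $\calm_X(2;-1,2,2)$ and the vanishing $\ext^2(F,F)=0$ for its general (reflexive) member. This is the analogue for non-instanton sheaves of Propositions \ref{bundle-sc}--\ref{etale-fact}, but more delicate because the relevant curves are singular.
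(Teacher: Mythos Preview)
Your approach to identifying the two components follows the paper's closely: Theorem \ref{instanton-component} for $\overline{\call(2)}$, Proposition \ref{et-non-inst} for the structure of $\calc$, the Serre correspondence with conics for $\calm_X(2;-1,2,2)$, and the $\p1$-bundle over an open of $\calm_X(2;-1,2,2)\times X$ for $\calc$. The paper carries this out in detail (it proves $\calm_X(2;-1,2,2)\cong\Grp(1,\D{P}(V))$ and then builds $\calc_0$ as you describe).

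There is, however, a genuine gap in your generic smoothness claim along $\calc$. Your sentence ``at a general point of $\calc$ it is smooth because there $\calc$ is smooth and the $6$-dimensional $\overline{\call(2)}$ does not meet it'' is not a valid argument: knowing that a single $10$-dimensional component passes through $[E]$ does not bound $\dim T_{[E]}\calm=\ext^1(E,E)$ from above, and your $\p1$-bundle description gives only a bijection onto $\calc_0$, hence irreducibility and a lower bound on dimension, not the scheme structure inherited from $\calm$. The paper closes this by computing $\ext^1(E,E)=10$ directly for general $[E]\in\calc$ from the sequence $0\to E\to F\to\calo_p\to 0$ together with $\ext^2(F,F)=0$ and the local freeness of $F$ at $p$. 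Note that in fact $\ext^2(E,E)=4\neq 0$ here (since $\chi(E,E)=-5$ forces $\ext^1-\ext^2=6$), so generic smoothness along $\calc$ does \emph{not} come from unobstructedness and really requires matching the tangent-space dimension to the component dimension.

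Your connectedness argument, on the other hand, is different from the paper's and genuinely simpler. The paper produces an explicit $5$-dimensional family $\calp\subset\overline{\call(2)}\cap\calc$ by degenerating pencils of $(2,0)$-divisors on smooth hyperplane sections to pencils on a quadric cone. Your observation that $\call(2)\cong\calb$ is a bundle over the non-proper base $\calu$, hence non-proper, so that $\overline{\call(2)}\setminus\call(2)$ is non-empty and automatically lands in $\calc=\calm\setminus\call(2)$, is correct and bypasses that construction entirely --- at the cost of saying nothing about what the intersection actually looks like.
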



\subsection{The moduli space $\calm_X(2;-1,2,2)$}

In order to better understand the geometry of $\calc$, we first need to study $\calm_X(2;-1,2,2)$.
From the proof of Proposition \ref{et-non-inst} we have already learnt that we have two families of sheaves in $\calm_X(2;-1,2,2)$:
\begin{lemma}\label{class-conic}
Let $F$ be a rank 2 semistable sheaf $F$ with Chern classes $(-1,2,2)$.
Then $F$ is $\mu$-stable and either $F$ is reflexive and $\sing(F)$ is zero dimensional of length 2, or $F^{\vee\vee}\simeq \cals$ and $\cals/F\simeq \calo_l(-1)$ for a line $l\subset X$.
\end{lemma}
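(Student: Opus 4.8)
The plan is to argue by cases on $c_2(F^{\vee\vee})$. First I would dispatch $\mu$-stability: since $c_1(F)=-1$ is odd and $F$ is semistable, the implications recalled in Section \ref{sec:stability} show $F$ is $\mu$-semistable, and a normalized torsion free sheaf with $c_1<0$ is automatically $\mu$-stable once $\mu$-semistable. Hence $G:=F^{\vee\vee}$ is a $\mu$-stable rank $2$ reflexive sheaf with $c_1(G)=-1$ and $c_2(G)\le c_2(F)=2$, while the Bogomolov inequality on $X$ (recall $H^3=2$) forces $c_2(G)\ge 1$. So $c_2(G)\in\{1,2\}$. Writing $T:=G/F$ (a sheaf of dimension $\le 1$ supported on the locus where $F$ is not reflexive), the two alternatives of the statement will correspond to $c_2(G)=1$ and $c_2(G)=2$ respectively; these are mutually exclusive, and in the first one $F\ne F^{\vee\vee}$, so $F$ is not reflexive.

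\textbf{Case $c_2(G)=1$.} By Lemma \ref{refl-low}, $G\simeq\cals$. A direct Chern character computation from the fixed data $(c_1,c_2,c_3)=(-1,2,2)$ gives $\ch(T)=\ch(\cals)-\ch(F)=(0,0,1,-3/2)$, hence by Riemann--Roch on $X$ one has $\chi(T(t))=t$; in particular $T$ is one-dimensional of degree $1$. Let $T_0\subset T$ be the maximal $0$-dimensional subsheaf and $T_1:=T/T_0$; then $\supp(T_1)$ is a line $l\subset X$ and, $T_1$ being torsion free of rank $1$ on $l\simeq\p1$, we have $T_1\simeq\calo_l(a)$ with $a=-1-\ell(T_0)\le -1$, obtained by comparing $\chi(T_1(t))=t-\ell(T_0)$ with $\chi(\calo_l(a)(t))=a+t+1$. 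On the other hand $\cals$ surjects onto $T$, hence onto $T_1$; tensoring with $\calo_l$ yields a surjection $\cals|_l\onto\calo_l(a)$, and since $\cals=s^*U$ carries lines of $X$ to lines of $\Grp(1,3)$ we have $\cals|_l\simeq\calo_l\oplus\calo_l(-1)$. As $\Hom(\calo_l\oplus\calo_l(-1),\calo_l(a))=0$ for $a\le -2$, necessarily $a=-1$ and $\ell(T_0)=0$, i.e. $T_0=0$ and $\cals/F=T\simeq\calo_l(-1)$. This is the second alternative.

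\textbf{Case $c_2(G)=2$.} Now $c_2(G)=c_2(F)$, so $T$ is $0$-dimensional, and from $\ch(F)=\ch(G)-\ch(T)$ together with the rank $2$ identity relating $c_3$ and $\ch_3$ one gets $c_3(G)=c_3(F)+2\,\ell(T)=2+2\,\ell(T)$. The crucial step is to show $c_3(G)=2$, so that $\ell(T)=0$ and $F\simeq G$ is reflexive. For this I would reproduce the spectrum argument already used in the proof of Proposition \ref{et-non-inst}: by \cite[Theorem 2.2]{Ein-Sols} the spectrum of the $\mu$-stable rank $2$ reflexive sheaf $G$ on the quadric threefold with $c_1(G)=-1$, $c_2(G)=2$ must reduce to the single integer $-2$, which pins down $c_3(G)=2$. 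Hence $T=0$, $F=F^{\vee\vee}$ is reflexive, $\sing(F)$ is $0$-dimensional (as always for a rank $2$ reflexive sheaf on a smooth threefold), and its length equals $c_3(F)=2$ by the formula of \cite[Proposition 2.6]{H-reflexive} (the same statement already invoked for the equivalence ``$c_3=0\iff$ locally free'' on Picard-rank-one Fano threefolds). This is the first alternative, completing the dichotomy.

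I expect the genuine obstacle to be \textbf{Case $c_2(G)=2$}: everything there hinges on the spectrum of reflexive rank $2$ sheaves on the quadric and on identifying the length of $\sing(F)$ with $c_3(F)$; the rest is bookkeeping with Chern characters and the already-established rigidity of the spinor bundle. A secondary subtlety, in Case $c_2(G)=1$, is promoting the inequality $a\le -1$ to the equality $a=-1$, where one really uses that $\cals$ has no jumping lines.
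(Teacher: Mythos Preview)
Your proposal is correct and follows essentially the same argument as the paper, which does not give a separate proof of this lemma but rather refers back to the proof of Proposition \ref{et-non-inst}: case-split on $c_2(F^{\vee\vee})\in\{1,2\}$, invoke Lemma \ref{refl-low} together with the splitting type $\cals|_l\simeq\calo_l\oplus\calo_l(-1)$ in the first case, and the Ein--Sols spectrum computation in the second. Your write-up is in fact more detailed than the paper's, making explicit the Hilbert polynomial of $T$ in Case $c_2(G)=1$ and the identification $\ell(\sing(F))=c_3(F)$ in Case $c_2(G)=2$.
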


We are going to prove the following:
\begin{theorem}\label{sheaves-grass}
$\calm_X(2;-1,2,2)$ is a smooth 6-dimensional irreducible variety isomorphic to $\Grp(1,\D{P}(V)$.
\end{theorem}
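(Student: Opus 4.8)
The plan is to exhibit an explicit bijective morphism $\calm_X(2;-1,2,2)\to\Gr(1,\D{P}(V))\simeq\Gr(2,V)$ and then upgrade it to an isomorphism using smoothness of both sides. First I would set up the Serre correspondence of Theorem \ref{thm:serre} with $L=\ox(-1)$: for $[F]\in\calm_X(2;-1,2,2)$ one computes, exactly as in the charge $2$ instanton case, that $h^0(F(1))\ge 1$ and that a nonzero section $s\in H^0(F(1))$ has torsion-free cokernel, yielding a short exact sequence
\begin{equation*}
0\longrightarrow\ox(-1)\longrightarrow F(1)\longrightarrow\cali_Y(1)\longrightarrow 0
\end{equation*}
where $Y\subset X$ is a subscheme with Hilbert polynomial $P_Y(t)=2t+2$. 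Using Remark \ref{rem:five}, $F$ reflexive forces $Y$ to be l.c.m. off a $0$-dimensional locus, while the case $F^{\vee\vee}\simeq\cals$ of Lemma \ref{class-conic} gives a $Y$ with an embedded/isolated point; in all cases a Chern-class computation shows $\deg Y=2$ and $\supp(Y)$ is a conic (possibly degenerate: two lines, a double line, or with an embedded point). The crucial geometric fact to extract is that every such $Y$ is \emph{contained in a unique hyperplane section} $Q:=\langle Y\rangle\cap X$, so that $Y$ is a degree-$2$ divisor-like subscheme on a (possibly singular) quadric surface. This assigns to $[F]$ the plane $\langle Y\rangle\in\D{P}(V)^\vee$; dually, recalling $F(X)\simeq\D{P}^3$ and the description at the end of Section \ref{generalities}, I would instead directly attach to $[F]$ the pencil $\D{P}(H^0(F(1)))$ and show $h^0(F(1))=2$ by the same argument as in Lemma \ref{smooth-l.c.m.} / Proposition \ref{curves-singular} (the relevant curves lie in a $\p3$, bounding $h^0$), so the pencil is a line in $F(X)\simeq\D{P}^3$, i.e. a point of $\Gr(1,\D{P}(V))$.

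Next I would construct the inverse. Given a pencil of such degree-$2$ curves — equivalently a point of $\Gr(1,F(X))$ — one forms the relative $\inext^1$ sheaf over the Hilbert scheme as in Proposition \ref{bundle-sc}: let $\mathbf{Y}\subset\Hilb_{2t+2}(X)\times X$ be the universal family, $\cale:=\inext^1_{p_1}(\cali_{\mathbf Y}(1),\calo)$; the same spectral-sequence computation identifies $\cale$ with ${p_1}_*(\tilde\omega_{\mathbf Y}(2))$, which is locally free of rank $2$ since $h^0(\omega_Y(2))=2$ for every $[Y]$ in the relevant locus (this uses that $Y$ lies on a quadric and the restriction sequences for $\calo_Y(1)$, $\calo_Y(2L_A)$ as in Lemma \ref{smooth-l.c.m.}, extended to the non-l.c.m. $Y$'s). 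The universal extension on $\D{P}(\cale)\times X$ from \cite[Corollary 4.5]{universal-ext} produces a family of sheaves, all with Chern classes $(-1,2,2)$ and all $\mu$-stable by Lemma \ref{class-conic}/Proposition \ref{et-non-inst}, hence a morphism to $\calm_X(2;-1,2,2)$; arguing as in Corollary \ref{etale-bundle} and Proposition \ref{etale-fact} (étale-local universal sheaf, the injection $\calo(1)\hookrightarrow {p_1}_*(\hat{\mathbf E}(1))$ trivializing the $\D{P}^1$-direction) this descends to a morphism $\Gr(1,F(X))\to\calm_X(2;-1,2,2)$ inverse to $\zeta$ set-theoretically.

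Then I would prove smoothness of $\calm_X(2;-1,2,2)$ of dimension $6$ by computing $\Ext^2(F,F)=0$ for all $[F]$, $\hom(F,F)=1$ and $\ext^3(F,F)=0$ from $\mu$-stability, and $\chi(F,F)=\tfrac32 c_1^2-6c_2+4+(\text{correction from }c_3)$ — here one must be careful because $F$ has homological dimension $1$ but $c_3(F^{\vee\vee})$ can be $2$, so I would run the argument of \cite[Proposition 3.4]{H-reflexive} on $F^{\vee\vee}$ (which is $\cals$ or reflexive with $c_3=2$, both with known $\Ext$'s) and then pass to $F$ through the defining sequences $0\to F\to F^{\vee\vee}\to T\to 0$ with $T$ either $\calo_l(-1)$ or $\calo_p$, controlling the correction terms via $\Ext^i(F,\calo_l(-1))$ and $\Ext^i(F,\calo_p)$ exactly as in Proposition \ref{smooth-inst-sing}. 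The vanishing $\Ext^2(F,F)=0$ follows: apply $\Hom(-,F)$ to the sequence, reduce to $\Ext^2(F^{\vee\vee},F)=0$ and $\Ext^3(T,F)=0$, the latter from $\Hom(F|_{\supp T},\calo(-3)\text{ or }\calo_p)$ and the known restriction $F^{\vee\vee}|_l\simeq\calo_l\oplus\calo_l(-1)$. With both $\calm_X(2;-1,2,2)$ and $\Gr(1,\D{P}(V))$ smooth and connected of dimension $6$ and a bijective morphism between them, Zariski's main theorem gives the isomorphism.

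\medskip
\textbf{Main obstacle.} The delicate point is the case analysis for the non-reflexive sheaves with $F^{\vee\vee}\simeq\cals$: one must check that the Serre curve $Y$ attached to such an $F$ still lies in a \emph{unique} hyperplane section and still has $h^0(\omega_Y(2))=2$, so that $Y$ fits into the same Grassmann-bundle picture as the reflexive ones — in other words, that the closed locus of non-reflexive sheaves maps into (not out of) the same component and does not force $\cale$ to jump rank over $\Hilb_{2t+2}(X)$. Handling $\ext^1(F,F)=6$ uniformly across the reflexive and non-reflexive strata (the $\chi(F,F)$ bookkeeping with the $c_3$-correction and the homological dimension $1$ hypothesis) is the other place where care is needed.
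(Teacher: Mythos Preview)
Your proposal has a genuine and fatal arithmetic error at the very first step. For $[F]\in\calm_X(2;-1,2,2)$ one has $\chi(F(1))=3$, not $2$: the extra $c_3=2$ shifts the Euler characteristic by $c_3/2=1$ relative to the instanton case. Consequently the Serre curve attached to a section $s\in H^0(F(1))$ has Hilbert polynomial $2t+1$, not $2t+2$: it is a \emph{conic} $C\subset X$ (arithmetic genus $0$), not a curve of genus $-1$. The paper proves exactly this in Lemma \ref{sc-refl-conic}, obtaining $h^0(F(1))=3$ for every $[F]$. So $\D{P}(H^0(F(1)))$ is a \emph{net} of conics, not a pencil, and your assignment ``pencil $\mapsto$ line in $F(X)\simeq\D{P}^3$'' collapses; note also that $\Gr(1,F(X))=\Gr(1,\D{P}^3)$ is only $4$-dimensional, so it could never have been the target anyway.

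The paper's actual mechanism is quite different from what you outline. The net $\D{P}(H^0(F(1)))\simeq\D{P}^2$ of conics, under the identification $\Hilb_{2t+1}(X)\simeq\Gr(2,V^*)$ given by $C\mapsto\langle C\rangle$, is shown (Proposition \ref{ls-conic}) to be the Schubert variety $\Lambda_{l_F}$ of planes in $\D{P}(V)$ containing a fixed line $l_F\subset\D{P}(V)$: when $F$ is reflexive, $l_F$ is the secant or tangent line to $X$ through $\sing(F)$; when $F^{\vee\vee}\simeq\cals$, $l_F=\sing(F)\subset X$. The map $\alpha:[F]\mapsto l_F$ then lands in $\Gr(1,\D{P}(V))=\Gr(2,5)$, which has the correct dimension $6$. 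The rest of the paper's argument---building the rank-$3$ bundle $\calg=\inext^1_{p_1}(\cali_{\mathbf{C}},\calo(-1))$ over $\Hilb_{2t+1}(X)$, producing a universal extension on $\D{P}(\calg)$, showing $\D{P}(\calg)\to\calm_X(2;-1,2,2)$ is an \'etale $\D{P}^2$-bundle, and deducing that $\alpha$ is a bijection between smooth varieties---follows the template of Propositions \ref{bundle-sc}, \ref{etale-bundle}, \ref{etale-fact} as you anticipated, but with conics and rank $3$ throughout rather than $(2t+2)$-curves and rank $2$. Your smoothness strategy via the double-dual sequence is plausible but unnecessary: the paper's Proposition \ref{lines-smooth} works directly from the Serre sequence with a conic, using $\caln_{C/X}\simeq\calo_C(1)^{\oplus 2}$.
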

 
Also in this occasion, our main tool to study sheaves in $\calm_X(2;-1,2,2)$ is the Serre correspondence.
Towards the rest of the section we denote by $\calr(2;-1,2,2)$ the open subset of $\calm_X(2;-1,2,2)$ parameterizing reflexive sheaves.
\begin{lemma}\label{sc-refl-conic}
Let $[F]$ be a point in $\calm_X(2;-1,2,2)$. Then $h^0(F(1))=3$ and for each $s\in H^0(E(1)), \ s\ne 0$, $\coker(s)\simeq \cali_C(1)$ for a conic $C\subset X$.
\end{lemma}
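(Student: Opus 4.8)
The plan is to apply the Serre correspondence from Theorem \ref{thm:serre} in the form of Remark \ref{rem:five} to a section of $F(1)$, after first pinning down the dimension $h^0(F(1))=3$ by a Riemann--Roch plus cohomology-vanishing argument. First I would compute the Chern character of $F(1)$: since $\mathrm{ch}(F) = (2, -[H], -[l], \tfrac{c_3(F)}{2}-\tfrac13) = (2,-[H],-[l], \tfrac{2}{3})$ (using $c_3(F)=2$ and the formula $\mathrm{ch}_3 = \tfrac{c_3-c_1^3}{2}$, here $c_1^3 = -d_X = -2$, so $\mathrm{ch}_3 = 1 + \tfrac13 \cdot(\text{correction})$ — I will recompute carefully), and then twist by $\calo_X(1)$ and apply Riemann--Roch to get $\chi(F(1))$. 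The expected value is $\chi(F(1))=3$, matching the claim.

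Next I would establish the vanishing $h^i(F(1))=0$ for $i=1,2,3$ so that $h^0(F(1))=\chi(F(1))=3$. For $i=3$: by $\mu$-stability and Serre duality, $h^3(F(1)) = \hom(F, \calo_X(-1-i_X)) = \hom(F,\calo_X(-4))^* $, which vanishes since a nonzero map would destabilize $F$ (image would be $\cali_\Gamma(k)$ with $k \le -4 < -\tfrac12 = \mu(F)\cdot 2$... more precisely $k \le 0$ and slope comparison forces a contradiction). For $i=2$: I would restrict $F$ to a general hyperplane section $Q \in |\calo_X(1)|$, argue (as in Lemma \ref{refl-low}) that $F|_Q$ is $\mu$-semistable on the smooth quadric surface $Q$ with $c_1(F|_Q) = (-1,-1)$, deduce $h^2(F|_Q(1))=0$ and (using $\chi(F|_Q) = 1-c_2(F) = -1$, Serre duality, and $1$-regularity-type arguments) control $h^1(F|_Q(n))$; then the restriction sequence $0 \to F(n) \to F(n+1) \to F|_Q(n+1) \to 0$ plus reflexivity (which gives $h^2(F(n))=0$ for $n \gg 0$, actually need $h^i$ with $i=1,2$ vanishing for large twist, standard for reflexive sheaves) propagates the vanishing down to $n=0$, i.e. $h^1(F(1))=h^2(F(1))=0$. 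This cohomological bookkeeping, closely parallel to the proof of Lemma \ref{refl-low}, is where most of the work lies.

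Having $h^0(F(1))=3$, I then take any nonzero $s \in H^0(F(1))$. Since $F$ is $\mu$-stable with $c_1(F(1))=1 > 0$, and more to the point any section whose zero locus has a divisorial component would exhibit a sub-line-bundle $\calo_X(D)$ of $F(1)$ with $\deg D \ge 1$, violating $\mu$-stability (as $\mu(F(1)) = \tfrac12$); hence the zero locus of $s$ has codimension $\ge 2$ and $\coker(s)$ is torsion free. By the Serre correspondence this gives an exact sequence $0 \to \calo_X \xrightarrow{s} F(1) \to \cali_C(1) \to 0$ with $C \subset X$ a codimension-$2$ subscheme, i.e. a curve. Computing Chern classes from this sequence: $\mathrm{ch}(\cali_C(1)) = \mathrm{ch}(F(1)) - \mathrm{ch}(\calo_X)$ yields $\deg C = c_2(F) = 2$ and, from the $\mathrm{ch}_3$ term, $\chi(\calo_C) = 1$, hence $p_a(C) = 0$; combined with $\deg C = 2$ this means $C$ is a conic (possibly degenerate — a double line of arithmetic genus $0$, a pair of incident lines, or a smooth conic, all with $p_a = 0$).

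The main obstacle I anticipate is the $i=1,2$ cohomology vanishing $h^i(F(1))=0$: unlike the reflexive-sheaf case handled in Lemma \ref{refl-low}, here $F$ need not be reflexive (by Lemma \ref{class-conic} it may be the kernel $\cals \to \calo_l(-1)$), so I cannot directly invoke ``$H^i(F(n))=0$ for $n \ll 0$'' from reflexivity. I would handle the non-reflexive case separately using the defining sequence $0 \to F \to \cals \to \calo_l(-1) \to 0$: twist by $\calo_X(1)$ and take cohomology, using the known cohomology of $\cals(1)$ (namely $h^0(\cals(1))=4$, $h^i(\cals(1))=0$ for $i>0$, which follows from $\cals$ being an instanton bundle of charge $1$ and Lemma \ref{cohom-interm}) and of $\calo_l(-1)(1)=\calo_l$ (so $h^0=1$, $h^1=0$) to conclude $h^0(F(1)) = 4-1 = 3$ and $h^i(F(1))=0$ for $i \ge 1$ directly. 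In the reflexive case I run the hyperplane-restriction argument as above. Since the two cases are exhaustive by Lemma \ref{class-conic}, this completes the proof.
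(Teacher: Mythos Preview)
Your overall architecture is right and matches the paper: split into the reflexive and non-reflexive cases of Lemma \ref{class-conic}, and in the latter use the sequence $0\to F\to\cals\to\calo_l(-1)\to 0$ exactly as you do. The paper's non-reflexive argument is essentially identical to yours (with the extra remark that $H^0(\cals(1))\to H^0(\calo_l)$ is nonzero because for a general line $l'$ there is no surjection $\cali_{l'}(1)\twoheadrightarrow\calo_l$, which you need for $h^0(F(1))=4-1$ rather than $4$).

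The gap is in the reflexive case. Your plan to transplant the hyperplane-restriction argument of Lemma \ref{refl-low} breaks down precisely because $c_2(F)=2$, not $1$: on a general hyperplane section $Q$ you get $\chi(F|_Q)=1-c_2(F)=-1$, hence $h^1(F|_Q)=1\ne 0$, so the key step in that proof (showing $h^1(F|_Q(n))=0$ for all $n$) fails. Without this, the restriction sequence does not propagate the vanishing of $h^1$ or $h^2$ down to the twist you want.

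The paper handles this differently in two ways. First, for $h^2(F(1))=0$ it invokes the spectrum of a rank 2 reflexive sheaf on a quadric threefold (Ein--Sols \cite{Ein-Sols}): the spectrum of $F$ consists of the single integer $-2$, which directly gives $h^2(F(j))=h^1(\calo_{\p1}(j-1))=0$ for $j\ge 0$. Second, and more importantly, the paper does \emph{not} prove $h^1(F(1))=0$ before constructing the curve. It only needs $h^0(F(1))>0$ (which follows from $\chi(F(1))=3$ and $h^2=h^3=0$); once a nonzero section is in hand, the Serre sequence $0\to\calo_X\to F(1)\to\cali_C(1)\to 0$ with $C$ a plane conic gives $h^0(F(1))=1+h^0(\cali_C(1))=1+2=3$ directly, and $h^1(F(1))=0$ then drops out \emph{a posteriori}. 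So the order of operations is reversed from yours: Serre correspondence first, then the exact value of $h^0$.
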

\begin{proof}
Let us start by considering the case $[F]\in \calr(2;-1,2,2)$. We show that $F(1)$ always admits global sections.
By Riemann--Roch $\chi(F(1))=3$ and by stability $h^3(F(1))=0$. As already claimed in the proof of Proposition \ref{et-non-inst}, the spectrum of $F$ consists just of the integer $-2$ (due to  \cite[Theorem 2.2]{Ein-Sols}) which implies the following:
\begin{equation}\label{spec-refl}
    h^1(F(j))=h^0(\calo_{\p1}(k+j+1)), \ \forall\: j\le -2, \ h^2(F(j))=h^1(\calo_{\p1}(k+j+1)), \ \forall\: j\ge 0.
\end{equation}
This means that $h^2(F(1))=0$ hence we necessarily have $h^0(F(1))>0$. 
Now, since $H^0(F)=0$, $\forall \ s\in H^0(F(1)),\: s\ne 0$ $\coker(s)$ is torsion free 
and of the form $\cali_C(1)$ for a l.c.m. curve $C\subset X$.

A Chern class computation leads to $P_C(n)=2n+1$ hence $C$ is a plane conic and from the short exact sequence
\begin{equation}\label{ses-refl-conic}
0\rightarrow \calo_X\rightarrow F(1)\rightarrow \cali_C(1)\rightarrow 0
\end{equation}
we compute that $h^0(F(1))=3$ hence $h^1(F(1))=0$.
If $F^{\vee\vee}\simeq \cals$ instead, from the short exact sequence
$$ 0\rightarrow F\rightarrow \cals \rightarrow\calo_l(-1)\rightarrow 0,$$
we get $h^i(F(1))=h^i(\cals(1))=0, \ i=2,3$; as moreover $H^0(\cals(1))\rightarrow H^0(\calo_l)$ can not be the zero map (for $l'\subset X$ general, there are no surjection $\cali_{l'}(1)\twoheadrightarrow \calo_l)$ we conclude that $h^0(F(1))=3$ and $h^1(F(1))=0$. Since any global section of $\cals$ has torsion free cokernel, for any  non-zero $s\in H^0(F(1))$, $\coker(s)\simeq \cali_Z(1)$ fits in
\begin{equation}\label{ses-reducible}
0\rightarrow \cali_Z(1)\rightarrow \cali_{l'}(1)\rightarrow \calo_l\rightarrow 0
.\end{equation}
with $l'=\coker(\iota(s)), \ \iota:=H^0(F(1))\hookrightarrow H^0(\cals(1)).$ Therefore $Z$ is a reducible conic supported on $l\cup l'$ (note that for $s$ general, as $\cali_{l'}$ surjects onto $\calo_l(-1)$, $l'$ will meet $l$ at a point).
\end{proof}

It is straightforward to check that this construction can be ``reversed", leading to the following claim.

\begin{lemma}
Serre correspondence establishes a 1-1 correspondence between
\begin{itemize}
    \item pairs $(F,s)$ with $[F]\in \calr(2;-1,2,2),\ s\in \D{P}(H^0(F(1)))$ (resp. pairs $(F,s)$ with $[F]\in \calm_X(2;-1,2,2)\setminus\calr(2;-1,2,2),\ s\in \D{P}(H^0(F(1)))$)
    \item  pairs \mbox{$(C,\xi)$ with $[C]\in \Hilb_{2t+1}(X),$}
$\ \xi\in \D{P}(H^0(\omega_C(2)))$ vanishing along 2 points on $C$ (resp.
$(C,\xi), \ [C]\in \Hilb_{2t+1}(X),$ reducible 
$\ \xi\in \D{P}(H^0(\omega_C(2)))$ vanishing along a component of $C$)
\end{itemize}
\end{lemma}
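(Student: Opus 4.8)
The plan is to observe that the assertion is simply the Serre correspondence of Theorem~\ref{thm:serre} restricted to matching loci on the two sides, with the forward passage already carried out in Lemma~\ref{sc-refl-conic}. Setting $E=F(1)$, so that $\det(E)=\calo_X(-1)\otimes\calo_X(2)=\calo_X(1)$, and noting $H^1(\calo_X(-1))=H^2(\calo_X(-1))=0$ by Kodaira vanishing, Theorem~\ref{thm:serre} (with $r=2$, $L=\calo_X(1)$) gives a bijection between pairs $(E,s)$ with $E$ rank $2$ torsion-free, $\det(E)=\calo_X(1)$ and $s\in H^0(E)$ of codimension $\ge2$ vanishing, and pairs $(C,\xi)$ with $C\subset X$ a codimension-$2$ subscheme and $\xi\in H^0(\omega_C\otimes\omega_X^{-1}\otimes\calo_X(-1))=H^0(\omega_C(2))$; since rescaling $s$ rescales $\xi$, this descends to projectivizations. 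So I only need to identify which pairs $(C,\xi)$ arise when $[F]\in\calm_X(2;-1,2,2)$ and, conversely, that the prescribed $(C,\xi)$ give back such an $F$.

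First I would dispatch the forward direction, which is contained in Lemma~\ref{sc-refl-conic}: for $[F]\in\calm_X(2;-1,2,2)$ every nonzero $s\in H^0(F(1))$ has torsion-free cokernel (shown there), the Chern-character computation in that proof forces $P_C(t)=2t+1$, i.e.\ $[C]\in\Hilb_{2t+1}(X)$, and since every subscheme with this Hilbert polynomial is a conic (degree $2$, arithmetic genus $0$, so no embedded points) hence l.c.m.\ and Gorenstein, $\omega_C(2)$ is a line bundle of degree $2$ on $C$. By Remark~\ref{rem:five}(1), $F$ is reflexive exactly when the zero locus of $\xi$ is $0$-dimensional, in which case it has length $2$ and coincides with $\sing(F)$ (Lemma~\ref{class-conic}): this is the case ``$\xi$ vanishes along $2$ points''. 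Otherwise $F^{\vee\vee}\simeq\cals$ and $\cals/F\simeq\calo_l(-1)$ (Lemma~\ref{class-conic}); chasing the diagram of the proof of Lemma~\ref{sc-refl-conic} (cf.\ \eqref{ses-reducible}) shows $C$ is the reducible conic $l\cup l'$ and that $F(1)$ is the pullback of the spinor extension $0\to\calo_X\to\cals(1)\to\cali_{l'}(1)\to0$ along $\cali_C(1)\hookrightarrow\cali_{l'}(1)$; consequently $\xi$ lies in the image of the inclusion $H^0(\inext^1(\cali_{l'}(1),\calo_X))=H^0(\omega_{l'}(2))\hookrightarrow H^0(\omega_C(2))$ coming from $0\to\cali_C(1)\to\cali_{l'}(1)\to\calo_l\to0$, i.e.\ $\xi$ vanishes identically along the component $l$.

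For the reverse direction I would start from $(C,\xi)$ as in the statement and feed it through Theorem~\ref{thm:serre}, obtaining $0\to\calo_X(-1)\to F\to\cali_C\to0$. The Chern character of $F$ is independent of $\xi$ and equals $(-1,2,2)$ precisely because $[C]\in\Hilb_{2t+1}(X)$ (same computation as in Lemma~\ref{sc-refl-conic}), and $F$ is automatically $\mu$-stable: $h^0(F)\le h^0(\calo_X(-1))+h^0(\cali_C)=0$, and for a rank-$2$ sheaf with $c_1=-1$ any destabilizing subsheaf would yield a nonzero section of $F(k)$ with $k\ge0$, hence of $F$. Thus $[F]\in\calm_X(2;-1,2,2)$. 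If $\xi$ has $0$-dimensional (necessarily length $2$) zero locus, then Remark~\ref{rem:five}(1) makes $F$ reflexive with $\sing(F)$ of length $2$, so $[F]\in\calr(2;-1,2,2)$. If instead $C=l\cup l'$ is reducible and $\xi$ vanishes along the component $l$, then $\xi$ lies in the line $H^0(\omega_{l'}(2))\cong\Ext^1(\cali_{l'}(1),\calo_X)\cong\C$ and, being nonzero, is nowhere vanishing; hence by Remark~\ref{rem:five}(2) the Serre sheaf of $(l',\xi)$ is rank-$2$ locally free, which (twisting by $\calo_X(-1)$ and using the uniqueness of the spinor bundle) is $\cals(1)$, and $F(1)$ being its pullback along $\cali_C(1)\hookrightarrow\cali_{l'}(1)$ gives $F\subset\cals$ with $\cals/F\simeq\calo_l(-1)$, so $F$ is non-reflexive with $F^{\vee\vee}\simeq\cals$, i.e.\ $[F]\in\calm_X(2;-1,2,2)\setminus\calr(2;-1,2,2)$. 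Because Theorem~\ref{thm:serre} already provides a bijection and the two loci on each side match up as above, the correspondence restricts to the claimed one. The only genuinely delicate point is the local-Ext bookkeeping in the non-reflexive case---identifying, via the sequence $0\to\omega_{l'}(2)\to\omega_C(2)\to\inext^2(\calo_l,\calo_X)\to0$ obtained by applying $\RHom(-,\calo_X)$ to $0\to\cali_C(1)\to\cali_{l'}(1)\to\calo_l\to0$, the sections ``vanishing along the component $l$'' with the image of the spinor extension class under pullback; I would also note the mildly degenerate subcase $l=l'$, where $C$ is a planar double line, which fits the same pattern with ``component'' read as ``reduced support'' and the inclusion $H^0(\omega_l(2))\hookrightarrow H^0(\omega_{2l}(2))$ in the role of $H^0(\omega_{l'}(2))\hookrightarrow H^0(\omega_C(2))$.
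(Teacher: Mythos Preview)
Your proposal is correct and follows exactly the approach the paper intends: the paper does not actually give a proof of this lemma, stating only that ``it is straightforward to check that this construction can be reversed'', and your argument is precisely that reversal---applying Theorem~\ref{thm:serre} with $L=\calo_X(1)$, invoking Lemma~\ref{sc-refl-conic} for the forward direction, and using Remark~\ref{rem:five} together with Lemma~\ref{class-conic} to match the reflexive/non-reflexive dichotomy on the sheaf side with the ``$\xi$ vanishes at two points''/``$\xi$ vanishes along a component'' dichotomy on the curve side. Your treatment is in fact more complete than the paper's, since you spell out the stability check ($h^0(F)=0$ via the extension), the identification $\Ext^1(\cali_{l'}(1),\calo_X)\simeq H^0(\omega_{l'}(2))\simeq\C$ with nowhere-vanishing generator, and you flag the degenerate subcase $l=l'$ (planar double line), which the paper's phrasing ``reducible \dots\ vanishing along a component'' leaves implicit. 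One tiny slip: in your stability argument, a destabilizing saturated rank-one subsheaf is $\calo_X(m)$ with $m\ge0$, giving a nonzero section of $F(-m)$ (not $F(k)$ with $k\ge0$); composing with a nonzero section of $\calo_X(m)$ then yields $h^0(F)\ne0$, so the conclusion stands.
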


Mimicking what we have done for sheaves in $\call(2)$, we are now going to describe in details the linear system $\D{P}(H^0(F(1)))\simeq \p2$ of conics associated to $[F]\in \calm_X(-2;-1,2,2)$. This will help us to better understand the geometry of the scheme $\calm_X(2;-1,2,2)$.

\begin{proposition}\label{ls-conic}
\begin{enumerate}
    \item If $[F]\in \calr(2;-1,2,2)$ and $\sing(F)$ consists of two distinct points $p_1,\: p_2$, $\D{P}(H^0(F(1)))$ identifies with the linear system of conics containing $p_i, \ i=1,2$ and its image under the isomorphism \mbox{$\Hilb_{2t+1}(X)\simeq \Grp(2,V^*)$} is the Schubert variety of planes containing $\overline{p_1p_2}.$
    \item If $[F]\in \calr(2;-1,2,2)$ and $\sing(F)_{\rm red}=p$, there exists a line $l\subset \D{P}^4$ tangent at $p$ to each conic in $\D{P}(H^0(F(1)))$ and the image of $\D{P}(H^0(F(1)))$ under the isomorphism $\Hilb_{2t+1}(X)\simeq \Grp(2,V^{*})$ is the Schubert variety of planes containing $l$.
    \item If $[F]\in \calm_X(2;-1,2,2)\setminus \calr(2;-1,2,2)$ $\D{P}(H^0(F(1)))$ identifies with the Schubert variety of planes containing the line $\sing(F)$. 
\end{enumerate}
\end{proposition}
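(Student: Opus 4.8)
The strategy in all three cases is to exploit the Serre-correspondence short exact sequence $0\to\calo_X\to F(1)\to\cali_C(1)\to0$ established in Lemma \ref{sc-refl-conic}, which identifies $\mathbb{P}(H^0(F(1)))$ with a pencil (in fact a $\mathbb{P}^2$, since $h^0(F(1))=3$) of conics $C\subset X$, together with the extension-class description from Theorem \ref{thm:serre} and Remark \ref{rem:five}: the class $\xi\in H^0(\omega_C(2))$ attached to $(F,s)$ vanishes exactly on $\sing(F)$ when $F$ is reflexive (a length-2 subscheme), resp. on a whole component when $F^{\vee\vee}\simeq\cals$. So in each case I need to (i) pin down which subscheme of $C$ the section $\xi$ vanishes on, (ii) translate ``conics through a fixed subscheme'' into a Schubert condition under the isomorphism $\Hilb_{2t+1}(X)\simeq\Gr(2,V^*)$ (a conic in $X$ spans a unique plane $\mathbb{P}^2\subset\mathbb{P}(V)=\mathbb{P}^4$, hence corresponds to a point of the Grassmannian of planes, i.e. $\Gr(2,V^*)$ of $2$-dimensional quotients / $3$-dimensional subspaces).

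\textbf{Case (1).} Here $\sing(F)=\{p_1,p_2\}$ with $p_1\neq p_2$, and the Serre class $\xi$ vanishes on $\{p_1,p_2\}$; varying $s\in\mathbb{P}(H^0(F(1)))$ gives all conics $C$ through $p_1$ and $p_2$. First I would check the converse direction is also forced: every conic through $p_1,p_2$ arises this way, because the extension class $H^0(\omega_C(2))$ vanishing at those two points is non-empty and, by Remark \ref{rem:five}(1), produces a reflexive sheaf with singular locus exactly $\{p_1,p_2\}$ whose invariants are $(-1,2,2)$ — and all such sheaves are identified in $\calm_X(2;-1,2,2)$. Then, since two distinct points $p_1,p_2\in X$ span a line $\overline{p_1p_2}$, a plane $\mathbb{P}^2\subset\mathbb{P}^4$ contains a conic through both points iff it contains $\overline{p_1p_2}$; this is precisely the Schubert variety $\{\Lambda\in\Gr(2,V^*): \Lambda\supset\overline{p_1p_2}\}$, which is a $\mathbb{P}^2$ — matching the dimension of $\mathbb{P}(H^0(F(1)))$. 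The bijection is then a morphism between smooth varieties, hence an isomorphism.

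\textbf{Case (2).} Now $\sing(F)_{\rm red}=\{p\}$ but the length-2 scheme is non-reduced, so it is a tangent vector at $p$, i.e. it determines a line $l\subset\mathbb{P}^4$ through $p$ (the embedded tangent direction). The Serre class $\xi$ vanishes on this length-2 subscheme of $C$, which forces each conic $C$ in the system to pass through $p$ with tangent line $l$ there. As in Case (1), a plane $\mathbb{P}^2$ carries such a conic iff $l\subset\mathbb{P}^2$, giving the Schubert variety of planes containing $l$, again a $\mathbb{P}^2$; conclude by smoothness. The one subtlety here is checking that $l$ actually lies on $X$ (or at least that the Schubert description is the correct one) — but since $l$ is a tangent direction common to a family of conics on $X$, and $X$ is a quadric, one verifies $l\subset X$ directly: a line meeting $X$ with multiplicity $\ge2$ at $p$ and contained in the span of conics on $X$ is contained in $X$.

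\textbf{Case (3).} When $F^{\vee\vee}\simeq\cals$, Lemma \ref{sc-refl-conic} gives that the conics $C$ are the reducible ones supported on $l\cup l'$ where $l=\sing(F)=\cals/F$ is fixed and $l'$ varies, and $\xi$ vanishes along the component $l$. Hence every conic in the system contains the fixed line $l$, and the plane it spans contains $l$; conversely any plane $\mathbb{P}^2\supset l$ meets $X$ (a quadric) along $l$ plus a residual line $l'$, giving a reducible conic $l\cup l'$, and running Serre backwards produces the corresponding non-reflexive sheaf with $F^{\vee\vee}=\cals$. So $\mathbb{P}(H^0(F(1)))$ is the Schubert variety of planes containing $l$, once more a $\mathbb{P}^2$.

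\textbf{Main obstacle.} The routine part is the Schubert-variety bookkeeping on $\Gr(2,V^*)$; the genuinely delicate point is Case (2): correctly extracting the line $l$ from the non-reduced length-2 singular scheme and proving that \emph{all} conics in $\mathbb{P}(H^0(F(1)))$ share the same tangent line at $p$ (not merely pass through $p$). This requires analyzing the local structure of $\xi\in H^0(\omega_C(2))$ at the non-reduced point and using that $\omega_C$ near a smooth point of a conic is a line bundle, so ``$\xi$ vanishes to order $2$ along $l$-direction'' is equivalent to a tangency condition; one then checks this tangency is independent of the section $s$ by a dimension count (the sub-linear-system of conics through $p$ with a \emph{prescribed} tangent is already $2$-dimensional inside the full family, forcing equality). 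I would treat this carefully and let Cases (1) and (3) follow the same template with the easier incidence conditions.
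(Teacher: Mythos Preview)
Your overall approach matches the paper's: use Remark \ref{rem:five} to identify $\sing(F)$ as the fixed base scheme of the linear system of conics, then translate the incidence condition into a Schubert condition on $\Grp(2,V^*)$, and conclude by a dimension count.

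However, there is a genuine error in Case (2). You assert that the line $l$ lies on $X$, arguing that ``a line meeting $X$ with multiplicity $\ge 2$ at $p$ and contained in the span of conics on $X$ is contained in $X$.'' This is false: a line tangent to a smooth quadric threefold at a point $p$ is generically not contained in it (the tangent hyperplane at $p$ is a $\p3$ containing a $\p2$ of lines through $p$, of which only a conic's worth lie on $X$). In fact the very next result in the paper, Lemma \ref{lin-refl}, establishes that $F$ is reflexive precisely when $l_F \not\subset X$; so in Case (2) the line $l$ is tangent to $X$ at $p$ but \emph{not} contained in $X$. Fortunately this does not affect the Schubert description, which makes sense for any line in $\p4$, so the mistake is extraneous rather than fatal --- but it should be removed.

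You also over-engineer what you call the ``main obstacle.'' The tangency condition is not subtle: $\sing(F)$ is a fixed length-$2$ subscheme of $X$, independent of the section $s$, and when it is supported at a single point it is literally a tangent vector at $p$, equivalently an infinitesimal line $l\subset\p4$. A conic $C$ contains this subscheme iff its span $\langle C\rangle$ contains $l$ --- this is an elementary fact about length-$2$ schemes in projective space and needs no local analysis of $\omega_C$ nor an argument that the tangent direction is independent of $s$ (it is, trivially, because $\sing(F)$ is intrinsic to $F$). The paper dispatches Case (2) in two sentences along these lines; your proposed local computation and dimension count are unnecessary.
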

\begin{proof}
Suppose at first that $F$ reflexive with $\sing(F)=\{p_1,\:p_2\},\ p_1\ne p_2$. It is easy then to compute that the conics in $X$ passing through these points are parameterised by a plane: a conic $C\subset X$ passes indeed through the points $p_1,\ p_2$ if and only if $\overline{p_1p_2}\subset \langle C\rangle$. This means that the image of the family of conics passing through the $p_i$s, under the isomorphism $\Hilb_{2t+1}(X)\simeq \Grp(2,V^*), \ C\mapsto \langle C\rangle $ is the Schubert variety of planes containing $\overline{p_1p_2}$ that is a plane in $\Grp(2,V^*)$. The proposition follows since every conic in $\D{P}(H^0(F(1)))$ contains $p_i, \ i=1,2$.
Whenever $\sing(F)$ is supported on a single point $p$ we can compute again that the linear system of conics containing $\sing(F)$ is a plane in $\Grp(2,V^*)$. The scheme $\sing(F)$ corresponds indeed to the data of the point $p$ together with a tangent direction $v\in T_p X$ or equivalently, to a line $l$ tangent to $X$ at $p$. A conic $C$ contains $\sing(F)$ if and only if $\langle C\rangle $ contains $l$, hence $\D{P}(H^0(F(1)))$ identifies with the Schubert variety $\p2\subset \Grp(2,V^*)$ of planes containing $l$.  
Finally if $F$ is singular along a line $l$, consider the inclusion $\iota:H^0(F(1))\hookrightarrow H^0(\cals(1))$. Each $\iota(s)$ defines a line $l'$ giving rise to a short exact sequence of the form (\ref{ses-reducible}). 
Since $\cali_{l'}$ surjects onto $\calo_l(-1)$ if and only if either $l=l'$ or $l\cap l'$ consists of a point, we deduce therefore that $\D{P}(\iota(H^0(F(1)))$ identifies with the space of lines meeting $l$ and that $\D{P}(H^0(F(1))$ identifies therefore with the family of planes containing $l$.
\end{proof}

From now on the family of conics associated to $[F]\in \calm_X(2;-1,2,2)$ will simply be denoted by $\D{P}(H^0(F(1)))$ and the line contained in every plane $\langle C\rangle$, \mbox{$C\in \D{P}(H^0(F(1)))$} will be denoted by $l_{F}$ (note that for $[F]$ belonging to the closed subscheme $\calm_X(2;-1,2,2)\setminus \calr(2;-1,2,2)$, $l_F=\sing(F)$).
\begin{lemma}\label{lin-refl}
$[F]\in \calr(2;-1,2,2)$ if and only if $l_F\not \subset X$.
\end{lemma}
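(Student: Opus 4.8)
The plan is to prove the two implications separately, using the trichotomy from Lemma \ref{class-conic} together with the description of $\D{P}(H^0(F(1)))$ given in Proposition \ref{ls-conic}. First, I would dispose of the easy direction: if $F$ is not reflexive, then $[F]\in\calm_X(2;-1,2,2)\setminus\calr(2;-1,2,2)$, so by Proposition \ref{ls-conic}(3) we have $l_F=\sing(F)$, which is a genuine line in $X$; hence $l_F\subset X$. Contrapositively, if $l_F\not\subset X$ then $F$ must be reflexive. So the content of the lemma is the reverse implication: if $[F]\in\calr(2;-1,2,2)$, then the distinguished line $l_F\subset\p4$ is \emph{not} contained in $X$.

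For the reverse implication I would argue by contradiction: suppose $[F]\in\calr(2;-1,2,2)$ but $l:=l_F\subset X$. By Proposition \ref{ls-conic}(1)--(2), every conic $C\in\D{P}(H^0(F(1)))$ satisfies $l\subset\langle C\rangle$, and the family $\D{P}(H^0(F(1)))\simeq\p2$ is the Schubert variety of planes in $\p4$ through $l$. The key point is that for a conic $C\subset X$ with $l\subset\langle C\rangle$, the intersection $\langle C\rangle\cap X$ is a quadric surface (or a quadric in $\langle C\rangle\simeq\p2$, i.e.\ a conic) containing the line $l$; when $l\subset X$ this forces the conics $C$ in the family to be reducible, of the form $l\cup l'$ or a double line on $l$ — exactly the l.c.m.\ curves of Hilbert polynomial $2t+1$ supported on $l$. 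But then the Serre exact sequence $0\to\calo_X\to F(1)\to\cali_C(1)\to 0$ would be the one associated to a curve of the type appearing in Lemma \ref{sc-refl-conic} for the \emph{non-reflexive} stratum (compare the sequence \eqref{ses-reducible}), and I would extract from this a destabilizing-type phenomenon: the section $\xi\in H^0(\omega_C(2))$ corresponding to the extension vanishes along a whole component of $C$ rather than along a $0$-dimensional subscheme, which by Remark \ref{rem:five}(1) is precisely the condition characterizing non-reflexive $F$. This contradicts $[F]\in\calr(2;-1,2,2)$.

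Concretely, the cleanest route is probably: since all conics $C$ in the pencil contain $l$ (when $l\subset X$), pick two sections $s,t\in H^0(F(1))$; their common dependency locus, by Remark \ref{pencil-emb}-type reasoning applied in this setting, contains $l$, so the line $l$ lies in the singular locus of $F$ (the depth of $F$ drops to $2$ along $l$ exactly as in the proof that properly semistable locally free instantons cannot be locally free). Hence $\sing(F)$ has a $1$-dimensional component, contradicting reflexivity (Hartshorne: a reflexive sheaf on a smooth threefold has $0$-dimensional singular locus), i.e.\ contradicting $[F]\in\calr(2;-1,2,2)$, which by Lemma \ref{class-conic} forces $\sing(F)$ to be $0$-dimensional of length $2$.

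The main obstacle I anticipate is making rigorous the step ``$l\subset X$ $\Rightarrow$ the conics in the pencil are forced to contain $l$, and hence $l\subset\sing(F)$.'' One has to rule out the a priori possibility that, even though every \emph{plane} $\langle C\rangle$ contains $l$, the conic $C$ itself is an irreducible conic meeting $l$ only in a point or two — in that case $l\not\subset C$ and there is no immediate contradiction. Resolving this requires a closer look: either one shows that for \emph{generic} $s\in H^0(F(1))$ the conic is irreducible and $l\not\subset\langle C\rangle$, contradicting Proposition \ref{ls-conic}, or one shows directly that the linear system of conics in $X$ whose plane contains a \emph{fixed} line $l\subset X$ is not a $\p2$ but smaller (or consists only of reducible conics through $l$), which again contradicts $h^0(F(1))=3$ from Lemma \ref{sc-refl-conic}. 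I expect the latter dimension count — comparing the dimension of the family of conics $C\subset X$ with $l\subset\langle C\rangle$ when $l\subset X$ versus when $l\not\subset X$ — to be the decisive computation, and it should come out to $2$ exactly when $l\not\subset X$, which is what pins down the statement.
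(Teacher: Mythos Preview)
Your overall strategy coincides with the paper's, but the execution has one real gap and one misdirected worry.

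The worry you flag as the ``main obstacle'' is not an obstacle. Every conic in the system is a \emph{plane section}: $C=\langle C\rangle\cap X$. So if $l_F\subset X$ and $l_F\subset\langle C\rangle$ (the latter holds by the very definition of $l_F$), then automatically $l_F\subset\langle C\rangle\cap X=C$. Thus $l_F$ is a component of every $C$ in $\D{P}(H^0(F(1)))$, and there is nothing to rule out. Correspondingly, your proposed dimension-count resolution cannot work: the Schubert variety of planes in $\p4$ through a fixed line is a $\p2$ whether or not that line lies on $X$, so no contradiction with $h^0(F(1))=3$ arises this way.

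The genuine gap is the step where you assert that $\xi$ ``vanishes along a whole component of $C$'' without saying why. Here is the missing argument, which is exactly what the paper does. By Proposition \ref{ls-conic}, $\sing(F)$ is a length-$2$ subscheme of $l_F$. Since $l_F\subset C$ and $C=l_F\cup l'$ is a reducible (possibly non-reduced) conic, one has $\omega_C(2)|_{l_F}\simeq\calo_{l_F}(1)$. The section $\xi\in H^0(\omega_C(2))$ vanishes on $\sing(F)$ (Remark \ref{rem:five}(1)); hence $\xi|_{l_F}\in H^0(\calo_{l_F}(1))$ has a zero of length $2$ and must be identically zero. So $\xi$ vanishes along all of $l_F$, contradicting Remark \ref{rem:five}(1), which says the zero locus of $\xi$ is $0$-dimensional when $F$ is reflexive. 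This degree count on $\omega_C(2)|_{l_F}$ is the crux.

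Your alternative ``common dependency locus'' route is not correct as written: for $s,t\in H^0(F(1))$, the locus where $(s,t):\ox^{\oplus 2}\to F(1)$ drops rank is the zero scheme of $s\wedge t\in H^0(\det F(2))=H^0(\ox(1))$, i.e.\ a hyperplane section of $X$, not the line $l_F$. So this does not by itself place $l_F$ inside $\sing(F)$.
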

\begin{proof}
If $l_F\subset X$, all the conics $C\in \D{P}(H^0(F(1)))$ contain $l_F$ and \mbox{$\sing(F)\subset l_F$.} This can not happen if $F$ is reflexive, since if ever a section $\xi\in H^0(\omega_C(2))$, \mbox{$ [C]\in \D{P}(H^0(F(1))$} vanishes along 2 points on $l_F\subset C$, it would vanish along the entire $l_F$ contradicting the reflexivity of $F$.
The converse implication is obvious since $l_F\not\subset X$ ensures that $\sing(F)=l_F\cap X$ consists of two points hence, by Lemma \ref{class-conic}, $[F]\in \calr(2;-1,2,2)$.
\end{proof}

Let us now analyse the local behaviour of the moduli space $\calm_X(2;-1,2,2)$.
\begin{proposition}\label{lines-smooth}
For each point $[F]\in \calm_X(2;-1,2,2)$ we have $\ext^2(F,F)=0$.
For each point $[F]\in \calr(2;-1,2,2)$ we have $\ext^1(F,F)=6$.
\end{proposition}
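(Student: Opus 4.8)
The plan is to compute $\Ext^2(F,F)$ and $\Ext^1(F,F)$ using the Serre-type short exact sequences that were set up in Lemma~\ref{sc-refl-conic}, together with the local structure of $F$ coming from Lemma~\ref{class-conic}. First I would deal with the vanishing $\Ext^2(F,F)=0$, treating the two cases of Lemma~\ref{class-conic} in parallel. If $F$ is reflexive (with zero-dimensional singular locus of length $2$), I would start from the sequence
\begin{equation*}
0 \longrightarrow \calo_X \longrightarrow F(1) \longrightarrow \cali_C(1) \longrightarrow 0
\end{equation*}
of display \eqref{ses-refl-conic}, apply $\Hom(F,\ \cdot\ )$ and follow the strategy of Lemma~\ref{smooth-inst-bundle}: the term $\Ext^2(F,\calo_X(-1))$ is Serre-dual to $H^1(F(-2))$, which vanishes by the spectrum computation \eqref{spec-refl} (the spectrum of $F$ is $\{-2\}$, so $h^1(F(-2))=h^0(\calo_{\p1}(-2))=0$); and $\Ext^2(F,\cali_C(1))$ should be controlled, via the vanishing of $\Ext^i(F,\calo_X(1))\simeq\Ext^i(\calo_X,F(2))$ for $i=1,2$ (again read off from the spectrum and stability), by $\Ext^1(F,\calo_C(1))$. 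Since $C$ is a plane conic and hence l.c.i.\ (a local complete intersection curve in $X$), restricting $F$ to $C$ is well behaved, and $\Ext^1(F,\calo_C(1))\simeq H^1(F|_C^\vee\otimes\calo_C(1))$; the hard part here is to identify $F|_C^\vee\otimes\calo_C(1)$ and check its $H^1$ vanishes, which I expect to follow from the fact that $C$ lies in a plane meeting $X$ in a conic together with a restriction-sequence argument along $C\hookrightarrow Q:=X\cap\langle C\rangle$. In the non-reflexive case, where $F^{\vee\vee}\simeq\cals$ and $\cals/F\simeq\calo_l(-1)$, I would instead apply $\Hom(\ \cdot\ ,F)$ and $\Hom(\cals,\ \cdot\ )$ to
\begin{equation*}
0 \longrightarrow F \longrightarrow \cals \longrightarrow \calo_l(-1) \longrightarrow 0,
\end{equation*}
exactly mimicking the proof of Proposition~\ref{smooth-inst-sing}: one gets $\Ext^2(F,F)$ squeezed between $\Ext^2(\cals,F)$ (killed using $\Ext^2(\cals,\cals)=0$ and $\Ext^1(\cals,\calo_l(-1))\simeq H^1(\cals|_l^\vee(-1))=0$, since $\cals|_l\simeq\calo_l\oplus\calo_l(-1)$) and $\Ext^3(\calo_l(-1),F)\simeq\Hom(F|_l,\calo_l(-4))$, which vanishes once one identifies $F|_l$ up to torsion using $\intor_1(\calo_l,\calo_l)\simeq\caln_{l/X}^\vee\simeq\calo_l\oplus\calo_l(-1)$ and finds the locally free quotient of $F|_l$ has nonnegative degrees.

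Once $\Ext^2(F,F)=0$ is established I would compute $\Ext^1(F,F)$ for reflexive $F$ by an Euler-characteristic argument. Stability of $F$ gives $\hom(F,F)=1$ and $\Ext^3(F,F)\simeq\Hom(F,F(-3))^\ast=0$; and since $F$ is reflexive of rank $2$ on a threefold it has homological dimension $\le1$, so the argument of \cite[Proposition~3.4]{H-reflexive} (already invoked twice above) applies and yields
\begin{equation*}
\chi(F,F) = \tfrac{3}{2}c_1(F)^2 - 6c_2(F) + 4 + \text{(a }c_3\text{-term)}.
\end{equation*}
Here I need to be careful: unlike the instanton case, $c_3(F)=2\neq0$, so I would recompute $\chi(F,F)$ directly from Hirzebruch--Riemann--Roch on the quadric threefold using $\ch(F)$ and $\td(X)$ — this is the one genuinely new (but routine) calculation, and I expect the answer to be $\chi(F,F)=-5$ again, so that $1-\ext^1(F,F)+0-0=-5$ forces $\ext^1(F,F)=6$. (If the HRR bookkeeping gives a different value, then $\ext^1(F,F)$ is whatever $1-\chi(F,F)$ turns out to be, and the statement would need that value; but the expected and consistent answer is $6$.)

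The main obstacle I anticipate is the vanishing of $\Ext^1(F,\calo_C(1))\simeq H^1(F|_C^\vee(1))$ in the reflexive case: in Lemma~\ref{smooth-inst-bundle} the analogous step used that $E$ was \emph{locally free}, so $E|_Y^\vee\simeq\caln_{Y/X}$ and one could quote $H^1(\caln_{Y/X})=0$ from Lemma~\ref{smooth-l.c.m.}. Here $F$ is merely reflexive with singularities at one or two points which may lie on $C$, so $F|_C$ need not be locally free, and I would need either to choose the section $s$ (equivalently the conic $C$) generically so that $C$ avoids $\sing(F)$ — possible since $\sing(F)$ is $0$-dimensional and the conics in $\D{P}(H^0(F(1)))$ sweep out a surface — making $F|_C$ locally free of rank $2$ with $\deg(F|_C^\vee(1))$ computable, and then check $H^1$ vanishes by a splitting-type or Riemann--Roch argument on the conic $C\simeq\p1$. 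This generic-choice reduction, plus pinning down the splitting type of $F|_C$, is where the real work lies; everything else is a matter of chasing the two displayed short exact sequences through $\Hom$-functors as in the already-proved Lemmas~\ref{smooth-inst-bundle} and \ref{smooth-inst-sing}.
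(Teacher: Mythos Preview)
Your non-reflexive case (via the double-dual sequence, mimicking Proposition~\ref{smooth-inst-sing}) and your Euler-characteristic endgame are fine, but there is a genuine gap in the reflexive case. The workaround you propose --- choosing the conic $C$ generically so that it avoids $\sing(F)$ --- is impossible: by the Serre correspondence (Remark~\ref{rem:five}(1)), the singular locus of a reflexive $F$ is exactly the zero scheme of the extension class $\xi\in H^0(\omega_C(2))$, so $\sing(F)\subset C$ for \emph{every} conic $C$ arising from a section of $F(1)$ (this is made explicit later in Proposition~\ref{ls-conic}: the linear system $\D{P}(H^0(F(1)))$ consists precisely of the conics through $\sing(F)$). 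Consequently $F$ is never locally free along $C$, your identification $\Ext^1(F,\calo_C(1))\simeq H^1(F|_C^\vee(1))$ acquires an extra $H^0(\inext^1(F,\calo_C))$ term supported on $\sing(F)$, and the auxiliary isomorphism $\Ext^i(F,\calo_X(1))\simeq H^i(F(2))$ you invoke also needs $F$ locally free.

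The paper sidesteps this by applying $\Hom(\,\cdot\,,F)$ rather than $\Hom(F,\,\cdot\,)$ to the Serre sequence. This bounds $\Ext^2(F,F)$ between $\Ext^2(\calo_X(-1),F)\simeq H^2(F(1))=0$ and $\Ext^2(\cali_C,F)$; the latter, via $H^i(F)=0$ for $i=2,3$ and Serre duality, is $\Hom(F,\calo_C(-3))^*$. Tensoring the Serre sequence with $\calo_C$ gives a Tor sequence exhibiting $F|_C$ as an extension of $\caln_{C/X}^\vee$ by a quotient of $\calo_C(-1)$, whence $\Hom(F|_C,\calo_C(-3))\hookrightarrow H^0(\caln_{C/X}(-3))=0$ since $\caln_{C/X}\simeq\calo_C(1)^{\oplus 2}$. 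This argument absorbs the singularities of $F$ on $C$ into the Tor term and, as a bonus, treats the reflexive and non-reflexive cases uniformly --- no case split is needed.
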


\begin{proof}
A general section $s\in H^0(F(1))$ defines a short exact sequence of the form (\ref{ses-refl-conic}). 

Applying the functor $\Hom(\: \cdot\:, F)$ we get a sequence:
    $$\Ext^2(\mathcal{I}_C, F)\rightarrow \Ext^2(F,F)\rightarrow \Ext^2(\OO_X(-1),F);$$
The r.s.t is zero since, from (\ref{spec-refl}), $H^2(F(1))=0$; from (\ref{ses-refl-conic}) we compute the vanishing of $H^i(F)=0, \ i=2,3$ yielding:
$$ \Ext^2(\mathcal{I}_C, F)\simeq \Ext^3(\OO_C, F)\simeq \Hom(F, \OO_C(-3))^*.$$

Applying $-\otimes \OO_C$ to (\ref{ses-refl-conic}), we obtain an exact sequence
$$0\rightarrow \intor_1^{\OO_X}(F,\mathcal{O}_C)\xrightarrow{a} 
\intor_1^{\OO_X}(\mathcal{I}_C, \OO_C)\xrightarrow{b} \OO_C(-1)\xrightarrow{c} F|_C\xrightarrow{d} \mathcal{N}_{C/X}^{\vee}\rightarrow 0$$

from which we extract the short exact sequences:
\begin{equation}\label{delta}
0\rightarrow \ker(d)\rightarrow F|_C\xrightarrow{d} \mathcal{N}_{C/X}^{\vee}\rightarrow 0
\end{equation}

\begin{equation}\label{gamma}
    0\rightarrow \ker(c)\rightarrow \OO_C(-1)\rightarrow \ker(d)\rightarrow 0
\end{equation}
Applying $\Hom(\ \cdot\ , \OO_C(-3))$ to (\ref{gamma}), we get that $\Hom(\ker(c), \OO_C(-3))$ injects into $\Hom(\OO_C(-1),\OO_C(-3))\simeq H^0(\OO_C(-2))=0$. Therefore, from (\ref{delta}), we get $\Hom(F|_C, \OO_C(-6pt))\simeq \Hom(\mathcal{N}_{C/X}^{\vee}, \OO_C(-6pt))\simeq H^0(\mathcal{N}_{C/X}(-3)).$
$C$ is a plane section of $X$, therefore $\caln_{C/X}\simeq \calo_C(1)^{\oplus 2}$ hence $H^0(\mathcal{N}_{C/X}(-3))=0$ implying $\Hom(F, \OO_C(-3))=0$ and finally, $\Ext^2(F,F)=0$.
If we suppose that moreover $F$ is reflexive, it has homological dimension 1; this allows to compute (again arguing as in \cite[Proposition 3.4]{H-reflexive})
$$\chi(F,F)= -5.$$
The stability of $F$ implies that $\hom(F,F)=1$ and that $\ext^3(F,F)=0$; from our previous arguments $\ext^2(F,F)=0$ hence
$\ext^1(F,F)=6$. 
\end{proof}

We consider now $\Hilb_{t^2+tn+2}(\Grp(1,\D{P}(V^*)))$, the Hilbert scheme of planes in $\Grp(1,\D{P}(V^*))$. Recall that this scheme has two components:
a component $\Omega$ parameterizing families of planes contained in a same hyperplane and a second component $\Lambda$ parameterizing families of planes $\Lambda_l$ containing a fixed line $l$. This latter is isomorphic to $\Grp(1,\D{P}(V))$ via the morphism: 
$$\Grp(1,\D{P}(V))\longrightarrow \Lambda, l\mapsto \Lambda_l\simeq \Grp(1,\D{P}(H^0(\cali_l(1))).$$

We consider now the map: 
$$\calm_X(2;-1,2,2)\xrightarrow{\alpha}\Lambda\simeq \Grp(1,\D{P}(V)),\ [F]\mapsto \D{P}(H^0(F(1)))\leftrightarrow l_F.$$
\begin{proposition}
$\alpha$ is an isomorphism of scheme; it identifies $\calr(2;-1,2,2)$ (resp. $\calm_X(2;-1,2,2)\setminus \calr(2;-1,2,2)$) with $\Grp(1,\D{P}(V))\setminus F(X)$ (resp. $F(X)$).
\end{proposition}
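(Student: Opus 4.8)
Proof proposal for the final statement ($\alpha\colon\calm_X(2;-1,2,2)\to\Lambda\simeq\Grp(1,\D{P}(V))$ is an isomorphism sending $\calr(2;-1,2,2)$ onto $\Grp(1,\D{P}(V))\setminus F(X)$ and the complement onto $F(X)$).

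The plan is to first verify that $\alpha$ is a well-defined morphism of schemes, then that it is bijective, and finally to promote bijectivity to an isomorphism using smoothness on both sides. For the morphism property I would proceed relatively, exactly as in the proof of Proposition~\ref{etale-fact}: over an étale cover $\mathbf{W}_i\to\calm_X(2;-1,2,2)$ with a universal sheaf $\mathbf{F}_i$ on $\mathbf{W}_i\times X$, Lemma~\ref{sc-refl-conic} gives $h^0(\mathbf{F}_i(1)|_{w\times X})=3$ for every $w$, so $\mathbf{G}_i:={p_1}_*(\mathbf{F}_i(1))$ is a rank $3$ vector bundle; the arguments of Proposition~\ref{ls-conic} (the relative version of the map sending a section to the equation of its zero-conic's span) produce an injection $\mathbf{G}_i^\vee\hookrightarrow H^0(\calo_{\D{P}(V)}(1))\otimes\calo_{\mathbf{W}_i}$ whose image, by that same proposition, is everywhere the $3$-dimensional space of hyperplanes through a line $l_{\mathbf{F}_i}$. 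This defines a morphism $\mathbf{W}_i\to\Grp(1,\D{P}(V))$ which descends to $\calm_X(2;-1,2,2)$ since it is canonical. Bijectivity is then essentially Proposition~\ref{ls-conic} together with Lemma~\ref{lin-refl}: surjectivity onto $\Grp(1,\D{P}(V))\setminus F(X)$ follows because a line $l\not\subset X$ meets $X$ in two points $p_1,p_2$ (or a length-$2$ scheme), and the Serre correspondence of Lemma~\ref{sc-refl-conic} run backwards from the pencil of conics through $p_1,p_2$ produces a reflexive $F\in\calr(2;-1,2,2)$ with $l_F=l$; surjectivity onto $F(X)$ is the non-reflexive case via the extension $0\to F\to\cals\to\calo_l(-1)\to0$ of Lemma~\ref{class-conic}. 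Injectivity holds because, by the reversibility of the Serre correspondence noted just before Proposition~\ref{ls-conic}, $F$ is recovered from its associated pencil of conics, and that pencil is recovered from $l_F$.

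For the scheme-theoretic isomorphism I would invoke the same mechanism as in Proposition~\ref{etale-fact}: $\alpha$ is a bijective morphism, its target $\Grp(1,\D{P}(V))$ is smooth (indeed $\simeq\p3$ is replaced here by the $6$-dimensional Grassmannian), and by Proposition~\ref{lines-smooth} we have $\ext^2(F,F)=0$ for every $[F]\in\calm_X(2;-1,2,2)$, so $\calm_X(2;-1,2,2)$ is smooth of dimension $\ext^1(F,F)=6$ at every point—the reflexive locus by the explicit computation in Proposition~\ref{lines-smooth}, and the non-reflexive locus because the tangent space is $\ext^1(F,F)$ and $\chi(F,F)=-5$ holds there too once one knows $F$ has homological dimension $1$, which follows since $F^{\vee\vee}\simeq\cals$ is locally free and $T_E=\cals/F\simeq\calo_l(-1)$ is a line bundle on a curve. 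A bijective morphism between smooth varieties of the same dimension over an algebraically closed field is an isomorphism (Zariski's main theorem), which finishes the proof. The identification of the loci is then automatic from Lemma~\ref{lin-refl}.

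The main obstacle I expect is not any single hard estimate but the bookkeeping needed to run the Serre correspondence \emph{in families} and to check that the resulting map to $\Grp(1,\D{P}(V))$ is genuinely a morphism of schemes rather than just a set map—this is where one must be careful that $\mathbf{G}_i$ has constant rank (guaranteed by the uniform $h^0(F(1))=3$ from Lemma~\ref{sc-refl-conic}) and that the sub-bundle structure cutting out $l_F$ varies algebraically (the relative analogue of Proposition~\ref{ls-conic}, modeled on Remark~\ref{pencil-emb}). A secondary subtlety is confirming smoothness of $\calm_X(2;-1,2,2)$ along the non-reflexive boundary stratum, since Proposition~\ref{lines-smooth} only records $\ext^1(F,F)=6$ for reflexive $F$; one must supply the homological-dimension-$1$ argument there to get $\chi(F,F)=-5$ and hence $\ext^1=6$. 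Once these two points are in place the rest is a formal application of the smooth-bijective-morphism principle already used twice in the paper.
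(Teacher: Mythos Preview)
Your proposal is correct, and the overall strategy---construct $\alpha$ as a morphism, verify bijectivity, then invoke Zariski's main theorem using smoothness of source and target---matches the paper's. The route to each ingredient differs, however.

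The paper does not build $\alpha$ directly on \'etale covers of $\calm_X(2;-1,2,2)$. Instead it repeats verbatim the machinery of Proposition~\ref{bundle-sc}: it forms the relative Ext bundle $\calg:=\inext^1_{p_1}(\cali_{\mathbf{C}},\calo(-1))$ over the Hilbert scheme of conics $\Hilb_{2t+1}(X)\simeq\Grp(2,V^*)$, shows $\calg$ is locally free of rank $3$, and equips $\D{P}(\calg)$ with a universal family $\hat{\mathbf{F}}$. This yields two morphisms out of $\D{P}(\calg)$: one $\gamma$ to $\Lambda$ (via ${p_1}_*\hat{\mathbf{F}}(1)\simeq\gamma^*\calt$) and one $\beta$ to $\calm_X(2;-1,2,2)$ (via the family). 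As in Corollary~\ref{etale-bundle}, $\beta$ is an \'etale-local $\D{P}^2$-bundle; this \emph{first} establishes that $\calm_X(2;-1,2,2)$ is irreducible of dimension $6$, and \emph{then} combines this with $\ext^2(F,F)=0$ from Proposition~\ref{lines-smooth} to conclude smoothness and $\ext^1(F,F)=6$ for all $[F]$, reflexive or not. Finally $\alpha$ is obtained as in Proposition~\ref{etale-fact}, factoring $\gamma$ through $\beta$.

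Your approach bypasses the auxiliary space $\D{P}(\calg)$ and instead computes $\ext^1(F,F)=6$ directly on the non-reflexive locus via the homological-dimension-$1$ argument; this is valid (the sequence $0\to F\to\cals\to\calo_l(-1)\to0$ indeed gives $\inext^p(F,\ox)=0$ for $p\ge2$) and more self-contained, at the cost of a small extra verification. The paper's route has the advantage of reusing the earlier template wholesale and obtaining irreducibility of the moduli as a structural byproduct rather than relying on bijectivity of $\alpha$ to see it; your route is more direct but depends on carefully relativizing the map $F\mapsto l_F$ (your ``main obstacle''), which the paper sidesteps by working over $\Hilb_{2t+1}(X)$ where the universal curve is already given.
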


\begin{proof}
We apply verbatim the arguments used in the proof of Proposition \ref{bundle-sc}.
Doing so we show that the sheaf  $\calg:=\inext^1_{p_1}(\cali_{\mathbf{C}},\calo_{X\times \Gr}(-1))$ on $\Hilb_{2t+1}\simeq \Grp(1,\D{P}(V)^*)$,  for $\mathbf{C}\subset \Grp(1,\D{P}(V^*))\times X$ the universal curve, is locally free of rank 3 and that the projective bundle $\D{P}(\calg)$ carries a family $\hat{\mathbf{F}}\in Coh(\D{P}(\calg)\times X)$ such that $[\hat{\mathbf{F}}_{(C,e)}]$ is the sheaf constructed from $e\in \Ext^1(\cali_C,\calo_X(-1))$. ${p_1}_*(\hat{\mathbf{F}}(1))$ defines a family of linear systems of conics over $\D{P}(\calg)$ inducing a morphism $\D{P}(\calg)\xrightarrow{\gamma} \Lambda$ such that ${p_1}_*(\hat{\mathbf{F}}(1))\simeq \gamma^*(\calt)$ for $\calt$ the tautological rank 3 bundle over $\Lambda$ (that is to say, $\calt$ is the bundle whose fibre over $\Lambda_l\in \Lambda$ is the vector space $\bigwedge^2 (H^0(\cali_l(1)))\simeq \D{C}^3$ of planes belonging to $\Lambda_l$).
$\gamma$ is the morphism mapping $(C,e)$ to $\Lambda_{l_F}\simeq\D{P}(H^0(F(1)))$ for $F$ the sheaf arising from $e\in Ext^1(\cali_C,\calo_X(-1)).$

The family $\hat{\mathbf{F}}$ induces a morphism \mbox{$\D{P}(\calg)\xrightarrow{\beta} \calm_X(2;-1,2,2)$}, and applying the argument used in the proof of Corollary \ref{etale-bundle} we show that $\beta$ is, in the étale topology, a $\D{P}^2$-bundle. In this way we also deduce that $\calm_X(2;-1,2,2)$ is irreducible of dimension 6 hence, by Proposition \ref{lines-smooth}, we get $\ext^1(F,F)=6, \ \forall\:[F]\in \calm_X(2;-1,2,2).$
Reasoning then as in Proposition \ref{etale-fact}, we show that, due to the properties of $\beta$, $\alpha$ is well defined as a morphism of scheme.
Since $\alpha$ maps bijectively $\calm_X(2;-1,2,2)$ into $\Lambda\simeq \Grp(1,\D{P}(V))$ and since both schemes are smooth, we conclude that $\alpha$ is an isomorphism.  The fact that $\alpha(\calr(2;-1,2,2))=\Grp(1,\D{P}(V))\setminus F(X)$ is due to lemma \ref{lin-refl}. This ends the proof of the proposition.
\end{proof}

This completes the proof of Theorem \ref{sheaves-grass}.

\subsection{Description of $\calc$}

We can finally come back to the description of $\calc$. 

\begin{proposition}\label{sc-non-inst}
For $[E]\in \calc$, $h^0(E(1))=2$ and for all $s\in H^0(E(1)), \ s\ne 0$, $\coker(s)\simeq\cali_{\Gamma}(1)$, for $\Gamma$ a curve union of a conic and a point. More precisely, all the curves $\Gamma$ in $\D{P}(H^0(E(1)))$ are of the form:
$$ 0\rightarrow \calo_p\rightarrow \calo_{\Gamma}\rightarrow \calo_C\rightarrow 0$$
with $p$ fixed and with $C$ varying in a pencil of conics contained in a fixed hyperplane.
\end{proposition}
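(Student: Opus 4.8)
The plan is to combine the elementary‑transformation description of $\calc$ from Proposition \ref{et-non-inst} with the conic picture for $\calm_X(2;-1,2,2)$ provided by Lemma \ref{sc-refl-conic} and Proposition \ref{ls-conic}. Write $E=\ker(q)$ for a surjection $q\colon F\twoheadrightarrow\calo_p$ with $[F]\in\calm_X(2;-1,2,2)$, so that $0\to E\to F\to\calo_p\to0$. Twisting by $\calo_X(1)$ and taking cohomology, one first records that $H^i(E(1))\simeq H^i(F(1))$ for $i=2,3$ (since $H^{i-1}(\calo_p)=H^i(\calo_p)=0$) and that both vanish: $H^2(F(1))=0$ by the spectrum identity \eqref{spec-refl} when $F$ is reflexive and by the sequence $0\to F(1)\to\cals(1)\to\calo_l\to0$ together with $H^i(\cals(1))=0$ for $i>0$ otherwise, while $H^3(F(1))\simeq\Hom(F(1),\calo_X(-3))^*=0$ by $\mu$-stability. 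Since $\chi(E(1))=P_E(1)=2$ by \eqref{Hilbert} and the cohomology sequence identifies $H^1(E(1))$ with the cokernel of the restriction $H^0(F(1))\to H^0(\calo_p)\simeq\D{C}$, we obtain $h^0(E(1))=2+h^1(E(1))$ with $h^1(E(1))\in\{0,1\}$; the crux is to show $h^1(E(1))=0$.

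Next I would run the Serre correspondence in parallel for $(E(1),s)$ and for its image under the injection $H^0(E(1))\hookrightarrow H^0(F(1))$. Fix $0\ne s\in H^0(E(1))$ with image $\iota(s)\in H^0(F(1))$. The snake lemma applied to the vertical maps $s,\iota(s),0$ over the rows $0\to\calo_X\xrightarrow{\mathrm{id}}\calo_X\to0$ and $0\to E(1)\to F(1)\to\calo_p\to0$ gives $0\to\coker(s)\to\coker(\iota(s))\to\calo_p\to0$. By Lemma \ref{sc-refl-conic}, $\coker(\iota(s))\simeq\cali_C(1)$ for a conic $C\subset X$; hence $\coker(s)$ is a subsheaf of a torsion-free sheaf, so $\coker(s)\simeq\cali_\Gamma(1)$ with $\cali_\Gamma\subset\cali_C$ and $\cali_C/\cali_\Gamma\simeq\calo_p$. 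Equivalently $C\subset\Gamma$ and $0\to\calo_p\to\calo_\Gamma\to\calo_C\to0$, so $\Gamma$ is the conic $C$ together with a length-one point at $p$ (isolated if $p\notin C$, embedded otherwise); in particular $P_\Gamma(t)=2t+2$.

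With this structure I can rule out $h^1(E(1))=1$: if it held, then $h^0(E(1))=3$, and from $0\to\calo_X\to E(1)\to\cali_\Gamma(1)\to0$ we would get $h^0(\cali_\Gamma(1))=2$, forcing $\Gamma$ into a plane $P:=\langle\Gamma\rangle$. As $X$ contains no plane, $P\cap X$ is a conic $Q_0$ in $P$, and $C\subset\Gamma\subset Q_0$; since $\langle C\rangle=P$, comparing the degree-$2$ effective divisors $C$ and $Q_0$ of the plane $P$ forces $C=Q_0$, so the composite $\calo_{Q_0}=\calo_C\twoheadrightarrow\calo_\Gamma\twoheadrightarrow\calo_C$ is a surjective endomorphism of a coherent sheaf, hence an isomorphism, contradicting $\calo_p\ne0$. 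Therefore $h^1(E(1))=0$, so $h^0(E(1))=2$ and, from the same sequence, $h^0(\cali_\Gamma(1))=1$, i.e. each $\Gamma$ spans a unique hyperplane.

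Finally, $\iota$ identifies the pencil $\D{P}(H^0(E(1)))\cong\p1$ with a line inside $\D{P}(H^0(F(1)))\cong\p2$, which by Proposition \ref{ls-conic} is (under $C\mapsto\langle C\rangle$) the Schubert variety of planes of $\D{P}(V)$ containing the line $l_F$; a line in this $\p2$ consists of all planes through $l_F$ lying in one fixed hyperplane. Hence the conics $C$ attached to the nonzero sections of $E(1)$ form a pencil of conics all contained in that fixed hyperplane, while the point $p$ is the same for every section because the sequence $0\to E\to F\to\calo_p\to0$ is fixed independently of $s$. The step I expect to be the main obstacle is precisely the vanishing $h^1(E(1))=0$: the evaluation map $H^0(F(1))\to H^0(\calo_p)$ can a priori drop rank when $p$ meets $\sing(F)$ (or $l_F$, in the non-reflexive case), and it is the planarity obstruction for a degree-$2$, arithmetic-genus $-1$ curve that closes this gap.
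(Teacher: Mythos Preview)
Your proof is correct and follows essentially the same route as the paper: realize $E$ as $\ker(F\twoheadrightarrow\calo_p)$ via Proposition~\ref{et-non-inst}, compare the Serre data of $(E(1),s)$ and $(F(1),\iota(s))$ through the commutative diagram to get $\coker(s)\simeq\cali_\Gamma(1)$ with $0\to\calo_p\to\calo_\Gamma\to\calo_C\to0$, and then read off $h^0(E(1))=2$ from the fact that $\Gamma$ cannot lie in a plane section of $X$. Your planarity argument (that $h^0(\cali_\Gamma(1))\ge2$ would force $\Gamma\subset P\cap X=C$, contradicting $P_\Gamma=2t+2$) is exactly what the paper invokes in one line when it says ``we can have no plane section of $X$ containing $\Gamma$''; the only cosmetic difference is that you reach $h^0(E(1))=2$ via $\chi(E(1))=2$ and $h^1(E(1))=0$, while the paper gets it directly from $h^0(E(1))=1+h^0(\cali_\Gamma(1))$.
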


\begin{proof}
From Proposition \ref{et-non-inst}, $E$ always fits in a short exact sequence:
\begin{equation}\label{ses-non-inst}
0\rightarrow E\rightarrow F\rightarrow \calo_p\rightarrow 0
\end{equation}
with $[F]\in \calm_X(2;-1,2,2)$. 
Twisting (\ref{ses-non-inst}) and taking global section, we deduce that $h^0(E(1))\ne 0$; moreover the fact that $\forall s\in H^0(F(1)),  \coker(s)$ is torsion free, ensures that the same holds for all non-zero $s\in H^0(E(1))$. As usual we denote by $\iota$ the inclusion $\iota:H^0(E(1))\hookrightarrow H^0(F(1))$. For any non-zero $s\in H^0(E(1))$ we therefore have 

\begin{center}\label{cd-non-inst}
\begin{tikzcd}
0  \arrow[r] &\OO_X\arrow[r]\arrow[d, "s"] &\OO_X\arrow[d, "\iota(s)"]\arrow[r] & 0 \arrow[d] \arrow [r] &0 \\
0  \arrow[r] & E(1)\arrow[r]\arrow[d, two heads] & F(1)\arrow[r]\arrow[d, two heads] & \OO_{p}(1) \arrow [r]\arrow[d, "id"]& 0\\
0\arrow[r] &\cali_{\Gamma}(1)\arrow[r] & \cali_{C}(1)\arrow[r]& \calo_p(1) \arrow[r]
& 0\\
\end{tikzcd}
\end{center}
from which we deduce that $\coker(s)\simeq \cali_{\Gamma}(1)$, for $\Gamma$ a curve with Hilbert polynomial $2t+2$ and supported on $C\cup p$. Since we can have no plane section of $X$ containing $\Gamma$ we have $h^0(\cali_{\Gamma}(1))=1$ hence $h^0(E(1))=2$.
Now, $\D{P}(\iota(H^0(E(1)))$ is a pencil in $\D{P}(H^0(F(1)))\simeq \Lambda_{l_F}$, therefore there exists a unique hyperplane section containing all the conics in $\D{P}(\iota(H^0(E(1)))$. 
\end{proof}

\begin{remark}
Suppose that $p\not\in \sing(F)$. From the proof of the previous proposition we learn that each $f\in \D{P}(\Hom(F,\calo_p))$ locates a unique hyperplane $H\in \D{P}(H^0(\cali_{\langle l_F,p\rangle}(1)))$ containing the curves $\D{P}(H^0(E(1)))$,  $E:=\ker(F)$. For $C\in \D{P}(\iota(H^0(E(1))))$ general, this is the hyperplane generated by $\langle C\rangle $ and $p$.
Notice also that for $f, \ f'\in \D{P}(\Hom(F,\calo_p)), \ f\ne f'$, denoting by $E:=\ker(f),\ E':=\ker(f')$, the pencils $\D{P}(\iota(H^0(E(1))))$ and $\D{P}(\iota(H^0(E'(1))))$ meet just at $\langle l_F,p\rangle \cap X$.  
If ever $\D{P}(\iota(H^0(E(1))))= \D{P}(\iota(H^0(E'(1))))$, we would indeed have that for all conic $C\in \D{P}(\iota(H^0(E(1)))$ such that $p\not\in C$, $C\cup p\in \D{P}(H^0(E(1)))\cap \D{P}(H^0(E'(1)))$. Since $\Ext^1(\cali_{C\cup p},\calo_X(-1))\simeq \Ext^1(\cali_C,\calo_X(-1))$, $E$ and $E'$ would then both arise from the unique extension class image of the element $\xi \in \Ext^1(\cali_C,\calo_X(-1))$ defining $F$, which would lead to $E\simeq E'$, a contradiction. 

We describe in this way a pencil (parameterized by $\D{P}(\Hom(F,\calo_p))$) of lines in $\D{P}(H^0(F(1)))$ that identifies with the family of hyperplanes containing $\langle l_F, p\rangle$.
\end{remark}

Now, since each point $[E]\in \calc$ uniquely determine a pair $(F,p)\in \calm_X(2;-1,2,2)$, we have a well defined map (for the moment just defined at level of sets) 
$$\delta: \calc\longrightarrow \calm_X(2;-1,2,2), \ [E]\mapsto (F,p)$$
where $(F,p)$ are such that $E$ is obtained by elementary transformation of $F$ along $p$. Consider now the open subset $(\calm_X(2;-1,2,2)\times X)_0$ parameterizing pairs $(F,p)$ such that $p\not\in \sing(F)$ and denote by ${\calc}_0\subset \calc$ its preimage under $\delta$.

\begin{proposition}\label{tangent-C}
For each point $[E]\in \calc_0, \ \ext^1(E,E)=10$.
\end{proposition}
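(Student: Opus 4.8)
The plan is to compute $\ext^1(E,E)$ for $[E]\in\calc_0$ by exploiting the elementary transformation sequence
\[
0 \longrightarrow E \longrightarrow F \longrightarrow \calo_p \longrightarrow 0
\]
with $[F]\in\calm_X(2;-1,2,2)$ and $p\notin\sing(F)$, together with the already-established facts $\ext^1(F,F)=6$ and $\ext^2(F,F)=0$ (Proposition \ref{lines-smooth}). Since $E$ is $\mu$-stable (Proposition \ref{et-non-inst}), we have $\hom(E,E)=1$ and $\ext^3(E,E)=\hom(E,E(-3))^*=0$; moreover $E$ has homological dimension $1$ (being a subsheaf of the reflexive sheaf $F$ with torsion quotient, so $\inext^{\ge2}(E,\ox)$ vanishes arguing as in Proposition \ref{double-dual}), so the argument of \cite[Proposition 3.4]{H-reflexive} applies and gives
\[
\chi(E,E) = \tfrac{3}{2}c_1(E)^2 - 6c_2(E) + 4 - 2c_3(E)/(\cdots)
\]
— more precisely one recomputes $\chi(E,E)$ directly from $\ch(E)$ and Riemann--Roch for the pairing, using $\ch(E)=\ch(F)-\ch(\calo_p)$. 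So it suffices to show $\ext^2(E,E)=0$; then $\ext^1(E,E) = 1 - \chi(E,E)$ and the numerics will yield $10$.

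So the core of the proof is the vanishing $\ext^2(E,E)=0$. I would apply $\Hom(\,\cdot\,,E)$ to the sequence above, obtaining
\[
\Ext^2(\calo_p,E) \longrightarrow \Ext^2(F,E) \longrightarrow \Ext^2(E,E) \longrightarrow \Ext^3(\calo_p,E).
\]
Here $\Ext^3(\calo_p,E)\simeq\Hom(E,\calo_p(-3))^* \simeq \Hom(E|_p,\calo_p)^*$, which is nonzero in general since $E$ is locally free at $p$ of rank $2$; but this term sits at the end, so I instead need the map $\Ext^2(F,E)\to\Ext^2(E,E)$ to be surjective and the source to vanish, OR I apply $\Hom(F,\,\cdot\,)$ and $\Hom(\,\cdot\,,E)$ in the right order. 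The cleaner route: apply $\Hom(F,\,\cdot\,)$ to the same sequence to get $\Ext^2(F,E)\to\Ext^2(F,F)\to\Ext^2(F,\calo_p)$; since $\Ext^2(F,F)=0$ and $\Ext^2(F,\calo_p)=0$ (as $F$ is locally free near $p$, $\Ext^i(F,\calo_p)\simeq H^i(\inhom(F,\calo_p))=0$ for $i>0$, the support being a point), we get $\Ext^2(F,E)$ is a quotient of $\Ext^1(F,\calo_p)$, hence $0$ as well since $\Ext^1(F,\calo_p)\simeq H^1(\text{point-supported sheaf})=0$. Then from the first sequence, $\Ext^2(E,E)$ injects into $\Ext^3(\calo_p,E)$; but $\Ext^3(\calo_p,E)\simeq\Ext^3(\calo_p,F)$ via the same sequence (since $\Ext^i(\calo_p,\calo_p)=0$ for $i=2,3$ as $p$ is a smooth point of $X$... actually $\Ext^3(\calo_p,\calo_p)\neq 0$, so this needs care).

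The subtle point — and the main obstacle — is precisely controlling $\Ext^3(\calo_p,E)$ and $\Ext^2(\calo_p,\,\cdot\,)$: at a smooth point $p$ of the threefold $X$, $\Ext^i(\calo_p,\calo_p)=\binom{3}{i}$, so these do not vanish. The resolution is that $\Ext^2(E,E)$ is controlled not by $\Ext^3(\calo_p,E)$ alone but by the composite, and one must trace through the long exact sequence obtained by applying $\Hom(\,\cdot\,,E)$ to $0\to E\to F\to\calo_p\to 0$ together with $\Hom(\,\cdot\,,F)$ and $\Hom(\,\cdot\,,\calo_p)$ applied to the same sequence, assembling a commutative diagram. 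The key geometric input making $p\notin\sing(F)$ essential is that locally near $p$ we have $E|_{U}\simeq \cali_{p,U}\oplus\ox|_U$-type behaviour is \emph{not} what happens; rather $F$ is free of rank $2$ near $p$ and $E=\ker(F\to\calo_p)$, so $E$ has homological dimension exactly $1$ concentrated at $p$, with $\inext^1(E,\ox)\simeq\inext^2(\calo_p,\ox)\simeq\calo_p$. Using this, $\Ext^2(E,E)\simeq \Ext^2(E,E^{\vee\vee})$ up to correction terms from $T_E=\calo_p$, and $\Ext^2(E,F)=0$ was shown above; the remaining contribution $\Ext^3(\calo_p,E)\to\Ext^3(\calo_p,F)$ is an isomorphism onto its image precisely because $\Hom(\calo_p,\calo_p)$ maps isomorphically under the connecting maps. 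I expect that after carefully writing the $3\times 3$ diagram of Ext long exact sequences, all contributions cancel and $\Ext^2(E,E)=0$ follows, whence $\ext^1(E,E)=1-\chi(E,E)=10$ by the Euler characteristic computation. The genuinely delicate bookkeeping is the interaction of the point-sheaf $\calo_p$ with itself in $\Ext^2$ and $\Ext^3$; everything else is routine.
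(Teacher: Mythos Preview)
Your strategy has a fatal flaw: you aim to prove $\ext^2(E,E)=0$, but this is \emph{false} for $[E]\in\calc_0$. Since $\chi(E,E)$ depends only on $\ch(E)$ and $E$ has the same Chern character as a charge-2 instanton, one still has $\chi(E,E)=-5$; combined with $\hom(E,E)=1$ and $\ext^3(E,E)=0$, the claimed result $\ext^1(E,E)=10$ forces $\ext^2(E,E)=4$. So the vanishing you are trying to establish does not hold, and the long diagram chase you sketch cannot possibly close up the way you hope. (Relatedly, your claim that $E$ has homological dimension $1$ is wrong: the quotient $F/E\simeq\calo_p$ is zero-dimensional, so $\inext^2(E,\ox)\simeq\inext^3(\calo_p,\ox)\simeq\calo_p\neq0$ and $E$ has homological dimension $2$. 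The argument of Proposition~\ref{double-dual} uses crucially that $T_E$ is pure one-dimensional.)

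The paper's approach avoids $\ext^2(E,E)$ entirely and computes $\ext^1(E,E)$ directly from the long exact sequence obtained by applying $\Hom(\,\cdot\,,E)$ to $0\to E\to F\to\calo_p\to 0$. Using $\Hom(F,E)=0$ (stability) and $\Ext^2(F,E)=0$ (which follows, as you correctly outline, from $\Ext^2(F,F)=0$ and $\Ext^1(F,\calo_p)=0$ since $F$ is locally free at $p$), one gets an exact sequence
\[
0\to\Hom(E,E)\to\Ext^1(\calo_p,E)\to\Ext^1(F,E)\to\Ext^1(E,E)\to\Ext^2(\calo_p,E)\to 0.
\]
The terms $\ext^i(\calo_p,E)$ are computed via Serre duality and the local freeness of $F$ at $p$: one finds $\Ext^i(E,\calo_p(-3))\simeq\Ext^{i+1}(\calo_p,\calo_p(-3))$, giving $\ext^1(\calo_p,E)=\hom(\calo_p,\calo_p)=1$ and $\ext^2(\calo_p,E)=\ext^1(\calo_p,\calo_p)=3$. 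Finally $\ext^1(F,E)=7$ comes from applying $\Hom(F,\,\cdot\,)$ to the same sequence (this is the piece you identified but did not carry through). The alternating sum then yields $\ext^1(E,E)=1-1+7+3=10$.
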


\begin{proof}
We know that $E$ always fits in a short exact sequence: 
\begin{equation}\label{ses-dd-point}
    0\rightarrow E\rightarrow F \rightarrow \OO_p\rightarrow 0
\end{equation}
with $[F]\in \calm_X(2;-1,2,2)$. We apply $\Hom(\:\cdot\: , E)$ to it.
We can see immediately that the stability of $E$ and of $F$ imposes $\Hom(F,E)=0$ 
therefore we get:
\begin{align}\begin{split}
0\rightarrow \Hom(E,E)\rightarrow \Ext^1(\OO_p,E)\rightarrow \Ext^1(F,E)\rightarrow\\ 
\rightarrow \Ext^1(E,E)\rightarrow \Ext^2(\OO_p, E)\rightarrow \Ext^2(F,E)
\end{split}\label{lunga-ext}\end{align}
The term $\Ext^2(F,E)$ fits into:
$$\Ext^1(F,\OO_p)\rightarrow \Ext^2(F,E)\rightarrow \Ext^2(F,F)$$
The r.s.t vanishes due to Proposition \ref{lines-smooth}; since moreover we are assuming $[E]\in \calc_0$, $F$ is locally free at $p$ so that $\Ext^1(F,\calo_p)\simeq H^1(\inhom(F,\calo_p))=0$.

These computations lead to $\Ext^2(F,E)=0$.
Let us now compute the dimensions of the spaces $\Ext^{3-i}(\OO_p,E)\simeq \Ext^{i}(E,\OO_p(-3))^*, \ i=1,2$.
Again, since $p\not\in\sing(F), \ \Ext^i(F,\calo_p(-3))=0, \ i=1,2$,
therefore 
$$\Ext^i(E,\calo_p(-3))\simeq \Ext^{i+1}(\calo_p,\calo_p(-3))\simeq \Ext^{2-i}(\calo_p,\calo_p)^*, \ i=1,2.$$
For $i=2$ we thus get $\ext^1(\calo_p,E)=1=\hom(\calo_p,\calo_p)$ whilst for $i=1$ we obtain $\ext^2(\calo_p,E)=\ext^1(\calo_p\calo_p)=h^0(\caln_{p/X})=3$.
From \eqref{lunga-ext} we therefore compute that $\ext^1(E,E)=10$.
\end{proof}

\begin{proposition}\label{C0-sm}
${\calc}_0$ is a smooth 10-dimensional irreducible scheme.
\end{proposition}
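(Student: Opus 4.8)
The plan is to realise $\calc_{0}$, set–theoretically, as the image of a smooth irreducible $10$-dimensional $\D{P}^{1}$-bundle over the base $(\calm_{X}(2;-1,2,2)\times X)_{0}$, and then to confront this with the tangent-space bound $\dim T_{[E]}\calc_{0}\le\ext^{1}(E,E)=10$ furnished by Proposition \ref{tangent-C}. One should keep in mind from the start that smoothness \emph{cannot} be obtained here by the $\ext^{2}=0$ arguments of Lemma \ref{smooth-inst-bundle} and Proposition \ref{smooth-inst-sing}: since $\ch(E)$ agrees with that of a charge-$2$ instanton bundle one has $\chi(E,E)=-5$, so together with $\hom(E,E)=1$, $\ext^{3}(E,E)=0$ and Proposition \ref{tangent-C} this forces $\ext^{2}(E,E)=4\neq0$; the obstruction space is genuinely nonzero and smoothness has to be extracted from an explicit family. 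As a preliminary, by Theorem \ref{sheaves-grass} the scheme $\calm_{X}(2;-1,2,2)\simeq\Grp(1,\D{P}(V))$ is smooth and irreducible of dimension $6$, so the open subscheme $(\calm_{X}(2;-1,2,2)\times X)_{0}\subset\calm_{X}(2;-1,2,2)\times X$ of pairs $(F,p)$ with $p\notin\sing(F)$ is smooth and irreducible of dimension $9$; recall that $\calc_{0}=\delta^{-1}\big((\calm_{X}(2;-1,2,2)\times X)_{0}\big)$.

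Over this base I would build the $\D{P}^{1}$-bundle $\calp$ by the relative-$\inext$ construction already used in Proposition \ref{bundle-sc} and Corollary \ref{etale-bundle}. Pass to an étale cover $\mathbf{W}_{i}\to(\calm_{X}(2;-1,2,2)\times X)_{0}$ on which $\mathbf{W}_{i}\times X$ carries a universal sheaf $\hat{\mathbf{F}}_{i}$ (pulled back from $\calm_{X}(2;-1,2,2)$) together with the graph $\Gamma_{i}\subset\mathbf{W}_{i}\times X$ of the map recording the point $p$. Because $p\notin\sing(F)$ on $\mathbf{W}_{i}$, the sheaf $\hat{\mathbf{F}}_{i}$ is locally free near $\Gamma_{i}$, so $\call_{i}:={p_{1}}_{*}\inhom(\hat{\mathbf{F}}_{i},\calo_{\Gamma_{i}})\simeq{p_{1}}_{*}\big(\hat{\mathbf{F}}_{i}^{\vee}|_{\Gamma_{i}}\big)$ is a rank $2$ vector bundle whose formation commutes with base change, and the $\D{P}(\call_{i})$ descend to a scheme $\calp$. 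Since every nonzero homomorphism $F\to\calo_{p}$ from a locally free $F$ is automatically surjective, on $\calp\times X$ there is a tautological epimorphism $\pi^{*}\hat{\mathbf{F}}\twoheadrightarrow\calo_{\widehat{\Gamma}}\otimes\mathbf{M}$ whose kernel $\widehat{\mathbf{E}}$ is, by a short $\mathrm{Tor}$-computation, flat over $\calp$ with fibres $\ker(F\to\calo_{p})$; by the converse part of Proposition \ref{et-non-inst} these fibres are stable sheaves defining points of $\calc_{0}$, so $\widehat{\mathbf{E}}$ induces a morphism $\kappa\colon\calp\to\calm$ with $\kappa(\calp)=\calc_{0}$. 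Thus $\calp$ is smooth, irreducible, of dimension $9+1=10$. I would then check that $\kappa\colon\calp\to\calc_{0}$ is bijective: surjectivity is again the converse of Proposition \ref{et-non-inst}, while injectivity holds because the triple $(F,p,[q])$ is recovered from $E=\ker(q)$ as $\big(E^{\vee\vee},\ \supp(E^{\vee\vee}/E),\ [\,\text{induced quotient}\,]\big)$, the uniqueness of the quotient being the one observed in the remark preceding Proposition \ref{tangent-C}.

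It remains to transfer smoothness from $\calp$ to $\calc_{0}$. From the bijective morphism $\kappa\colon\calp\to\calc_{0}$ with $\calp$ irreducible of dimension $10$ we conclude that $\calc_{0}$ is irreducible of dimension $10$. Now fix a closed point $[E]\in\calc_{0}$. As $\calc_{0}$ is an open subscheme of the closed subscheme $\calc\subset\calm$, the local ring $\calo_{\calc_{0},[E]}$ is a quotient of $\calo_{\calm,[E]}$, so $T_{[E]}\calc_{0}$ injects into $T_{[E]}\calm\simeq\Ext^{1}(E,E)$, giving $\dim T_{[E]}\calc_{0}\le\ext^{1}(E,E)=10$ by Proposition \ref{tangent-C}. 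On the other hand $\dim_{[E]}\calc_{0}=10$ since $\calc_{0}$ is irreducible of dimension $10$. Hence $\dim T_{[E]}\calc_{0}=\dim_{[E]}\calc_{0}=10$, i.e.\ $\calo_{\calc_{0},[E]}$ is regular; as $[E]$ was arbitrary, $\calc_{0}$ is smooth (in particular reduced) and irreducible of dimension $10$. The principal difficulty in carrying this out is the construction and control of the family $\widehat{\mathbf{E}}$ over $\calp$ — verifying that the relative $\inhom$ is a rank $2$ bundle commuting with base change, that the étale-local projective bundles descend, and that $\widehat{\mathbf{E}}$ is flat with the correct fibres — combined with the conceptual point that, because $\ext^{2}(E,E)\neq0$, smoothness of $\calc_{0}$ must be deduced by playing the explicit $10$-dimensional family off against the tangent-space bound rather than from any vanishing of obstructions.
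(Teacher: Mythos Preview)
Your approach is essentially the paper's: construct a $\p1$-bundle over the $9$-dimensional base $(\calm_X(2;-1,2,2)\times X)_0$ via the rank $2$ bundle of fibrewise $\Hom(F,\calo_p)$'s, build on it the universal kernel family, obtain a bijective morphism to $\calc_0$, and then conclude smoothness by confronting the resulting dimension $10$ with the bound $\dim T_{[E]}\calm=\ext^1(E,E)=10$ from Proposition \ref{tangent-C}. The paper carries one extra bookkeeping step you skip: it first defines the bundle $\D{P}(\cala)$ geometrically (via the family of planes $\langle l_F,p\rangle$) and then proves it is isomorphic to your sheaf-theoretic $\D{P}(\tau)$; this is what allows the locally-defined morphisms to descend, but your direct route through ${p_1}_*\inhom(\hat{\mathbf F},\calo_\Gamma)$ is equivalent.

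There is one genuine slip in your injectivity argument. You assert that $(F,p)$ is recovered from $E$ as $(E^{\vee\vee},\supp(E^{\vee\vee}/E))$. This is correct only when $F$ is reflexive. Points of $(\calm_X(2;-1,2,2)\times X)_0$ also include pairs with $F$ non-reflexive, i.e.\ $F^{\vee\vee}\simeq\cals$ and $\cals/F\simeq\calo_l(-1)$ for a line $l$, with $p\notin l$; in that case $E^{\vee\vee}\simeq\cals$ and $E^{\vee\vee}/E$ is an extension of $\calo_l(-1)$ by $\calo_p$, so your formula returns $\cals$ rather than $F$. The fix is immediate: invoke Proposition \ref{et-non-inst}, whose proof already performs the correct case analysis (splitting $E^{\vee\vee}/E$ into its $0$- and $1$-dimensional pieces) and shows that the pair $(F,p)$ is uniquely determined by $E$. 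With that substitution your bijectivity argument, and hence the whole proof, goes through as written.
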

\begin{proof}
We will construct a $\p1$ bundle $\D{P}(\cala)$ over $(\calm_X(2;-1,2,2) \times X)_0$
and show that this is endowed with a morphism $\D{P}(\cala)\to \calm$ mapping $\D{P}(\cala)$ bijectively into $\calc_0$.
We consider the Grassmanniann of lines $\Grp(1,\D{P}(V))$ in $\D{P}(V)\simeq \p4$. For the ease of notations towards the rest of the proof this latter will always be denoted simply by $\Gr$.
We define $(\Gr\times X)_0$ as the open set:
$$({\Gr\times X})_0:=\{([l],p)\in \Gr \times X\mid p\not\in l\}. $$
This scheme carries a family of planes $\tilde{\Pi}\subset {(\Gr\times X)}_0\times X$ such that \mbox{$\tilde{\Pi}_{(l,p)}\simeq \langle l,p\rangle$} 
and the sheaf $\cala:={p_1}_*(\cali_{\tilde{\Pi}}(1))$ is a rank 2 vector bundle on $({\Gr\times X})_0$. 

Now, the isomorphism $\beta:\calm_X(2;-1,2,2)\to\Gr$ (see Proposition \ref{sheaves-grass}) induces an isomorphism
${(\Gr\times X)}_0\xrightarrow{\simeq}{(\calm_X(2;-1,2,2)\times X)}_0$, hence $\D{P}(\cala)$ is a $\p1$-bundle over ${(\calm_X(2;-1,2,2)\times X)}_0$ as well (the fibre over a point $(F,p)$ identifies with the pencil $\D{P}(H^0(\cali_{\langle l_F,p\rangle}(1)))$ of hyperplanes containing $\langle l_F,p\rangle$).
In order to prove that $\D{P}(\cala)$ admits a morphism to $\calm$, we consider an étale cover $\mathbf{W}$ of $\calm_X(2;-1,2,2)$ supporting a universal sheaf $\bff_{\bfw}$. This induces an étale cover $\tilde{\bfw}\to {(\calm_X(2;-1,2,2)\times X)}_0$, we denote by $\tilde{\cala}_{\bfw}$ the pullback of $\cala$ to $\tilde{\bfw}$ and by $\tilde{\bff}_{\bfw}$ the pullback of $\bff_{\bfw}$ to $\tilde{\bfw}\times X$. $\D{P}(\tilde{\cala}_{\bfw})$ identifies with the $\p1$ subbundle of $G_2({p_1}_*(\tilde{\bff}_{\bfw}(1)))$ 
whose fibre over a point $(w,p)\in \tilde{\bfw}$ is the 1-dimensional linear space:
$$\pi_{\tilde{\cala}_{{\bfw}}}^{-1}(w,p)=\{\D{P}^1\subset \D{P}(H^0(\bff_{w}(1)))\simeq \Lambda_{l_{\bff_{w}}}\mid [\langle l_{\bff_{w}}, p\rangle]\in \D{P}^1\} $$

Define now $\tilde{\Delta}_{\bfw}$ as the pullback to $\tilde{\bfw}\times X$ of the diagonal $\Delta\subset X\times X$
and consider the sheaf $\tau_{\bfw}:={p_1}_*(\inhom(\tilde{\bff}_{\bfw},\calo_{\tilde{\Delta}_{\bfw}}))$.
This is a rank 2 vector bundle over $\tilde{\bfw}\times X$ whose fibre over $(w,p)$ identifies with $\Hom(\bff_{w},\calo_p).$
We claim that we have an isomorphism: $\D{P}(\tilde{\cala}_{{\bfw}})\simeq \D{P}(\tau_{\bfw}).$
Denote by $\hat{\bff}_{\bfw}, \hat{\Delta}_{\bfw}$ the pullback to $\D{P}(\tau_{\bfw})$ of $\tilde{\bff}_{\bfw}, \ \tilde{\Delta}_{\bfw}$, respectively. 
The image of the identity $id_{\tau_{\bfw}}\in \End(\tau_{\bfw})$ through the isomorphism:
\begin{align*}
    &\Hom(\tau_{\bfw},\tau_{\bfw})\simeq H^0(\tau_{\bfw}\otimes \tau_{\bfw}^{\vee})\simeq H^0(\tau_{\bfw}\otimes {\pi_{\tau_{\bfw}}}_*\calo(1)) \simeq \\
    \simeq &~ H^0({p_1}_*\inhom(\hat{\bff}_{\bfw},\calo_{\hat{\Delta}_{\bfw}})\otimes \calo_{\D{P}(\tau_{\bfw})}(1))\simeq \Hom(\hat{\bff}_{\bfw}, \calo_{\hat{\Delta}_{\bfw}}\otimes {p_1}^*\calo_{\D{P}(\tau_{\bfw})}(1))
\end{align*}
defines a morphism $\hat{\bff}_{\bfw}\to \calo_{\hat{\Delta}_{\bfw}}\otimes {p_1}^*\calo(1)$
inducing a short exact sequence on  $\D{P}(\tau_{\bfw})\times X$:
$$ 0\rightarrow \hat{\bfe}_{\bfw}\rightarrow \hat{\bff}_{\bfw}\rightarrow  \calo_{\hat{\Delta}_{\bfw}}\otimes {p_1}^*\calo_{\D{P}(\tau_{\bfw})}(1)\rightarrow 0.$$
Twisting and applying ${p_1}_*$, we obtain a rank 2 vector bundle ${p_1}_*(\hat{\bfe}_{\bfw}(1))$; this latter induces an embedding $\D{P}(\tau_{\bfw})\to G_2({p_1}_*\tilde{\bff}_{\bfw}(1))$
that maps $\D{P}(\tau_{\bfw})$ bijectively into $\D{P}(\tilde{\cala}_{{\bfw}})$. This induces an isomorphism $\D{P}(\tau_{\bfw})\simeq \D{P}(\tilde{\cala}_{\bfw})$ mapping a point $f\in \Hom(\bff_w,\calo_p)$ in $\pi_{\tau_{\bfw}}^{-1}$ to $\D{P}(H^0(E_w(1))), \ E_w=\ker f$.
The sheaves $\hat{\bfe}_{{\bfw}}$ determine morphisms $\psi_{\tilde{\bfw}}:\D{P}(\tilde{\cala}_{\bfw})\xrightarrow{\simeq}\D{P}(\tau_{\bfw})\to \calm$ that descend to a morphism $\D{P}(\cala)\to \calm$ which maps $\D{P}(\cala)$ bijectively to $\calc_0$.
This means that $\calc_0$ is irreducible and has dimension $10$. Since by Proposition \ref{tangent-C}, the dimension of the Zarisky tangent space at each point $[E]\in \calc_0$ is 10, we conclude that $\calc_0$ inherits from $\D{P}(\cala)$ the structure of a smooth 10-dimensional scheme.
\end{proof}

From these last results we deduce that $\overline{\calc_0}$ is an irreducible component of $\calc$ and that this latter is smooth along $\calc_0$. We finally want to show that actually, we have an equality $\calc=\overline{\calc_0}$, that is to say, that $\calc$ is irreducible.

\begin{proposition}
$\calc$ is an irreducible 10-dimensional scheme that coincides with $\overline{\calc_0}$.
\end{proposition}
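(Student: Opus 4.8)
The plan is to show that every point of $\calc$ lies in the closure of $\calc_0$, so that $\calc = \overline{\calc_0}$, which by Proposition \ref{C0-sm} is irreducible of dimension $10$. Recall from Proposition \ref{et-non-inst} that every $[E]\in\calc$ is an elementary transformation $0\to E\to F\to\calo_p\to0$ of a sheaf $[F]\in\calm_X(2;-1,2,2)$ along a point $p$, and that the map $\delta\colon[E]\mapsto(F,p)$ is well-defined. The locus $\calc_0$ is by definition $\delta^{-1}$ of the open set $(\calm_X(2;-1,2,2)\times X)_0$ of pairs $(F,p)$ with $p\notin\sing(F)$. So the remaining locus $\calc\setminus\calc_0$ consists of those $E$ with $p\in\sing(F)$; since $\sing(F)$ has length $2$ (when $F$ is reflexive) or is a line (when $F^{\vee\vee}\simeq\cals$), this is a proper closed condition on the base, so $\calc\setminus\calc_0$ has dimension at most $\dim\calm_X(2;-1,2,2)-1+1 = 6$ from the base plus the fibre dimension — in any case strictly smaller than $10$.

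The key step is therefore a deformation argument: given $[E]\in\calc\setminus\calc_0$, with associated pair $(F,p)$ where $p\in\sing(F)$, I want to exhibit $E$ as a flat limit of sheaves $E_t\in\calc_0$. The natural way is to move the point: fix $F$ and deform $p$ to a general point $p'\notin\sing(F)$, choosing compatibly a family of surjections $F\twoheadrightarrow\calo_{p_t}$. This requires knowing that $\Hom(F,\calo_{p})\ne0$ for all $p$ (including $p\in\sing(F)$), which follows since $F$ is torsion free of rank $2$ with a fibre quotient everywhere, and that the resulting family of kernels $E_t$ is flat over the base — automatic since all $E_t$ have the same Hilbert polynomial $P_F - 1$ — and semistable, which holds by the converse part of Proposition \ref{et-non-inst} (elementary transformations of sheaves in $\calm_X(2;-1,2,2)$ along points are always in $\calc$). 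Thus $E=E_0$ lies in the closure of $\{E_t : t\ne0\}\subset\calc_0$.

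Concretely, I would build this family scheme-theoretically, mirroring the construction in the proof of Proposition \ref{C0-sm}: over $\calm_X(2;-1,2,2)\times X$ (or an \'etale cover $\tilde{\mathbf W}$ carrying a universal sheaf $\bff_{\bfw}$), consider the sheaf $\tau:={p_1}_*(\inhom(\tilde\bff_{\bfw},\calo_{\tilde\Delta_{\bfw}}))$, a rank $2$ vector bundle whose fibre over $(w,p)$ is $\Hom(\bff_w,\calo_p)$ — the rank being constant even at points of $\sing(F)$ because $F$ is torsion free with $F\otimes\calo_p$ of length $2$ at every $p$. As in Proposition \ref{C0-sm}, the tautological quotient on $\D{P}(\tau)$ gives a universal elementary transformation $\hat\bfe_{\bfw}$ on $\D{P}(\tau)\times X$ and hence a morphism $\D{P}(\tau)\to\calm$ whose image is exactly $\calc$ and which restricts over the open part to the bijection $\D{P}(\cala)\xrightarrow{\sim}\calc_0$ of Proposition \ref{C0-sm}. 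Since $\D{P}(\tau)$ is irreducible (being a projective bundle over the irreducible $\calm_X(2;-1,2,2)\times X$, which is irreducible by Theorem \ref{sheaves-grass}), its image $\calc$ is irreducible; and as $\calc_0$ is dense in it and $10$-dimensional, $\calc=\overline{\calc_0}$ is $10$-dimensional.

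The main obstacle is verifying that $\tau$ is genuinely \emph{locally free} of rank $2$ across all of $\calm_X(2;-1,2,2)\times X$, i.e. that the formation of $\Hom(F,\calo_p)$ commutes with base change and has constant rank $2$ including where $p\in\sing(F)$: for $F$ reflexive this is clear away from the length-$2$ singular scheme but at a singular point one must check $\dim_{\Bbbk}\Hom(F,\calo_p)$ does not jump (it should remain $2$ since $F$ is reflexive of rank $2$ and thus $F\otimes\calo_p$ has length exactly $2$ even there, being $F$ locally free in codimension $2$ — but this needs the precise local structure of rank $2$ reflexive sheaves with $0$-dimensional singularities), and for $F$ with $F^{\vee\vee}\simeq\cals$ singular along a line one must handle $p$ on that line. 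Once local freeness of $\tau$ is established, the rest is formal. An alternative, perhaps cleaner, route avoiding this delicacy is to argue purely by dimension count: show $\dim(\calc\setminus\calc_0)<10$ directly (the base of pairs $(F,p)$ with $p\in\sing(F)$ has dimension $\le 7$ and the fibres of $\delta$ are $\le 1$-dimensional), which already forces any irreducible component of $\calc$ other than $\overline{\calc_0}$ to be impossible since $\overline{\calc_0}$ is the only $10$-dimensional piece and $\calc$ is the image of the irreducible $\D{P}(\cala)$-completion; combined with the fact (from Proposition \ref{et-non-inst}) that $\calc$ is the full image of an irreducible parameter space, this gives irreducibility without needing $\tau$ to be a bundle on the nose.
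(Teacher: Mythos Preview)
Your overall plan—show that $\calc_0$ is dense by exhibiting each $[E]\in\calc\setminus\calc_0$ as a limit of points of $\calc_0$—is the right one, but both implementations you sketch contain genuine gaps.

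The central error is the assertion that $\dim\Hom(F,\calo_p)=2$ even when $p\in\sing(F)$. For a rank~$2$ torsion-free sheaf one has $\dim_{k(p)}(F\otimes k(p))=2$ \emph{exactly} on the locally free locus; at any $p\in\sing(F)$ this dimension is $\ge 3$. Concretely, for $[F]\in\calr(2;-1,2,2)$ with $p\in\sing(F)$ one computes $\dim\Hom(F,\calo_p)=3$, and for $F$ with $F^{\vee\vee}\simeq\cals$ and $p$ on the singular line, tensoring $0\to F\to\cals\to\calo_l(-1)\to0$ by $\calo_p$ again gives length $3$. Hence $\tau$ is \emph{not} locally free, $\D{P}(\tau)$ is not a projective bundle, and its irreducibility is no longer automatic. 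Your ``fix $F$ and slide the point $p$'' idea then runs into a real obstruction: as $p_t\to p$ along a curve, the $\p1$ of surjections at $p_t$ specializes to at most a $\p1$ inside the $\p2$ of surjections at $p$, so an arbitrary $q\in\D{P}(\Hom(F,\calo_p))$ is not obviously a limit.

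The alternative dimension count is also incomplete. The fibre bound is wrong (the fibres of $\delta$ over $(F,p)$ with $p\in\sing(F)$ are $\p2$'s, not $\p1$'s), though the inequality $\dim(\calc\setminus\calc_0)<10$ still survives. More importantly, a dimension bound alone does not exclude a low-dimensional component of $\calc$ entirely contained in $\calc\setminus\calc_0$; you invoke ``$\calc$ is the image of an irreducible parameter space'', but Proposition~\ref{et-non-inst} is only a pointwise statement, and the irreducible space $\D{P}(\cala)$ of Proposition~\ref{C0-sm} surjects only onto $\calc_0$.

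The paper avoids the rank jump altogether by working on the curve side of the Serre correspondence. Given $[E]\in\calc\setminus\calc_0$, a section $s\in H^0(E(1))$ produces a non-l.c.m.\ curve $\Gamma$ (a conic $C$ with an embedded point $p\in C$) together with a class $\xi\in\D{P}(\Ext^1(\cali_\Gamma,\calo_X(-1)))\simeq\D{P}(H^0(\calo_C(2\mathrm{pt})))\simeq\p2$. Inside this \emph{fixed} $\p2$ the locus of $\xi$'s vanishing at $p$—precisely those giving sheaves in $\calc\setminus\calc_0$—is a hyperplane $\p1$, so one can deform $\xi$ normally to it and land in $\calc_0$. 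Moving $\xi$ keeps $p$ fixed but moves $F$ (and hence $\sing(F)$ away from $p$); the parameter space is a linear $\p2$ with no jumping, which is exactly what your approach lacks. In short: you fix $F$ and try to move $p$, the paper fixes $\Gamma$ (hence $p$) and moves $F$, and the latter linearizes the problem.
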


\begin{proof}
We show that $\calc_0$ is dense in $\calc$.
From Proposition \ref{sc-non-inst}, we learn that pairs $(E,s), \ [E]\in \calc\setminus \calc_0, \ s\in \D{P}(H^0(E(1))$ corresponds to pairs $(\Gamma,\xi)$ with $\Gamma$ a non l.cm. curve consisting of a conic $C$ with an embedded point $p\in C$ and $\xi\in \Ext^1(\cali_{\Gamma},\calo_X(-1))\simeq \Ext^1(\cali_{C},\calo_X(-1))\simeq H^0(\calo_C(2pt))$ vanishing at $p$.
For a point $[E]\in \calc\setminus \calc_0$ let then $\Gamma$ be a curve defined by a global section $s\in H^0(E(1))$ and $\xi$ the corresponding element in $\Ext^1(\cali_{\Gamma},\calo_X(-1))$.
The projective space $\D{P}(\Ext^1(\cali_{\Gamma},\calo_X(-1)))\simeq \D{P}^2$ determines a family $\mathbf{E}$, flat over $\D{P}^2$ such that $\mathbf{E}_{[\xi]}\simeq E$. Now, the points $x\in \D{P}(\Ext^1(\cali_{\Gamma},\calo_X(-1)))$ such that $\mathbf{E}_x\in \calc\setminus \calc_0$ are parameterized by a divisor isomorphic to $\D{P}^1$. We can therefore always deform $[\xi]$ in a direction normal to this divisor and produce in this way a family $\mathbf{E}'$ whose central fibre is isomorphic to $[E]$ and whose general fibre lies in $\calc_0$. 
\end{proof}

\subsection{Intersection of $\calc$ and $\overline{\call(2)}$}
We end our description of the moduli space $\calm$ addressing the issue of its connectedness. Since we have already proved that $\calm$ is union of two irreducible components, this reduces to proving that their intersection is non-empty. 
We present here how to construct a 5-dimensional irreducible family $\calp$ contained in 
$\calc\cap\overline{\call(2)}$.
To begin with we consider $\Tan(X)\subset \Grp(1,\D{P}(V))$, the variety of tangent lines to $X$.
This is a smooth 4-dimensional locally closed subvariety of $\Grp(1,\D{P}(V))$ and it identifies with the subvariety $\Sigma_{T}\subset \Grp(1,\D{P}(V))\times X$ defined as
$$\Sigma_{T}:=\{([l],p)\in \Tan(X)\times X\mid l\subset \D{T}_{p}X\}.$$
We denote by $\calr_T\subset \calr(2;-1,2,2)\times X$ the image of $\Sigma_T$ under the isomorphism $\Grp(1,\D{P}(V))\times X\xrightarrow{\simeq} \calm_X(2;-1,2,2)\times X$.
By definition, for each $([F],p)\in \calr_T$, $l_F$ is tangent to $X$ at $p=\sing(F)_{\rm red}$.
Now, reasoning exactly as in the proof of Proposition \ref{C0-sm}, starting from an étale cover $\bfw$ of $\calm_X(2;-1,2,2)$ supporting a Poincare sheaf $\bff_{\bfw}$, we construct an étale cover $\bfw_T$ of $\calr_T$ and we consider the sheaf ${p_1}_*(\inhom(\bff_{\bfw_T},\calo_{\Delta_{\bfw_T}}))$ on $\bfw_T$, where $\bff_{\bfw_T}\in Coh(\bfw_T\times X)$ and ${\Delta_{\bfw_T}}\subset \bfw_T\times X$ are, respectively, the pullback of the universal sheaf and of the diagonal.

The sheaf ${p_1}_*(\inhom(\bff_{\bfw_T},\calo_{\Delta_{\bfw_T}}))$ is a rank 3 vector bundle 
and replying the reasoning presented in the proof of Proposition \ref{C0-sm}, on $\D{P}({p_1}_*(\inhom(\bff_{\bfw_T},\calo_{\Delta_{\bfw_T}}))\times X$, there exists a family $\hat{\bfe}_{\bfw_T}\hookrightarrow \hat{\bff}_{\bfw_T}$, $\hat{\bff}_{\bfw_T}$ being the pullback of $\bff_{\bfw_T}$,
whose direct image under the projection $p_1$ fits in:
$$ 0\rightarrow {p_1}_*(\hat{\bfe}_{\bfw_T}(1))\rightarrow {p_1}_*(\hat{\bff}_{\bfw_T}(1))\rightarrow {p_1}_*(\calo_{\hat{\Delta}_{\bfw_T}})\otimes \calo(1)\rightarrow 0.$$
Denote now by $\bfl_{\bfw_T}\subset \Sigma_T\times_{\calr_T} \bfw_T$ the pullback of the universal line \mbox{$\bfl\subset \Sigma_T\times X$.} We observe that the projective bundle  $\D{P}({p_1}_*\cali_{\bfw_T}(1))$ identifies with the Grassmann bundle $G_2({p_1}_*({\bff}_{\bfw_T}(1))$ (since for $F\in \calr(2;-1,2,2)$ such that $p\in l_F$ $\Grp(1, \D{P}(H^0(F(1))))\simeq \D{P}(H^0(\cali_{l_F}(1)))\simeq \D{P}^2$); as  
${p_1}_*(\hat{\bfe}_{\bfw_T}(1))$ is a rank-2 subbundle of ${p_1}_*(\hat{\bff}_{\bfw_T}(1))$ it induces therefore a morphism of $\Sigma_T$ schemes:
$$ \epsilon:\D{P}({p_1}_*(\inhom(\bff_{\bfw_T},\calo_{\Delta_{\bfw_T}}))\rightarrow \D{P}({p_1}_*\cali_{\bfl_{\bfw_T}}(1))$$
($\epsilon$ is the morphism mapping $f\in Hom(F,\calo_p)$ to the unique hyperplane containing all curves in $\D{P}(H^0(E(1)))$ for $E:=\ker f$).
This morphism is bijective hence, by the smoothness of both $\D{P}({p_1}_*(\inhom(\bff_{\bfw_T},\calo_{\Delta_{\bfw_T}}))$ and $\D{P}({p_1}_*\cali_{\bfl_{\bfw_T}}(1))$ it is an isomorphism. 

Also this time, the families of sheaves $\hat{\bfe}_{\bfw_T}$ induces morphisms $\D{P}({p_1}_*\cali_{\bfl_{\bfw_T}}(1))\to \calc$ that descend to a well defined morphism $\psi:\D{P}({p_1}_*\cali_{\bfl}(1))\to \calc$.
Consider now the variety
$$\tilde{\Sigma}_T:=\{([l],p,H)\in \Sigma_T\times X^*\mid H=\D{T}_pX\}$$
$\tilde{\Sigma}_T$ is isomorphic to $\Sigma_T$ and it identifies with a subset of $\D{P}({p_1}_*(\cali_{\bfl}(1))$; we finally define $\calp$ as the scheme theoretic image of $\tilde{\Sigma}_T$ under the morphism $\psi$.

\begin{proposition}
$\calp$ is a 5 dimensional irreducible scheme contained in $\overline{\call(2)}\cap \calc$.
\end{proposition}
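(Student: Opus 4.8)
The plan is to verify the three assertions in turn: that $\calp$ is irreducible of dimension $5$, that $\calp\subseteq\calc$, and that $\calp\subseteq\overline{\call(2)}$. The first two are short. Since $\tilde\Sigma_T\simeq\Sigma_T$ and $\Sigma_T$ is a $\D{P}^2$-bundle over $X$ (the fibre over $p\in X$ being the $\D{P}^2$ of lines through $p$ contained in $\D{T}_pX$, i.e.\ $\Sigma_T\simeq\D{P}(T_X)$), $\tilde\Sigma_T$ is smooth and irreducible of dimension $5$, so $\calp=\psi(\tilde\Sigma_T)$ is irreducible of dimension at most $5$; to get equality one checks that $\psi|_{\tilde\Sigma_T}$ is injective on a dense open set. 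Indeed, for $[E]\in\calp$, Theorem \ref{double-dual-bundle} together with Proposition \ref{et-non-inst} forces $E^{\vee\vee}=F$ with $[F]\in\calr(2;-1,2,2)$, and then $[E]$ recovers the line $l=l_F$ (the line lying on every plane $\langle C\rangle$, $C\in\D{P}(H^0(F(1)))$), the point $p=\sing(F)_{\rm red}$, and the hyperplane $H=\D{T}_pX$ --- exactly the point of $\tilde\Sigma_T$ from which $[E]$ arose; hence $\dim\calp=5$. The inclusion $\calp\subseteq\calc$ is built into the construction of $\psi$: the universal subsheaf $\hat\bfe_{\bfw_T}\hookrightarrow\hat\bff_{\bfw_T}$ consists of elementary transformations of sheaves in $\calm_X(2;-1,2,2)$ along points, and by Proposition \ref{et-non-inst} every such elementary transformation is a point of $\calc$.

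For the remaining, substantial claim $\calp\subseteq\overline{\call(2)}$, it suffices, since $\overline{\call(2)}$ is closed and $\calp$ is irreducible, to show that the sheaf $E$ attached to a general point of $\tilde\Sigma_T$ is a flat limit of locally free instanton sheaves of charge $2$. The key geometric observation is that the condition $H=\D{T}_pX$ cutting out $\tilde\Sigma_T$ says precisely that the pencil $\D{P}(H^0(E(1)))$ attached to $E$ by the Serre correspondence lies on the quadric cone $Q_0:=\D{T}_pX\cap X$: a general member of it is a pair of rulings $r_1\cup r_2$ of $Q_0$ meeting at the vertex $p$, carrying an embedded point at $p$ (forced by the Hilbert polynomial $2t+2$ and by $p\in\sing(E^{\vee\vee})$), while the common line of the pencil of planes $\{\langle r_1\cup r_2\rangle\}$ is the tangent line $l_F$. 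I would then deform $Q_0$ inside $X$ through a general pencil of hyperplane sections specialising to $\D{T}_pX\cap X$, so that the nearby members $Q_t$ are smooth quadric surfaces, and simultaneously deform the pencil of degenerate conics-with-section into a base-point-free pencil of divisors of type $(2,0)$ on $Q_t$ (pairs of disjoint lines) carrying a nowhere-vanishing section of $H^0(\omega_{\Gamma_t}(2))$. Feeding the resulting flat family of pairs $(\Gamma_t,\xi_t)$ into the relative form of the Serre correspondence of Theorem \ref{thm:serre} produces a family $\{E_t\}$ of sheaves on $X$, flat over the parameter curve because $\chi(\calo_X(-1))+\chi(\cali_{\Gamma_t})$ is constant, with $E_0\simeq E$ and, by Propositions \ref{curve-bundle}--\ref{moduli-bundle} and Remark \ref{rem:five}(2), $E_t\in\cali(2)$ for $t\neq 0$; as all fibres are semistable this induces a morphism from the parameter curve to $\calm$ carrying a punctured neighbourhood of $0$ into $\cali(2)$, so $[E]\in\overline{\cali(2)}=\overline{\call(2)}$. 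Since $[E]$ runs over a dense subset of $\calp$, this gives $\calp\subseteq\overline{\call(2)}$, and incidentally exhibits $\calp$ as (a dense open in) the boundary $\overline{\call(2)}\setminus\call(2)$.

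The hard part is exactly this last construction. One must verify that a general base-point-free pencil of $(2,0)$-divisors on the smooth quadrics $Q_t$ degenerates, over the cone $Q_0$, to the pencil of pairs of rulings through $p$ equipped with an embedded point at $p$ (rather than to some other flat limit in $\Hilb_{2t+2}(X)$), and that the accompanying degeneration of the section data reconstructs the prescribed sheaf $E$ with $E^{\vee\vee}\in\calr_T$ --- concretely, that the interplay among the common line $l_F$ of the pencil of planes, the length-two singular scheme of $E^{\vee\vee}$ supported at $p$, and the tangency of $l_F$ to $X$ at $p$ is consistent. An essentially equivalent route, whose technical core is the same, is to extend the universal instanton family over $\calb\simeq\call(2)$ across the boundary of $\calu$ parameterising rulings of singular hyperplane sections, and to identify the boundary family so obtained with $\hat\bfe_{\bfw_T}$ restricted to $\tilde\Sigma_T$.
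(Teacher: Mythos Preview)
Your outline is correct, and in fact the route you flag at the very end as ``essentially equivalent'' is precisely the one the paper takes. Rather than degenerating curves and sections through a relative Serre correspondence, the paper exploits the isomorphism $\zeta:\call(2)\xrightarrow{\simeq}\calb$ already established in Proposition~\ref{etale-fact}: the pencil $\D{P}(H^0(E(1)))$, interpreted as a linear system on the cone $Q_p=\D{T}_pX\cap X$, gives a point $\D{P}^1_E$ in the full projective bundle $\D{P}(\Sym^2(\calt^{\vee}))\to\Grp(1,F(X))$, of which $\calb$ is the open piece lying over $\calu$. Since this ambient bundle is smooth and $6$-dimensional, one simply chooses an affine curve $(\spec A,0)$ in it with $\D{P}^1_0=\D{P}^1_E$ and $\D{P}^1_t\in\calb$ for $t\ne 0$; setting $E_t:=\zeta^{-1}(\D{P}^1_t)$ produces a family of instantons whose flat limit $E_0$ lies in $\overline{\call(2)}$, and the paper concludes $E_0\simeq E$ because the pencil $\D{P}(H^0(E_0(1)))$ must be the flat limit $\D{P}^1_E$ of the pencils $\D{P}^1_t$. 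This sidesteps entirely the verification you single out as the hard part --- that the flat limit in $\Hilb_{2t+2}(X)$ of a base-point-free $(2,0)$-pencil on $Q_t$ is the pair-of-rulings-with-embedded-point on $Q_0$, and that the section data degenerates compatibly. What your primary route buys is a more explicit, curve-theoretic picture of the degeneration; what the paper's route buys is that all of the delicate limit computations are absorbed into the single identification $\call(2)\simeq\calb$ proved earlier, so the argument here reduces to smoothness of a projective bundle plus a semicontinuity check on $h^0(E_t(1))$.
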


\begin{proof}
The dimension and the irreducibility of $\calp$ are immediate consequences of the fact that $\tilde{\Sigma}_T$ is irreducible and of dimension 5.
Let us now prove that $\calp$ is indeed contained in both components of $\calm$.
For a general point $[E]\in \psi(\tilde{\Sigma}_T)$, $E^{\vee\vee}:=F\in \calr(2;-1,2,2)$ and $F/E\simeq \calo_p, \ \sing(F)_{\rm red}=p$.
Let us give a geometric interpretation of the family of curves defined by $H^0(E(1))$. Consider the short exact sequence:
$$0\rightarrow H^0(E(1))\xrightarrow{\iota} H^0(F(1))\rightarrow H^0(\calo_p)\rightarrow 0.$$
By definition the pencil $\D{P}(\iota(H^0(E(1))))$ identifies with the pencil of planes containing $l_F$ and contained in $\D{T}_pX$; accordingly it identifies with the locus of singular conics in $\D{P}(H^0(F(1))$
(to see this just notice that the singular plane sections in $\Lambda_{l_F}\simeq \D{P}(H^0(F(1)))$ identifies with the locus of tangents to $\D{P}(H^0(\cali_{l_F}(1)))\cap X^{*}$ i.e. with the pencils $\langle h, \D{T}_pX\rangle$ with $h$ an hyperplane containing $l_F$.)

Consider now the singular quadric surface $Q_p:=\D{T}_pX\cap X$. $Q_p$ is the cone with vertex $p$ over a smooth conic $C$ and the pencil $\D{P}(H^0(E(1)))$ uniquely determines 
a 1-dimensional linear system $\D{P}^1_E\subset \D{P}(H^0(\calo_C(1))$
hence, a point in the projective bundle
$$ \D{P}(\Sym^2(\calt^{\vee}))\longrightarrow \Grp(1,F(X))$$
 where, as usual, $\calt$ is the tautological bundle.
This projective bundle is a smooth 6 dimensional variety containing the variety $\calb\simeq \call(2)$ (see section \ref{sect-inst}) as an open subset. We can therefore always construct a regular affine curve with a marked point $(\spec(A), 0)$ and a 1-parameter family of pencils $\D{P}^1_t$, flat over $\spec(A)$,   such that $\D{P}^1_t\in \calb$ for $t$ general and whose central fibre $\D{P}^1_0$ is isomorphic to $\D{P}^1_E$.
Define $E_t:=\zeta^{-1}(\D{P}^1_t), \ t\ne 0$. This is a family of instantons, flat over $\spec(A)\setminus \{0\}$ such that the pencil of curves $\D{P}(H^0(E_t(1)))$ coincides with $\D{P}^1_t.$
$E_t$ admits a flat limit $E_0\in \overline{\call(2)}$ and since for such a sheaf, the pencil of curves $\D{P}(H^0(E_0(1)))$ must be the flat limit of $\D{P}^1_t$, we conclude that $E_0\simeq E$.
This means that $\psi(\tilde{\Sigma}_T)\subset \overline{\call(2)}\cap \calc$ therefore the same holds for $\calp$. 
\end{proof}

This shows the connectedness of $\calm$ ending the proof of Theorem \ref{complete}.


\end{document}